\documentclass[reqno,11pt]{amsart}

\usepackage[
  a4paper,
  textwidth=15cm,
  textheight=21cm,
  centering
]{geometry}

\usepackage{dsfont}
\usepackage{times,amsmath,cancel,stmaryrd,graphicx}
\usepackage{amsfonts,enumitem,amssymb,color,mathrsfs}
\usepackage[colorlinks=true]{hyperref}
\hypersetup{
citecolor=blue
}
\usepackage{cleveref,enumerate}
\usepackage{mathtools}
\usepackage{lmodern}
\usepackage[dvipsnames]{xcolor}
\usepackage{comment}
\usepackage{tikz}
\usetikzlibrary{arrows.meta,calc}
\usepackage{cite}

\usepackage{float}

\newtheorem{thm}{Theorem}[section]
\newtheorem{lem}[thm]{Lemma}
\newtheorem{cor}[thm]{Corollary}
\newtheorem{prop}[thm]{Proposition}

\newtheorem{deff}[thm]{Definition}
\theoremstyle{definition}
\newtheorem{rem}[thm]{Remark}

\numberwithin{equation}{section}

\newcommand{\R}{\mathbb{R}}

\newcommand{\rd}{\mathrm{d}}

\newcommand{\dhr}{\mathrel{\lhook\joinrel\relbar\kern-.8ex\joinrel\lhook\joinrel\rightarrow}}

\makeatletter
\renewcommand{\@captionfont}{\small}
\makeatother

\allowdisplaybreaks

\begin{document}

\title[Completion of DNA replication is constrained by origin activity]
{Completion of DNA replication is constrained by the spatiotemporal organisation of origin firing}



%
\author{Ahmad Alkhaled}
\address{King Abdullah University of Science and Technology (KAUST)\\
CEMSE Division\\
Thuwal 23955-6900\\
Saudi Arabia}
\email{ahmad.alkhaled@kaust.edu.sa}

\author{Francisco Berkemeier}
\address{%
University of Cambridge, Department of Pathology,
Cambridge CB2 1QP, United Kingdom\newline
\hspace*{\parindent}University of Cambridge, Department of Genetics,
Cambridge CB2 3EH, United Kingdom}
\email{fp409@cam.ac.uk}

\author{Michael A.\ Boemo}
\address{%
University of Cambridge, Department of Pathology,
Cambridge CB2 1QP, United Kingdom\newline
\hspace*{\parindent}University of Cambridge, Department of Genetics,
Cambridge CB2 3EH, United Kingdom}
\email{mb915@cam.ac.uk}

\author{Katerina Nik}
\address{King Abdullah University of Science and Technology (KAUST)\\
CEMSE Division\\
Thuwal 23955-6900\\
Saudi Arabia}
\email{katerina.nik@kaust.edu.sa}
%

\begin{abstract}
DNA replication requires the coordination of origin firing and fork progression to ensure the entire genome is timely duplicated before cell division. Yet origin firing is stochastic, giving rise to the classical random completion problem of how probabilistic local events can nevertheless ensure reliable genome duplication. Although several biological mechanisms have been proposed to resolve this problem, a quantitative account of how heterogeneous initiation and fork speed govern the persistence of the final unreplicated regions is still lacking. To address this gap, we introduce a population-level kinetic framework that extends KJMA nucleation-and-growth models by tracking unreplicated intervals over size, genomic position and time. We establish well-posedness of the resulting mean-field system and global existence for compatible data. Notably, by introducing a local initiation mass function, we quantify how the density and spatial organisation of origin firing constrain replication completion, yielding novel and sharp upper bounds on both the worst-locus unreplicated fraction and locuswise near-completion time. These results provide a rigorous and computable foundation for mapping vulnerabilities in replication completion and relating persistent unreplicated regions to replication stress and genome instability. 
\end{abstract}
%
\subjclass[2020]{35Q92,35F50,92C37,92C40}
\keywords{DNA replication, replication completion, stochastic origin firing, nucleation-and-growth modelling, genome stability, well-posedness.}

\maketitle
\section{Introduction}
\label{Sec1}

DNA replication is the process by which a cell duplicates its genome before division, and its accurate regulation is essential for preserving genome integrity across cell generations. In eukaryotes, replication takes place during the S phase of the cell cycle and is initiated at multiple genomic loci known as origins of replication~\cite{leonard2013dna}. When an origin fires, two replication forks are formed and move in opposite directions along the DNA molecule, copying the template as they progress. These forks continue until they meet forks from neighbouring origins, reach chromosome ends, or encounter barriers that cause them to slow or stop moving~\cite{mirkin2007replication}. Together, these mechanisms define the replication programme, understood here as the highly regulated, ordered sequence in which different sections of a genome are duplicated within S phase~\cite{rhind2013dna,kelly2000regulation,sclafani2007cell}. Accurate replication therefore requires the timely coordination of origin firing and fork progression, so that replicated domains expand and merge until the genome has been copied once and completely prior to cell division. When this coordination fails, replication can be delayed or incomplete, with consequences for genome instability, replication stress, ageing and cancer~\cite{zeman2014causes,flach2014replication,gaillard2015replication,macheret2015dna}.

Despite this regulation, replication is inherently stochastic. Origins fire with probabilities that vary across the genome and through S phase, shaped by chromatin state, origin accessibility and the local regulatory environment~\cite{wang2021genome,bechhoefer2012replication}. Fork progression is likewise heterogeneous, with local sequence composition, chromatin organisation, and replication-transcription conflicts among the many factors that can alter fork speed~\cite{yousefi2019stochasticity,conti2007replication}. In an individual cell, the time at which a locus is replicated is therefore a random outcome of origin firing and fork progression~\cite{dileep2018single}. At the population level, however, genomic loci exhibit reproducible replication-timing profiles, indicating that these stochastic local events are constrained by a regulated genome-wide programme~\cite{hansen2010sequencing,zhao2020high}. Nonetheless, a reproducible average timing profile does not ensure that every realisation of the replication programme completes on time, since stochastic origin firing may occasionally leave large intervals to be replicated late in S phase. This is the essence of the classical ``random completion problem'', which asks how stochastic initiation can nevertheless ensure timely and reliable genome duplication~\cite{hyrien2003paradoxes}.

Biological resolutions to this problem include excess licensed origins that provide dormant initiation capacity~\cite{blow2011dormant}, increasing initiation probability as S phase progresses~\cite{jun2008just,goldar2008dynamic,yang2008xenopus}, regulation of cellular replication capacity to control S phase duration~\cite{pennycook2020e2f,bertoli2021control}, and spatial organisation of potential origins that limits large origin-poor regions~\cite{hyrien2003paradoxes}. Experimentally, replication completion is difficult to assess because rare, cell-specific regions that persist late into S phase can be obscured by population-averaged timing measurements, such as Repli-seq~\cite{hansen2010sequencing,zhao2020high}. Single-molecule approaches can resolve origin usage and fork progression more directly~\cite{conti2007replication}, but sample only partial realisations of the replication programme. Studies of dormant origins and under-replicated DNA have already established that origin availability and spacing influence faithful completion~\cite{karschau2012optimal,ge2007dormant,ibarra2008excess,almamun2016inevitability,moreno2016unreplicated}.

These observations recast completion as a quantitative constraint on the persistence of the final unreplicated regions, rather than simply the endpoint of an average timing profile~\cite{wendel2014completion}. They also expose a fundamental gap in how the random completion problem is formulated for heterogeneous genomes. Although the importance of origin availability and organisation is well established, it remains less clear how spatially and temporally varying initiation and fork-speed landscapes determine the persistence and distribution of unreplicated regions, and hence the locuswise probability of remaining unreplicated by a given time. This motivates a coarse-grained kinetic description in which the many molecular determinants of initiation and elongation are captured by effective origin-firing rates and fork speeds~\cite{Herrick2002,PhysRevE.71.011908,PhysRevE.71.011909,Nieto2022}, without the need to model every protein and enzymatic step explicitly~\cite{yeeles2015regulated,yeeles2017replisome,gauthier2012modeling}. Such a description retains the spatial and temporal organisation relevant to replication kinetics.

In this work, we develop a mathematical framework to determine how initiation and fork progression govern the persistence of unreplicated regions, providing a new quantitative view of the random completion problem in heterogeneous genomes. We first establish that, for prescribed initiation and fork-speed profiles, the reduced system generates a unique replication programme that depends continuously on its initial data. This analytical control makes it possible to derive completion estimates in heterogeneous settings. We then prove an explicit upper bound on locuswise near-completion time \(T_\varepsilon\), defined as the time by which almost every locus has replicated in all but an arbitrarily small fraction $\varepsilon$ of cells, together with bounds on expected replication-timing observables. These estimates are subsequently evaluated against stochastic simulations driven by Repli-seq-derived initiation profiles~\cite{hansen2010sequencing}. Importantly, we show that completion depends not only on the overall density of initiation, but also on how initiation is distributed across the genome. This dependence is captured by a local initiation ``mass'', which reflects the initiation available in the least active genomic regions and explains why landscapes with similar overall activity can differ in their susceptibility to persistent unreplicated regions. Overall, this work establishes a rigorous and computable connection between heterogeneous initiation landscapes, fork progression and genome-wide near-completion dynamics, allowing experimentally informed replication programmes to reveal completion bottlenecks and yield conservative, conditional guarantees on the resolution of residual unreplicated DNA.

\section{Derivation of the Model}
\label{Sec2}
The most natural setting for DNA replication kinetics is nucleation-and-growth theory. The classical Kolmogorov--Johnson--Mehl--Avrami (KJMA) model, developed to describe phase transformations such as crystallisation~\cite{kolmogorov-crystallization, johnson1939reaction, Avrami1939KineticsOP, Avrami1940KineticsOP, Avrami1941GranulationPC}, casts origin firing and bidirectional fork progression as a one-dimensional analogue of nucleation and growth~\cite{Herrick2002,PhysRevE.71.011908,PhysRevE.71.011909}. In this analogy, nucleation corresponds to origin firing, the expansion of crystallised domains to passive replication by moving forks, and domain coalescence corresponds to the merging of converging forks (Figure \ref{fig:kjma_replication_analogy}).

Replication kinetics are encoded by two prescribed fields over genomic position (locus) $x$ and time $t$. The \textit{initiation rate} $I(x,t)$ specifies where and when new origins fire within unreplicated DNA, while the \textit{fork speed} $v(x,t)$ determines how rapidly established replication forks move the genome. Formally, $I(x, t)$ is the initiation rate per unit time and per unit length of unreplicated DNA, whereas $v(x, t)$ is the fork speed at position $x$ and time $t$.

Assuming that the initial unreplicated set is \(\mathbb R\), that potential
initiation events form a homogeneous Poisson point process on
\(\mathbb R\times(0,\infty)\) with intensity
\(I(x,t)\equiv I_0>0\), and that \(v(x,t)\equiv v>0\), the
replicated fraction is independent of \(x\in\mathbb R\) and satisfies
\begin{equation}
\label{eq:replicated_fraction_homogeneous_case}
f(t)=1-e^{-I_0vt^2},
\qquad t\geq0.
\end{equation}
Indeed, for every \(x\in\mathbb R\) and \(t\geq0\), the backward causal
cone has intensity
\[
\int_0^t2I_0v(t-\tau)\,\rd\tau=I_0vt^2,
\]
and the Poisson void probability gives
\eqref{eq:replicated_fraction_homogeneous_case} 
~\cite{PhysRevE.71.011908}.

\begin{figure}[t]
    \centering
    \includegraphics[width=.82\textwidth]{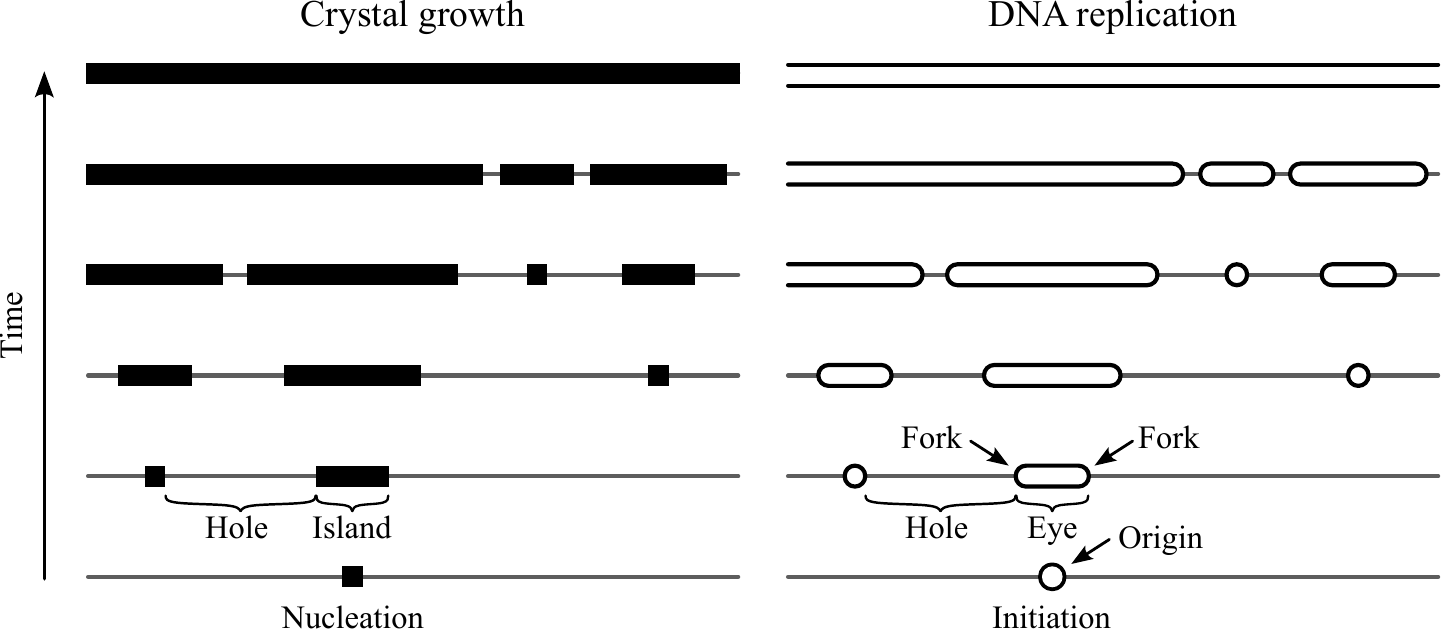}
    \caption{
    Analogy between KJMA nucleation and growth and DNA replication.
    In crystal growth, stochastic nucleation creates expanding islands that consume holes until the domain is fully crystallised, whereas in DNA replication, origin firing creates replication bubbles (or ``eyes'') whose bidirectional forks copy unreplicated DNA until neighbouring forks coalesce, completing genome duplication into two identical DNA molecules.
    }
    \label{fig:kjma_replication_analogy}
\end{figure}

Relation \eqref{eq:replicated_fraction_homogeneous_case} summarises the average progress of genome duplication and shows how initiation and fork speed jointly determine the temporal increase of replicated DNA. It is closely tied to replication timing, since $1-f(t)$ gives the probability that a locus remains unreplicated at time $t$ in the homogeneous setting~\cite{berkemeier2025dna}. However, this global fraction does not describe how the remaining unreplicated DNA is organised. The same value of $f(t)$ may correspond to many small unreplicated gaps or to a smaller number of large gaps, with different implications for fork density, coalescence and completion. This distinction is biologically important because genome duplication depends not only on the total amount of DNA left to copy, but also on how that DNA is arranged. Short, dispersed gaps can be replicated by nearby converging forks, whereas long initiation-poor intervals require forks to travel farther and may remain vulnerable late in S phase~\cite{ge2007dormant,blow2011dormant,yang2008xenopus}. The size and distribution of unreplicated regions therefore carry information about the robustness of completion that is not captured by the global replicated fraction alone.

To capture this finer structure, we track the distribution of unreplicated intervals rather than only their total mass. The number density of holes of size $\ell$ at time $t$, $\rho_{h}(\ell, t)$, can be derived by extending the argument used to obtain \eqref{eq:replicated_fraction_homogeneous_case}, as shown in~\cite{Nieto2022}, yielding
\begin{equation}
\rho_{h}(\ell, t)=(I_0 t)^2 e^{-I_0 t ( \ell +v t)}, \qquad t>0, \quad \ell > 0.
\end{equation} 
Jun et al.~\cite{PhysRevE.71.011908} further generalised the one-dimensional KJMA model to allow a time-dependent initiation rate $I(t)$, obtaining an evolution equation for the hole density,
\begin{equation}
\label{eq:evolution_eq_rho(l,t)}
\partial_t\rho_h(\ell,t)
=
2v\partial_\ell\rho_h(\ell,t)
-I(t)\ell\rho_h(\ell,t)
+2I(t)\int_\ell^\infty \rho_h(y,t)\,\rd y, \qquad t>0, \quad \ell > 0.
\end{equation}
Here the first term describes shrinkage, the second the annihilation of holes by initiation, and the last the creation of two child holes after initiation in a parent hole.

For \(x\in[0,L)\) and \(t\geq0\), Gauthier et al.~\cite{gauthier2012modeling} later introduced a mean-field rate-equation model linking the replicated fraction to fork-density fields, while allowing both the initiation rate and fork speed to vary with space and time. In this formulation, $f(x,t)$ denotes the population-level replicated fraction, namely the fraction of cells in which locus $x$ has replicated by time $t$. Introducing fork-density fields $\rho_+(x,t)$ and $\rho_-(x,t)$ for right- and left-moving forks, respectively, with local fork speeds $v_+(x,t)$ and $v_-(x,t)$, they obtained
\begin{equation}
\label{eq:sys_1}
\partial_t f(x,t)=v_+(x,t)\rho_+(x,t)+v_-(x,t)\rho_-(x,t),
\end{equation}
and
\begin{align}
\label{eq:sys_2}
\partial_t\rho_\pm(x,t) &\pm \partial_x\big(v_\pm(x,t)\rho_\pm(x,t)\big) \nonumber \\
&=
I(x,t)s(x,t)
-
\frac{\big(v_+(x,t) + v_-(x,t)\big)\rho_+(x,t)\rho_-(x,t)}{s(x,t)},
\end{align}
where $s(x,t)=1-f(x,t)$ is the unreplicated fraction, and the second term on the right-hand side of~\eqref{eq:sys_2} represents fork loss by coalescence under the
mean-field approximation and is defined on the set on which
\(s(x,t)>0\).

Nieto and V\'asquez~\cite{Nieto2022} studied a size-only hole-density
model related to~\eqref{eq:evolution_eq_rho(l,t)}, proving local existence and
uniqueness of weak solutions on \((0,\infty)\times(0,T)\), for some \(T>0\),
first for constant fork speed \(v>0\) and a prescribed bounded initiation rate
\(I(t)\), and then for size-dependent fork speed \(v(\ell)\) and nonlinear
initiation rate \(I(t,\ell,\rho_h)\).

For the derivation below and the subsequent analysis of the reduced
system, we therefore consider a generalised equation for the hole
density that depends on size \(\ell>0\), position \(x\in\mathbb R\),
and time \(t>0\), and do not include boundary effects at chromosome
ends. Although such a model might be more detailed than what is needed
for a given observable, it provides a unified description of the
models introduced above. Let \(T\in(0,\infty)\) be fixed.

In particular, as shown below,
integration with respect to \(\ell\) over \((0,\infty)\), together with
the mean-field approximation in~\eqref{eq:mean-field-Jh}, yields on
\(\mathbb R\times(0,T)\) the symmetric-speed form
of~\eqref{eq:sys_1}--\eqref{eq:sys_2} under
\[
v_+(x,t)=v_-(x,t)=v(x,t)
\quad\text{for a.e. }(x,t)\in\mathbb R\times(0,T),
\]
whereas omitting the dependence on \(x\) in \(I\) and \(\rho_h\) and
taking a constant fork speed \(v>0\) yields
\eqref{eq:evolution_eq_rho(l,t)} on
\((0,\infty)\times(0,T)\).

For \((x,t)\in\mathbb R\times(0,T)\), let \(I(x,t)\) and \(v(x,t)\)
denote the space- and time-dependent initiation rate and fork speed,
respectively. For a.e. \(t\in(0,T)\), assume that the unreplicated set is
almost surely a disjoint union of bounded intervals of positive length
and that the first-moment measure of these holes has density
\(\rho_h(\ell,x,t)\geq0\) with respect to
\(\rd\ell\,\rd x\), so that, for every non-negative measurable function
\(\psi:(0,\infty)\times\mathbb R\to[0,\infty]\),
\begin{equation}
\label{eq:hole-density-definition}
\mathbb E\left[
\sum_{H\text{ a hole at time }t}
\psi(|H|,\inf H)
\right]
=
\int_{\mathbb R}\int_0^\infty
\psi(\ell,x)\rho_h(\ell,x,t)\,\rd\ell\,\rd x\, .    
\end{equation}

Following the same mechanism as in the derivation
of~\eqref{eq:evolution_eq_rho(l,t)}, the evolution equation for
\(\rho_h(\ell,x,t)\) contains three contributions on the right-hand
side, namely hole shrinkage, hole annihilation, and hole creation, while
the motion of the left endpoint gives the transport term in \(x\) on the
left-hand side.

\begin{itemize}
    \item \textbf{Shrinkage.}
    Since the two endpoints of a hole move inward with speeds $v(x,t)$ and $v(x+\ell,t)$, respectively, the total shrinking speed is
    $$
    w(\ell,x,t) \coloneqq v(x,t)+v(x+\ell,t) \qquad \text{for }(\ell,x,t)\in (0,\infty)\times \mathbb{R}\times (0,T)\,,
    $$
    and the corresponding transport term in the size variable is
    $$
    \partial_\ell\big(w(\ell,x,t)\rho_h(\ell,x,t)\big) \qquad \text{for }(\ell,x,t)\in (0,\infty)\times \mathbb{R}\times (0,T)\,.
    $$
    If the dependence of $v(x,t)$ on $x$ is removed, then $w(\ell,t)=2v(t)$ and this reduces to $2v(t)\partial_\ell\rho_h(\ell,x,t)$.

    \item \textbf{Annihilation.}
    Initiation may occur anywhere inside the hole. Hence the loss rate is obtained by integrating the hazard over the whole interval
    $$
    -\left(\int_0^\ell I(x+s,t)\,\rd s\right)\rho_h(\ell,x,t) \qquad \text{for }(\ell,x,t)\in (0,\infty)\times \mathbb{R}\times (0,T)\,.
    $$
    If $I(x,t) \equiv I(t)$, this reduces to $-I(t)\ell\,\rho_h(\ell,x,t)$.

    \item \textbf{Creation.}
    There are two creation channels.

    \begin{enumerate}
        \item \emph{Left-subhole term.}
        If initiation occurs at $x+\ell$ inside a parent hole with the same left endpoint $x$ and size $m\ge \ell$, then a child hole of size $\ell$ is created. This contributes
        $$
        I(x+\ell,t)\int_\ell^\infty \rho_h(m,x,t)\,\rd m \qquad \text{for }(\ell,x,t)\in (0,\infty)\times \mathbb{R}\times (0,T)\,.
        $$

        \item \emph{Right-subhole term.}
        If initiation occurs at $x$, then a child hole of size $\ell$ is created from a parent with left endpoint $x-s$ and size $m=\ell+s$, where $s\ge 0$. This contributes
        $$
        I(x,t)\int_0^\infty \rho_h(\ell+s,x-s,t)\,\rd s \qquad \text{for }(\ell,x,t)\in (0,\infty)\times \mathbb{R}\times (0,T)\,.
        $$
    \end{enumerate}

    When the dependence on $x$ is removed, the two creation terms reduce to
    $$
    2I(t)\int_\ell^\infty \rho_h(m,t)\,\rd m \qquad \text{for }(\ell,t)\in (0,\infty)\times (0,T)\,.
    $$

    \item \textbf{Transport in $x$.}
    Since $\rho_h(\ell,x,t)$ is indexed by the left endpoint position, the left endpoint itself moves with velocity $v(x,t)$. Therefore the left-hand side contains, in addition to the time derivative, the advection term
    $$
    \partial_x\!\bigl(v(x,t)\rho_h(\ell,x,t)\bigr), \qquad \text{for }(\ell,x,t)\in (0,\infty)\times \mathbb{R}\times (0,T).
    $$
\end{itemize}

Combining these contributions, we obtain
\begin{align}
\label{eq:joint-hole}
\partial_t \rho_h(\ell,x,t)
&+ \partial_x\big(v(x,t)\rho_h(\ell,x,t)\big)\nonumber \\
&=
\partial_\ell\big(w(\ell,x,t)\rho_h(\ell,x,t)\big)
- \Bigl(\int_0^\ell I(x+s,t)\,\rd s\Bigr)\rho_h(\ell,x,t) \nonumber \\
&\quad + I(x+\ell,t)\int_\ell^\infty \rho_h(m,x,t)\,\rd m 
 + I(x,t)\int_0^\infty \rho_h(\ell+s,x-s,t)\,\rd s\,, 
\end{align}
in $\mathcal{D}^{\prime}((0, \infty) \times \mathbb{R} \times(0, T))$. 

To integrate~\eqref{eq:joint-hole} with respect to
\(\ell\in(0,\infty)\), we assume
\begin{subequations}
\label{eq:assumptions}
\begin{equation}
I\in L^\infty\bigl(\mathbb R\times(0,T)\bigr)\,,
\qquad
I(x,t)\geq0
\text{ for a.e. }(x,t)\in\mathbb R\times(0,T)\,,
\label{eq:assumption-I}
\end{equation}
and 
\begin{equation}
v\in L^\infty\bigl(0,T;W^{1,\infty}(\mathbb R)\bigr)\,,
\qquad
v(x,t)\geq0 \text{ for a.e. }(x,t)\in\mathbb R\times(0,T)\,,
\label{eq:assumption-v}
\end{equation}
and 
\begin{equation}
(1+\ell)\rho_h
\in
L^\infty(0,T;L^1((0,\infty);L^\infty(\mathbb R)))\,,
\label{eq:assumption-rhoh}
\end{equation}
and 
\begin{equation}
\label{eq:additional_assumptions}
 w\rho_h
\in L^1_{\mathrm{loc}}(\mathbb R\times(0,T);W^{1,1}(0,\infty))\,.   
\end{equation}
\end{subequations}

For a.e. \((x,t)\in\mathbb R\times(0,T)\), we define
\begin{equation}
\label{eq:rho_h_integral}
\rho_+(x,t)\coloneqq \int_0^\infty \rho_h(\ell,x,t)\,\rd \ell\, ,
\qquad
\rho_-(x,t)\coloneqq \int_0^\infty \rho_h(\ell,x-\ell,t)\,\rd \ell\, ,
\end{equation}
where the first and second quantities count right- and left-moving
forks at the left and right endpoints of the holes, respectively.

For the shrinkage term, \eqref{eq:additional_assumptions} allows us to set
\begin{equation}
\label{eq:J_h-definition}
J_h(x,t):=
\lim_{\ell\downarrow0}(w\rho_h)(\ell,x,t)
\end{equation}
for a.e. \((x,t)\in\mathbb R\times(0,T)\), where \(J_h(x,t)\) is the flux of holes through zero size and represents the annihilation of a
hole when two opposing forks meet. Moreover,
\[
\lim_{\ell\to\infty}(w\rho_h)(\ell,x,t)=0\,,
\]
and hence
\begin{equation}
\label{eq:size-flux-identity}
\int_0^\infty
\partial_\ell(w\rho_h)(\ell,x,t)\,\rd\ell
=
\bigl[(w\rho_h)(\ell,x,t)\bigr]_{\ell=0}^{\ell=\infty}
=
-J_h(x,t)\,.
\end{equation}
To obtain the reduced system, we use the mean-field approximation
from~\cite{gauthier2012modeling} and, assuming that
\(s(x,t)>0\) for a.e.
\((x,t)\in\mathbb R\times(0,T)\), impose the relation
\begin{equation}
\label{eq:mean-field-Jh}
J_h(x,t)
=
\frac{2v(x,t)}{s(x,t)}
\rho_+(x,t)\rho_-(x,t)
\end{equation}
for a.e.
\((x,t)\in\mathbb R\times(0,T)\). The subsequent analysis concerns
the reduced system obtained under~\eqref{eq:mean-field-Jh}.

We next consider the annihilation term and the left-subhole creation
term and, for a.e. \((x,t)\in\mathbb R\times(0,T)\), set
\begin{equation}
\label{eq:H_and_F_def}
F(\ell,x,t) \coloneqq  \int_0^\ell I(x+s,t)\,\rd s\, ,
\quad
H(\ell,x,t)\coloneqq \int_\ell^\infty\rho_h(m,x,t)\,\rd m \, ,
\qquad \ell>0.
\end{equation}
The following lemma shows that these terms cancel after integration
with respect to \(\ell\).

\begin{lem}
\label{lem:cancellation_of_annihilation_and_left_creation}
Assume~\eqref{eq:assumption-I} and~\eqref{eq:assumption-rhoh}. For a.e. $(x,t)\in \mathbb{R}\times(0,T)$,
$$
-\int_0^\infty F(\ell,x,t)\rho_h(\ell,x,t)\,\rd \ell
+ \int_0^\infty I(x+\ell,t)H(\ell,x,t)\,\rd \ell =0\,.
$$
\end{lem}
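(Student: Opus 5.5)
This is a Fubini–Tonelli exchange of order of integration. Both integrals are rewritten as the same double integral over the region \(\{0\le s<\ell<\infty\}\) of \(I(x+s,t)\rho_h(\ell,x,t)\).

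Fix \((x,t)\) outside a null set. For such \((x,t)\) we need:
\begin{itemize}
\item \(I(\cdot,t)\) is measurable, nonnegative and bounded by \(\|I\|_{L^\infty}\), from~\eqref{eq:assumption-I};
\item \(\ell\mapsto(1+\ell)\rho_h(\ell,x,t)\) is integrable on \((0,\infty)\), from~\eqref{eq:assumption-rhoh}.
\end{itemize}

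\textbf{Step 1: a nonnegative double integral.} Define
\[
\Phi(s,\ell)\coloneqq I(x+s,t)\,\rho_h(\ell,x,t)\,\mathds 1_{\{0<s<\ell\}} ,
\]
which is nonnegative and measurable on \((0,\infty)^2\).

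\textbf{Step 2: integrate in \(s\) first.} By the definition of \(F\) in~\eqref{eq:H_and_F_def},
\[
\int_0^\infty\!\!\int_0^\infty \Phi\,\rd s\,\rd\ell=\int_0^\infty F(\ell,x,t)\rho_h(\ell,x,t)\,\rd\ell .
\]
This is finite, because \(0\le F(\ell,x,t)\le \|I\|_{L^\infty}\,\ell\). Hence the double integral is bounded by
\[
\|I\|_{L^\infty}\int_0^\infty \ell\,\rho_h(\ell,x,t)\,\rd\ell<\infty .
\]
This is exactly where the first moment \(\ell\rho_h\) in~\eqref{eq:assumption-rhoh} enters.

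\textbf{Step 3: integrate in \(\ell\) first.} Tonelli's theorem, applicable since \(\Phi\ge0\), lets us swap the order:
\[
\int_0^\infty I(x+s,t)\int_s^\infty\rho_h(m,x,t)\,\rd m\,\rd s=\int_0^\infty I(x+s,t)H(s,x,t)\,\rd s .
\]
Renaming \(s\) as \(\ell\), this is the second integral in the statement. The two integrals are therefore equal and finite, and their difference is zero.

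\textbf{The only delicate point is measurability.} We must make sure that for a.e. \((x,t)\) the sections \(I(x+\cdot,t)\) and \(\rho_h(\cdot,x,t)\) are measurable, and that the bounds hold. Two facts handle this:
\begin{itemize}
\item Joint measurability of \(I\) and \(\rho_h\), together with Fubini on the product space, gives measurability of the sections for a.e. \((x,t)\).
\item The Bochner-space assumption~\eqref{eq:assumption-rhoh} gives
\[
\int_0^\infty(1+\ell)\,\|\rho_h(\ell,\cdot,t)\|_{L^\infty}\,\rd\ell<\infty
\]
for a.e. \(t\). This dominates the section \(\int_0^\infty(1+\ell)\rho_h(\ell,x,t)\,\rd\ell\) for a.e. \(x\).
\end{itemize}
The bound on \(I(x+s,t)\) for a.e. \(s\) follows similarly from the \(L^\infty\) bound, up to a null set in \((x,s)\) that is handled by translation invariance of Lebesgue measure.
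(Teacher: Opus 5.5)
Your proposal is correct and follows essentially the same route as the paper: both exchange the order of integration over the triangle $\{0<s<\ell\}$, the paper via Fubini with $|I|$ and you via Tonelli using $I\ge0$. In both, finiteness comes from the bound $\|I\|_{L^\infty}\int_0^\infty \ell\,\rho_h(\ell,x,t)\,\rd\ell<\infty$ supplied by \eqref{eq:assumption-rhoh}, and your added remarks on the measurability of sections are a harmless, slightly more careful supplement.
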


\begin{proof}
By~\eqref{eq:assumption-I} and~\eqref{eq:assumption-rhoh}, for a.e. \((x,t)\in\mathbb R\times(0,T)\),
\begin{align*}
\int_0^\infty\int_0^m
|I(x+\ell,t)|\rho_h(m,x,t)\,\rd\ell\,\rd m
\leq \|I\|_{L^\infty(\mathbb R\times(0,T))}
\int_0^\infty m\rho_h(m,x,t)\,\rd m
<\infty\,.
\end{align*}
Hence Fubini's theorem applies and gives
\begin{align*}
\int_0^\infty I(x+\ell,t)H(\ell,x,t)\,\rd \ell
&=\int_0^\infty \int_\ell^\infty I(x+\ell,t)\rho_h(m,x,t)\,\rd m\,\rd \ell \\
&=\int_0^\infty \int_0^m I(x+\ell,t)\,\rd \ell \rho_h(m,x,t)\,\rd m \\
&=\int_0^\infty F(m,x,t)\rho_h(m,x,t)\,\rd m\,,
\end{align*}
which proves the assertion.
\end{proof}

We now consider the right-subhole creation term and, for a.e. \((x,t)\in\mathbb R\times(0,T)\), define
\[
\mathcal B(x,t)\coloneqq 
\int_0^\infty\int_0^\infty
\rho_h(\ell+s,x-s,t)\,\rd s\,\rd\ell\,,
\]
which is finite by~\eqref{eq:assumption-rhoh}. The following lemma identifies the integrated right-subhole creation term
with the unreplicated fraction.

\begin{lem}
\label{lem:coverage-identity}
Assume~\eqref{eq:assumption-rhoh}. For a.e. $(x,t)\in \mathbb{R}\times(0,T)$,
$$
\mathcal{B}(x,t)
= \int_0^\infty \int_{\mathbb{R}} \mathbf{1}_{[y,y+r)}(x)\rho_h(r,y,t)\,\rd y\,\rd r\,.
$$
In particular,
\[
\mathcal B(x,t)=s(x,t)=1-f(x,t)
\]
for a.e. \((x,t)\in\mathbb R\times(0,T)\).
\end{lem}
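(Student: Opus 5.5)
The proof has two parts: a change of variables that rewrites $\mathcal B$, and an application of the first-moment identity \eqref{eq:hole-density-definition} that identifies the result with $s$.

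\textbf{Step 1: change of variables.} Since $\rho_h\ge0$, Tonelli's theorem applies to the double integral defining $\mathcal B(x,t)$. For fixed $(x,t)$, substitute
\[
r=\ell+s,\qquad y=x-s .
\]
This maps $(\ell,s)\in(0,\infty)\times[0,\infty)$ bijectively onto the set
\[
\{(r,y): r>0,\; y\le x,\; x-y<r\},
\]
and its Jacobian has absolute value one. The constraints $s\ge0$ and $\ell>0$ become $y\le x<y+r$, that is, $x\in[y,y+r)$. Hence
\[
\mathcal B(x,t)=\int_0^\infty\int_{\mathbb R}\mathbf 1_{[y,y+r)}(x)\,\rho_h(r,y,t)\,\rd y\,\rd r .
\]
By \eqref{eq:assumption-rhoh} the right-hand side is bounded by
\[
\int_0^\infty r\,\|\rho_h(r,\cdot,t)\|_{L^\infty}\,\rd r<\infty,
\]
so $\mathcal B$ is finite. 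This proves the first identity.

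\textbf{Step 2: identification with $s$.} The expression
\[
\sum_H \mathbf 1_{[\inf H,\inf H+|H|)}(x)
\]
equals one exactly when $x$ lies in some hole, up to endpoints, since holes are disjoint intervals. Otherwise it equals zero. Its expectation is therefore $\mathbb P(x\text{ unreplicated at time }t)=s(x,t)$.

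The natural move is to apply \eqref{eq:hole-density-definition} with
\[
\psi(\ell,y)=\mathbf 1_{[y,y+\ell)}(x).
\]

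\textbf{Main obstacle.} The obstacle is making this step hold for a.e.\ $x$ rather than only in an integrated sense. Pointwise evaluation at a single $x$ is only meaningful for a.e.\ $x$, because $\rho_h$ is merely a density and is defined only almost everywhere.

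To handle this, test against an arbitrary nonnegative $\varphi\in L^1(\mathbb R)$, choosing
\[
\psi(\ell,y)=\int_y^{y+\ell}\varphi(x)\,\rd x .
\]
Applying \eqref{eq:hole-density-definition} and using Tonelli on both sides gives
\[
\int_{\mathbb R}\varphi(x)\,\mathcal B(x,t)\,\rd x=\mathbb E\Bigl[\sum_H\int_H\varphi\Bigr]=\mathbb E\Bigl[\int_{\mathbb R}\varphi\,\mathbf 1_{U_t}\Bigr]=\int_{\mathbb R}\varphi(x)\,s(x,t)\,\rd x .
\]
Here $U_t$ denotes the unreplicated set at time $t$, and $s(x,t)=\mathbb P(x\in U_t)$ by definition of the replicated fraction.

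Since $\varphi\ge0$ is arbitrary, $\mathcal B(\cdot,t)=s(\cdot,t)$ a.e. The boundary points of holes form a null set and do not affect the conclusion. Finally, measurability in $t$ and the joint a.e.\ statement on $\mathbb R\times(0,T)$ follow from the joint measurability of $\rho_h$.
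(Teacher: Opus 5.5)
Your proposal is correct. Step 1 is the paper's change of variables: the paper does it in two stages, $y=x-s$ and then $r=\ell+s$, but it is the same unimodular affine substitution. Your finiteness bound via $\int_0^\infty r\|\rho_h(r,\cdot,t)\|_{L^\infty(\mathbb R)}\,\rd r$ is also right.

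You depart from the paper in Step 2. The paper fixes $x$ and inserts $\psi(r,y)=\mathbf 1_{[y,y+r)}(x)$ directly into \eqref{eq:hole-density-definition}, then appeals to disjointness of the holes. You instead integrate against a non-negative $\varphi\in L^1(\mathbb R)$ and conclude by duality.

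The obstacle you cite is not really there. \eqref{eq:hole-density-definition} is stated for every non-negative measurable $\psi$, so the pointwise choice is admissible for each fixed $x$. The resulting right-hand side is a two-dimensional integral of $\rho_h(\cdot,\cdot,t)$ over the region $\{(r,y): y\le x<y+r\}$, so it does not depend on which representative of $\rho_h$ is used.

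What your weak formulation does buy is a clean treatment of the endpoint mismatch between $[\inf H,\inf H+|H|)$ and $H$ itself, which the paper passes over. In the pointwise route one needs that, almost surely, $x$ is not an endpoint of a hole. This holds only for a.e.\ $x$, by Fubini, since the endpoints form an almost surely countable set. In your route the issue disappears automatically: $\int_{[\inf H,\inf H+|H|)}\varphi=\int_H\varphi$, and countable additivity gives $\sum_H\int_H\varphi=\int_{U_t}\varphi$.

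So the paper's argument is shorter and gives the identity at every $x$ outside that endpoint-exceptional set. Yours yields a.e.\ equality directly, which is all the lemma asserts.
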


\begin{proof}
For a.e. \((x,t)\in\mathbb R\times(0,T)\), the condition
\(x\in[y,y+r)\) means that \(y\in(x-r,x]\), and the change of variables \(y=x-s\) gives \(s\in[0,r)\). Since \(\rho_h\geq0\), Tonelli's theorem
and the change of variables \(r=\ell+s\) give
\begin{align*}
\int_0^\infty\int_{\mathbb R}
\mathbf{1}_{[y,y+r)}(x)\rho_h(r,y,t)\,\rd y\,\rd r &=
\int_0^\infty\int_0^r
\rho_h(r,x-s,t)\,\rd s\,\rd r\\
&=
\int_0^\infty\int_0^\infty
\rho_h(\ell+s,x-s,t)\,\rd s\,\rd\ell \\
& =\mathcal B(x,t)\,.
\end{align*}
Finally, the choice
\(\psi(r,y)=\mathbf{1}_{[y,y+r)}(x)\)
in~\eqref{eq:hole-density-definition}, together with the disjointness of
the holes, gives the second assertion.
\end{proof}

The following lemma relates \(\partial_xs\) to
\(\rho_+-\rho_-\).

\begin{lem}
\label{lem:geometric-identity}
Assume~\eqref{eq:assumption-rhoh}. Then, for a.e. \(t\in(0,T)\), we have
\begin{equation}
\label{eq:geometric_identity}
\partial_xs(\cdot,t)
=
\rho_+(\cdot,t)-\rho_-(\cdot,t)
\qquad\text{in }\mathcal D'(\mathbb R)\,.
\end{equation}
\end{lem}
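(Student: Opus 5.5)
By Lemma~\ref{lem:coverage-identity} we have the representation
\[
s(x,t)=\int_0^\infty\int_{\mathbb R}\mathbf 1_{[y,y+r)}(x)\,\rho_h(r,y,t)\,\rd y\,\rd r ,
\]
so the plan is to pair $s(\cdot,t)$ with a test function $\varphi\in C_c^\infty(\mathbb R)$ and move the $x$-derivative onto the indicator. Fix $t$ outside a null set such that $(1+\ell)\rho_h(\cdot,\cdot,t)\in L^1((0,\infty);L^\infty(\mathbb R))$, as provided by \eqref{eq:assumption-rhoh}. We then compute
\[
\langle \partial_x s(\cdot,t),\varphi\rangle=-\int_{\mathbb R}s(x,t)\varphi'(x)\,\rd x .
\]

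\textbf{Justifying Fubini.} Before substituting the representation, we check integrability. We need
\[
\int_{\mathbb R}|\varphi'(x)|\int_0^\infty\int_{\mathbb R}\mathbf 1_{[y,y+r)}(x)\,\rho_h(r,y,t)\,\rd y\,\rd r\,\rd x<\infty .
\]
Let $K=\operatorname{supp}\varphi$. The integrand vanishes unless $y\in(x-r,x]$ with $x\in K$, so for fixed $r$ the $y$-integral is over a set of length at most $|K|+r$. Bounding $\rho_h(r,\cdot,t)$ by its $L^\infty$ norm, the triple integral is controlled by
\[
\|\varphi'\|_\infty\,|K|\int_0^\infty r\,\|\rho_h(r,\cdot,t)\|_{L^\infty}\,\rd r ,
\]
which is finite by \eqref{eq:assumption-rhoh}. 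Tonelli then lets us swap the order and integrate in $x$ first.

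\textbf{Computing the $x$-integral.} For fixed $(r,y)$,
\[
-\int_y^{y+r}\varphi'(x)\,\rd x=\varphi(y)-\varphi(y+r).
\]
This gives
\[
\langle\partial_xs(\cdot,t),\varphi\rangle=\int_0^\infty\int_{\mathbb R}\rho_h(r,y,t)\,\varphi(y)\,\rd y\,\rd r-\int_0^\infty\int_{\mathbb R}\rho_h(r,y,t)\,\varphi(y+r)\,\rd y\,\rd r .
\]
In the second term we substitute $x=y+r$ for each fixed $r$. Each term is now separately absolutely integrable: the bound is $\|\varphi\|_\infty|K|\int_0^\infty\|\rho_h(r,\cdot,t)\|_{L^\infty}\,\rd r$, finite by \eqref{eq:assumption-rhoh}. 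Swapping the integrals again, the first term becomes $\int_{\mathbb R}\varphi(x)\rho_+(x,t)\,\rd x$ and the second becomes $\int_{\mathbb R}\varphi(x)\rho_-(x,t)\,\rd x$, by the definitions \eqref{eq:rho_h_integral}. This yields \eqref{eq:geometric_identity} in $\mathcal D'(\mathbb R)$.

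\textbf{Main obstacle.} The only delicate point is the measure-theoretic bookkeeping. Specifically, $\rho_\pm(\cdot,t)$ must be locally integrable, indeed in $L^\infty$, and they are, by Minkowski's integral inequality applied to \eqref{eq:assumption-rhoh}. We must also ensure that the a.e.-in-$t$ sets on which Lemma~\ref{lem:coverage-identity} and \eqref{eq:assumption-rhoh} hold are intersected, so that a single full-measure set of times works. Beyond this, the argument is the elementary computation above.
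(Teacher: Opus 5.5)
Your proof is correct and follows the paper's argument: pair $\partial_x s(\cdot,t)$ with $\varphi$, insert the coverage representation of $s$ from Lemma~\ref{lem:coverage-identity}, use Fubini to integrate $\varphi'$ over $[y,y+r)$, and identify the two resulting terms with $\rho_+$ and $\rho_-$ via the shift $x=y+r$. Your explicit integrability bound through $\int_0^\infty r\|\rho_h(r,\cdot,t)\|_{L^\infty(\mathbb R)}\,\rd r$ just spells out what the paper compresses into ``\eqref{eq:assumption-rhoh} and Fubini's theorem''. The remark about a $y$-range of length $|K|+r$ is not needed: for fixed $(x,r)$ the admissible $y$ form an interval of length $r$, and that is what gives your bound.
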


\begin{proof}
Fix \(\varphi\in C_c^\infty(\mathbb R)\). For a.e. \(t\in(0,T)\), Lemma~\ref{lem:coverage-identity}, \eqref{eq:assumption-rhoh}, and Fubini's theorem give

\begin{align*}
\left\langle\partial_xs(\cdot,t),\varphi\right\rangle &=
-\int_{\mathbb R}s(x,t)\varphi'(x)\,\rd x\\
&= -\int_0^\infty\int_{\mathbb R}
\rho_h(r,y,t)
\left(\int_y^{y+r}\varphi'(x)\,\rd x\right)
\, \rd y\,\rd r\\
&=
\int_0^\infty\int_{\mathbb R}
\rho_h(r,y,t)
\bigl(\varphi(y)-\varphi(y+r)\bigr)
\,\rd y\,\rd r\\
&=
\int_{\mathbb R}
\bigl(\rho_+(x,t)-\rho_-(x,t)\bigr)
\varphi(x)\,\rd x\,.
\end{align*}
\end{proof}

The following lemma gives the fork-density equations obtained from
\eqref{eq:joint-hole} by integrating over the hole size according to~\eqref{eq:rho_h_integral}.

\begin{lem}
\label{lem:size-collapsed-fork-equations}
Assume~\eqref{eq:assumptions}. Then
\begin{equation}
\label{eq:rho-plus-collapsed-exact}
\partial_t\rho_+(x,t)
+\partial_x\bigl(v(x,t)\rho_+(x,t)\bigr)
= I(x,t)s(x,t)-J_h(x,t)
\end{equation}
and
\begin{equation}
\label{eq:rho-minus-collapsed-exact}
\partial_t\rho_-(x,t)
-\partial_x\!\bigl(v(x,t)\rho_-(x,t)\bigr)
=
I(x,t)s(x,t)-J_h(x,t)
\end{equation}
in \(\mathcal D'(\mathbb R\times(0,T))\). Inserting
\eqref{eq:mean-field-Jh} into
\eqref{eq:rho-plus-collapsed-exact}
and~\eqref{eq:rho-minus-collapsed-exact}, we obtain
\begin{equation}
\label{eq:rho-plus-collapsed}
\partial_t\rho_+(x,t)
+\partial_x\bigl(v(x,t)\rho_+(x,t)\bigr)
= I(x,t)s(x,t) -\frac{2v(x,t)}{s(x,t)} \rho_+(x,t)\rho_-(x,t)
\end{equation}
and
\begin{equation}
\label{eq:rho-minus-collapsed}
\partial_t\rho_-(x,t)
-\partial_x\bigl(v(x,t)\rho_-(x,t)\bigr)
= I(x,t)s(x,t)
-\frac{2v(x,t)}{s(x,t)}
\rho_+(x,t)\rho_-(x,t)
\end{equation}
in \(\mathcal D'(\mathbb R\times(0,T))\).
\end{lem}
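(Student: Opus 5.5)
I would derive both equations by testing \eqref{eq:joint-hole} against test functions that do not depend on \(\ell\) in the appropriate coordinates, and then reading off each term with Lemmas~\ref{lem:cancellation_of_annihilation_and_left_creation} and~\ref{lem:coverage-identity} and the flux identity \eqref{eq:size-flux-identity}. Since \eqref{eq:joint-hole} holds only in \(\mathcal D'((0,\infty)\times\mathbb R\times(0,T))\), I first insert \(\psi_R(\ell,x,t)=\chi(\ell/R)\,\eta_\delta(\ell)\,\varphi(x,t)\). Here \(\varphi\in C_c^\infty(\mathbb R\times(0,T))\), \(\chi\) is a cutoff equal to \(1\) near the origin, and \(\eta_\delta\) vanishes near \(\ell=0\) and tends to \(1\). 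I then let \(R\to\infty\) and \(\delta\to0\). The integrability \eqref{eq:assumption-rhoh}, together with \(I\in L^\infty\) and \(v\in L^\infty(W^{1,\infty})\), gives dominated convergence for the time, transport, annihilation and creation terms. Note that \(F(\ell,x,t)\le\|I\|_\infty\ell\), and the creation terms are bounded by \(\|I\|_\infty(H+\mathcal B)\).

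\emph{Equation for \(\rho_+\).} After removing the cutoffs, the left-hand side becomes \(\partial_t\rho_++\partial_x(v\rho_+)\) in the weak sense. In the shrinkage term, \eqref{eq:additional_assumptions} lets me integrate by parts in \(\ell\) for a.e. \((x,t)\). Equivalently, I use \eqref{eq:size-flux-identity} directly: \(\int_0^\infty\partial_\ell(w\rho_h)\,\rd\ell=-J_h\). The annihilation term and the left-subhole creation term cancel after \(\ell\)-integration by Lemma~\ref{lem:cancellation_of_annihilation_and_left_creation}. The right-subhole term gives \(I(x,t)\mathcal B(x,t)=I(x,t)s(x,t)\) by Lemma~\ref{lem:coverage-identity}. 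This proves \eqref{eq:rho-plus-collapsed-exact}.

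\emph{Equation for \(\rho_-\).} Here I test with \(\psi(\ell,x,t)=\varphi(x+\ell,t)\), again with cutoffs, so that holes are indexed by their right endpoint \(z=x+\ell\). The time term yields \(-\int\rho_-\partial_t\varphi\). The transport term and the shrinkage term must now be combined, because \(\partial_\ell\psi=\partial_x\psi=\varphi'(x+\ell)\). Integration by parts in \(\ell\) contributes the boundary term \(J_h(x,t)\varphi(x,t)\) from \(\ell=0\). It also contributes \(-\int w\rho_h\varphi'(x+\ell)\), and together with \(\int v(x)\rho_h\varphi'(x+\ell)\) from transport this leaves \(-\int v(x+\ell)\rho_h(\ell,x)\varphi'(x+\ell)\,\rd\ell\,\rd x\). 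After the shift \(z=x+\ell\), this is exactly the weak form of \(-\partial_x(v\rho_-)\), with \(-J_h\) on the right.

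The roles of the creation terms are swapped in these coordinates. The right-subhole term preserves the parent's right endpoint, and after the shift and Fubini (the mirror image of Lemma~\ref{lem:cancellation_of_annihilation_and_left_creation}) it cancels the annihilation term. The left-subhole term creates a hole whose right endpoint \(x+\ell\) is the firing site. It therefore yields \(I(z,t)\int_0^\infty\rho_h(m,z-\ell,t)\mathbf 1_{m\ge\ell}\,\rd m\,\rd\ell\), which equals \(I(z,t)s(z,t)\) by the coverage representation in Lemma~\ref{lem:coverage-identity}. The mean-field versions \eqref{eq:rho-plus-collapsed}--\eqref{eq:rho-minus-collapsed} then follow by substituting \eqref{eq:mean-field-Jh}.

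The main obstacle I expect is the rigorous handling of the \(\ell=0\) boundary and the \(\ell\to\infty\) tail. The boundary contribution must converge to \(J_h\varphi\) as \(\delta\to0\), and the tail must vanish. I would use that \(w\rho_h(\cdot,x,t)\in W^{1,1}(0,\infty)\) for a.e. \((x,t)\) and is locally integrable in \((x,t)\). This makes it absolutely continuous in \(\ell\), so its trace at \(0\) is \(J_h\), its limit at infinity is \(0\), and the integration by parts against \(\eta_\delta\chi(\cdot/R)\) passes to the limit with an \(L^1_{\mathrm{loc}}\) dominating function. The bookkeeping of the shifted test function in the \(\rho_-\) case also needs care, since the support in \(x\) is then unbounded in \(\ell\). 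The uniform \(L^\infty_x\) bound in \eqref{eq:assumption-rhoh} is what controls this.
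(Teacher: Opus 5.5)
Your proposal is correct and follows essentially the same route as the paper. You test \eqref{eq:joint-hole} with $\eta_\varepsilon(\ell)\chi_R(\ell)\varphi(y,t)$ and $\eta_\varepsilon(\ell)\chi_R(\ell)\varphi(y+\ell,t)$, recover $J_h$ from the $\eta_\varepsilon'$ term via the $W^{1,1}$ assumption, and remove the $\chi_R'$ tail with the $(1+\ell)\|\rho_h(\ell,\cdot,t)\|_{L^\infty}$ bound. In the $\rho_-$ case you cancel the annihilation term against the right-subhole term, and the left-subhole term produces $I\mathcal B=Is$; all of this matches the paper's argument, and your order of limits also works.
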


\begin{proof}
By the definition of \(w\), \eqref{eq:assumption-v}, and
\eqref{eq:assumption-rhoh}, the functions \(\rho_h\), \(v\rho_h\), and
\(w\rho_h\) belong to
\(L^1\bigl((0,\infty);L^1_{\mathrm{loc}}(\mathbb R\times(0,T))\bigr)\).
Let \(\varphi\in C_c^\infty(\mathbb R\times(0,T))\), choose
\(\chi\in C_c^\infty([0,\infty))\) and a non-decreasing function
\(\eta\in C^\infty([0,\infty))\), both with values in \([0,1]\),
such that
\[
\chi(r)=1 \quad\text{for }0\leq r\leq1\,,
\qquad
\chi(r)=0 \quad\text{for }r\geq2\,,
\]
and 
\[
\eta(r)=0 \quad\text{for }0\leq r\leq1\,,
\qquad
\eta(r)=1 \quad\text{for }r\geq2\,.
\]
For \(R>1\) and \(0<\varepsilon<R/2\), we set
\(\chi_R(\ell):=\chi(\ell/R)\) and
\(\eta_\varepsilon(\ell):=\eta(\ell/\varepsilon)\).
Then 
\[
\chi_R(\ell)=1 \quad \text{for }0\leq\ell\leq R\,, 
\quad 
\chi_R(\ell)=0 \quad \text{for } \ell\geq2R\,,
\quad 
\operatorname{supp}\chi_R'\subset[R,2R]\,,
\]
with $|\chi_R'(\ell)|\leq
\frac{1}{R}\|\chi'\|_{L^\infty(0,\infty)}$, and 
\[
\eta_\varepsilon(\ell)=0 \quad \text{for }0\leq\ell\leq \varepsilon\,, 
\quad 
\eta_\varepsilon(\ell)=1 \quad \text{for } \ell\geq 2 \varepsilon\,,
\quad 
\operatorname{supp}\eta_\varepsilon'\subset[\varepsilon,2\varepsilon]\,,
\]
with $|\eta_\varepsilon'(\ell)|\leq\frac{1}{\varepsilon}\|\eta'\|_{L^\infty(0,\infty)}$. It follows that
$\eta_\varepsilon(\ell)\chi_R(\ell)\varphi(y,t)$ and
$\eta_\varepsilon(\ell)\chi_R(\ell)\varphi(y+\ell,t)$ lie in
$C_c^\infty\bigl((0,\infty)\times\mathbb R\times(0,T)\bigr)$.
For a.e. \((y,t)\in\mathbb R\times(0,T)\),
\eqref{eq:additional_assumptions},
\eqref{eq:J_h-definition}, and the fundamental theorem of calculus give
\begin{align*}
(w\rho_h)(\ell,y,t)=
J_h(y,t)
+ \int_0^\ell
\partial_r\bigl((w\rho_h)(r,y,t)\bigr)\,\rd r\, .
\end{align*}
Moreover, since
\begin{equation*}
\int_\varepsilon^{2\varepsilon}
\eta_\varepsilon'(\ell)\,\rd\ell
=
\eta_\varepsilon(2\varepsilon)
-\eta_\varepsilon(\varepsilon)
=1\, ,
\end{equation*}
and, for \(2\varepsilon<R\),
\(\chi_R(\ell)=1\) for every
\(\ell\in\operatorname{supp}\eta_\varepsilon'\), we obtain
\begin{align*}
&\int_0^\infty
(w\rho_h)(\ell,y,t)
\eta_\varepsilon'(\ell)\chi_R(\ell)
\varphi(y+\ell,t)\,\rd\ell
-J_h(y,t)\varphi(y,t)\\
&\; =
\int_\varepsilon^{2\varepsilon}
\eta_\varepsilon'(\ell)
\Big(\int_0^\ell\partial_r\bigl((w\rho_h)(r,y,t)\bigr)\,\rd r\Big)
\varphi(y+\ell,t)\,\rd\ell 
+ J_h(y,t)
\int_\varepsilon^{2\varepsilon}
\eta_\varepsilon'(\ell)
\bigl(
\varphi(y+\ell,t)-\varphi(y,t)
\bigr)\,\rd\ell.
\end{align*}
Therefore,
\begin{align}
\label{eq:lcse}
&\left|
\int_0^\infty
(w\rho_h)(\ell,y,t)
\eta_\varepsilon'(\ell)\chi_R(\ell)
\varphi(y+\ell,t)\,\rd\ell
-J_h(y,t)\varphi(y,t)
\right| \nonumber\\
&\; \leq
\|\eta'\|_{L^\infty(0,\infty)}
\|\varphi\|_{L^\infty(\mathbb R\times(0,T))}
\int_0^{2\varepsilon}
\left| \partial_r\bigl((w\rho_h)(r,y,t)\bigr)
\right|\,\rd r + |J_h(y,t)|
\sup_{0\leq r\leq2\varepsilon}
|\varphi(y+r,t)-\varphi(y,t)|
\nonumber \\
& \; \longrightarrow0 \, 
\end{align}
as \(\varepsilon\downarrow0\).  
Moreover, replacing
\(\varphi(y+\ell,t)\) by \(\varphi(y,t)\) in the preceding calculation
gives
\begin{align}
\label{eq:lcue}
&\left|
\int_0^\infty
(w\rho_h)(\ell,y,t)
\eta_\varepsilon'(\ell)\chi_R(\ell)
\varphi(y,t)\,\rd\ell
-J_h(y,t)\varphi(y,t)
\right| \nonumber\\
&\quad\leq
\|\eta'\|_{L^\infty(0,\infty)}
\|\varphi\|_{L^\infty(\mathbb R\times(0,T))}
\int_0^{2\varepsilon}
\left|
\partial_r\bigl((w\rho_h)(r,y,t)\bigr)
\right|\,\rd r
\longrightarrow0 \quad \text{ as } \varepsilon\downarrow0\,.
\end{align}
Furthermore,
\eqref{eq:size-flux-identity} and
\eqref{eq:additional_assumptions} give, for a.e.
\((y,t)\in\mathbb R\times(0,T)\),
\begin{align*}
|J_h(y,t)|
\leq
\int_0^\infty
\left|
\partial_r\bigl((w\rho_h)(r,y,t)\bigr)
\right|\,\rd r
\leq
\|(w\rho_h)(\cdot,y,t)\|_{W^{1,1}(0,\infty)},
\end{align*}
and consequently, for every compact set
\(K\subset\mathbb R\times(0,T)\),
\begin{align*}
\int_K|J_h(y,t)|\,\rd y\,\rd t
&\leq
\int_K
\|(w\rho_h)(\cdot,y,t)\|_{W^{1,1}(0,\infty)}
\,\rd y\,\rd t
<\infty\,,
\end{align*}
which proves that
\(J_h\in L^1_{\mathrm{loc}}(\mathbb R\times(0,T))\). 
For every fixed
\(R>1\), the expressions on the left-hand sides
of~\eqref{eq:lcse}
and~\eqref{eq:lcue} vanish outside a compact subset of \(\mathbb R\times(0,T)\) that is independent of
\(\varepsilon\) and, by these estimates and the preceding bound for
\(J_h\), satisfy, for a.e.
\((y,t)\in\mathbb R\times(0,T)\),
\begin{align*}
\Big|
\int_0^\infty
(w\rho_h)(\ell,y,t)
&\eta_\varepsilon'(\ell)\chi_R(\ell)
\varphi(y+\ell,t)\,\rd\ell
-J_h(y,t)\varphi(y,t)
\Big|
\\
&\quad\leq
\Bigl(
\|\eta'\|_{L^\infty(0,\infty)}+2
\Bigr)
\|\varphi\|_{L^\infty(\mathbb R\times(0,T))}
\|(w\rho_h)(\cdot,y,t)\|_{W^{1,1}(0,\infty)}
\end{align*}
and
\begin{align*}
\Big|
\int_0^\infty
(w\rho_h)(\ell,y,t)
&\eta_\varepsilon'(\ell)\chi_R(\ell)
\varphi(y,t)\,\rd\ell
-J_h(y,t)\varphi(y,t)
\Big|
\\
&\quad\leq
\|\eta'\|_{L^\infty(0,\infty)}
\|\varphi\|_{L^\infty(\mathbb R\times(0,T))}
\|(w\rho_h)(\cdot,y,t)\|_{W^{1,1}(0,\infty)}\,.
\end{align*}
Since both upper bounds are integrable with respect to \((y,t)\) on
this compact subset by~\eqref{eq:additional_assumptions}, it follows
from Lebesgue's dominated convergence theorem that
\begin{align}
\label{eq:lcrm}
\lim_{\varepsilon\downarrow0}
\int_0^T\int_{\mathbb R}\int_0^\infty
(w\rho_h)(\ell,y,t)
\eta_\varepsilon'(\ell)\chi_R(\ell)
\varphi(y+\ell,t)
\,\rd\ell\,\rd y\,\rd t
=
\int_0^T\int_{\mathbb R}
J_h(y,t)\varphi(y,t)\,\rd y\,\rd t.
\end{align}
and
\begin{align}
\label{eq:lcrp}
\lim_{\varepsilon\downarrow0}
\int_0^T\int_{\mathbb R}\int_0^\infty
(w\rho_h)(\ell,y,t)
\eta_\varepsilon'(\ell)\chi_R(\ell)
\varphi(y,t)
\,\rd\ell\,\rd y\,\rd t
=
\int_0^T\int_{\mathbb R}
J_h(y,t)\varphi(y,t)\,\rd y\,\rd t.
\end{align}
In addition, since \(1+\ell\geq1+R\) for
\(\ell\in[R,2R]\), it follows from the definition of \(w\), the
properties of \(\chi_R\), \eqref{eq:assumption-v}, and
\eqref{eq:assumption-rhoh} that
\begin{align}
\label{eq:ucl}
&\Big|
\int_0^T\int_R^{2R}\int_{\mathbb R}
\chi_R'(\ell)w(\ell,y,t)\rho_h(\ell,y,t)
\varphi(y+\ell,t)
\,\rd y\,\rd\ell\,\rd t
\Big|\nonumber\\
&\quad\leq
\frac{
2}{R(1+R)}\|\chi'\|_{L^\infty(0,\infty)}
\|v\|_{L^\infty(\mathbb R\times(0,T))}
\int_0^T
\|\varphi(\cdot,t)\|_{L^1(\mathbb R)}
\int_R^{2R}
(1+\ell)
\|\rho_h(\ell,\cdot,t)\|_{L^\infty(\mathbb R)}
\,\rd\ell\,\rd t \nonumber\\
&\quad\leq
\frac{2}{R(1+R)}\|\chi'\|_{L^\infty(0,\infty)}
\|v\|_{L^\infty(\mathbb R\times(0,T))}
\|\varphi\|_{L^1(\mathbb R\times(0,T))}
\nonumber\\
&\qquad\times
\operatorname*{ess\,sup}_{0<t<T}
\int_0^\infty
(1+\ell)
\|\rho_h(\ell,\cdot,t)\|_{L^\infty(\mathbb R)}
\,\rd\ell
\longrightarrow0
\end{align}
as \(R\to\infty\), and the same estimate holds with
\(\varphi(y+\ell,t)\) replaced by \(\varphi(y,t)\).
For \(0\leq q\leq m\),
\begin{align*}
\int_{\mathbb R}
\rho_h(m,y,t)
\Bigl(|\varphi(y,t)|+|\varphi(y+q,t)|+|\varphi(y+m,t)|
\Bigr)\,\rd y \leq
3\|\rho_h(m,\cdot,t)\|_{L^\infty(\mathbb R)}
\|\varphi(\cdot,t)\|_{L^1(\mathbb R)}\,,
\end{align*}
and therefore~\eqref{eq:assumption-I}
and~\eqref{eq:assumption-rhoh} imply
\begin{align}
\label{eq:fii}
&\int_0^T\int_0^\infty\int_{\mathbb R}
\rho_h(m,y,t)
\int_0^m|I(y+q,t)|
\Bigl(
|\varphi(y,t)|
+|\varphi(y+q,t)|
+|\varphi(y+m,t)|
\Bigr)
\,\rd q\,\rd y\,\rd m\,\rd t \nonumber\\
&\quad \leq
3\|I\|_{L^\infty(\mathbb R\times(0,T))}
\|\varphi\|_{L^1(\mathbb R\times(0,T))}
\operatorname*{ess\,sup}_{0<t<T}
\int_0^\infty
m\|\rho_h(m,\cdot,t)\|_{L^\infty(\mathbb R)}
\,\rd m
<\infty,
\end{align}
so that Fubini's theorem applies to the three initiation terms on the
right-hand side of~\eqref{eq:joint-hole} and, by Lebesgue's
dominated convergence theorem, we may first let
\(\varepsilon\downarrow0\) for each fixed \(R>1\) and then let
\(R\to\infty\) in these terms.
To prove~\eqref{eq:rho-plus-collapsed-exact}, we first
test~\eqref{eq:joint-hole} with
\(\eta_\varepsilon(\ell)\chi_R(\ell)\varphi(y,t)\), which gives
\begin{align}
\label{eq:rpcwf}
&-\int_0^T\int_{\mathbb R}\int_0^\infty
\rho_h(\ell,y,t)\eta_\varepsilon(\ell)\chi_R(\ell)
\partial_t\varphi(y,t)
\,\rd\ell\,\rd y\,\rd t \nonumber \\
&-
\int_0^T\int_{\mathbb R}\int_0^\infty
v(y,t)\rho_h(\ell,y,t)\eta_\varepsilon(\ell)\chi_R(\ell)
\partial_x\varphi(y,t)
\,\rd\ell\,\rd y\,\rd t \nonumber\\
&+
\int_0^T\int_{\mathbb R}\int_0^\infty
(w\rho_h)(\ell,y,t)
\Bigl(
\eta_\varepsilon'(\ell)\chi_R(\ell)
+\eta_\varepsilon(\ell)\chi_R'(\ell)
\Bigr)
\varphi(y,t)
\,\rd\ell\,\rd y\,\rd t \nonumber \\
&\quad =
\int_0^T\int_{\mathbb R}\int_0^\infty
\Big[-F(\ell,y,t)\rho_h(\ell,y,t)
+I(y+\ell,t)H(\ell,y,t) \nonumber\\
&\hspace{3cm}
+I(y,t)\int_0^\infty
\rho_h(\ell+q,y-q,t)\,\rd q
\Big]
\eta_\varepsilon(\ell)\chi_R(\ell)\varphi(y,t)
\,\rd\ell\,\rd y\,\rd t\,.
\end{align}
It follows from~\eqref{eq:rho_h_integral},
\eqref{eq:assumption-v}, \eqref{eq:assumption-rhoh}, and Lebesgue's
dominated convergence theorem that
\begin{align*}
\lim_{R\to\infty}\lim_{\varepsilon\downarrow0}
\int_0^T\int_{\mathbb R}\int_0^\infty
\rho_h(\ell,y,t)\eta_\varepsilon(\ell)\chi_R(\ell)
\partial_t\varphi(y,t)
\,\rd\ell\,\rd y\,\rd t=
\int_0^T\int_{\mathbb R}
\rho_+(x,t)\partial_t\varphi(x,t)\,\rd x\,\rd t
\end{align*}
and
\begin{align*}
&\lim_{R\to\infty}\lim_{\varepsilon\downarrow0}
\int_0^T\int_{\mathbb R}\int_0^\infty
v(y,t)\rho_h(\ell,y,t)
\eta_\varepsilon(\ell)\chi_R(\ell)
\partial_x\varphi(y,t)
\,\rd\ell\,\rd y\,\rd t\\
&\qquad  =
\int_0^T\int_{\mathbb R}
v(x,t)\rho_+(x,t)\partial_x\varphi(x,t)\,\rd x\,\rd t,
\end{align*}
while~\eqref{eq:lcrp}
and~\eqref{eq:ucl} give
\begin{align*}
&\lim_{R\to\infty}\lim_{\varepsilon\downarrow0}
\int_0^T\int_{\mathbb R}\int_0^\infty
(w\rho_h)(\ell,y,t)
\Bigl(
\eta_\varepsilon'(\ell)\chi_R(\ell)
+\eta_\varepsilon(\ell)\chi_R'(\ell)
\Bigr)
\varphi(y,t)
\,\rd\ell\,\rd y\,\rd t\\
&\qquad=
\int_0^T\int_{\mathbb R}
J_h(x,t)\varphi(x,t)\,\rd x\,\rd t\,.
\end{align*}
For the right-hand side
of~\eqref{eq:rpcwf}, it follows
from~\eqref{eq:fii}, Lebesgue's dominated
convergence theorem, and
Lemmas~\ref{lem:cancellation_of_annihilation_and_left_creation}
and~\ref{lem:coverage-identity} that
\begin{align*}
&\lim_{R\to\infty}\lim_{\varepsilon\downarrow0}
\int_0^T\int_{\mathbb R}\int_0^\infty
\Big[
-F(\ell,y,t)\rho_h(\ell,y,t)
+I(y+\ell,t)H(\ell,y,t)\\
&\hspace{12em}
+I(y,t)\int_0^\infty
\rho_h(\ell+q,y-q,t)\,\rd q
\Big]
\eta_\varepsilon(\ell)\chi_R(\ell)\varphi(y,t)
\,\rd\ell\,\rd y\,\rd t\\
&\quad=
\int_0^T\int_{\mathbb R}
\Big[
-\int_0^\infty
F(\ell,y,t)\rho_h(\ell,y,t)\,\rd\ell
+\int_0^\infty
I(y+\ell,t)H(\ell,y,t)\,\rd\ell\\
&\hspace{12em}
+I(y,t)\mathcal B(y,t)
\Big]\varphi(y,t)\,\rd y\,\rd t\\
&\quad=
\int_0^T\int_{\mathbb R}
I(x,t)s(x,t)\varphi(x,t)\,\rd x\,\rd t\,,
\end{align*}
and, substituting these four limits
into~\eqref{eq:rpcwf}, we infer
\begin{align*}
&-\int_0^T\int_{\mathbb R}
\rho_+(x,t)\partial_t\varphi(x,t)\,\rd x\,\rd t
-\int_0^T\int_{\mathbb R}
v(x,t)\rho_+(x,t)\partial_x\varphi(x,t)\,\rd x\,\rd t
+
\int_0^T\int_{\mathbb R}
J_h(x,t)\varphi(x,t)\,\rd x\,\rd t\\
& \qquad=
\int_0^T\int_{\mathbb R}
I(x,t)s(x,t)\varphi(x,t)\,\rd x\,\rd t \,,
\end{align*}
which proves~\eqref{eq:rho-plus-collapsed-exact}.
To prove~\eqref{eq:rho-minus-collapsed-exact}, we
test~\eqref{eq:joint-hole} with
\(\eta_\varepsilon(\ell)\chi_R(\ell)\varphi(y+\ell,t)\). Since
$
\partial_y\varphi(y+\ell,t) = \partial_\ell\varphi(y+\ell,t)
=\partial_x\varphi(y+\ell,t)
$
and $w(\ell,y,t)-v(y,t)=v(y+\ell,t)$,
the corresponding cutoff weak formulation is
\begin{align}
\label{eq:rmcwf}
&-\int_0^T\int_{\mathbb R}\int_0^\infty
\rho_h(\ell,y,t)\eta_\varepsilon(\ell)\chi_R(\ell)
\partial_t\varphi(y+\ell,t)
\,\rd\ell\,\rd y\,\rd t \nonumber \\
&+
\int_0^T\int_{\mathbb R}\int_0^\infty
v(y+\ell,t)\rho_h(\ell,y,t)
\eta_\varepsilon(\ell)\chi_R(\ell)
\partial_x\varphi(y+\ell,t)
\,\rd\ell\,\rd y\,\rd t \nonumber\\
&+
\int_0^T\int_{\mathbb R}\int_0^\infty
(w\rho_h)(\ell,y,t)
\Bigl(
\eta_\varepsilon'(\ell)\chi_R(\ell)
+\eta_\varepsilon(\ell)\chi_R'(\ell)
\Bigr)\varphi(y+\ell,t)
\,\rd\ell\,\rd y\,\rd t \nonumber\\
&=
\int_0^T\int_{\mathbb R}\int_0^\infty
\Big[
-F(\ell,y,t)\rho_h(\ell,y,t)
+I(y+\ell,t)H(\ell,y,t) \nonumber \\
&\hspace{2.5cm}
+I(y,t)\int_0^\infty
\rho_h(\ell+q,y-q,t)\,\rd q
\Big]
\eta_\varepsilon(\ell)\chi_R(\ell)
\varphi(y+\ell,t)
\,\rd\ell\,\rd y\,\rd t\,.
\end{align}
For the left-hand side
of~\eqref{eq:rmcwf}, it follows from the
change of variables \(x=y+\ell\), \eqref{eq:rho_h_integral},
\eqref{eq:assumption-v}, \eqref{eq:assumption-rhoh}, Lebesgue's
dominated convergence theorem,
\eqref{eq:lcrm}, and
\eqref{eq:ucl} that
\begin{align}
\label{eq:rmlhsl}
\lim_{R\to\infty}\lim_{\varepsilon\downarrow0}
\Big[
&-\int_0^T\int_{\mathbb R}\int_0^\infty
\rho_h(\ell,y,t)\eta_\varepsilon(\ell)\chi_R(\ell)
\partial_t\varphi(y+\ell,t)
\,\rd\ell\,\rd y\,\rd t \nonumber\\
&+ \int_0^T\int_{\mathbb R}\int_0^\infty
v(y+\ell,t)\rho_h(\ell,y,t)
\eta_\varepsilon(\ell)\chi_R(\ell)
\partial_x\varphi(y+\ell,t)
\,\rd\ell\,\rd y\,\rd t \nonumber \\
&+
\int_0^T\int_{\mathbb R}\int_0^\infty
(w\rho_h)(\ell,y,t)
\Big(
\eta_\varepsilon'(\ell)\chi_R(\ell)
+\eta_\varepsilon(\ell)\chi_R'(\ell)
\Big)\varphi(y+\ell,t)
\,\rd\ell\,\rd y\,\rd t
\Big] \nonumber \\
&\hspace{-1.5cm}=
-\int_0^T\int_{\mathbb R}
\rho_-(x,t)\partial_t\varphi(x,t)\,\rd x\,\rd t
+\int_0^T\int_{\mathbb R}
v(x,t)\rho_-(x,t)\partial_x\varphi(x,t)
\,\rd x\,\rd t \nonumber\\
&\hspace{-1.0cm}+
\int_0^T\int_{\mathbb R}
J_h(x,t)\varphi(x,t)\,\rd x\,\rd t \,.
\end{align}
For the right-hand side
of~\eqref{eq:rmcwf}, it follows
from~\eqref{eq:fii}, Lebesgue's dominated
convergence theorem, Fubini's theorem, and the changes of variables
\(\widetilde y=y-q\), \(m=\ell+q\), and then \(x=y+q\) that
\begin{align}
\label{eq:rmrl}
&\lim_{R\to\infty}\lim_{\varepsilon\downarrow0}
\int_0^T\int_{\mathbb R}\int_0^\infty
\Big[
-F(\ell,y,t)\rho_h(\ell,y,t)
+I(y+\ell,t)H(\ell,y,t)\nonumber\\
&\hspace{12em}
+I(y,t)\int_0^\infty
\rho_h(\ell+q,y-q,t)\,\rd q
\Big]
\eta_\varepsilon(\ell)\chi_R(\ell)
\varphi(y+\ell,t)
\,\rd\ell\,\rd y\,\rd t \nonumber \\
&\quad=
\int_0^T\int_{\mathbb R}\int_0^\infty
\Big[
-F(\ell,y,t)\rho_h(\ell,y,t)
+I(y+\ell,t)H(\ell,y,t) \nonumber \\
&\hspace{12em}
+I(y,t)\int_0^\infty
\rho_h(\ell+q,y-q,t)\,\rd q
\Big]
\varphi(y+\ell,t)
\,\rd\ell\,\rd y\,\rd t\nonumber \\
&\quad=
\int_0^T\int_{\mathbb R}\int_0^\infty
\rho_h(m,y,t)
\int_0^m I(y+q,t)
\Bigl(
-\varphi(y+m,t)
+\varphi(y+q,t)
+\varphi(y+m,t)
\Bigr)
\,\rd q\,\rd m\,\rd y\,\rd t \nonumber\\
&\quad=
\int_0^T\int_{\mathbb R}
I(x,t)\varphi(x,t)
\left(
\int_0^\infty\int_0^m
\rho_h(m,x-q,t)\,\rd q\,\rd m
\right)\rd x\,\rd t \nonumber\\
&\quad=
\int_0^T\int_{\mathbb R}
I(x,t)\mathcal B(x,t)\varphi(x,t)\,\rd x\,\rd t \nonumber\\
&\quad=
\int_0^T\int_{\mathbb R}
I(x,t)s(x,t)\varphi(x,t)\,\rd x\,\rd t\, .
\end{align}
Substituting~\eqref{eq:rmlhsl}
and~\eqref{eq:rmrl}
into~\eqref{eq:rmcwf}, we infer
\begin{align*}
&-\int_0^T\int_{\mathbb R}
\rho_-(x,t)\partial_t\varphi(x,t)\,\rd x\,\rd t
+\int_0^T\int_{\mathbb R}
v(x,t)\rho_-(x,t)\partial_x\varphi(x,t)\,\rd x\,\rd t\\
&\quad+
\int_0^T\int_{\mathbb R}
J_h(x,t)\varphi(x,t)\,\rd x\,\rd t
=
\int_0^T\int_{\mathbb R}
I(x,t)s(x,t)\varphi(x,t)\,\rd x\,\rd t,
\end{align*}
which proves~\eqref{eq:rho-minus-collapsed-exact}. 

Finally, inserting~\eqref{eq:mean-field-Jh}
into~\eqref{eq:rho-plus-collapsed-exact}
and~\eqref{eq:rho-minus-collapsed-exact} proves
\eqref{eq:rho-plus-collapsed}
and~\eqref{eq:rho-minus-collapsed}.
\end{proof}

Finally, we derive the evolution equation for the replicated fraction, $f(x,t)$.

\begin{lem}
\label{lem:replicated_frac_eq}
Assume~\eqref{eq:assumptions}. Assume additionally that, for every
$0<\tau_1<\tau_2<T$ and every bounded interval
$[a,b]\subset\mathbb R$,
\begin{equation}
\label{eq:2nd_moment_rho}
\int_{\tau_1}^{\tau_2}
\int_0^\infty
\int_{a-m}^{b}
\rho_h(m,y,t)
\left(
\int_0^m |I(y+q,t)|\,\rd q
\right)
\,\rd y\,\rd m\,\rd t
<\infty.
\end{equation}
Then
\begin{equation}
\label{eq:f-definition}
\partial_t f(x,t)
=
v(x,t)\bigl(\rho_+(x,t)+\rho_-(x,t)\bigr)
\qquad
\text{in }\mathcal D'(\mathbb R\times(0,T)).
\end{equation}
\end{lem}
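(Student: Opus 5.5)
Since \(f=1-s\), it suffices to show \(\partial_t s=-v(\rho_++\rho_-)\) in \(\mathcal D'(\mathbb R\times(0,T))\). We obtain this by testing~\eqref{eq:joint-hole} against a function that reproduces the coverage representation of \(s\) from Lemma~\ref{lem:coverage-identity}.

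Fix \(\varphi\in C_c^\infty(\mathbb R\times(0,T))\) and set \(\Phi(\ell,y,t):=\int_y^{y+\ell}\varphi(x,t)\,\rd x\). By Lemma~\ref{lem:coverage-identity} and Tonelli,
\[
\int_{\mathbb R}s\,\partial_t\varphi\,\rd x=\int_0^\infty\!\!\int_{\mathbb R}\rho_h(\ell,y,t)\,\partial_t\Phi(\ell,y,t)\,\rd y\,\rd\ell .
\]
We therefore test~\eqref{eq:joint-hole} with \(\eta_\varepsilon(\ell)\chi_R(\ell)\Phi(\ell,y,t)\), using the same cutoffs as in the proof of Lemma~\ref{lem:size-collapsed-fork-equations}. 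This is admissible: the function is smooth and compactly supported in \((0,\infty)\times\mathbb R\times(0,T)\), because \(\ell\le 2R\) and \(\varphi\) has compact support.

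We compute the terms on the left-hand side as follows.
\begin{itemize}
\item The \(x\)-transport term uses \(\partial_y\Phi=\varphi(y+\ell)-\varphi(y)\).
\item The size-transport term uses \(\partial_\ell\Phi=\varphi(y+\ell)\), together with the boundary contributions from \(\eta_\varepsilon'\) and \(\chi_R'\).
\item Since \(\Phi(\ell,y,t)=O(\ell)\) as \(\ell\downarrow0\), the \(\eta'_\varepsilon\)-term is bounded by \(\|\varphi\|_\infty\,2\varepsilon\,|w\rho_h|/\varepsilon\) on \([\varepsilon,2\varepsilon]\). Arguing as in~\eqref{eq:lcue}, it tends to \(\lim_{\ell\downarrow 0}(w\rho_h)\cdot 0=0\). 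This is the point of the lemma: the annihilation flux \(J_h\) does not enter the \(f\)-equation, since a vanishing hole carries no length.
\item The \(\chi_R'\)-term is bounded by \(R^{-1}\|\chi'\|\,|\Phi|\,w\rho_h\). It vanishes as \(R\to\infty\) by~\eqref{eq:assumption-rhoh} and the bound \(|\Phi|\le\|\varphi(\cdot,t)\|_{L^1}\), exactly as in~\eqref{eq:ucl}.
\end{itemize}
In the limit, the remaining transport terms are
\[
-\!\int\!\!\int\rho_h\Big[v(y)\big(\varphi(y+\ell)-\varphi(y)\big)-w\,\varphi(y+\ell)\Big]
=\int\!\!\int\rho_h\big[v(y)\varphi(y)+v(y+\ell)\varphi(y+\ell)\big],
\]
using \(w-v(y)=v(y+\ell)\). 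By~\eqref{eq:rho_h_integral} and the change of variables \(x=y+\ell\), this equals \(\int v(\rho_++\rho_-)\varphi\), up to the sign fixed by the integration by parts.

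Next we show that the three initiation terms cancel after integration against \(\Phi\). Apply Fubini and the change of variables \(m=\ell+q\), \(\tilde y=y-q\) used in~\eqref{eq:rmrl}. For a parent hole \((m,y)\) with an initiation at \(y+q\), the combination becomes
\[
\rho_h(m,y)\,I(y+q)\big[-\Phi(m,y)+\Phi(q,y)+\Phi(m-q,y+q)\big].
\]
The bracket vanishes identically, since \(\int_y^{y+m}=\int_y^{y+q}+\int_{y+q}^{y+m}\). Collecting the limits then gives \(-\int s\,\partial_t\varphi=-\int v(\rho_++\rho_-)\varphi\), i.e.\ \(\partial_t f=v(\rho_++\rho_-)\).

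The main obstacle is justifying Fubini and dominated convergence in the initiation terms. There \(\Phi\) is not localized in \(y\): \(\Phi(m,y,t)\neq0\) whenever the interval \([y,y+m]\) meets \(\operatorname{supp}\varphi(\cdot,t)\subset[a,b]\), that is, for \(y\in(a-m,b]\). This is not controlled by~\eqref{eq:fii}, which relied on \(\varphi(y)\) having compact support in \(y\). Since \(|\Phi|\le\|\varphi\|_\infty\) times the length of the overlap, we bound the three terms by
\[
3\|\varphi\|_\infty\int_0^\infty\int_{a-m}^b\rho_h(m,y,t)\int_0^m|I(y+q,t)|\,\rd q\,\rd y\,\rd m ,
\]
integrated over \(t\in\operatorname{supp}_t\varphi\subset[\tau_1,\tau_2]\). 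This is exactly the quantity assumed finite in~\eqref{eq:2nd_moment_rho}, so it serves as the dominating function that lets us pass to the limits \(\varepsilon\downarrow0\), then \(R\to\infty\), and perform the cancellation above.
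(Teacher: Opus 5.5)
Your proposal is correct and follows the paper's proof essentially step for step. You test \eqref{eq:joint-hole} with $\eta_\varepsilon\chi_R\Phi$, where your $\Phi(\ell,y,t)=\int_y^{y+\ell}\varphi(x,t)\,\rd x$ is the paper's $\Psi$, kill the $\eta_\varepsilon'$-term through $|\Phi|\le\ell\|\varphi\|_{L^\infty}$, and cancel the three initiation terms by the additivity $\Phi(q,y,t)+\Phi(m-q,y+q,t)=\Phi(m,y,t)$, with \eqref{eq:2nd_moment_rho} as the dominating bound.

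Two cosmetic slips remain, neither a gap:
\begin{itemize}
\item Your dominating bound drops the overlap length. It should carry a factor $3(b-a)\|\varphi\|_{L^\infty}$, or $2\sup_t\|\varphi(\cdot,t)\|_{L^1(\mathbb R)}$ as in the paper, rather than $3\|\varphi\|_\infty$.
\item The $\chi_R'$-term is not handled ``exactly as in \eqref{eq:ucl}''. The $y$-support $[a-\ell,b]$ of $\Phi(\ell,\cdot,t)$ grows with $\ell$, which costs a factor $\ell+b-a$. The $1/R$ from $\chi_R'$ together with the $(1+\ell)$-moment in \eqref{eq:assumption-rhoh} still absorbs this factor.
\end{itemize}
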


\begin{proof}
Let
$\varphi\in C_c^\infty(\mathbb R\times(0,T))$, and choose
$a<b$ and $0<\tau_1<\tau_2<T$ such that
\[
\operatorname{supp}\varphi
\subset
[a,b]\times[\tau_1,\tau_2].
\]
For \((\ell,y,t)\in[0,\infty)\times\mathbb R\times(0,T)\), we define
\[
\Psi(\ell,y,t) \coloneqq \int_y^{y+\ell}\varphi(x,t)\,\rd x \, ,
\] 
and differentiation gives
\begin{align}
\label{eq:Phi-derivatives}
\partial_t\Psi(\ell,y,t)
&= \int_y^{y+\ell}\partial_t\varphi(x,t)\,\rd x\, , \nonumber\\
\partial_y\Psi(\ell,y,t)
&=
\varphi(y+\ell,t)-\varphi(y,t)\,, \nonumber \\
\partial_\ell\Psi(\ell,y,t)
&= \varphi(y+\ell,t)\,.
\end{align}
Moreover, Fubini's theorem and Lemma~\ref{lem:coverage-identity}
give
\begin{equation}
\label{eq:coverage-tested}
\int_0^T\int_{\mathbb R}
s(x,t)\varphi(x,t)\,\rd x\,\rd t
=
\int_0^T\int_{\mathbb R}\int_0^\infty
\rho_h(\ell,y,t)\Psi(\ell,y,t)
\,\rd\ell\,\rd y\,\rd t\,.
\end{equation}
Let $\eta_\varepsilon,\chi_R\in C^\infty([0,\infty))$, with values
in $[0,1]$, satisfy
\[
\eta_\varepsilon(\ell)=0
\quad\text{for }0\leq\ell\leq\varepsilon \, ,
\qquad
\eta_\varepsilon(\ell)=1
\quad\text{for }\ell\geq2\varepsilon\, ,
\]
and
\[
\chi_R(\ell)=1
\quad\text{for }0\leq\ell\leq R\, ,
\qquad
\chi_R(\ell)=0
\quad\text{for }\ell\geq2R\, ,
\]
where
\[
|\eta_\varepsilon'(\ell)|
\leq\frac{C}{\varepsilon}\, ,
\qquad
|\chi_R'(\ell)|
\leq\frac{C}{R}\, .
\]
Here $C>0$ is fixed independently of $\varepsilon$ and $R$. For $0<2\varepsilon<R$, the function
\(
\eta_\varepsilon(\ell)\chi_R(\ell)\Psi(\ell,y,t)
\)
belongs to
$C_c^\infty((0,\infty)\times\mathbb R\times(0,T))$.
Testing~\eqref{eq:joint-hole} with this function gives
\begin{align}
&-\int_0^T\int_{\mathbb R}\int_0^\infty
\rho_h(\ell,y,t)
\eta_\varepsilon(\ell)\chi_R(\ell)
\partial_t\Psi(\ell,y,t)
\,\rd\ell\,\rd y\,\rd t
\nonumber\\
&
-\int_0^T\int_{\mathbb R}\int_0^\infty
v(y,t)\rho_h(\ell,y,t)
\eta_\varepsilon(\ell)\chi_R(\ell)
\partial_y\Psi(\ell,y,t)
\,\rd\ell\,\rd y\,\rd t
\nonumber\\
&
+\int_0^T\int_{\mathbb R}\int_0^\infty
w(\ell,y,t)\rho_h(\ell,y,t)
\eta_\varepsilon(\ell)\chi_R(\ell)
\partial_\ell\Psi(\ell,y,t)
\,\rd\ell\,\rd y\,\rd t
\nonumber\\
&
+\int_0^T\int_{\mathbb R}\int_0^\infty
w(\ell,y,t)\rho_h(\ell,y,t)
\Bigl(
\eta_\varepsilon'(\ell)\chi_R(\ell)
+
\eta_\varepsilon(\ell)\chi_R'(\ell)
\Bigr)
\Psi(\ell,y,t)
\,\rd\ell\,\rd y\,\rd t
\nonumber\\
& \quad=
\int_0^T\int_{\mathbb R}\int_0^\infty
\Big[
-F(\ell,y,t)\rho_h(\ell,y,t)
+
I(y+\ell,t)H(\ell,y,t)
\nonumber\\
&\hspace{3cm}
+
I(y,t)
\int_0^\infty
\rho_h(\ell+q,y-q,t)\,\rd q
\Big]
\eta_\varepsilon(\ell)\chi_R(\ell)
\Psi(\ell,y,t)
\,\rd\ell\,\rd y\,\rd t.
\label{eq:f-cutoff-weak-form}
\end{align}
We first consider the two terms containing derivatives of the cutoff
functions on the left-hand side of~\eqref{eq:f-cutoff-weak-form}.
Since
\[
|\Psi(\ell,y,t)|
\leq
\ell\|\varphi\|_{L^\infty(\mathbb R\times(0,T))}
\]
and
\[
\operatorname{supp}_y\Psi(\ell,\cdot,t)
\subset[a-\ell,b],
\]
we have, for $0<\varepsilon<\min\{1,R/2\}$,
\begin{align*}
&\left|
\int_0^T\int_{\mathbb R}\int_0^\infty
w(\ell,y,t)\rho_h(\ell,y,t)
\eta_\varepsilon'(\ell)\chi_R(\ell)
\Psi(\ell,y,t)
\,\rd\ell\,\rd y\,\rd t
\right|
\\
&\quad\leq
\frac{
2C\|v\|_{L^\infty(\mathbb R\times(0,T))}
\|\varphi\|_{L^\infty(\mathbb R\times(0,T))}
}{\varepsilon}
\int_{\tau_1}^{\tau_2}
\int_0^{2\varepsilon}
\ell(\ell+b-a)
\|\rho_h(\ell,\cdot,t)\|_{L^\infty(\mathbb R)}
\,\rd\ell\,\rd t
\\
&\quad\leq
4C(b-a+2)
\|v\|_{L^\infty(\mathbb R\times(0,T))}
\|\varphi\|_{L^\infty(\mathbb R\times(0,T))}
\\
&\qquad\qquad\times
\int_{\tau_1}^{\tau_2}
\int_0^{2\varepsilon}
\|\rho_h(\ell,\cdot,t)\|_{L^\infty(\mathbb R)}
\,\rd\ell\,\rd t
\longrightarrow0
\end{align*}
as $\varepsilon\downarrow0$, for every fixed $R>1$. Indeed,
\eqref{eq:assumption-rhoh} gives
\[
\int_{\tau_1}^{\tau_2}
\int_0^\infty
\|\rho_h(\ell,\cdot,t)\|_{L^\infty(\mathbb R)}
\,\rd\ell\,\rd t
<\infty,
\]
so the last convergence follows from Lebesgue's dominated convergence
theorem.

For the upper cutoff term, we use
\[
|\Psi(\ell,y,t)|
\leq
\|\varphi(\cdot,t)\|_{L^1(\mathbb R)}
\]
and
\[
\operatorname{supp}_y\Psi(\ell,\cdot,t)
\subset[a-\ell,b].
\]
Since
$\operatorname{supp}\chi_R'\subset[R,2R]$, we obtain, for $R>1$,
\begin{align*}
&\left|
\int_0^T\int_{\mathbb R}\int_0^\infty
w(\ell,y,t)\rho_h(\ell,y,t)
\eta_\varepsilon(\ell)\chi_R'(\ell)
\Psi(\ell,y,t)
\,\rd\ell\,\rd y\,\rd t
\right|
\\
&\quad\leq
\frac{
2C\|v\|_{L^\infty(\mathbb R\times(0,T))}
}{R}
\int_{\tau_1}^{\tau_2}
\|\varphi(\cdot,t)\|_{L^1(\mathbb R)}
\int_R^{2R}
(\ell+b-a)
\|\rho_h(\ell,\cdot,t)\|_{L^\infty(\mathbb R)}
\,\rd\ell\,\rd t
\\
&\quad\leq
\frac{
2C(b-a+2)
\|v\|_{L^\infty(\mathbb R\times(0,T))}
}{1+R}
\|\varphi\|_{L^1(\mathbb R\times(0,T))}
\\
&\qquad\qquad\times
\operatorname*{ess\,sup}_{0<t<T}
\int_0^\infty
(1+\ell)
\|\rho_h(\ell,\cdot,t)\|_{L^\infty(\mathbb R)}
\,\rd\ell
\longrightarrow0
\end{align*}
as $R\to\infty$, uniformly in $\varepsilon$. We next consider the first three terms on the left-hand side
of~\eqref{eq:f-cutoff-weak-form}. For a.e.\
$(\ell,t)\in(0,\infty)\times(0,T)$, Fubini's theorem and
\eqref{eq:Phi-derivatives} give
\begin{align*}
\int_{\mathbb R}
\rho_h(\ell,y,t)
|\partial_t\Psi(\ell,y,t)|
\,\rd y
&\leq
\ell
\|\rho_h(\ell,\cdot,t)\|_{L^\infty(\mathbb R)}
\|\partial_t\varphi(\cdot,t)\|_{L^1(\mathbb R)},\\
\int_{\mathbb R}
|v(y,t)|\rho_h(\ell,y,t)
|\partial_y\Psi(\ell,y,t)|
\,\rd y
&\leq
2\|v\|_{L^\infty(\mathbb R\times(0,T))}
\|\rho_h(\ell,\cdot,t)\|_{L^\infty(\mathbb R)}
\|\varphi(\cdot,t)\|_{L^1(\mathbb R)},\\
\int_{\mathbb R}
|w(\ell,y,t)|\rho_h(\ell,y,t)
|\partial_\ell\Psi(\ell,y,t)|
\,\rd y
&\leq
2\|v\|_{L^\infty(\mathbb R\times(0,T))}
\|\rho_h(\ell,\cdot,t)\|_{L^\infty(\mathbb R)}
\|\varphi(\cdot,t)\|_{L^1(\mathbb R)}.
\end{align*}
The right-hand sides are integrable with respect to
$(\ell,t)\in(0,\infty)\times(0,T)$ by
\eqref{eq:assumption-rhoh}. Therefore, Lebesgue's dominated convergence
theorem gives
\begin{align*}
&\lim_{R\to\infty}\lim_{\varepsilon\downarrow0}
\int_0^T\int_{\mathbb R}\int_0^\infty
\rho_h(\ell,y,t)
\eta_\varepsilon(\ell)\chi_R(\ell)
\partial_t\Psi(\ell,y,t)
\,\rd\ell\,\rd y\,\rd t
\\
&\qquad=
\int_0^T\int_{\mathbb R}\int_0^\infty
\rho_h(\ell,y,t)
\partial_t\Psi(\ell,y,t)
\,\rd\ell\,\rd y\,\rd t,
\end{align*}
\begin{align*}
&\lim_{R\to\infty}\lim_{\varepsilon\downarrow0}
\int_0^T\int_{\mathbb R}\int_0^\infty
v(y,t)\rho_h(\ell,y,t)
\eta_\varepsilon(\ell)\chi_R(\ell)
\partial_y\Psi(\ell,y,t)
\,\rd\ell\,\rd y\,\rd t
\\
&\qquad=
\int_0^T\int_{\mathbb R}\int_0^\infty
v(y,t)\rho_h(\ell,y,t)
\partial_y\Psi(\ell,y,t)
\,\rd\ell\,\rd y\,\rd t,
\end{align*}
and
\begin{align*}
&\lim_{R\to\infty}\lim_{\varepsilon\downarrow0}
\int_0^T\int_{\mathbb R}\int_0^\infty
w(\ell,y,t)\rho_h(\ell,y,t)
\eta_\varepsilon(\ell)\chi_R(\ell)
\partial_\ell\Psi(\ell,y,t)
\,\rd\ell\,\rd y\,\rd t
\\
&\qquad=
\int_0^T\int_{\mathbb R}\int_0^\infty
w(\ell,y,t)\rho_h(\ell,y,t)
\partial_\ell\Psi(\ell,y,t)
\,\rd\ell\,\rd y\,\rd t.
\end{align*}
We next consider the three initiation terms on the right-hand side
of~\eqref{eq:f-cutoff-weak-form}. For $0\leq q\leq m$, we have
\begin{align*}
&|\Psi(q,y,t)|
+
|\Psi(m-q,y+q,t)|
+
|\Psi(m,y,t)|
\\
&\qquad\leq
2\|\varphi(\cdot,t)\|_{L^1(\mathbb R)}
\mathbf 1_{[a-m,b]}(y).
\end{align*}
Consequently,
\begin{align*}
&\int_{\tau_1}^{\tau_2}
\int_0^\infty
\int_{\mathbb R}
\rho_h(m,y,t)
\int_0^m |I(y+q,t)|
\Bigl(
|\Psi(q,y,t)|
\\
&\hspace{6cm}
+
|\Psi(m-q,y+q,t)|
+
|\Psi(m,y,t)|
\Bigr)
\,\rd q\,\rd y\,\rd m\,\rd t
\\
&\quad\leq
2\sup_{\tau_1\leq t\leq\tau_2}
\|\varphi(\cdot,t)\|_{L^1(\mathbb R)}
\int_{\tau_1}^{\tau_2}
\int_0^\infty
\int_{a-m}^{b}
\rho_h(m,y,t)
\left(
\int_0^m|I(y+q,t)|\,\rd q
\right)
\,\rd y\,\rd m\,\rd t
\\
&\quad<\infty
\end{align*}
by~\eqref{eq:2nd_moment_rho}. Thus Fubini's theorem, the changes of
variables below, and Lebesgue's dominated convergence theorem are
applicable.

After first letting $\varepsilon\downarrow0$ and then letting
$R\to\infty$, the annihilation term on the right-hand side
of~\eqref{eq:f-cutoff-weak-form} becomes
\begin{align*}
&-\int_{\tau_1}^{\tau_2}
\int_{\mathbb R}\int_0^\infty
F(\ell,y,t)\rho_h(\ell,y,t)
\Psi(\ell,y,t)
\,\rd\ell\,\rd y\,\rd t
\\
&\quad=
-\int_{\tau_1}^{\tau_2}
\int_{\mathbb R}\int_0^\infty
\rho_h(m,y,t)
\int_0^m
I(y+q,t)\Psi(m,y,t)
\,\rd q\,\rd m\,\rd y\,\rd t.
\end{align*}
Using the definition of $H$ from~\eqref{eq:H_and_F_def} and then changing the order of integration,
the left-subhole creation term becomes
\begin{align*}
&\int_{\tau_1}^{\tau_2}
\int_{\mathbb R}\int_0^\infty
I(y+\ell,t)H(\ell,y,t)
\Psi(\ell,y,t)
\,\rd\ell\,\rd y\,\rd t
\\
&\quad=
\int_{\tau_1}^{\tau_2}
\int_{\mathbb R}\int_0^\infty
\rho_h(m,y,t)
\int_0^m
I(y+q,t)\Psi(q,y,t)
\,\rd q\,\rd m\,\rd y\,\rd t.
\end{align*}
Finally, in the right-subhole creation term, the changes of variables
\[
\widetilde y=y-q,
\qquad
m=\ell+q
\]
give, after renaming $\widetilde y$ as $y$,
\begin{align*}
&\int_{\tau_1}^{\tau_2}
\int_{\mathbb R}\int_0^\infty
I(y,t)
\left(
\int_0^\infty
\rho_h(\ell+q,y-q,t)\,\rd q
\right)
\Psi(\ell,y,t)
\,\rd\ell\,\rd y\,\rd t
\\
&\quad=
\int_{\tau_1}^{\tau_2}
\int_{\mathbb R}\int_0^\infty
\rho_h(m,y,t)
\int_0^m
I(y+q,t)
\Psi(m-q,y+q,t)
\,\rd q\,\rd m\,\rd y\,\rd t.
\end{align*}
It follows that the limit of the entire right-hand side
of~\eqref{eq:f-cutoff-weak-form} is
\begin{align*}
&\int_{\tau_1}^{\tau_2}
\int_{\mathbb R}\int_0^\infty
\rho_h(m,y,t)
\int_0^m
I(y+q,t)
\Bigl[
\Psi(q,y,t)
\\
&\hspace{5cm}
+
\Psi(m-q,y+q,t)
-
\Psi(m,y,t)
\Bigr]
\,\rd q\,\rd m\,\rd y\,\rd t.
\end{align*}
Since
\[
\Psi(q,y,t)
+
\Psi(m-q,y+q,t)
=
\Psi(m,y,t),
\qquad
0\leq q\leq m,
\]
this limit is zero. Combining the preceding limits in~\eqref{eq:f-cutoff-weak-form}
and using~\eqref{eq:Phi-derivatives}, we obtain
\begin{align*}
0
&=
-\int_0^T\int_{\mathbb R}\int_0^\infty
\rho_h(\ell,y,t)
\partial_t\Psi(\ell,y,t)
\,\rd\ell\,\rd y\,\rd t
\\
&\quad
-\int_0^T\int_{\mathbb R}\int_0^\infty
v(y,t)\rho_h(\ell,y,t)
\bigl[
\varphi(y+\ell,t)-\varphi(y,t)
\bigr]
\,\rd\ell\,\rd y\,\rd t
\\
&\quad
+\int_0^T\int_{\mathbb R}\int_0^\infty
\bigl[v(y,t)+v(y+\ell,t)\bigr]
\rho_h(\ell,y,t)\varphi(y+\ell,t)
\,\rd\ell\,\rd y\,\rd t
\\
&=
-\int_0^T\int_{\mathbb R}
s(x,t)\partial_t\varphi(x,t)\,\rd x\,\rd t
\\
&\quad
+\int_0^T\int_{\mathbb R}
v(x,t)
\bigl(\rho_+(x,t)+\rho_-(x,t)\bigr)
\varphi(x,t)
\,\rd x\,\rd t.
\end{align*}
In the last equality, we used~\eqref{eq:coverage-tested},
the definitions of $\rho_\pm$, and the change of variables
$x=y+\ell$ in the term corresponding to $\rho_-$. Thus
\[
\partial_t s(x,t)
=
-v(x,t)(\rho_+(x,t)+\rho_-(x,t))
\qquad
\text{in }\mathcal D'(\mathbb R\times(0,T)).
\]
Since $f(x,t)=1-s(x,t)$, it follows that
\[
\partial_t f(x,t)
=
v(x,t)(\rho_+(x,t)+\rho_-(x,t))
\qquad
\text{in }\mathcal D'(\mathbb R\times(0,T)).
\]
\end{proof}

Lemma~\ref{lem:replicated_frac_eq} expresses the fact that each fork advances the replicated region at speed $v(x,t)$. Equations~\eqref{eq:rho-plus-collapsed},~\eqref{eq:rho-minus-collapsed}, and~\eqref{eq:f-definition} give the reduced replication-fork system analysed in Section~\ref{Sec3}.

The compatibility relation inherited from the hole-density description is
preserved by the reduced dynamics as shown below.

\begin{lem}
\label{lem:compatibility-propagation}
Let $(\rho_+,\rho_-,f)$ be a distributional solution
of~\eqref{eq:rho-plus-collapsed},
\eqref{eq:rho-minus-collapsed}, and~\eqref{eq:f-definition}, and set
\[
s(x,t):=1-f(x,t).
\]
Assume that
\begin{equation}
\label{eq:temporal-traces-compatibility}
\rho_\pm(\cdot,t)
\stackrel{*}{\rightharpoonup}
\rho_{\pm,0}
\quad\text{in }L^\infty(\mathbb R),
\qquad
s(\cdot,t)\stackrel{*}{\rightharpoonup}s_0
\quad\text{in }L^\infty(\mathbb R)
\end{equation}
as $t\downarrow0$, and that
\begin{equation}
\label{eq:compatible_data}
\partial_xs_0
=
\rho_{+,0}-\rho_{-,0}
\qquad
\text{in }\mathcal D'(\mathbb R).
\end{equation}
Then
\begin{equation}
\label{eq:spatial_derivative_of_s}
\partial_xs(\cdot,t)
=
\rho_+(\cdot,t)-\rho_-(\cdot,t)
\qquad
\text{in }\mathcal D'(\mathbb R)
\end{equation}
for a.e.\ $t\in(0,T)$.
\end{lem}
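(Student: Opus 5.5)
Set $D:=\partial_x s-(\rho_+-\rho_-)$, regarded as a distribution on $\mathbb R\times(0,T)$. The plan is to show that $D$ satisfies $\partial_t D=0$ in $\mathcal D'(\mathbb R\times(0,T))$, to identify its initial trace through~\eqref{eq:temporal-traces-compatibility}, and to conclude with~\eqref{eq:compatible_data} that $D\equiv0$.

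\textbf{Step 1: $\partial_t D=0$.} Write $Q:=I s-\frac{2v}{s}\rho_+\rho_-$ for the common right-hand side of~\eqref{eq:rho-plus-collapsed} and~\eqref{eq:rho-minus-collapsed}. Subtracting~\eqref{eq:rho-minus-collapsed} from~\eqref{eq:rho-plus-collapsed} cancels $Q$ and gives
\[
\partial_t(\rho_+-\rho_-)+\partial_x\bigl(v(\rho_++\rho_-)\bigr)=0
\qquad\text{in }\mathcal D'(\mathbb R\times(0,T)).
\]
Next, differentiate~\eqref{eq:f-definition} in $x$. Since $s=1-f$, this yields $\partial_t\partial_x s=-\partial_x\bigl(v(\rho_++\rho_-)\bigr)$; distributional derivatives commute. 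Subtracting the two identities gives $\partial_t D=0$. This step is purely formal. The only point to check is that the products $v\rho_\pm$ and $\rho_+\rho_-/s$ are locally integrable, which is part of what it means to be a distributional solution.

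\textbf{Step 2: $D$ is constant in time.} A distribution on $\mathbb R\times(0,T)$ with vanishing time derivative is of the form $1\otimes D_0$ for some $D_0\in\mathcal D'(\mathbb R)$. To see this concretely, fix $\psi\in C_c^\infty(\mathbb R)$ and set
\[
g(t):=\int_{\mathbb R}\bigl(-s(x,t)\psi'(x)-(\rho_+-\rho_-)(x,t)\psi(x)\bigr)\,\rd x .
\]
This is an $L^1_{\mathrm{loc}}(0,T)$ function. Testing $\partial_tD=0$ with $\psi(x)\theta(t)$, $\theta\in C_c^\infty(0,T)$, gives $\int g\theta'\,\rd t=0$, so $g$ is a.e. equal to a constant $c_\psi$.

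\textbf{Step 3: identifying $c_\psi$.} This is the main obstacle, because $g$ is only defined for a.e.\ $t$. By~\eqref{eq:temporal-traces-compatibility}, $g(t)\to\langle\partial_x s_0-(\rho_{+,0}-\rho_{-,0}),\psi\rangle$ as $t\downarrow0$, and this equals $0$ by~\eqref{eq:compatible_data}. The weak-$*$ convergence is used here with test functions $\psi,\psi'\in L^1(\mathbb R)$.

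Some care is needed to reconcile the a.e.-defined constant with this limit. I would read the convergence in~\eqref{eq:temporal-traces-compatibility} as holding along a.e.\ $t\downarrow0$, or equivalently as an essential limit. An alternative is to test with $\theta_n(t)$ approximating $\mathbf 1_{(0,t_0)}$ and pass to the limit using the trace hypothesis. Either way, since $g=c_\psi$ a.e.\ and $g(t)\to0$, we get $c_\psi=0$.

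\textbf{Step 4: conclusion.} Choose a countable set $\{\psi_k\}$ dense in $C_c^\infty(\mathbb R)$. Then there is a common null set of times outside of which $\langle D(\cdot,t),\psi_k\rangle=0$ for all $k$. For such $t$ the functions $s(\cdot,t)$ and $\rho_\pm(\cdot,t)$ lie in $L^\infty$, so $\langle D(\cdot,t),\psi\rangle$ depends continuously on $\psi$ in the $W^{1,1}$ norm. By density, $D(\cdot,t)=0$ in $\mathcal D'(\mathbb R)$, which is~\eqref{eq:spatial_derivative_of_s} for a.e.\ $t\in(0,T)$.
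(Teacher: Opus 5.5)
Your proposal is correct and follows essentially the same route as the paper's proof: subtract the fork equations and differentiate the $f$-equation in $x$ to obtain $\partial_t D=0$, deduce that $t\mapsto\langle D(\cdot,t),\psi\rangle$ is a.e. constant, show the constant vanishes by passing to the trace along times in the full-measure set (the paper does this by choosing $t_k\in E_\zeta$ with $t_k\downarrow0$), and conclude with a countable dense family of test functions. The only difference is in the final density step, where the paper uses merely $L^1_{\mathrm{loc}}$ slices together with $C^1$-dense families $\{\zeta_{m,n}\}\subset C_c^\infty((-m,m))$; this avoids the assumption you make, without justification from the lemma's hypotheses, that $s(\cdot,t),\rho_\pm(\cdot,t)\in L^\infty(\mathbb R)$ for a.e. $t\in(0,T)$, which your $W^{1,1}$-continuity argument needs.
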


\begin{proof}
Subtracting~\eqref{eq:rho-minus-collapsed}
from~\eqref{eq:rho-plus-collapsed} and, using \(s=1-f\), differentiating~\eqref{eq:f-definition} with respect to \(x\), respectively, we obtain
\begin{align*}
\partial_t(\rho_+(x,t)-\rho_-(x,t))
&=-\partial_x\big(v(x,t)(\rho_ +(x,t)+\rho_-(x,t))\big)\,, \\
\partial_t\partial_xs(x,t)&=
\partial_x\partial_ts(x,t)=
-\partial_x\big(v(x,t)(\rho_+(x,t)+\rho_-(x,t))\big)
\end{align*}
in \(\mathcal D'(\mathbb R\times(0,T))\),  where we have used the
commutativity of distributional derivatives, and hence
\begin{equation}
\label{eq:compatibility-time-derivative}
\partial_t\big(
\partial_xs(x,t)-\rho_+(x,t)+\rho_-(x,t)
\big)=0
\qquad
\text{in }\mathcal D'(\mathbb R\times(0,T))\,.
\end{equation}
Since \(s\), \(\rho_+\), \(\rho_- \in L^1_{\mathrm{loc}}(\mathbb R\times(0,T))\), Fubini's theorem allows us to choose \(E_0\subset(0,T)\) such that
\((0,T)\setminus E_0\) has Lebesgue measure zero and
\[
s(\cdot,t)\,,\,
\rho_+(\cdot,t)\,,\,
\rho_-(\cdot,t)
\in L^1_{\mathrm{loc}}(\mathbb R)
\quad
\text{for every }t\in E_0\,.
\]
Consequently, for every \(\zeta\in C_c^\infty(\mathbb R)\), the scalar map
\[
t\longmapsto 
\left\langle
\partial_xs(\cdot,t)-\rho_+(\cdot,t)+\rho_-(\cdot,t),
\zeta
\right\rangle
=
-\int_{\mathbb R}s(x,t)\partial_x\zeta(x)\,\rd x
-\int_{\mathbb R} (\rho_+(x,t)-\rho_-(x,t))\zeta(x)\,\rd x
\]
belongs to \(L^1_{\mathrm{loc}}(0,T)\), and testing
\eqref{eq:compatibility-time-derivative} with
\(\zeta(x)\vartheta(t)\), where
\(\vartheta\in C_c^\infty(0,T)\), shows that
\[
\frac{\rd}{\rd t}
\left\langle
\partial_xs(\cdot,t)-\rho_+(\cdot,t)+\rho_-(\cdot,t),
\zeta
\right\rangle
=0
\qquad
\text{in }\mathcal D'(0,T)\,.
\]
Thus we may choose \(E_\zeta\subset E_0\) such that
\((0,T)\setminus E_\zeta\) has Lebesgue measure zero and this scalar
map is constant on \(E_\zeta\). Since \(s=1-f\),
\eqref{eq:temporal-traces-compatibility} and the continuity of
distributional differentiation imply
\[
s(\cdot,t)\longrightarrow s_0\,,
\qquad
\partial_xs(\cdot,t)\longrightarrow\partial_xs_0
\quad\text{in }\mathcal D'(\mathbb R)
\quad\text{as }t\downarrow0\,.
\]
Choosing \(t_k\in E_\zeta\) with \(t_k\downarrow0\),
\eqref{eq:temporal-traces-compatibility}
and~\eqref{eq:compatible_data} show that the constant above is
\begin{align*}
\lim_{k\to\infty}
\left\langle
\partial_xs(\cdot,t_k)-\rho_+(\cdot,t_k)+\rho_-(\cdot,t_k),\zeta
\right\rangle
=\left\langle \partial_xs_0-\rho_{+,0}+\rho_{-,0},\zeta \right\rangle
=0\,,
\end{align*}
and therefore
\begin{align*}
\left\langle \partial_xs(\cdot,t)-\rho_+(\cdot,t)+\rho_-(\cdot,t), \zeta\right\rangle
=0 \qquad \text{for every }t\in E_\zeta\,.
\end{align*}
It remains to choose a common set of times, independent of
\(\zeta\), on which the identity holds for every test function. For
every \(m\in\mathbb N\), choose a countable family
\[
\{\zeta_{m,n}:n\in\mathbb N\} \subset C_c^\infty((-m,m))
\]
which is dense in \(C_c^\infty((-m,m))\) with respect to the \(C^1([-m,m])\)-norm, and set
\[
E:= E_0 \cap \bigcap_{m=1}^{\infty} \bigcap_{n=1}^{\infty}E_{\zeta_{m,n}}\,.
\]
Since \((0,T)\setminus E\) is a countable union of sets of Lebesgue
measure zero, it has Lebesgue measure zero, and
\[
\left\langle \partial_xs(\cdot,t)-\rho_+(\cdot,t)+\rho_-(\cdot,t), \zeta_{m,n} \right\rangle
=0 \qquad \text{for every } t\in E \text{ and all } m,n \in\mathbb N\,.
\] 
Now, let \(t\in E\) and \(\zeta\in C_c^\infty(\mathbb R)\), choose \(m\in\mathbb N\) such that
\(\operatorname{supp}\zeta\subset(-m,m)\), and choose a sequence \((\zeta_{m,n_k})_{k\in\mathbb N}\) satisfying
\[
\zeta_{m,n_k}\longrightarrow\zeta \quad \text{in }C^1([-m,m])\,.
\]
Then
\begin{align*}
| &\left\langle \partial_xs(\cdot,t)-\rho_+(\cdot,t)+\rho_-(\cdot,t), \zeta \right\rangle|\\
&\quad =|\left\langle\partial_xs(\cdot,t)-\rho_+(\cdot,t)+\rho_-(\cdot,t),\zeta-\zeta_{m,n_k}\right\rangle|\\
&\quad =\left|-\int_{-m}^{m} s(x,t)\partial_x (\zeta-\zeta_{m,n_k})(x)\,\rd x
-\int_{-m}^{m} (\rho_+(x,t)-\rho_-(x,t))(\zeta-\zeta_{m,n_k})(x)\,\rd x
\right|\\
&\quad \leq \|s(\cdot,t)\|_{L^1(-m,m)}\|\partial_x\zeta-\partial_x\zeta_{m,n_k}\|_{L^\infty(-m,m)}
+
\|\rho_+(\cdot,t)-\rho_-(\cdot,t)\|_{L^1(-m,m)}
\|\zeta-\zeta_{m,n_k}\|_{L^\infty(-m,m)}\\
&\quad \longrightarrow0\,.
\end{align*}
It follows that
\[
\partial_xs(\cdot,t)
=
\rho_+(\cdot,t)-\rho_-(\cdot,t)
\qquad
\text{in }\mathcal D'(\mathbb R)
\]
for every \(t\in E\), and thus for a.e. \(t\in(0,T)\), which
proves~\eqref{eq:spatial_derivative_of_s}.
\end{proof}
\section{Well-Posedness}
\label{Sec3} 

We now study the Cauchy problem associated with the size-collapsed system derived in Lemmas~\ref{lem:size-collapsed-fork-equations}
and~\ref{lem:replicated_frac_eq}.
For an arbitrary \(T\in(0,\infty)\), we seek the right- and left-moving fork densities
\(\rho_+\) and \(\rho_-\) and the replicated fraction \(f\) on
\(\mathbb R\times[0,T]\), and we define the unreplicated fraction, as
before, by
\[
s(x,t) \coloneqq 1-f(x,t),
\]
so that the system takes the form
\begin{equation}
\left\{
\begin{aligned}
\partial_t \rho_+(x,t)+\partial_x\!\big(v(x,t)\rho_+(x,t)\big)&=I(x,t)s(x,t)-\frac{2v(x,t)}{s(x,t)}\rho_+(x,t)\rho_-(x,t), \\
\partial_t \rho_-(x,t)-\partial_x\!\big(v(x,t)\rho_-(x,t)\big)&=I(x,t)s(x,t)-\frac{2v(x,t)}{s(x,t)}\rho_+(x,t)\rho_-(x,t),\\
\partial_t f(x,t)
&=v(x,t)\big(\rho_+(x,t)+\rho_-(x,t)\big),
\end{aligned}
\right.
\label{eq:collapsed_system}
\end{equation}
in \(\mathcal D'(\mathbb R\times(0,T))\), subject to the initial
conditions
\begin{equation}
\rho_\pm(x,0)=\rho_{\pm,0}(x), \qquad f(x,0)=f_0(x), \qquad \text{for a.e. } x \in \R.
\label{eq:initial_data}
\end{equation}
Throughout this section, for every \(T\in(0,\infty)\), we assume
\begin{subequations}
\label{eq:wp-assumptions}
\begin{equation}
\label{eq:wp-initiation}
I \in L^\infty(\mathbb{R}\times(0,T)), \qquad
I(x,t) \geq 0
\quad\text{for a.e. }(x,t)\in\mathbb{R}\times(0,T),
\end{equation}
and
\begin{equation}
\label{eq:wp-velocity-regularity}
v\in L^\infty\bigl(0,T;W^{1,\infty}(\mathbb R)\bigr)\,,
\qquad
v(x,t)\geq0 \text{ for a.e. }(x,t)\in\mathbb R\times(0,T)\,.
\end{equation}
Moreover, the initial data satisfy
\begin{equation}
\label{eq:wp-initial-forks}
\rho_{\pm,0} \in L^\infty(\mathbb{R})\,, \qquad
\rho_{\pm,0}(x) \geq 0
\quad\text{for a.e. }x\in\mathbb{R}\,,
\end{equation} 
and
\begin{equation}
\label{eq:wp-initial-fraction}
f_0 \in L^\infty(\mathbb{R})\,, \qquad
f_0(x)\geq 0
\quad\text{for a.e. }x\in\mathbb{R}\,.
\end{equation}
Finally, setting \(s_0 \coloneqq 1-f_0\), we assume that there exists
\(s_*>0\) such that
\begin{equation}
\label{eq:wp-initial-gap}
s_0(x) \geq s_*
\qquad \text{for a.e. } x \in \mathbb{R}\,.
\end{equation}
\end{subequations}
Note that, for every finite \(T>0\), the assumptions on \(I\) and \(v\)
in~\eqref{eq:wp-initiation}
and~\eqref{eq:wp-velocity-regularity} coincide with
\eqref{eq:assumption-I} and~\eqref{eq:assumption-v}, respectively,
while~\eqref{eq:wp-initial-forks}--\eqref{eq:wp-initial-gap}
require the initial fork densities and replicated fraction to be
bounded and non-negative and the initial unreplicated fraction to be
bounded away from zero.
For each fixed \(T\in(0,\infty)\), we use the notation
\[
M_v\coloneqq 
\|v\|_{L^\infty(\mathbb R\times(0,T))}\,,
\qquad
M_{\partial v}\coloneqq 
\|\partial_xv\|_{L^\infty(\mathbb R\times(0,T))}\,,
\qquad
M_I\coloneqq 
\|I\|_{L^\infty(\mathbb R\times(0,T))}\,,
\]
and
\[
R_0\coloneqq 
\max\left\{
\|\rho_{+,0}\|_{L^\infty(\mathbb R)},
\|\rho_{-,0}\|_{L^\infty(\mathbb R)}
\right\}\,.
\]
\begin{rem}
\label{rem:biological_interpretation_of_the_assumptions}
The assumptions above arise naturally from DNA replication kinetics. 
The non-negativity and boundedness of $I(x,t)$ describe origin firing through
a finite effective initiation rate whose spatial and temporal variation may
reflect origin licensing, chromatin organisation, checkpoint control and
limiting replication factors~\cite{Herrick2002,fragkos2015DNA}.
The non-negativity and boundedness of \(v(x,t)\) allow heterogeneous fork
progression with a finite effective local speed, including severe slowing
and effective local arrest of fork progression on space-time regions where
\(v=0\)~\cite{gauthier2012modeling,gauthier2010defects}. Its spatial Lipschitz regularity is imposed to ensure uniqueness of the
characteristics introduced in the following subsection.
\end{rem}

\subsection{Characteristics and mild formulation}
\label{subsec:characteristics-mild-formulation}
In the first two equations of~\eqref{eq:collapsed_system},
\(\rho_+\) is transported with velocity \(v\), while \(\rho_-\) is
transported with velocity \(-v\). Accordingly, for every
\((x,t)\in\mathbb R\times(0,T]\), we consider the terminal-value
problems
\begin{equation}
\label{eq:char}
\frac{\rd}{\rd\tau}X_\pm(\tau;x,t)
=
\pm v\bigl(X_\pm(\tau;x,t),\tau\bigr)\,,
\qquad
X_\pm(t;x,t)=x\,.
\end{equation}
By~\eqref{eq:wp-velocity-regularity} and Carath\'eodory's
existence and uniqueness
theorem~\cite[Chapter~III, \S~10, Supplement~II,
Theorem~XVIII]{Walter1998}, the two problems
in~\eqref{eq:char} admit unique solutions
\[
X_\pm(\cdot;x,t)\in W^{1,\infty}(0,t)\,,
\]
and the differential equations hold for a.e.\ \(\tau\in(0,t)\). These solutions are the
backward characteristics ending at \(x\) at time \(t\). For
\(x_1,x_2\in\mathbb R\), applying Gronwall's inequality
to~\eqref{eq:char} in both time directions gives
\begin{equation}
\label{eq:char_lipschitz}
e^{-M_{\partial v}(t-\tau)}|x_1-x_2|
\leq
\left|X_\pm(\tau;x_1,t)-X_\pm(\tau;x_2,t)\right|
\leq
e^{M_{\partial v}(t-\tau)}|x_1-x_2|
\end{equation}
for \(0\leq\tau\leq t\leq T\). Consequently,
\(x\mapsto X_\pm(\tau;x,t)\) is a bi-Lipschitz bijection of $\R$ onto $\R$. 
Moreover,  the integral formulation
of~\eqref{eq:char}, uniqueness and~\eqref{eq:char_lipschitz} show that
the map 
\[
(\tau,x,t)\mapsto X_\pm(\tau;x,t)
\] 
is continuous, and hence measurable, on
\(\bigl\{(\tau,x,t)\in[0,T]\times\mathbb R\times[0,T]:
0\leq\tau\leq t\leq T\bigr\}\)\,. 
Together with the bi-Lipschitz property and Fubini's theorem, this shows that compositions with \(X_\pm(\tau;\cdot,t)\) and
\((X_\pm(\tau;\cdot,t))^{-1}\) are measurable and well defined on \(L^\infty(\mathbb R)\)- and
\(L^\infty(\mathbb R\times(0,T))\)-spaces occurring below and preserve a.e. equalities and inequalities.\\

To rewrite the first two equations
of~\eqref{eq:collapsed_system} along the characteristics, we formally
expand the transport terms on \(\mathbb R\times(0,T)\) as
$$
\partial_t\rho_+(x,t) + v(x,t)\partial_x\rho_+(x,t) + (\partial_x v)(x,t)\rho_+(x,t)
=
I(x,t)s(x,t)-\frac{2v(x,t)}{s(x,t)}\rho_+(x,t)\rho_-(x,t),
$$
and
$$
\partial_t\rho_-(x,t) - v(x,t)\partial_x\rho_-(x,t) - (\partial_x v)(x,t)\rho_-(x,t)
=
I(x,t)s(x,t)-\frac{2v(x,t)}{s(x,t)}\rho_+(x,t)\rho_-(x,t)\, , 
$$
which, for every \(t\in(0,T]\) and for a.e.\ \((\tau,x)\in(0,t)\times\mathbb R\), reduce along
\(X_+(\cdot;x,t)\) and \(X_-(\cdot;x,t)\), respectively, to
\begin{subequations}
\begin{equation}
\frac{\rd}{\rd\tau}\rho_+\big(X_+(\tau;x,t),\tau\big)
+A_+(\tau;x,t)\rho_+\big(X_+(\tau;x,t),\tau\big)
=G_+(\tau;x,t)\,,
\label{eq:ode_plus}
\end{equation}
and
\begin{equation}
\frac{\rd}{\rd\tau}\rho_-(X_-(\tau;x,t),\tau)
+A_-(\tau;x,t)\rho_-(X_-(\tau;x,t),\tau)
=G_-(\tau;x,t)\,,
\label{eq:ode_minus}
\end{equation}
\end{subequations}
where
\begin{equation*}
A_+(\tau;x,t)\coloneqq (\partial_x v)(X_+(\tau;x,t),\tau)
+\frac{2v(X_+(\tau;x,t),\tau)}{s(X_+(\tau;x,t),\tau)}
\rho_-(X_+(\tau;x,t),\tau)\,,
\end{equation*}
and
\begin{equation*}
A_-(\tau;x,t)\coloneqq -(\partial_x v)(X_-(\tau;x,t),\tau)
+\frac{2v(X_-(\tau;x,t),\tau)}{s(X_-(\tau;x,t),\tau)}
\rho_+(X_-(\tau;x,t),\tau)\,
\end{equation*}
while
\begin{equation*}
G_\pm(\tau;x,t)\coloneqq I(X_\pm(\tau;x,t),\tau)s(X_\pm(\tau;x,t),\tau)\,.
\end{equation*}
Solving~\eqref{eq:ode_plus} and~\eqref{eq:ode_minus} by the
integrating-factor method and using~\eqref{eq:char}
and~\eqref{eq:initial_data}, we obtain
\begin{subequations}
\begin{align}
\rho_+(x,t)&= \rho_{+,0}(X_+(0;x,t))
\exp\left(-\int_0^t A_+(\eta;x,t)\,\rd \eta\right) \nonumber \\
&\qquad +\int_0^t G_+(\sigma;x,t)
\exp\left(-\int_\sigma^t A_+(\eta;x,t)\,\rd \eta\right)\rd \sigma,
\label{eq:duhamel_plus}
\end{align}
and 
\begin{align}
\rho_-(x,t)&=\rho_{-,0}(X_-(0;x,t))\exp\left(-\int_0^t A_-(\eta;x,t)\,\rd \eta\right) \nonumber \\
&\qquad +\int_0^t G_-(\sigma;x,t)
\exp\left(-\int_\sigma^t A_-(\eta;x,t)\,\rd \eta\right)\rd \sigma
\label{eq:duhamel_minus}
\end{align}
for a.e.\ $(x,t)\in \mathbb R\times(0,T)$, while the third equation in~\eqref{eq:collapsed_system} yields
\begin{equation}
f(x,t)=f_0(x)+\int_0^t v(x,\tau)\big(\rho_+(x,\tau)+\rho_-(x,\tau)\big)\,\rd \tau, \qquad \text{for a.e. }(x,t)\in \mathbb R\times(0,T)\,.
\label{eq:duhamel_f}
\end{equation}
\end{subequations}
We use the following notion of a mild solution.

\begin{deff}
\label{def:mild_solution}
We call a triple
\[
(\rho_+,\rho_-,f)
\in
\bigl(L^\infty(\mathbb R\times(0,T))\bigr)^3
\]
a \emph{mild solution}
of~\eqref{eq:collapsed_system}--\eqref{eq:initial_data}
on \(\mathbb R\times[0,T]\) if
\(\rho_\pm(x,t)\geq0\) and \(0\leq f(x,t)\leq1\) for a.e. \((x,t)\in\mathbb R\times(0,T)\), if \(s=1-f\) satisfies
\[
\operatorname*{ess\,inf}_{(x,t)\in\mathbb R\times(0,T)}
s(x,t)>0\,,
\]
and if identities~\eqref{eq:duhamel_plus}--\eqref{eq:duhamel_f}
hold for a.e.\((x,t)\in\mathbb R\times(0,T)\).
\end{deff}
The mild and distributional formulations are related as follows.
\begin{prop}
\label{prop:mild-implies-distributional}
Assume~\eqref{eq:wp-initiation}--\eqref{eq:wp-initial-fraction}
and let \((\rho_+,\rho_-,f)\) be a mild solution on
\(\mathbb R\times[0,T]\). Then~\eqref{eq:collapsed_system} holds in
\(\mathcal D'(\mathbb R\times(0,T))\). Moreover, the initial
conditions in~\eqref{eq:initial_data} are attained in the sense that
\begin{align*}
\rho_\pm(\cdot,t)\stackrel{*}{\rightharpoonup}\rho_{\pm,0} \quad  \text{in }L^\infty(\mathbb R) \qquad \text{and} \qquad f(\cdot,t)&\longrightarrow f_0 \quad \text{in }L^\infty(\mathbb R)
\end{align*}
as $t\downarrow0$.
\end{prop}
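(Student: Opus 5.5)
The plan is to verify the distributional equations by testing the Duhamel formulas against smooth test functions, using the flow maps $X_\pm$ to change variables. Since $\rho_\pm$, $f$, $1/s$, $v$, $\partial_x v$ and $I$ are all essentially bounded, the coefficients $A_\pm$ and $G_\pm$ lie in $L^\infty$. The third equation is immediate. By \eqref{eq:duhamel_f}, for a.e.\ $x$ the map $t\mapsto f(x,t)$ is absolutely continuous with derivative $v(\rho_++\rho_-)\in L^\infty$. Hence testing with $\varphi\in C_c^\infty(\mathbb R\times(0,T))$ and integrating by parts in $t$ for a.e.\ fixed $x$ (Fubini) gives $\partial_t f=v(\rho_++\rho_-)$ in $\mathcal D'$. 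The same formula gives $\|f(\cdot,t)-f_0\|_{L^\infty}\le 2M_v\max\{\|\rho_+\|_\infty,\|\rho_-\|_\infty\}\,t\to0$.

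For the fork equations I would treat $\rho_+$; the case of $\rho_-$ is symmetric with $v\mapsto -v$. The first step is to use the semigroup property of characteristics, $X_+(\tau;X_+(t';x,t),t')=X_+(\tau;x,t)$, which follows from uniqueness. Define the Lagrangian function $r(y,t):=\rho_+(\Phi_t(y),t)$, where $\Phi_t(y)$ is the forward flow from $(y,0)$, so that $X_+(0;\Phi_t(y),t)=y$. Then \eqref{eq:duhamel_plus} says that, for a.e.\ $y$, $t\mapsto r(y,t)$ agrees a.e.\ with an absolutely continuous function solving $\partial_t r = -a r + g$. Here $a(y,t)=A_+(t;\Phi_t(y),t)$ and $g$ is defined analogously; the $y$-dependence enters only through the characteristic. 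Concretely, one differentiates the integrating-factor expression in $t$, which holds because every integrand is bounded and measurable along the flow.

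Next, test with $\varphi$. Write $\int\!\!\int \rho_+(\partial_t\varphi+v\partial_x\varphi)\,\rd x\,\rd t$ and change variables $x=\Phi_t(y)$ using the Jacobian $J(y,t)=\partial_y\Phi_t(y)$. The Jacobian exists a.e.\ by the bi-Lipschitz bound \eqref{eq:char_lipschitz} and satisfies $\partial_tJ=(\partial_xv)(\Phi_t(y),t)\,J$ (Liouville). This step needs some care, since $v$ is only Lipschitz in $x$. I would justify it by mollifying $v$ in $x$ (and $t$), proving the identity for smooth flows, and passing to the limit. That limit uses uniform convergence of the flows on compacts and the weak-$*$ convergence of $\partial_x v^\delta$; the Jacobians are uniformly bounded between $e^{\pm M_{\partial v}T}$. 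Since $\partial_t\varphi+v\partial_x\varphi$ becomes $\frac{\rd}{\rd t}[\varphi(\Phi_t(y),t)]$, integrating by parts in $t$ against $rJ$ transfers the derivative onto $rJ$. The product rule then gives $\partial_t(rJ)=(-ar+g+(\partial_xv) r)J$. The $\partial_x v$ contributions cancel with the $\partial_xv$ part of $a$, leaving exactly $\int\!\!\int (Is-\tfrac{2v}{s}\rho_+\rho_-)\varphi$ after changing variables back. I expect this Jacobian/change-of-variables step with merely Lipschitz $v$ to be the main technical obstacle. The fallback is to carry out the whole computation for mollified $v^\delta$ with the same (fixed) $\rho_\pm,s$ in the nonlinear terms, and use the linear stability of the Duhamel formula.

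Finally, for the initial traces I would fix $\zeta\in L^1(\mathbb R)$, using density of $C_c$ and uniform $L^\infty$ bounds to reduce to $\zeta\in C_c(\mathbb R)$. From \eqref{eq:duhamel_plus}, $|\rho_+(x,t)-\rho_{+,0}(X_+(0;x,t))|\le Ct$, because the exponential factor is within $Ct$ of $1$ and the source integral is $O(t)$. Then, by the change of variables $y=X_+(0;x,t)$,
\[
\int\rho_{+,0}(X_+(0;x,t))\zeta(x)\,\rd x=\int\rho_{+,0}(y)\,\zeta(\Phi_t(y))\,J(y,t)\,\rd y .
\]
This tends to $\int\rho_{+,0}\zeta$, since $\zeta\circ\Phi_t\to\zeta$ uniformly with uniformly compact support and $J\to1$ uniformly, because $|J-1|\le e^{M_{\partial v}t}-1$. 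This yields the weak-$*$ convergence.
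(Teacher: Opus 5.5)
Your proposal is correct and follows the paper's proof essentially step by step: you pull the Duhamel formula back along the forward characteristics, use the Liouville formula $\partial_t J=(\partial_xv)\circ\Phi_t\,J$ so that the $\partial_xv$ terms cancel in $\frac{\rd}{\rd t}(rJ)$, change variables back after testing, and obtain the weak-$*$ trace from the change of variables $y=X_+(0;x,t)$ together with $J\to1$ uniformly. The paper simply asserts the Jacobian formula rather than mollifying; if you do mollify, rely on $\partial_xv^\delta\to\partial_xv$ strongly in $L^1_{\mathrm{loc}}$, which mollification in $x$ provides, rather than on weak-$*$ convergence, because the derivative enters the Jacobian through an exponential.
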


\begin{proof}
By Definition~\ref{def:mild_solution} and
\eqref{eq:wp-initiation}--\eqref{eq:wp-initial-fraction},
\[
\tfrac{1}{s},\ Is,\ v\rho_\pm,\
\frac{v\rho_+\rho_-}{s},\
v(\rho_++\rho_-)
\in L^\infty(\mathbb R\times(0,T))\subset L^1_{\mathrm{loc}}(\mathbb R\times(0,T))\,,
\]
and hence the right-hand sides of~\eqref{eq:collapsed_system} are locally integrable. 
Next, for every \(y\in\mathbb R\), the same Carath\'eodory theorem used
for~\eqref{eq:char} gives unique forward characteristics
\(Y_\pm(\cdot;y)\in W^{1,\infty}(0,T)\) for 
\begin{equation}
\label{eq:forward_characteristic_equation}
\frac{\rd}{\rd t} Y_\pm(t;y) = \pm v\bigl(Y_\pm(t;y),t \bigr)
\quad \text{for a.e. } t \in (0,T), \qquad Y_\pm(0;y) = y,
\end{equation}
satisfying
\[
Y_\pm(t;y)
=y\pm\int_0^t v(Y_\pm(r;y),r)\,\rd r \qquad \text{for every }t\in[0,T]\,,
\]
and, since \(r\mapsto X_\pm(r;Y_\pm(t;y),t)\) and \(r\mapsto Y_\pm(r;y)\) solve the same characteristic equation on
\([0,t]\) and agree at \(r=t\), uniqueness gives
\begin{equation}
\label{eq:flow_identity}
X_\pm(\tau;Y_\pm(t;y),t)
= Y_\pm(\tau;y) \qquad
\text{for }0\leq\tau\leq t\leq T\,. 
\end{equation}
In particular,
\(X_\pm(0;Y_\pm(t;y),t)=y\), and therefore the bijectivity of
\(x\mapsto X_\pm(0;x,t)\) shows that
\[
Y_\pm(t;\cdot)^{-1}(x)
= X_\pm(0;x,t) \qquad \text{for every }(x,t)\in\mathbb R\times[0,T].
\]
Moreover, the inverse derivative formula and the variational formula
for \(X_\pm\) give, for every \(t\in[0,T]\) and for a.e. \(y\in\mathbb R\),
\begin{equation}
\label{eq:forward_jacobian}
\partial_yY_\pm(t;y)
=
\exp\left(\pm\int_0^t
(\partial_xv)(Y_\pm(r;y),r)\,\rd r
\right)>0,
\end{equation}
and, since the exponent in this formula is absolutely continuous in
\(t\), the fundamental theorem 
and the chain rule yield 
\begin{equation}
\label{eq:forward_variational_equation}
\frac{\rd}{\rd t}\partial_yY_\pm(t;y) =\pm(\partial_xv)(Y_\pm(t;y),t) \partial_yY_\pm(t;y) 
\end{equation}
for a.e. \((y,t)\in\mathbb R\times(0,T)\). Consequently,
\begin{equation}
\label{eq:forward_displacement}
|Y_\pm(t;y)-y| \leq \int_0^t |v(Y_\pm(r;y),r)|\,\rd r
\leq M_vt \quad \text{for every }(y,t)\in\mathbb R\times[0,T]\,,
\end{equation}
whereas~\eqref{eq:forward_jacobian} implies
\begin{equation}
\label{eq:forward_jacobian_bound}
\|\partial_yY_\pm(t;\cdot)-1\|_{L^\infty(\mathbb R)}
\leq
\left\|
\exp\Big(
\int_0^t |(\partial_xv)(Y_\pm(r;\cdot),r)|\,\rd r\Big)-1
\right\|_{L^\infty(\mathbb R)}\leq e^{M_{\partial v}t}-1
\end{equation}
for every $t\in[0,T]$.
Next, pulling back
\eqref{eq:duhamel_plus}-\eqref{eq:duhamel_minus} by
\(x=Y_\pm(t;y)\) and using~\eqref{eq:flow_identity}, we obtain
\begin{align}
\label{eq:duhamel_forward}
\rho_\pm(Y_\pm(t;y),t)
&=\rho_{\pm,0}(y) \exp\left(
-\int_0^t\Big[\pm(\partial_xv)+\frac{2v\rho_\mp}{s}
\Big](Y_\pm(\eta;y),\eta)\,\rd\eta\right) 
\nonumber \\
&\quad+
\int_0^t(Is)(Y_\pm(\sigma;y),\sigma)
\exp\left(-\int_\sigma^t
\Big[\pm(\partial_xv)+\frac{2v\rho_\mp}{s}
\Big](Y_\pm(\eta;y),\eta)\,\rd\eta
\right)\rd\sigma
\end{align}
for a.e. \((y,t)\in\mathbb R\times(0,T)\).
Moreover, Definition~\ref{def:mild_solution} and
\eqref{eq:wp-initiation}--\eqref{eq:wp-initial-fraction} imply
\begin{align*}
\Big|\Big[\pm(\partial_xv)&+\frac{2v\rho_\mp}{s}
\Big](Y_\pm(t;y),t)\Big| + |(Is)(Y_\pm(t;y),t)|\\
& \leq
M_{\partial v}+2M_v\|\tfrac{1}{s}\|_{L^\infty(\mathbb R\times(0,T))}\|\rho_\mp\|_{L^\infty(\mathbb R\times(0,T))}+M_I<\infty
\end{align*}
for a.e. \((y,t)\in\mathbb R\times(0,T)\), and therefore
\eqref{eq:duhamel_forward} implies that, for a.e. \(y\in\mathbb R\),
\begin{equation}
\label{eq:forward_pullback_continuity}
\rho_\pm\bigl(Y_\pm(\cdot;y),\cdot\bigr)
\in W^{1,\infty}(0,T)\,.
\end{equation}
Differentiating~\eqref{eq:duhamel_forward} then yields
\begin{align*}
\frac{\rd}{\rd t} \rho_\pm(Y_\pm(t;y),t)
=\Big(Is-\frac{2v\rho_+\rho_-}{s}\mp(\partial_xv)\rho_\pm\Big)(Y_\pm(t;y),t) \qquad \text{for a.e. } t \in (0,T)\,,
\end{align*}
and, combining this identity with the variational equation~\eqref{eq:forward_variational_equation} for \(\partial_yY_\pm\), we infer, for a.e. \((y,t)\in\mathbb R\times(0,T)\), that
\begin{align*}
\frac{\rd}{\rd t}
\big(\rho_\pm(Y_\pm(t;y),t)\partial_yY_\pm(t;y)\big)
= \Big(Is-\frac{2v\rho_+\rho_-}{s}\Big)(Y_\pm(t;y),t)\partial_yY_\pm(t;y)\,.
\end{align*}
Now, for every \(\varphi\in C_c^\infty(\mathbb R\times(0,T))\),  differentiation along the forward characteristics \(Y_\pm\) and the preceding identity give
\begin{align*}
\frac{\rd}{\rd t}
&\big[\rho_\pm (Y_\pm(t;y),t)\partial_yY_\pm(t;y)
\varphi(Y_\pm(t;y),t)\big] \\
&=\Big[
\Big(Is-\frac{2v\rho_+\rho_-}{s}\Big)\varphi
+\rho_\pm(\partial_t\varphi\pm v\partial_x\varphi)
\Big](Y_\pm(t;y),t) \partial_yY_\pm(t;y)
\end{align*}
for a.e. \((y,t)\in\mathbb R\times(0,T)\), and, since
\eqref{eq:forward_displacement} shows that the expression on the
right vanishes outside a fixed compact interval in \(y\) and is
bounded, integration with respect to \(t\), Fubini's theorem, and the
change of variables \(x=Y_\pm(t;y)\),
\(\rd x=\partial_yY_\pm(t;y)\,\rd y\), yield 
\begin{align*}
\int_0^T\int_{\mathbb R}
\Big[
\rho_\pm(x,t)\partial_t\varphi(x,t)
&\pm v(x,t)\rho_\pm(x,t)\partial_x\varphi(x,t)
\\
&+\Big(I(x,t)s(x,t)-\frac{2v(x,t)\rho_+(x,t)\rho_-(x,t)}{s(x,t)}
\Big)\varphi(x,t)
\Big]\,\rd x\,\rd t =0\,.
\end{align*}
Thus the first two equations of~\eqref{eq:collapsed_system} hold in
\(\mathcal D'(\mathbb R\times(0,T))\).

Next, for a.e. \(x\in\mathbb R\), we have \(v(x,\cdot)(\rho_+(x,\cdot)+\rho_-(x,\cdot))\in L^\infty(0,T)\), and hence~\eqref{eq:duhamel_f} implies that
\[
f(x,\cdot)\in W^{1,\infty}(0,T)\,,
\quad
\partial_tf(x,t)=v(x,t)(\rho_+(x,t)+\rho_-(x,t))
\qquad\text{for a.e. }t\in(0,T)\,,
\]
which proves that the third equation of \eqref{eq:collapsed_system} holds in
\(\mathcal D'(\mathbb R\times(0,T))\), while \eqref{eq:duhamel_f} further gives
\begin{align*}
\|f(\cdot,t)-f_0\|_{L^\infty(\mathbb R)}\leq tM_v\big(
\|\rho_+\|_{L^\infty(\mathbb R\times(0,T))}+\|\rho_-\|_{L^\infty(\mathbb R\times(0,T))}\big) \longrightarrow0
\qquad\text{as }t\downarrow0\,.
\end{align*}
It remains to prove the weak-star traces of the fork densities. For
every \(\psi\in C_c^\infty(\mathbb R)\), choose \(R>0\) such that
\(\operatorname{supp}\psi\subset[-R,R]\). 
Then
\eqref{eq:forward_displacement} and the mean value theorem imply
\begin{align*}
\|\psi\bigl(Y_\pm(t;\cdot)\bigr)-\psi\|_{L^1(\mathbb R)}
&=\int_{-R-M_vT}^{R+M_vT}| \psi(Y_\pm(t;y)\bigr)-\psi(y)|\,\rd y
\leq \int_{-R-M_vT}^{R+M_vT} M_vt\|\partial_x\psi\|_{L^\infty(\mathbb R)}\,\rd y
\longrightarrow0\,,
\end{align*}
and, together with~\eqref{eq:forward_jacobian_bound}, this yields
\begin{align*}
&\|\psi(Y_\pm(t;\cdot))\partial_yY_\pm(t;\cdot)-\psi\|_{L^1(\mathbb R)}
\\
&\quad\leq \|\partial_yY_\pm(t;\cdot)\|_{L^\infty(\mathbb R)}
\|\psi(Y_\pm(t;\cdot))-\psi\|_{L^1(\mathbb R)}
+\|\partial_yY_\pm(t;\cdot)-1\|_{L^\infty(\mathbb R)}
\|\psi\|_{L^1(\mathbb R)}
\\
&\quad\leq e^{M_{\partial v}t} \|\psi(Y_\pm(t;\cdot))-\psi\|_{L^1(\mathbb R)}
+ \|\psi\|_{L^1(\mathbb R)}(e^{M_{\partial v}t}-1)
\longrightarrow0\,,
\end{align*}
as \(t\downarrow0\). Moreover, the change of variables \(x=Y_\pm(t;y)\) shows that
\[
\|\psi(Y_\pm(t;\cdot))\partial_yY_\pm(t;\cdot)\|_{L^1(\mathbb R)} =\|\psi\|_{L^1(\mathbb R)}
\qquad
\text{for every }t\in[0,T],
\]
and hence, by density,
\(\psi(Y_\pm(t;\cdot)) \partial_yY_\pm(t;\cdot)\) converges to \(\psi\) in \(L^1(\mathbb R)\) as \(t\downarrow0\) for
every \(\psi\in L^1(\mathbb R)\). Since
\(X_\pm(0;Y_\pm(t;y),t)=y\), it follows that
\begin{align*}
\int_{\mathbb R}\rho_{\pm,0}(X_\pm(0;x,t))\psi(x)\,\rd x
=\int_{\mathbb R}\rho_{\pm,0}(y)\psi(Y_\pm(t;y))
\partial_yY_\pm(t;y)\,\rd y\longrightarrow
\int_{\mathbb R} \rho_{\pm,0}(y)\psi(y)\,\rd y
\end{align*}
for every \(\psi\in L^1(\mathbb R)\).

Finally, the definitions of \(A_\pm\) and \(G_\pm\) in
Subsection~\ref{subsec:characteristics-mild-formulation}, along with \eqref{eq:wp-initiation}, \eqref{eq:wp-velocity-regularity}, and
Definition~\ref{def:mild_solution}, imply, for every
\(t\in(0,T]\), that
\begin{align*}
\|A_\pm(\cdot;\cdot,t)\|_{
L^\infty((0,t)\times\mathbb R)}
&\leq
M_{\partial v} + 2M_v \left\|\frac1s\right\|_{L^\infty(\mathbb R\times(0,T))}
\max\{\|\rho_+\|_{L^\infty(\mathbb R\times(0,T))}, \|\rho_-\|_{L^\infty(\mathbb R\times(0,T))}\}\,,
\\
\|G_\pm(\cdot;\cdot,t)\|_{L^\infty((0,t)\times\mathbb R)}
&\leq M_I,
\end{align*}
and therefore
\begin{align*}
&\left\|\exp\left(-\int_0^tA_\pm(\eta;\cdot,t)\,\rd\eta
\right)-1\right\|_{L^\infty(\mathbb R)}
+ \left\|\int_0^t G_\pm(\sigma;\cdot,t)\exp\left(
-\int_\sigma^tA_\pm(\eta;\cdot,t)\,\rd\eta
\right)\,\rd\sigma \right\|_{L^\infty(\mathbb R)}
\\
&\qquad\leq
(1+M_It)\exp\left(t\|A_\pm(\cdot;\cdot,t)\|_{L^\infty((0,t)\times\mathbb R)}\right)-1
\longrightarrow0
\end{align*}
as $t\downarrow0$, and, since the bi-Lipschitz bijection
\(x\mapsto X_\pm(0;x,t)\) satisfies
\[
\left\|
\rho_{\pm,0}\bigl(X_\pm(0;\cdot,t)\bigr)
\right\|_{L^\infty(\mathbb R)}
=
\|\rho_{\pm,0}\|_{L^\infty(\mathbb R)},
\]
the preceding convergence of the transported initial term, the last
estimate, and~\eqref{eq:duhamel_plus}--\eqref{eq:duhamel_minus} prove
\[
\rho_\pm(\cdot,t)
\stackrel{*}{\rightharpoonup}\rho_{\pm,0}
\quad\text{in }L^\infty(\mathbb R)
\quad\text{as }t\downarrow0.
\]
\end{proof}

\subsection{The fixed-point map}
We now construct a local-in-time mild solution by a Banach fixed-point
argument. To this end, for \(R\geq R_0\) and \(\theta\in(0,T]\), we define the set
\begin{equation}
K_{R,\theta}\coloneqq 
\left\{
(\rho_+,\rho_-,f)\in
\bigl(L^\infty(\mathbb R\times(0,\theta))\bigr)^3:
\begin{array}{l}
0\leq\rho_\pm(x,t)\leq R,\\[1mm]
0\leq f(x,t)\leq1,\\[1mm]
1-f(x,t)\geq s_*/2
\end{array}
\text{ for a.e. }(x,t)\in\mathbb R\times(0,\theta)
\right\}.
\label{eq:KRT}
\end{equation}
For $u\coloneqq (\rho_+,\rho_-,f)$, $\widetilde u\coloneqq (\widetilde\rho_+,\widetilde\rho_-,\widetilde f) \in K_{R,\theta}$, we endow \(K_{R,\theta}\) with the metric \(d_\theta\) given by
\begin{equation}
d_\theta(u,\widetilde u)\coloneqq
\|\rho_+-\widetilde\rho_+\|_{L^\infty(\mathbb R\times(0,\theta))}
+
\|\rho_--\widetilde\rho_-\|_{L^\infty(\mathbb R\times(0,\theta))}
+
\|f-\widetilde f\|_{L^\infty(\mathbb R\times(0,\theta))}\,.
\label{eq:metric}
\end{equation}
By~\eqref{eq:wp-initial-forks}--\eqref{eq:wp-initial-gap},
\[
0\leq\rho_{\pm,0}(x)\leq R_0\leq R\,,
\qquad
0\leq f_0(x)\leq1-s_*<1\,,
\qquad
1-f_0(x)=s_0(x)\geq s_*>\frac{s_*}{2}
\]
for a.e.\ \(x\in\mathbb R\). Consequently, the time-independent
initial-data triple \(\bigl(\rho_{+,0},\rho_{-,0},f_0\bigr)\) belongs to \(K_{R,\theta}\), and we observe that, since each inequality in~\eqref{eq:KRT} is preserved under convergence in
\(\bigl(L^\infty(\mathbb R\times(0,\theta))\bigr)^3\), the set
\(K_{R,\theta}\) is closed in this Banach space and hence, when
endowed with the metric \(d_\theta\), is a complete metric space.
Now, let $u\coloneqq(\rho_+,\rho_-,f)\in K_{R,\theta}$, set
$s(x,t)\coloneqq 1-f(x,t)$ for a.e. 
$(x,t)\in\mathbb R\times(0,\theta)$, and define
\begin{equation}
\label{eq:Phi_theta}
\Phi_\theta(u)
\coloneqq
\widehat u
\coloneqq
(\widehat\rho_+,\widehat\rho_-,\widehat f)\,,
\end{equation}
where the components of \(\widehat u\) are given as follows. First, we define
\begin{equation}
\label{eq:Ahat_pm}
\widehat A_\pm(\tau;x,t)\coloneqq
\pm(\partial_xv)\bigl(X_\pm(\tau;x,t),\tau\bigr)+
\frac{2v\bigl(X_\pm(\tau;x,t),\tau\bigr)}{s\bigl(X_\pm(\tau;x,t),\tau\bigr)}
\rho_\mp\bigl(X_\pm(\tau;x,t),\tau\bigr)
\end{equation}
and
\begin{equation}
\label{eq:Ghat_pm}
\widehat G_\pm(\tau;x,t)
\coloneqq I\bigl(X_\pm(\tau;x,t),\tau\bigr) s\bigl(X_\pm(\tau;x,t),\tau\bigr)
\end{equation}
for a.e. \((\tau,x,t)\in (0,\theta)\times\mathbb R\times(0,\theta)\) with \(\tau<t\).
For a.e. \((x,t)\in\mathbb R\times(0,\theta)\) and every
\(\sigma\in[0,t]\), we set
\begin{equation}
\label{eq:Ehat}
\widehat E_\pm(\sigma,t;x)
\coloneqq 
\exp\left(-\int_\sigma^t \widehat A_\pm(\eta;x,t)\,\rd\eta \right)
\end{equation}
and then define
\begin{equation}
\label{eq:Phi_plus}
\widehat\rho_+(x,t)\coloneqq
\rho_{+,0}\bigl(X_+(0;x,t)\bigr)
\widehat E_+(0,t;x)+ \int_0^t \widehat G_+(\sigma;x,t) \widehat E_+(\sigma,t;x)\,\rd\sigma\,,
\end{equation}
\begin{equation}
\label{eq:Phi_minus}
\widehat\rho_-(x,t)
\coloneqq \rho_{-,0}\bigl(X_-(0;x,t)\bigr)
\widehat E_-(0,t;x)+ \int_0^t \widehat G_-(\sigma;x,t) \widehat E_-(\sigma,t;x)\,\rd\sigma\,,
\end{equation}
and
\begin{equation}
\label{eq:Phi_f}
\widehat f(x,t)
\coloneqq f_0(x) + \int_0^t v(x,\tau) \bigl( \widehat\rho_+(x,\tau)
+ \widehat\rho_-(x,\tau) \bigr)\,\rd\tau
\end{equation}
for a.e. \((x,t)\in\mathbb R\times(0,\theta)\).
By~\eqref{eq:wp-initiation}, \eqref{eq:wp-velocity-regularity},
and~\eqref{eq:KRT}, equations~\eqref{eq:Ahat_pm}
and~\eqref{eq:Ghat_pm} imply
\[
|\widehat A_\pm(\tau;x,t)|\leq M_{\partial v}
+\frac{4M_vR}{s_*} \quad \text{and} \quad 
0\leq\widehat G_\pm(\tau;x,t) \leq M_I
\]
for a.e.\((\tau,x,t)\in(0,\theta)\times\mathbb R\times(0,\theta)\)
with \(\tau<t\), and~\eqref{eq:Ehat}, together with the first
estimate, further gives
\[
0<\widehat E_\pm(\sigma,t;x)  
\leq
\exp\left[\left(M_{\partial v}+\frac{4M_vR}{s_*}\right)(t-\sigma)\right]
\]
for a.e.\ \((x,t)\in\mathbb R\times(0,\theta)\) and every
\(\sigma\in[0,t]\). Hence,
\[
\|\widehat\rho_\pm\|_{L^\infty(\mathbb R\times(0,\theta))}\leq
(R_0+M_I\theta) \exp\left[\left(M_{\partial v}+\frac{4M_vR}{s_*}\right)\theta\right]
\]
and 
\[ 
\|\widehat f\|_{L^\infty(\mathbb R\times(0,\theta))}\leq
\|f_0\|_{L^\infty(\mathbb R)}+2M_v\theta(R_0+M_I\theta)
\exp\left[\left(M_{\partial v}+\frac{4M_vR}{s_*}
\right)\theta\right]\,,
\]
and consequently, \eqref{eq:Phi_theta}--\eqref{eq:Phi_f} yield the
well-defined map
\(\Phi_\theta:K_{R,\theta}\longrightarrow\bigl(L^\infty(\mathbb R\times(0,\theta))\bigr)^3\).

\subsection{Self-mapping property}

We next show that, for a suitable \(R\) and sufficiently small
\(\theta\), \(\Phi_\theta\) maps \(K_{R,\theta}\) into itself, and we begin by proving the non-negativity of its components.

\begin{lem}
\label{lem:selfmap-positivity}
Assume~\eqref{eq:wp-initiation}--\eqref{eq:wp-initial-fraction}.
Then, for every \(u\in K_{R,\theta}\),
\[
\widehat\rho_\pm(x,t)\geq0,
\qquad
\widehat f(x,t)\geq0
\]
for a.e. \((x,t)\in\mathbb R\times(0,\theta)\).
\end{lem}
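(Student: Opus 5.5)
The plan is to read the sign of each component directly from the explicit formulas \eqref{eq:Phi_plus}--\eqref{eq:Phi_f}, using only the pointwise constraints built into \(K_{R,\theta}\) and the sign assumptions \eqref{eq:wp-initiation}--\eqref{eq:wp-initial-fraction}. No smallness of \(\theta\) and no particular choice of \(R\) will be needed. The only care required is measure-theoretic: the a.e. inequalities for \(u\) and for the data have to transfer to the compositions with the characteristics \(X_\pm\).

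First I will record that, since \(u\in K_{R,\theta}\), we have \(\rho_\mp\geq0\) and \(s=1-f\geq s_*/2>0\) a.e. Together with \(I\geq0\) and \(v\geq0\), this gives \(Is\geq0\) a.e. on \(\mathbb R\times(0,\theta)\). By the remark in Subsection~\ref{subsec:characteristics-mild-formulation}, composition with \(X_\pm(\tau;\cdot,t)\) preserves a.e. inequalities, because this map is bi-Lipschitz and jointly continuous. Fubini's theorem then gives \(\widehat G_\pm(\tau;x,t)\geq0\) for a.e. \((\tau,x,t)\) with \(\tau<t\). In the same way, \(\rho_{\pm,0}\geq0\) a.e. yields \(\rho_{\pm,0}(X_\pm(0;x,t))\geq0\) for a.e. \((x,t)\).

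Next I will use the bound \(|\widehat A_\pm|\leq M_{\partial v}+4M_vR/s_*\) established just before this lemma. It shows that \(\eta\mapsto\widehat A_\pm(\eta;x,t)\) is integrable on \((\sigma,t)\) for a.e. \((x,t)\), so \(\widehat E_\pm(\sigma,t;x)\) is the exponential of a finite real number and hence strictly positive. The sign of \(\widehat A_\pm\) does not matter here, so the term \(\pm\partial_xv\) of indefinite sign causes no difficulty. Both summands in \eqref{eq:Phi_plus} and \eqref{eq:Phi_minus} are then non-negative: the first is a product of non-negative factors, and the second is an integral of a product of non-negative factors. This gives \(\widehat\rho_\pm\geq0\) a.e.

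Finally, in \eqref{eq:Phi_f} we have \(f_0\geq0\) a.e. by \eqref{eq:wp-initial-fraction}. The integrand \(v(\widehat\rho_++\widehat\rho_-)\) is non-negative a.e. and bounded, so its time integral is non-negative for a.e. \(x\) and all \(t\). Hence \(\widehat f\geq0\). The main point requiring attention is the null-set bookkeeping in the first step: the exceptional sets in \((\tau,x,t)\) must be handled so that the \(\sigma\)-integral in \eqref{eq:Phi_plus} is well defined with a non-negative integrand for a.e. \((x,t)\). I will handle this by applying Tonelli's theorem to the negative part \(\widehat G_\pm^-\), which vanishes a.e., so its integral vanishes for a.e. \((x,t)\).
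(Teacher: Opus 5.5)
Your proposal is correct and follows the paper's proof: you show that \(\widehat G_\pm\) and \(\rho_{\pm,0}\circ X_\pm(0;\cdot,t)\) are non-negative and that the exponential factor \(\widehat E_\pm\) is strictly positive, and then you obtain \(\widehat f\geq f_0\geq0\) from \(v\geq0\) and \(\widehat\rho_\pm\geq0\). The null-set bookkeeping you add, namely transferring a.e.\ inequalities through the bi-Lipschitz characteristics and applying Tonelli to the negative part, is exactly what the paper's one-line argument leaves implicit.
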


\begin{proof}
Using~\eqref{eq:wp-initiation}, \eqref{eq:wp-initial-forks},
and~\eqref{eq:KRT}, together with the positivity of the exponential
in~\eqref{eq:Ehat}, it follows from~\eqref{eq:Ghat_pm},
\eqref{eq:Phi_plus}, and~\eqref{eq:Phi_minus} that
\[
\widehat\rho_\pm(x,t)\geq0 \qquad \text{ for a.e. } (x,t)\in\mathbb R\times(0,\theta)
\]
and, since \(v(x,t)\geq0\) and \(f_0(x)\geq0\) by
\eqref{eq:wp-velocity-regularity}
and~\eqref{eq:wp-initial-fraction}, respectively,
\eqref{eq:Phi_f} further gives
\[
\widehat f(x,t)\geq f_0(x)\geq0 \qquad \text{ for a.e. } (x,t)\in\mathbb R\times(0,\theta)\,.
\]
\end{proof}

We prove next that the fork densities are uniformly bounded.

\begin{lem}
\label{lem:selfmap-rho-bound}
Assume~\eqref{eq:wp-initiation}--\eqref{eq:wp-initial-forks}, and let
\(B:[0,T]\to[0,\infty)\) be given by
\begin{equation}
B(t):=
\begin{cases}
R_0e^{M_{\partial v}t}+\dfrac{M_I}{M_{\partial v}}\big(e^{M_{\partial v}t}-1\big),& M_{\partial v}>0\,,\\[2mm]
R_0+M_It,& M_{\partial v}=0\,.
\end{cases}
\label{eq:BofT}
\end{equation}
Then, for every \(u\in K_{R,\theta}\),
\[
0\leq\widehat\rho_\pm(x,t)\leq B(t)
\]
for a.e. \((x,t)\in\mathbb R\times(0,\theta)\).
\end{lem}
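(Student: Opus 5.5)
The lower bound $\widehat\rho_\pm\geq0$ is Lemma~\ref{lem:selfmap-positivity}, so only the upper bound $\widehat\rho_\pm\le B(t)$ needs proof. The plan is to work directly with the explicit formulas \eqref{eq:Phi_plus}--\eqref{eq:Phi_minus}. The one structural fact we use is that the coalescence part of $\widehat A_\pm$ only helps. For $u\in K_{R,\theta}$ we have $v\ge0$, $\rho_\mp\ge0$ and $s\ge s_*/2>0$, so
\[
\frac{2v(X_\pm(\eta;x,t),\eta)}{s(X_\pm(\eta;x,t),\eta)}\rho_\mp(X_\pm(\eta;x,t),\eta)\ge0 .
\]
Hence $\widehat A_\pm(\eta;x,t)\ge \pm(\partial_xv)(X_\pm(\eta;x,t),\eta)\ge -M_{\partial v}$ for a.e.\ $\eta$. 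Inserting this into \eqref{eq:Ehat} gives
\[
0<\widehat E_\pm(\sigma,t;x)\le e^{M_{\partial v}(t-\sigma)}\qquad\text{for }0\le\sigma\le t .
\]
This bound is independent of $R$ and $s_*$, which is why $B$ does not involve them.

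Next we bound the two terms in \eqref{eq:Phi_plus} and \eqref{eq:Phi_minus}. For the transported initial datum, the bi-Lipschitz property of $x\mapsto X_\pm(0;x,t)$ from \eqref{eq:char_lipschitz} lets us transfer the a.e.\ bound $0\le\rho_{\pm,0}\le R_0$ to $\rho_{\pm,0}(X_\pm(0;x,t))\le R_0$ for a.e.\ $x$. That term is therefore at most $R_0e^{M_{\partial v}t}$. For the source term we use $0\le I\le M_I$ and $0\le s\le 1$, the latter because $0\le f\le1$ in $K_{R,\theta}$. This gives $0\le\widehat G_\pm\le M_I$, and so
\[
\int_0^t\widehat G_\pm(\sigma;x,t)\widehat E_\pm(\sigma,t;x)\,\rd\sigma\le M_I\int_0^te^{M_{\partial v}(t-\sigma)}\,\rd\sigma .
\]
The integral equals $\frac{M_I}{M_{\partial v}}(e^{M_{\partial v}t}-1)$ when $M_{\partial v}>0$ and $M_It$ when $M_{\partial v}=0$. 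Adding the two contributions gives exactly $B(t)$ in both cases of \eqref{eq:BofT}.

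The only delicate point is measure-theoretic. The bounds on $\widehat A_\pm$ and $\widehat G_\pm$ must hold for a.e.\ $\eta$ along a.e.\ characteristic, which is a full-measure statement in $(\eta,x,t)$. This follows from the remark after \eqref{eq:char_lipschitz}: composition with $X_\pm(\tau;\cdot,t)$ preserves a.e.\ inequalities, and Fubini then gives, for a.e.\ $(x,t)$, the inequalities for a.e.\ $\eta\in(0,t)$. That is all the integrals require. Beyond this bookkeeping the proof is a direct estimate, with no fixed-point or Gronwall argument needed, since the one-sided bound on $\widehat A_\pm$ already controls the exponential factor.
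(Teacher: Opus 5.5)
Your proposal is correct and follows the paper's own proof. You use the nonnegativity of the coalescence term to get $\widehat A_\pm\ge -M_{\partial v}$, hence $0<\widehat E_\pm(\sigma,t;x)\le e^{M_{\partial v}(t-\sigma)}$, and combine this with $0\le\widehat G_\pm\le M_I$ and $\rho_{\pm,0}\le R_0$ to obtain exactly $B(t)$. One small point: Lemma~\ref{lem:selfmap-positivity} also assumes \eqref{eq:wp-initial-fraction}, which is not a hypothesis here, but you do not need it, since $\widehat\rho_\pm\ge0$ follows directly from your own observations $\rho_{\pm,0}\ge0$, $\widehat E_\pm>0$ and $\widehat G_\pm\ge0$.
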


\begin{proof}
It follows from~\eqref{eq:KRT} that
\[
\rho_\mp\bigl(X_\pm(\tau;x,t),\tau\bigr)\ge0\,,
\qquad
s\bigl(X_\pm(\tau;x,t),\tau\bigr)\ge\frac{s_*}{2}
\]
for a.e. \((\tau,x,t)\in(0,\theta)\times\mathbb R\times(0,\theta)\)
with \(\tau<t\), and hence
\eqref{eq:wp-velocity-regularity} and~\eqref{eq:Ahat_pm} give
\[
\widehat A_\pm(\tau;x,t)
\geq -\left| (\partial_xv)\bigl(X_\pm(\tau;x,t),\tau\bigr)\right|
\geq -M_{\partial v}
\]
for the same arguments, which, together with~\eqref{eq:Ehat}, yields
\begin{equation}
0<\widehat E_\pm(\sigma,t;x)
= \exp\left( -\int_\sigma^t \widehat A_\pm(\eta;x,t)\,\rd\eta \right)
\leq
\exp\bigl(M_{\partial v}(t-\sigma)\bigr)
\label{eq:Ebound}
\end{equation}
for a.e.\ \((x,t)\in\mathbb R\times(0,\theta)\) and every
\(\sigma\in[0,t]\). Moreover, \eqref{eq:wp-initiation},
\eqref{eq:KRT}, and~\eqref{eq:Ghat_pm} give
\[
0\leq\widehat G_\pm(\sigma;x,t)\leq
M_I s\bigl(X_\pm(\sigma;x,t),\sigma\bigr)\leq M_I
\]
for a.e.\((\sigma,x,t)\in(0,\theta)\times\mathbb R\times(0,\theta)\)
with \(\sigma<t\), and therefore, combining this estimate
and~\eqref{eq:Ebound} with~\eqref{eq:wp-initial-forks}
and~\eqref{eq:Phi_plus}--\eqref{eq:Phi_minus}, we obtain
\[
0\leq\widehat\rho_\pm(x,t)
\leq
R_0e^{M_{\partial v}t}
+M_I\int_0^t e^{M_{\partial v}(t-\sigma)}\,\rd\sigma
=B(t)
\]
for a.e. \((x,t)\in\mathbb R\times(0,\theta)\).
\end{proof}

We next derive a lower bound for the unreplicated fraction.

\begin{lem}
\label{lower-bound}
Assume~\eqref{eq:wp-initiation}--\eqref{eq:wp-initial-forks}
and~\eqref{eq:wp-initial-gap}, and suppose that
\begin{equation}
\label{eq:smalltime_s}
2M_v\theta B(\theta)\leq\frac{s_*}{2}.
\end{equation}
Then, for every \(u\in K_{R,\theta}\), the component
\(\widehat f\) of \(\Phi_\theta(u)\) satisfies
\[
1-\widehat f(x,t)\geq\frac{s_*}{2}
\]
for a.e.\ \((x,t)\in\mathbb R\times(0,\theta)\).
\end{lem}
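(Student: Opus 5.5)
The plan is to read the lower bound directly off the definition \eqref{eq:Phi_f} of \(\widehat f\), using the pointwise bound on \(\widehat\rho_\pm\) from Lemma~\ref{lem:selfmap-rho-bound}.

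First, fix \(u\in K_{R,\theta}\). Lemma~\ref{lem:selfmap-rho-bound} gives
\(0\le\widehat\rho_\pm(x,\tau)\le B(\tau)\) for a.e. \((x,\tau)\in\mathbb R\times(0,\theta)\). By Fubini's theorem, for a.e. \(x\) this bound holds for a.e. \(\tau\in(0,\theta)\). Since \(B\) is non-decreasing (it is increasing in \(t\) in both cases of \eqref{eq:BofT}), we get \(\widehat\rho_\pm(x,\tau)\le B(\theta)\) there. Together with \(0\le v\le M_v\) from \eqref{eq:wp-velocity-regularity}, this yields
\[
0\le\int_0^t v(x,\tau)\bigl(\widehat\rho_+(x,\tau)+\widehat\rho_-(x,\tau)\bigr)\,\rd\tau\le 2M_v\,t\,B(\theta)\le 2M_v\theta B(\theta)
\]
for a.e. \((x,t)\in\mathbb R\times(0,\theta)\).

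Next, insert this into \eqref{eq:Phi_f} and use \(1-f_0=s_0\ge s_*\) from \eqref{eq:wp-initial-gap}:
\[
1-\widehat f(x,t)=s_0(x)-\int_0^t v\bigl(\widehat\rho_++\widehat\rho_-\bigr)(x,\tau)\,\rd\tau\ge s_*-2M_v\theta B(\theta)\ge s_*-\frac{s_*}{2}=\frac{s_*}{2},
\]
where the last step is the smallness condition \eqref{eq:smalltime_s}.

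No step is a genuine obstacle. The only point needing care is measurability and the a.e. bookkeeping. The bound on \(\widehat\rho_\pm\) holds only a.e. in \((x,\tau)\), while \(\widehat f\) is defined through a time integral for fixed \(x\). This is handled by Fubini's theorem, which makes the integrand in \eqref{eq:Phi_f} measurable and bounded by \(2M_vB(\theta)\) on \((0,t)\) for a.e. \(x\). The monotonicity of \(B\) should also be noted explicitly: \(B'(t)=(M_{\partial v}R_0+M_I)e^{M_{\partial v}t}\ge0\) when \(M_{\partial v}>0\), and \(B'=M_I\ge0\) otherwise. Alternatively, one can bound \(\int_0^tB(\tau)\,\rd\tau\le\theta B(\theta)\) directly.
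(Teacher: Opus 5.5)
Your proposal is correct and follows the paper's own proof: it reads the bound off \eqref{eq:Phi_f} using $s_0\geq s_*$, $0\leq v\leq M_v$, the estimate $\widehat\rho_\pm\leq B$ from Lemma~\ref{lem:selfmap-rho-bound}, the monotonicity of $B$, and then \eqref{eq:smalltime_s}. The only cosmetic difference is that the paper first writes the integral $2M_v\int_0^tB(\tau)\,\rd\tau$ and then bounds it by $2M_vtB(t)$, an alternative you mention yourself.
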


\begin{proof}
It follows from~\eqref{eq:wp-velocity-regularity},
\eqref{eq:wp-initial-gap}, and~\eqref{eq:Phi_f}, together with
Lemma~\ref{lem:selfmap-rho-bound}, the monotonicity of \(B\) following
from~\eqref{eq:BofT}, and~\eqref{eq:smalltime_s}, that
\begin{align*}
1-\widehat f(x,t)
&= s_0(x) -\int_0^t v(x,\tau) \bigl(
\widehat\rho_+(x,\tau)+\widehat\rho_-(x,\tau)
\bigr)\,\rd\tau
\geq
s_*-2M_v\int_0^tB(\tau)\,\rd\tau\\
&\geq
s_*-2M_vtB(t)
\geq
s_*-2M_v\theta B(\theta)
\geq
\frac{s_*}{2}
\end{align*}
for a.e. \((x,t)\in\mathbb R\times(0,\theta)\), which proves the
assertion.
\end{proof}

We can now prove the self-mapping property by combining
Lemmas~\ref{lem:selfmap-positivity}--\ref{lower-bound}.

\begin{prop}
\label{prop:self-mapping}
Fix \(T\in(0,\infty)\) and assume~\eqref{eq:wp-assumptions}.
Then there exist \(R_1>R_0\) and \(T_1\in(0,T]\) such that, for every \(0<\theta\leq T_1\),
\[
\Phi_\theta\bigl(K_{R_1,\theta}\bigr)
\subset K_{R_1,\theta}\,.
\]
\end{prop}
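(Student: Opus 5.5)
The plan is to combine Lemmas~\ref{lem:selfmap-positivity}, \ref{lem:selfmap-rho-bound} and~\ref{lower-bound}. The key observation is that the bound $B(t)$ in~\eqref{eq:BofT} depends only on $R_0$, $M_I$, $M_{\partial v}$ and $t$. It does not depend on $R$. This is what decouples the choice of $R_1$ from the choice of $T_1$ and avoids the circularity one would otherwise meet. We first fix $R_1$ through $B(T)$, and only afterwards shrink time so that the gap condition~\eqref{eq:smalltime_s} holds.

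Concretely, set $R_1 \coloneqq B(T)+1$. Since $B$ is non-decreasing with $B(0)=R_0$, we get $R_1>R_0$ and $B(\theta)\leq B(T)<R_1$ for every $\theta\in(0,T]$. Next, the map $\theta\mapsto 2M_v\theta B(\theta)$ is continuous and non-decreasing on $[0,T]$ and vanishes at $\theta=0$. Hence there is $T_1\in(0,T]$ with $2M_vT_1B(T_1)\leq s_*/2$. If $M_v=0$ we take $T_1=T$; otherwise any $T_1$ below the first crossing of the level $s_*/2$ works. By monotonicity, \eqref{eq:smalltime_s} then holds for every $0<\theta\leq T_1$.

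Now let $0<\theta\leq T_1$ and $u\in K_{R_1,\theta}$, and write $\widehat u=\Phi_\theta(u)$, which is well defined in $(L^\infty(\mathbb R\times(0,\theta)))^3$ by the preceding subsection. We check the defining conditions of~\eqref{eq:KRT} one by one.
\begin{itemize}
\item Lemma~\ref{lem:selfmap-positivity} gives $\widehat\rho_\pm\geq0$ and $\widehat f\geq0$.
\item Lemma~\ref{lem:selfmap-rho-bound} gives $\widehat\rho_\pm(x,t)\leq B(t)\leq B(\theta)<R_1$ a.e.
\item Lemma~\ref{lower-bound}, applied with~\eqref{eq:smalltime_s}, gives $1-\widehat f\geq s_*/2$. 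Since $s_*>0$, this also yields the remaining constraint $\widehat f\leq 1-s_*/2\leq1$.
\end{itemize}
Hence $\widehat u\in K_{R_1,\theta}$.

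There is no real obstacle here. The only point needing care is the order of choices: $R_1$ must be fixed before $T_1$. This is legitimate only because Lemma~\ref{lem:selfmap-rho-bound} bounds $\widehat\rho_\pm$ uniformly in $R$. That uniformity comes from dropping the nonnegative coalescence part of $\widehat A_\pm$, which is the term that would otherwise introduce $R$.
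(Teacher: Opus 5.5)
Your proposal is correct and matches the paper's proof. The paper also sets $R_1=B(T)+1$, relies on the $R$-independence of $B$, and combines Lemmas~\ref{lem:selfmap-positivity}--\ref{lower-bound} exactly as you do; the only cosmetic difference is that it takes the explicit choice $T_1=\min\{T,\,s_*/(4M_vB(T))\}$ (or $T_1=T$ when $M_vB(T)=0$), which gives $2M_v\theta B(\theta)\le 2M_vT_1B(T)\le s_*/2$ directly, whereas you obtain $T_1$ from continuity and monotonicity of $\theta\mapsto 2M_v\theta B(\theta)$.
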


\begin{proof}
Since \(B(0)=R_0\) and \(B\) is non-decreasing on \([0,T]\)
by~\eqref{eq:BofT}, we set
\[
R_1:=B(T)+1>R_0
\]
and
\[
T_1:=
\begin{cases}
T\,,
& M_vB(T)=0\,,\\
\displaystyle
\min\left\{
T,\frac{s_*}{4M_vB(T)}
\right\}\,,
& M_vB(T)>0\,.
\end{cases}
\]
Then \(T_1\in(0,T]\), and, for every \(0<\theta\leq T_1\) and
\(0<t<\theta\),
\[
B(t)\leq B(\theta)\leq B(T)<R_1,
\]
while~\eqref{eq:smalltime_s} holds, and therefore
Lemmas~\ref{lem:selfmap-positivity}--\ref{lower-bound}
and~\eqref{eq:KRT} give
\[
\Phi_\theta(u)\in K_{R_1,\theta}
\qquad
\text{for every }u\in K_{R_1,\theta},
\]
which proves the assertion.
\end{proof}

\subsection{Contraction estimate}
We next prove that the fixed-point map is a strict contraction when the time interval is chosen sufficiently small.
\begin{prop}
\label{prop:contraction}
Assume~\eqref{eq:wp-assumptions}, and let \(R_1\) and \(T_1\) be
given by Proposition~\ref{prop:self-mapping}. Then there exists
\(T_2\in(0,T_1]\) such that, for every \(0<\theta\leq T_2\), the map
\[
\Phi_\theta:
K_{R_1,\theta}
\longrightarrow
K_{R_1,\theta}
\]
is a strict contraction with respect to the metric \(d_\theta\).
\end{prop}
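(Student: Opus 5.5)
We take $u=(\rho_+,\rho_-,f)$ and $\widetilde u=(\widetilde\rho_+,\widetilde\rho_-,\widetilde f)$ in $K_{R_1,\theta}$ and write $s=1-f$, $\widetilde s=1-\widetilde f$. We denote the images by $\widehat u=\Phi_\theta(u)$ and $\widehat{\widetilde u}=\Phi_\theta(\widetilde u)$, with their coefficients $\widehat A_\pm,\widehat G_\pm,\widehat E_\pm$ and $\widetilde A_\pm,\widetilde G_\pm,\widetilde E_\pm$. The characteristics $X_\pm$ depend only on $v$, so they are the same for both inputs. Every difference therefore comes from the integrands evaluated along one and the same curve. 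The goal is an estimate $d_\theta(\widehat u,\widehat{\widetilde u})\le C\theta\, d_\theta(u,\widetilde u)$, where $C$ depends only on $M_v,M_{\partial v},M_I,R_1,s_*,T$. We then choose $T_2\le T_1$ with $CT_2\le 1/2$.

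\textbf{Step 1: pointwise Lipschitz bounds for the coefficients.} On $K_{R_1,\theta}$ we have $s,\widetilde s\ge s_*/2$ and $0\le\rho_\mp,\widetilde\rho_\mp\le R_1$. We write
\[
\frac{\rho_\mp}{s}-\frac{\widetilde\rho_\mp}{\widetilde s}=\frac{\rho_\mp-\widetilde\rho_\mp}{s}+\widetilde\rho_\mp\,\frac{f-\widetilde f}{s\widetilde s}.
\]
This gives
\[
|\widehat A_\pm-\widetilde A_\pm|\le \frac{4M_v}{s_*}\|\rho_\mp-\widetilde\rho_\mp\|_{L^\infty}+\frac{8M_vR_1}{s_*^2}\|f-\widetilde f\|_{L^\infty}.
\]
The $\partial_x v$ term cancels in this difference. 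Likewise $|\widehat G_\pm-\widetilde G_\pm|\le M_I\|f-\widetilde f\|_{L^\infty}$. Composition with $X_\pm(\tau;\cdot,t)$ preserves essential suprema by the bi-Lipschitz property stated after \eqref{eq:char_lipschitz}, so these bounds hold for a.e.\ $(\tau,x,t)$.

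\textbf{Step 2: the exponentials and the fork components.} Both $\widehat A_\pm$ and $\widetilde A_\pm$ are bounded by $\Lambda:=M_{\partial v}+4M_vR_1/s_*$. The inequality $|e^{-a}-e^{-b}|\le e^{\max(|a|,|b|)}|a-b|$ then gives
\[
|\widehat E_\pm(\sigma,t;x)-\widetilde E_\pm(\sigma,t;x)|\le e^{\Lambda T}(t-\sigma)\,\|\widehat A_\pm-\widetilde A_\pm\|_{L^\infty}.
\]
We subtract \eqref{eq:Phi_plus} for the two inputs and split the difference as follows. The initial term contributes at most $R_0$ times the exponential difference. In the Duhamel term we write $\widehat G\widehat E-\widetilde G\widetilde E=(\widehat G-\widetilde G)\widehat E+\widetilde G(\widehat E-\widetilde E)$. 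Combining these pieces yields
\[
\|\widehat\rho_\pm-\widehat{\widetilde\rho}_\pm\|_{L^\infty(\mathbb R\times(0,\theta))}\le C_1\theta\, d_\theta(u,\widetilde u).
\]
The gain of the factor $\theta$ comes from $t-\sigma\le\theta$ and from the length of the $\sigma$-integral. Then \eqref{eq:Phi_f} gives
\[
\|\widehat f-\widehat{\widetilde f}\|\le 2M_v\theta\max_\pm\|\widehat\rho_\pm-\widehat{\widetilde\rho}_\pm\|\le 2M_vC_1\theta^2 d_\theta(u,\widetilde u).
\]

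\textbf{Step 3: the contraction.} Summing the three estimates gives $d_\theta(\widehat u,\widehat{\widetilde u})\le (2C_1\theta+2M_vC_1\theta^2)\,d_\theta(u,\widetilde u)$. We set $T_2:=\min\{T_1,\theta_0\}$, where $\theta_0>0$ is chosen so that $2C_1\theta_0+2M_vC_1\theta_0^2\le\tfrac12$. The self-map property is already supplied by Proposition~\ref{prop:self-mapping}.

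The main obstacle is Step 1 together with the measure-theoretic bookkeeping. We need the $L^\infty$ bounds on differences to hold along characteristics for a.e.\ $(\tau,x,t)$. We also need the Lipschitz bound for $1/s$ to be uniform, and this requires the lower bound $s\ge s_*/2$ built into $K_{R,\theta}$. The first of these concerns is handled exactly as in the well-definedness of $\Phi_\theta$: we use the measurability and null-set preservation of the compositions with $X_\pm$. The second is supplied directly by the definition of $K_{R,\theta}$.
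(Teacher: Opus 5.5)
Your proposal is correct and follows essentially the same route as the paper. You use the same Lipschitz bound for $\rho_\mp/s$ with identical constants, the same splitting $(\widehat G-\widetilde G)\widehat E+\widetilde G(\widehat E-\widetilde E)$ in the Duhamel term, and the same extra factor $\theta$ for the $f$-component, which gives a contraction factor of order $\theta$. The only difference is cosmetic: the paper bounds the exponential difference with the one-sided estimate $\widehat A_\pm\geq -M_{\partial v}$, giving the factor $e^{M_{\partial v}\theta}$, whereas you use the two-sided bound $|\widehat A_\pm|\leq M_{\partial v}+4M_vR_1/s_*$, which only worsens the constant.
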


\begin{proof}
By Proposition~\ref{prop:self-mapping}, \(\Phi_\theta\) is a self-map of \(K_{R_1,\theta}\) for every \(0<\theta\leq T_1\). Fix such a
\(\theta\), take
\[
u=(\rho_+,\rho_-,f)\,,
\quad
\widetilde u=
(\widetilde\rho_+,\widetilde\rho_-,\widetilde f)
\]
in \(K_{R_1,\theta}\), and set
\[
s(x,t):=1-f(x,t)\,,
\quad
\widetilde s(x,t):=1-\widetilde f(x,t)\,.
\]
To indicate the input in the quantities defined by
\eqref{eq:Ahat_pm}--\eqref{eq:Phi_f}, we write, for example,
\(\widehat A_\pm[u]\), \(\widehat\rho_\pm[u]\), and
\(\widehat f[u]\), and use the corresponding notation with
\(\widetilde u\). The characteristics are the same for both inputs
since they depend only on \(v\). It follows from~\eqref{eq:KRT} that
\[
s(x,t),\widetilde s(x,t)\geq\frac{s_*}{2}\,,
\quad
0\leq\rho_\pm(x,t),\widetilde\rho_\pm(x,t)\leq R_1
\]
for a.e. \((x,t)\in\mathbb R\times(0,\theta)\), and hence
\begin{align*}
\left|
\frac{\rho_\pm(x,t)}{s(x,t)}
- \frac{\widetilde\rho_\pm(x,t)}{\widetilde s(x,t)}
\right|
& = \left| \frac{\rho_\pm(x,t)-\widetilde\rho_\pm(x,t)}{s(x,t)}
+ \widetilde\rho_\pm(x,t) \frac{f(x,t)-\widetilde f(x,t)}
{s(x,t)\widetilde s(x,t)} \right| \\
&\leq
\frac{2}{s_*}
|\rho_\pm(x,t)-\widetilde\rho_\pm(x,t)|
+ \frac{4R_1}{s_*^2}
|f(x,t)-\widetilde f(x,t)|\,.
\end{align*}
Therefore, setting
\[
C_A:=\frac{4M_v}{s_*}+ \frac{8M_vR_1}{s_*^2},
\]
the cancellation of the terms containing \(\partial_xv\), together
with~\eqref{eq:wp-velocity-regularity}
and~\eqref{eq:Ahat_pm}, gives
\begin{align}
\label{eq:A_difference}
\left|
\widehat A_\pm[u](\tau;x,t)
- \widehat A_\pm[\widetilde u](\tau;x,t) \right|
&=
2|v\bigl(X_\pm(\tau;x,t),\tau\bigr)|
\left|\frac{\rho_\mp\bigl(X_\pm(\tau;x,t),\tau\bigr)}{s\bigl(X_\pm(\tau;x,t),\tau\bigr)}
-\frac{\widetilde\rho_\mp\bigl(X_\pm(\tau;x,t),\tau\bigr)}{\widetilde s\bigl(X_\pm(\tau;x,t),\tau\bigr)}\right| \nonumber
\\
&\leq\frac{4M_v}{s_*}\|\rho_\mp-\widetilde\rho_\mp\|_{L^\infty(\mathbb R\times(0,\theta))}
+\frac{8M_vR_1}{s_*^2}\|f-\widetilde f\|_{L^\infty(\mathbb R\times(0,\theta))} \nonumber
\\
&\leq
C_A d_\theta(u,\widetilde u)
\end{align}
for a.e. \((\tau,x,t)\in(0,\theta)\times\mathbb R\times(0,\theta)\)
with \(\tau<t\). Moreover, \eqref{eq:wp-initiation}, \eqref{eq:KRT},
and~\eqref{eq:Ghat_pm} give
\begin{align}
\label{eq:G_difference}
\left|
\widehat G_\pm[u](\tau;x,t)
- \widehat G_\pm[\widetilde u](\tau;x,t) \right|
&=\left|I\bigl(X_\pm(\tau;x,t),\tau\bigr)\bigl(f-\widetilde f\bigr)
\bigl(X_\pm(\tau;x,t),\tau\bigr)\right|
\nonumber \\
&\leq
M_I\|f-\widetilde f\|_{L^\infty(\mathbb R\times(0,\theta))}\leq
M_I d_\theta(u,\widetilde u)
\end{align}
for a.e.\((\tau,x,t)\in(0,\theta)\times\mathbb R\times(0,\theta)\)
with \(\tau<t\), and the same equations further give
\[
\max\left\{
\left|\widehat G_\pm[u](\tau;x,t)\right|,
\left|\widehat G_\pm[\widetilde u](\tau;x,t)\right|
\right\}
\leq M_I
\]
for the same \((\tau,x,t)\). Next, applying the mean value theorem to~\eqref{eq:Ehat}, and using \eqref{eq:Ebound} for both inputs together with~\eqref{eq:A_difference}, we obtain
\begin{align}
\label{eq:E_difference}
\left|\widehat E_\pm[u](\sigma,t;x)
-\widehat E_\pm[\widetilde u](\sigma,t;x)\right|
&\leq e^{M_{\partial v}\theta} \int_\sigma^t
\left|\widehat A_\pm[u](\eta;x,t)
-\widehat A_\pm[\widetilde u](\eta;x,t)
\right|\rd\eta \nonumber \\
&\leq\theta e^{M_{\partial v}\theta} C_A d_\theta(u,\widetilde u)
\end{align}
for a.e. \((x,t)\in\mathbb R\times(0,\theta)\) and every
\(\sigma\in[0,t]\). Now, let
\begin{equation}
\label{eq:Lambda}
\Lambda(\theta) \coloneqq e^{M_{\partial v}\theta}\theta
\Bigl((R_0+\theta M_I)C_A+M_I \Bigr)\,.
\end{equation}
It follows from~\eqref{eq:wp-initial-forks},
\eqref{eq:Phi_plus}--\eqref{eq:Phi_minus},
\eqref{eq:Ebound}, \eqref{eq:G_difference},
and~\eqref{eq:E_difference}, together with
\(|\widehat G_\pm[\widetilde u]|\leq M_I\), that
\begin{align}
\label{eq:rho_difference}
&\left\|\widehat\rho_\pm[u]
-\widehat\rho_\pm[\widetilde u]\right\|_{L^\infty(\mathbb R\times(0,\theta))} \nonumber 
\\
&\quad\leq \left\|\rho_{\pm,0}\bigl(X_\pm(0;x,t)\bigr)
\left( \widehat E_\pm[u](0,t;x)
- \widehat E_\pm[\widetilde u](0,t;x) \right)   \right\|_{L^\infty(\mathbb R\times(0,\theta))}
\nonumber \\
&\qquad+ \left\|
\int_0^t \left( \widehat G_\pm[u](\sigma;x,t)
- \widehat G_\pm[\widetilde u](\sigma;x,t) \right)
\widehat E_\pm[u](\sigma,t;x)\,\rd\sigma
\right\|_{L^\infty(\mathbb R\times(0,\theta))}
\nonumber \\
&\qquad+ \left\| \int_0^t \widehat G_\pm[\widetilde u](\sigma;x,t)
\left( \widehat E_\pm[u](\sigma,t;x)
- \widehat E_\pm[\widetilde u](\sigma,t;x) \right) \rd\sigma
\right\|_{L^\infty(\mathbb R\times(0,\theta))}
\nonumber \\
&\quad\leq \Bigl(
R_0\theta e^{M_{\partial v}\theta}C_A
+
\theta M_Ie^{M_{\partial v}\theta}
+
\theta^2M_Ie^{M_{\partial v}\theta}C_A
\Bigr)
d_\theta(u,\widetilde u)
=
\Lambda(\theta)d_\theta(u,\widetilde u)\,.
\end{align}
Furthermore, \eqref{eq:wp-velocity-regularity},
\eqref{eq:Phi_f}, and~\eqref{eq:rho_difference} yield
\begin{align}
\label{eq:ef_bound}
\left\|
\widehat f[u]
- \widehat f[\widetilde u] \right\|_{L^\infty(\mathbb R\times(0,\theta))}
&\leq
M_v\theta\left(\left\|\widehat\rho_+[u]
- \widehat\rho_+[\widetilde u] \right\|_{L^\infty(\mathbb R\times(0,\theta))}
+ \left\|
\widehat\rho_-[u]
-\widehat\rho_-[\widetilde u]\right\|_{L^\infty(\mathbb R\times(0,\theta))}\right) \nonumber 
\\
&\leq
2M_v\theta\Lambda(\theta)d_\theta(u,\widetilde u)\,.
\end{align}
Consequently, defining
\begin{equation}
\label{eq:qT}
q(\theta)\coloneqq 2(1+M_v\theta)\Lambda(\theta),
\end{equation}
equations~\eqref{eq:metric}, \eqref{eq:rho_difference},
and~\eqref{eq:ef_bound} give
\begin{equation}
\label{eq:contraction}
d_\theta\bigl(\Phi_\theta(u), \Phi_\theta(\widetilde u)\bigr)
\leq q(\theta)d_\theta(u,\widetilde u)\,.
\end{equation}
Since \(C_A\) is independent of \(\theta\),
\eqref{eq:Lambda} and~\eqref{eq:qT} give
\[
\lim_{\theta \downarrow0}\Lambda(\theta)=0,
\quad
\lim_{\theta \downarrow0} q(\theta)=0,
\]
and therefore there exists \(T_2\in(0,T_1]\) such that
\[
q(\theta)<1
\qquad
\text{for every }0<\theta\leq T_2,
\]
which proves the assertion.
\end{proof}

\subsection{Local-in-time well-posedness}
We can now establish the local-in-time existence and uniqueness of mild solutions to~\eqref{eq:collapsed_system}--\eqref{eq:initial_data}.

\begin{thm}
\label{thm:local-well-posedness}
Fix \(T\in(0,\infty)\), assume~\eqref{eq:wp-assumptions}, and let \(R_1\) be given by Proposition~\ref{prop:self-mapping}. Then  there exists \(T_*\in(0,T]\) such that the Cauchy problem
\eqref{eq:collapsed_system}--\eqref{eq:initial_data} admits a unique
mild solution
\[
u=(\rho_+,\rho_-,f)\in K_{R_1,T_*}
\]
on \(\mathbb R\times[0,T_*]\), and hence
\[
0\leq\rho_\pm(x,t)\leq R_1\,,
\qquad
0\leq f(x,t)\leq1\,,
\qquad
1-f(x,t)\geq\frac{s_*}{2}
\]
for a.e. \((x,t)\in\mathbb R\times(0,T_*)\).
\end{thm}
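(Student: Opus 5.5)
The plan is to set \(T_*:=T_2\), with \(T_2\in(0,T_1]\) from Proposition~\ref{prop:contraction}, and apply Banach's fixed-point theorem to \(\Phi_{T_*}\) on \(K_{R_1,T_*}\).

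The steps are as follows.

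\textbf{Existence.} As observed after~\eqref{eq:metric}, \(K_{R_1,T_*}\) is a closed subset of \(\bigl(L^\infty(\mathbb R\times(0,T_*))\bigr)^3\). It is nonempty, since it contains the initial-data triple, so it is a complete metric space under \(d_{T_*}\). By Proposition~\ref{prop:self-mapping}, \(\Phi_{T_*}\) maps \(K_{R_1,T_*}\) into itself. By Proposition~\ref{prop:contraction}, it is a strict contraction there. Hence there is a unique fixed point \(u=(\rho_+,\rho_-,f)\in K_{R_1,T_*}\).

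\textbf{The fixed point is a mild solution.} The identity \(u=\Phi_{T_*}(u)\) means that \(\widehat A_\pm,\widehat G_\pm\) built from \(u\) coincide with \(A_\pm,G_\pm\). Then~\eqref{eq:Phi_plus}--\eqref{eq:Phi_f} become exactly~\eqref{eq:duhamel_plus}--\eqref{eq:duhamel_f} a.e. Membership in \(K_{R_1,T_*}\) gives the remaining requirements of Definition~\ref{def:mild_solution}:
\begin{itemize}
\item \(0\le\rho_\pm\le R_1\);
\item \(0\le f\le1\);
\item \(\operatorname*{ess\,inf} s\ge s_*/2>0\).
\end{itemize}
The stated bounds follow at once from the same membership.

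\textbf{Uniqueness.} Within \(K_{R_1,T_*}\), uniqueness is immediate: any mild solution lying in \(K_{R_1,T_*}\) satisfies \(u=\Phi_{T_*}(u)\), by the same identification read backwards, and the contraction has only one fixed point.

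The main subtlety is whether "unique" should also cover mild solutions not a priori in \(K_{R_1,T_*}\). If that stronger reading is intended, I would argue as follows for an arbitrary mild solution \(\widetilde u\) on \([0,T_*]\).
\begin{itemize}
\item Because \(\widetilde\rho_\mp\ge0\) and \(v\ge0\), the lower bound \(\widetilde A_\pm\ge-M_{\partial v}\) holds exactly as in Lemma~\ref{lem:selfmap-rho-bound}. Since \(\widetilde G_\pm\le M_I\) (using \(\widetilde s\le1\)), this gives \(\widetilde\rho_\pm\le B(t)<R_1\).
\item The argument of Lemma~\ref{lower-bound}, using \(T_*\le T_1\), then gives \(\widetilde s\ge s_*/2\).
\end{itemize}
So \(\widetilde u\in K_{R_1,T_*}\) and \(\widetilde u=u\).

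The only care needed is that these estimates use only the Duhamel identities, non-negativity and \(\widetilde f\le1\), not the self-map property. Neither should require the positive lower bound on \(\widetilde s\) beyond what Definition~\ref{def:mild_solution} already guarantees.
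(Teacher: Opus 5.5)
Your proposal is correct and follows the paper's proof essentially step for step. Both arguments take $T_*=T_2$ and apply Banach's fixed-point theorem on $K_{R_1,T_*}$. Both then identify $u=\Phi_{T_*}(u)$ with the Duhamel formulae; strictly, it is $\widehat\rho_\pm=\rho_\pm$ and $\widehat f=f$ that the fixed point supplies, since $\widehat A_\pm,\widehat G_\pm$ built from $u$ equal $A_\pm,G_\pm$ by definition. Finally, both prove uniqueness among all mild solutions by showing, via the arguments of Lemmas~\ref{lem:selfmap-rho-bound} and~\ref{lower-bound}, that every mild solution lies in $K_{R_1,T_*}$; the paper adopts exactly this stronger reading.
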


\begin{proof}
Let \(T_*\coloneqq T_2\). By Proposition~\ref{prop:contraction},
\(\Phi_{T_*}\) is a strict contraction from \(K_{R_1,T_*}\) into
itself, and therefore the completeness of
\((K_{R_1,T_*},d_{T_*})\) and the Banach fixed-point theorem give a
unique fixed point
\[
u=(\rho_+,\rho_-,f)\in K_{R_1,T_*}.
\]
By~\eqref{eq:Phi_theta}--\eqref{eq:Phi_f}, this fixed-point identity
is equivalent to~\eqref{eq:duhamel_plus}--\eqref{eq:duhamel_f}, and
hence \(u\) is a mild solution in the sense of
Definition~\ref{def:mild_solution}.
It remains to prove uniqueness among all mild solutions, and therefore let
\[
\widetilde u
=(\widetilde\rho_+,\widetilde\rho_-,\widetilde f)
\]
be any mild solution on \(\mathbb R\times[0,T_*]\). By
Definition~\ref{def:mild_solution}, we have
\(\widetilde\rho_\pm\geq0\), \(0\leq\widetilde f\leq1\), and
\(1-\widetilde f>0\) a.e. on
\(\mathbb R\times(0,T_*)\), and hence the argument used in
Lemma~\ref{lem:selfmap-rho-bound} gives
\begin{align*}
0\leq\widetilde\rho_\pm(x,t) \leq
R_0e^{M_{\partial v}t} +M_I\int_0^t
e^{M_{\partial v}(t-\sigma)}\,\rd\sigma =B(t)\leq B(T)<R_1
\end{align*}
for a.e. \((x,t)\in\mathbb R\times(0,T_*)\), while
\eqref{eq:duhamel_f}, the monotonicity of \(B\), and
\(T_*=T_2\leq T_1\) yield
\[
1-\widetilde f(x,t)\geq
s_*-2M_v\int_0^tB(\tau)\,\rd\tau\geq
s_*-2M_vT_*B(T_*)\geq\frac{s_*}{2}\,.
\]
Thus \(\widetilde u\in K_{R_1,T_*}\), and comparison of
\eqref{eq:duhamel_plus}--\eqref{eq:duhamel_f} with
\eqref{eq:Phi_plus}--\eqref{eq:Phi_f} gives $\Phi_{T_*}(\widetilde u)=\widetilde u$, so that the uniqueness of the fixed point implies
\(\widetilde u=u\).
\end{proof}

The construction in Propositions~\ref{prop:self-mapping}
and~\ref{prop:contraction} depends on the initial data only through
a common \(L^\infty(\mathbb R)\)-bound for \(\rho_{\pm,0}\) and a
common positive lower bound for \(1-f_0\), and therefore yields the
following continuous-dependence estimate.

\begin{prop}
\label{prop:continuous_dependence}
Fix \(T\in(0,\infty)\), assume~\eqref{eq:wp-initiation}
and~\eqref{eq:wp-velocity-regularity},
and let
\(R_{\mathrm{in}}>0\) and \(s_*>0\). Suppose that
\[
u_0=(\rho_{+,0},\rho_{-,0},f_0)\,,
\quad \widetilde u_0
=(\widetilde\rho_{+,0},\widetilde\rho_{-,0},\widetilde f_0)
\]
belong to \(\bigl(L^\infty(\mathbb R)\bigr)^3\) and satisfy
\[
0\leq\rho_{\pm,0}(x)\leq R_{\mathrm{in}}\,,\quad 
0\leq\widetilde\rho_{\pm,0}(x)\leq R_{\mathrm{in}}\,,\quad
0\leq f_0(x)\leq1-s_*\,, \quad 
0\leq\widetilde f_0(x)\leq1-s_*
\]
for a.e. \(x\in\mathbb R\). Then there exist
\(R>R_{\mathrm{in}}\), \(T_c\in(0,T]\), and \(C>0\), depending only on $T$, $ R_{\mathrm{in}}$, $s_*$, $M_v$, $M_{\partial v}$, and $M_I$, such that the corresponding Cauchy problems admit unique mild
solutions
\[
u=(\rho_+,\rho_-,f)\,,
\quad
\widetilde u
= (\widetilde\rho_+,\widetilde\rho_-,\widetilde f)
\]
in \(K_{R,T_c}\), and
\begin{align}
\label{eq:cd}
d_{T_c}(u,\widetilde u)
\leq C\Bigl(\|\rho_{+,0}-\widetilde\rho_{+,0}\|_{L^\infty(\mathbb R)}+\|\rho_{-,0}-\widetilde\rho_{-,0}\|_{L^\infty(\mathbb R)}+
\|f_0-\widetilde f_0\|_{L^\infty(\mathbb R)}
\Bigr)\,.
\end{align}
\end{prop}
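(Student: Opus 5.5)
The plan is to rerun the constructions of Propositions~\ref{prop:self-mapping} and~\ref{prop:contraction}, with the actual initial bounds replaced by the uniform parameters $R_{\mathrm{in}}$ and $s_*$. The point is to obtain one closed set $K_{R,T_c}$ on which both fixed-point maps are contractions with the same constant. Concretely, let $B$ be defined by~\eqref{eq:BofT} with $R_0$ replaced by $R_{\mathrm{in}}$. Set $R:=B(T)+1$, and take $T_1$ as in Proposition~\ref{prop:self-mapping} with this $B$ and the given $s_*$. Since $\|\rho_{\pm,0}\|_{L^\infty}\le R_{\mathrm{in}}$, $\|\widetilde\rho_{\pm,0}\|_{L^\infty}\le R_{\mathrm{in}}$ and $1-f_0,\,1-\widetilde f_0\ge s_*$, Lemmas~\ref{lem:selfmap-positivity}--\ref{lower-bound} show that both maps $\Phi_\theta$ and $\widetilde\Phi_\theta$ send $K_{R,\theta}$ into itself for $\theta\le T_1$. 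Next, replace $R_0$ by $R_{\mathrm{in}}$ in \eqref{eq:Lambda}, which defines a function $q(\theta)$ depending only on $T,R_{\mathrm{in}},s_*,M_v,M_{\partial v},M_I$. Choose $T_c\le T_1$ so small that $q(T_c)\le 1/2$. Theorem~\ref{thm:local-well-posedness} (its argument) then yields unique mild solutions $u,\widetilde u\in K_{R,T_c}$.

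For the stability estimate, I would write
\[
u-\widetilde u=\Phi_{T_c}(u)-\widetilde\Phi_{T_c}(\widetilde u)=\bigl(\Phi_{T_c}(u)-\Phi_{T_c}(\widetilde u)\bigr)+\bigl(\Phi_{T_c}(\widetilde u)-\widetilde\Phi_{T_c}(\widetilde u)\bigr).
\]
The first bracket is bounded by $q(T_c)\,d_{T_c}(u,\widetilde u)\le\frac12 d_{T_c}(u,\widetilde u)$ by~\eqref{eq:contraction}. In the second bracket the same input $\widetilde u$ appears, so $\widehat A_\pm$, $\widehat G_\pm$ and $\widehat E_\pm$ coincide for the two maps. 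Only the data terms differ: the $\rho_\pm$-components differ by
\[
\bigl(\rho_{\pm,0}-\widetilde\rho_{\pm,0}\bigr)\bigl(X_\pm(0;x,t)\bigr)\,\widehat E_\pm[\widetilde u](0,t;x).
\]
By~\eqref{eq:Ebound} and the fact that the bi-Lipschitz composition preserves $L^\infty$ norms, this is at most $e^{M_{\partial v}T}\|\rho_{\pm,0}-\widetilde\rho_{\pm,0}\|_{L^\infty}$. The $f$-components differ by
\[
f_0-\widetilde f_0+\int_0^t v\,(\text{difference of }\widehat\rho_\pm)\,\rd\tau,
\]
which is bounded by $\|f_0-\widetilde f_0\|_{L^\infty}+2M_vT e^{M_{\partial v}T}\max_\pm\|\rho_{\pm,0}-\widetilde\rho_{\pm,0}\|_{L^\infty}$. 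Absorbing the term $\frac12 d_{T_c}(u,\widetilde u)$ into the left-hand side then gives~\eqref{eq:cd} with $C=2(1+2M_vT)e^{M_{\partial v}T}$ or a similar explicit constant.

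The main point needing care is uniformity: $\Lambda$, $C_A$, $R$ and $T_1$ must depend on the data only through $R_{\mathrm{in}}$ and $s_*$. I would check this by inspecting the proof of Proposition~\ref{prop:contraction}, where $R_0$ enters only via the bound $|\rho_{\pm,0}|\le R_0$ in~\eqref{eq:rho_difference} and can be replaced by $R_{\mathrm{in}}$. A second check is that $(\rho_{\pm,0},f_0)$ with $f_0\le 1-s_*$ places the time-independent triple in $K_{R,\theta}$, which keeps $K_{R,\theta}$ nonempty. I also need to confirm that uniqueness among all mild solutions (not only those in $K_{R,T_c}$) still holds via the a~priori bounds in the proof of Theorem~\ref{thm:local-well-posedness}. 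These hold because $B(t)\le B(T)<R$ and $2M_vT_cB(T_c)\le s_*/2$.
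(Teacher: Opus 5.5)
Your proposal is correct and follows the paper's proof. It uses the same choice $R=B(T)+1$ with $R_0$ replaced by $R_{\mathrm{in}}$, the same splitting of $\Phi_{u_0}(u)-\Phi_{\widetilde u_0}(\widetilde u)$ through $\Phi_{u_0}(\widetilde u)$, and the same observation that $\widehat A_\pm,\widehat G_\pm,\widehat E_\pm$ coincide for the two maps at a common input. The only cosmetic difference is that you fix $q(T_c)\le\frac12$ to obtain $C=2(1+2M_vT)e^{M_{\partial v}T}$, whereas the paper keeps a general $q<1$ and gets $C=(1+M_vT_c)e^{M_{\partial v}T_c}/(1-q)$.
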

\begin{proof}
Setting
\[
R:=R_{\mathrm{in}}e^{M_{\partial v}T}+M_I\int_0^Te^{M_{\partial v}(T-\sigma)}\,\rd\sigma+1\,,
\]
the estimates in the proofs of
Propositions~\ref{prop:self-mapping}
and~\ref{prop:contraction}, with \(R_0\) replaced by
\(R_{\mathrm{in}}\) and \(R_1\) by \(R\), give
\(T_c\in(0,T]\) and \(q\in[0,1)\), depending only on
\(T\), \(R_{\mathrm{in}}\), \(s_*\), \(M_v\),
\(M_{\partial v}\), and \(M_I\), such that the two fixed-point maps are self-maps of \(K_{R,T_c}\) with contraction factor \(q\).
Letting \(\Phi_{u_0}\) and \(\Phi_{\widetilde u_0}\) denote these maps on \(K_{R,T_c}\), we obtain from the Banach fixed-point theorem and the uniqueness argument in Theorem~\ref{thm:local-well-posedness}
the corresponding unique mild solutions satisfying
\begin{equation}
 \label{eq:fpi}   
u=\Phi_{u_0}(u)\,,
\quad
\widetilde u=\Phi_{\widetilde u_0}(\widetilde u)\,.
\end{equation}
For every \(w\in K_{R,T_c}\),
\eqref{eq:Ahat_pm}--\eqref{eq:Ehat} show that the coefficients,
source terms, and exponential factors in the two fixed-point maps
agree when they are evaluated at \(w\), and using
\eqref{eq:wp-velocity-regularity}, \eqref{eq:metric},
\eqref{eq:Phi_plus}--\eqref{eq:Phi_f}, and~\eqref{eq:Ebound},
we obtain
\begin{align}
\label{eq:fpdd}
&d_{T_c}\bigl(\Phi_{u_0}(w),\Phi_{\widetilde u_0}(w)\bigr)\nonumber \\
&\leq
\Big( e^{M_{\partial v}T_c} + M_v\int_0^{T_c} e^{M_{\partial v}\tau}\,\rd\tau \Big)
\Bigl( \|\rho_{+,0}-\widetilde\rho_{+,0}\|_{L^\infty(\mathbb R)}
+
\|\rho_{-,0}-\widetilde\rho_{-,0}\|_{L^\infty(\mathbb R)} \Bigr)
\nonumber \\
&\qquad + \|f_0-\widetilde f_0\|_{L^\infty(\mathbb R)}
\nonumber \\
&\leq
(1+M_vT_c)e^{M_{\partial v}T_c}\Bigl(\|\rho_{+,0}-\widetilde\rho_{+,0}\|_{L^\infty(\mathbb R)}
+\|\rho_{-,0}-\widetilde\rho_{-,0}\|_{L^\infty(\mathbb R)}\Bigr)
+ \|f_0-\widetilde f_0\|_{L^\infty(\mathbb R)}
\nonumber
\\
&\leq
(1+M_vT_c)e^{M_{\partial v}T_c}
\Bigl(
\|\rho_{+,0}-\widetilde\rho_{+,0}\|_{L^\infty(\mathbb R)}
+
\|\rho_{-,0}-\widetilde\rho_{-,0}\|_{L^\infty(\mathbb R)}
+
\|f_0-\widetilde f_0\|_{L^\infty(\mathbb R)}
\Bigr)\,,
\end{align}
where we used
\[
\int_0^{T_c}e^{M_{\partial v}\tau}\,\rd\tau
\leq T_ce^{M_{\partial v}T_c}
\quad\text{and}\quad
1\leq(1+M_vT_c)e^{M_{\partial v}T_c}
\]
for the second and last inequalities, respectively.
Using~\eqref{eq:fpi}, the triangle inequality, and
the contraction estimate~\eqref{eq:contraction} with contraction
factor \(q\), we obtain
\begin{align*}
d_{T_c}(u,\widetilde u)
&=
d_{T_c}\bigl(
\Phi_{u_0}(u),
\Phi_{\widetilde u_0}(\widetilde u)
\bigr)
\leq
d_{T_c}\bigl(
\Phi_{u_0}(u),
\Phi_{u_0}(\widetilde u)
\bigr)
+
d_{T_c}\bigl(
\Phi_{u_0}(\widetilde u),
\Phi_{\widetilde u_0}(\widetilde u)
\bigr)
\\
&\leq
q d_{T_c}(u,\widetilde u)
+
d_{T_c}\bigl(
\Phi_{u_0}(\widetilde u),
\Phi_{\widetilde u_0}(\widetilde u)
\bigr),
\end{align*}
and hence
\[
(1-q)d_{T_c}(u,\widetilde u)
\leq
d_{T_c}\bigl(
\Phi_{u_0}(\widetilde u),
\Phi_{\widetilde u_0}(\widetilde u)
\bigr),
\]
and consequently~\eqref{eq:fpdd} proves
\eqref{eq:cd} with
\[
C:=
\frac{(1+M_vT_c)e^{M_{\partial v}T_c}}{1-q}.
\]
\end{proof}

\subsection{A priori estimates and maximal continuation}

We now derive the a priori estimates that allow us to extend the
local-in-time mild solution obtained in
Theorem~\ref{thm:local-well-posedness} uniquely to its maximal interval
of existence \([0,T_{\max})\) and show that \(T_{\max}<\infty\) can
occur only if the unreplicated fraction \(s=1-f\) is no longer
uniformly bounded away from zero. To this end, we first establish the continuity
properties of mild solutions with respect to time and show that
\eqref{eq:duhamel_plus}--\eqref{eq:duhamel_f} remain valid after
restarting the solution at every time in the interval of existence.

\begin{lem}
\label{lem:temporal-traces-restart}
Fix \(T\in(0,\infty)\), assume
\eqref{eq:wp-initiation}--\eqref{eq:wp-initial-fraction}, let
\(T'\in(0,T]\), and let \((\rho_+,\rho_-,f)\) be a mild solution on
\(\mathbb R\times[0,T']\). Then \(\rho_\pm\) and \(f\) admit
representatives, denoted by the same symbols, such that
\begin{equation}
\label{eq:continuity_in_time}
\rho_\pm \in C_{w^\ast}([0,T'];L^\infty(\mathbb R)) \quad \text{and}
\quad f \in C([0,T'];L^\infty(\mathbb R))\,,
\end{equation}
where \(C_{w^\ast}\) denotes weak-star continuity, and
\[
\rho_\pm(\cdot,0)=\rho_{\pm,0} \quad \text{and} \quad f(\cdot,0)=f_0
\qquad \text{in }L^\infty(\mathbb R)\,.
\]
For every \(\theta\in[0,T')\), every \(t\in[\theta,T']\), and
for a.e. \(x\in\mathbb R\), these functions satisfy
\begin{align}
\label{eq:restarted-rho-duhamel}
\rho_\pm(x,t) &= \rho_\pm (X_\pm(\theta;x,t),\theta)
\exp\big( -\int_\theta^tA_\pm(\eta;x,t)\,\rd\eta \big) \nonumber \\
&\qquad + \int_\theta^t G_\pm(\sigma;x,t) \exp \big( -\int_\sigma^tA_\pm(\eta;x,t)\,\rd\eta
\big)\,\rd\sigma
\end{align}
and
\begin{equation}
\label{eq:restarted-f-duhamel}
f(x,t)
= f(x,\theta) +\int_\theta^t v(x,\sigma)
\big(\rho_+(x,\sigma)+\rho_-(x,\sigma)\big)\,\rd\sigma \,.
\end{equation}
Moreover, the inequalities in Definition~\ref{def:mild_solution} hold for every
\(t\in[0,T']\) and for a.e. \(x\in\mathbb R\), and any two mild solutions with the same initial data agree on their common interval of existence.
\end{lem}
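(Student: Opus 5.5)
The plan is to work directly from the Duhamel identities \eqref{eq:duhamel_plus}--\eqref{eq:duhamel_f}, which hold for a.e. $(x,t)$, and to use the right-hand sides themselves to define the continuous representatives.

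\emph{Step 1: the representative of $f$.} I take $f(x,t):=f_0(x)+\int_0^t v(\rho_++\rho_-)(x,\tau)\,\rd\tau$ for every $t\in[0,T']$. This is Lipschitz in $t$ with values in $L^\infty(\mathbb R)$, with constant $2M_v\max\|\rho_\pm\|_{L^\infty}$. Hence $f\in C([0,T'];L^\infty)$ and $f(\cdot,0)=f_0$. Subtracting the identities at times $t$ and $\theta$ gives \eqref{eq:restarted-f-duhamel} immediately.

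\emph{Step 2: the representative of $\rho_\pm$.} For every $t$, I define $\rho_\pm(\cdot,t)$ by the right-hand side of \eqref{eq:duhamel_plus}--\eqref{eq:duhamel_minus}. Here I use the continuity of $(\tau,x,t)\mapsto X_\pm$ and Fubini, so the right-hand side is well defined in $L^\infty(\mathbb R)$ for each $t$. To prove weak-star continuity, I pair with $\psi\in L^1$ and change variables $x=Y_\pm(t;y)$, exactly as in the trace argument of Proposition~\ref{prop:mild-implies-distributional}. By the forward representation \eqref{eq:duhamel_forward}, $t\mapsto\rho_\pm(Y_\pm(t;y),t)$ is Lipschitz uniformly in $y$. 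It then suffices to show that $t\mapsto\psi(Y_\pm(t;\cdot))\partial_yY_\pm(t;\cdot)$ is continuous in $L^1$ at every $t_0$, not just at $0$. For this I reuse the flow property $Y_\pm(t;y)=Y_\pm(t;\cdot)\circ Y_\pm(t_0;\cdot)^{-1}\circ Y_\pm(t_0;y)$ together with the same density and Jacobian estimates. One subtlety is that $A_\pm,G_\pm$ involve $\rho_\mp,s$ only through a.e.-defined compositions. Changing those inputs on null sets of $\mathbb R\times(0,T')$ does not affect the integrals, by the bi-Lipschitz property, so the new representative satisfies the same identities everywhere in $t$.

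\emph{Step 3: restart identity \eqref{eq:restarted-rho-duhamel}.} This is the main obstacle, because it requires evaluating $\rho_\pm$ at the single time $\theta$ along characteristics. The plan is to use the semigroup property $X_\pm(\tau;X_\pm(\theta;x,t),\theta)=X_\pm(\tau;x,t)$ for $\tau\le\theta\le t$, which follows from uniqueness as in \eqref{eq:flow_identity}. This gives $A_\pm(\eta;X_\pm(\theta;x,t),\theta)=A_\pm(\eta;x,t)$ for $\eta<\theta$, and likewise for $G_\pm$. Substituting $x'=X_\pm(\theta;x,t)$ into the Duhamel formula at time $\theta$ and multiplying by $\exp(-\int_\theta^tA_\pm)$ then splits the integrals over $[0,t]$ into $[0,\theta]$ and $[\theta,t]$, which yields the claimed identity. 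Since composition with the bi-Lipschitz map $X_\pm(\theta;\cdot,t)$ preserves a.e. equalities, the identity holds for a.e. $x$ and every $t\ge\theta$.

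\emph{Step 4: pointwise-in-time bounds and uniqueness.} The inequalities $\rho_\pm\ge0$, $0\le f\le1$ and $s\ge\operatorname{ess\,inf}s$ hold for a.e. $t$. They pass to every $t$ because the sets $\{g\in L^\infty:\ a\le g\le b\}$ are weak-star closed and $t\mapsto(\rho_\pm,f)(\cdot,t)$ is weak-star (respectively norm) continuous. For uniqueness, I let $t^\dagger$ be the supremum of the times up to which two mild solutions agree. Their common value at $t^\dagger$ is admissible data: it is bounded and nonnegative, and $s$ is bounded below. By Step 3 both solutions restarted at $t^\dagger$ are mild solutions, so Theorem~\ref{thm:local-well-posedness} (shifted in time) forces agreement on a further interval, contradicting maximality unless $t^\dagger$ is the end of the common interval.
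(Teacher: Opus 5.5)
Your proposal is correct. Its outer structure matches the paper: Duhamel right-hand sides as time-slice representatives, the change of variables $x=Y_\pm(t;y)$ for weak-star continuity, weak-star closedness for the every-time inequalities, and a supremum/restart argument for uniqueness. The genuine differences are in how you build the slices and how you obtain \eqref{eq:restarted-rho-duhamel}.

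\emph{Restart identity.} The paper works along forward characteristics. For one full-measure set of labels $y$, chosen independently of $\theta$ and $t$, it builds an absolutely continuous representative of $t\mapsto\rho_\pm(Y_\pm(t;y),t)$ that satisfies the characteristic ODE. It transfers this to $x$ via $y=X_\pm(0;x,t)$, and obtains the restart formula by integrating the ODE with an integrating factor from $\theta$ to $t$. This label-wise $W^{1,\infty}$ structure is reused later in Proposition~\ref{prop:normalised-equations}. You instead define every slice directly by the backward Duhamel formula, so the restart identity becomes pure algebra. The semigroup property $X_\pm(\tau;X_\pm(\theta;x,t),\theta)=X_\pm(\tau;x,t)$ identifies the coefficients at $(X_\pm(\theta;x,t),\theta)$ with those at $(x,t)$ on $(0,\theta)$, and splitting $[0,t]=[0,\theta]\cup[\theta,t]$ finishes. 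This avoids all bookkeeping of a label set independent of $\theta$ and $t$.

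\emph{The cost, and how to phrase it.} Your slices agree with the Lipschitz forward expression $F(\cdot,t)$ from \eqref{eq:duhamel_forward} only for a.e.\ $y$, and the exceptional null set depends on $t$. This is harmless, because your continuity estimate only uses the fixed-$t$ integrals $\int F(y,t)\phi_t(y)\,\rd y$, where $\phi_t:=\psi(Y_\pm(t;\cdot))\partial_yY_\pm(t;\cdot)$. You should state the ``Lipschitz uniformly in $y$'' claim in that form rather than pointwise in $y$.

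\emph{Continuity argument.} You combine the Lipschitz bound on $F$ with $L^1$-continuity of $\phi_t$ at every $t_0$. The latter comes from the relative flow $Y_\pm(t;\cdot)\circ Y_\pm(t_0;\cdot)^{-1}$, whose displacement is at most $M_v|t-t_0|$ and whose Jacobian differs from $1$ by at most $e^{M_{\partial v}|t-t_0|}-1$. This gives an explicit modulus and handles $\psi\in L^1$ directly. The paper instead uses dominated convergence for $\zeta\in C_c^\infty(\mathbb R)$ and then a density argument.
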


\begin{proof}
Let \(Y_\pm(\cdot;y)\) be the forward characteristics introduced in
the proof of Proposition~\ref{prop:mild-implies-distributional}. The
flow identity~\eqref{eq:flow_identity} and the inverse relation
established there give
\[
X_\pm(\sigma;Y_\pm(t;y),t)=Y_\pm(\sigma;y),
\qquad
Y_\pm(t;\cdot)^{-1}(x)=X_\pm(0;x,t)
\]
for every \(0\leq\sigma\leq t\leq T'\). Consequently, substituting
\(x=Y_\pm(t;y)\) in
\eqref{eq:duhamel_plus}--\eqref{eq:duhamel_minus} gives
\begin{align*}
&\rho_\pm(Y_\pm(t;y),t)
\\
&\quad=
\rho_{\pm,0}(y)
\exp\!\left(
-\int_0^t
\left[
\pm(\partial_xv)(Y_\pm(\eta;y),\eta)
+\frac{2v(Y_\pm(\eta;y),\eta)}
{s(Y_\pm(\eta;y),\eta)}
\rho_\mp(Y_\pm(\eta;y),\eta)
\right]\,\rd\eta
\right)
\\
&\qquad+
\int_0^t
I(Y_\pm(\sigma;y),\sigma)
s(Y_\pm(\sigma;y),\sigma)
\\
&\hspace{1.5cm}\times
\exp\!\left(
-\int_\sigma^t
\left[
\pm(\partial_xv)(Y_\pm(\eta;y),\eta)
+\frac{2v(Y_\pm(\eta;y),\eta)}
{s(Y_\pm(\eta;y),\eta)}
\rho_\mp(Y_\pm(\eta;y),\eta)
\right]\,\rd\eta
\right)
\,\rd\sigma
\end{align*}
for a.e. \((y,t)\in\mathbb R\times(0,T')\). Indeed, the
bi-Lipschitz property of \(Y_\pm(t;\cdot)\), the preservation of null
sets discussed after~\eqref{eq:char_lipschitz}, and Fubini's theorem
show that the full flow pullbacks preserve the a.e. identities and
inequalities in Definition~\ref{def:mild_solution}.

For a.e. \(y\in\mathbb R\), the functions occurring in the time
integrals in the preceding formula belong to \(L^\infty(0,T')\).
Another application of Fubini's theorem therefore gives one
full-measure set of labels such that, for every label in this set,
the preceding identity holds for a.e. \(t\in(0,T')\) and its
right-hand side defines an absolutely continuous function on
\([0,T']\). We take this right-hand side as the representative of
\(t\mapsto\rho_\pm(Y_\pm(t;y),t)\) for every \(t\in[0,T']\).
It satisfies
\begin{align*}
&\frac{\rd}{\rd t}\rho_\pm(Y_\pm(t;y),t)
\\
&\quad+
\left[
\pm(\partial_xv)(Y_\pm(t;y),t)
+\frac{2v(Y_\pm(t;y),t)}{s(Y_\pm(t;y),t)}
\rho_\mp(Y_\pm(t;y),t)
\right]
\rho_\pm(Y_\pm(t;y),t)
\\
&\qquad=
I(Y_\pm(t;y),t)s(Y_\pm(t;y),t)
\end{align*}
for a.e. \(y\in\mathbb R\) and for a.e. \(t\in(0,T')\), and
\[
\rho_\pm(Y_\pm(0;y),0)=\rho_{\pm,0}(y)
\qquad\text{for a.e. }y\in\mathbb R\,.
\]
The common full-measure set of labels can be chosen so that all the
preceding conclusions hold simultaneously for the two signs and for
all the coefficient pullbacks appearing above.

Moreover, Definition~\ref{def:mild_solution} and
\eqref{eq:wp-initiation}--\eqref{eq:wp-velocity-regularity} imply
\[
\pm(\partial_xv)(Y_\pm(\eta;y),\eta)
+\frac{2v(Y_\pm(\eta;y),\eta)}
{s(Y_\pm(\eta;y),\eta)}
\rho_\mp(Y_\pm(\eta;y),\eta)
\geq-M_{\partial v}
\]
for a.e. \((y,\eta)\in\mathbb R\times(0,T')\), while
\[
0\leq
I(Y_\pm(\sigma;y),\sigma)
s(Y_\pm(\sigma;y),\sigma)
\leq M_I
\]
for a.e. \((y,\sigma)\in\mathbb R\times(0,T')\). It follows from
the characteristic representation and~\eqref{eq:BofT} that
\[
0\leq\rho_\pm(Y_\pm(t;y),t)
\leq
R_0e^{M_{\partial v}t}
+M_I\int_0^t e^{M_{\partial v}(t-\sigma)}\,\rd\sigma
=B(t)\leq B(T')
\]
for every \(t\in[0,T']\) and for a.e. \(y\in\mathbb R\). For each
\(t\in[0,T']\), we define \(\rho_\pm(x,t)\) by evaluating the
preceding characteristic representative at
\(y=X_\pm(0;x,t)\). The joint measurability of the flow and of the
right-hand side above shows that this gives a jointly measurable
representative, and the bi-Lipschitz inverse relation shows that
\(\rho_\pm(\cdot,t)\in L^\infty(\mathbb R)\). The resulting
space-time representatives agree with the original mild solution
for a.e. \((x,t)\in\mathbb R\times(0,T')\), and
\[
\|\rho_\pm(\cdot,t)\|_{L^\infty(\mathbb R)}
\leq B(T')
\qquad\text{for every }t\in[0,T']\,.
\]

We next prove weak-star continuity. Let
\(\zeta\in C_c^\infty(\mathbb R)\), and choose \(K>0\) such that
\(\operatorname{supp}\zeta\subset[-K,K]\). The change of variables
\[
x=Y_\pm(t;y),
\qquad
\rd x=\partial_yY_\pm(t;y)\,\rd y
\]
gives
\begin{align*}
\int_{\mathbb R}\rho_\pm(x,t)\zeta(x)\,\rd x
=\int_{\mathbb R}
\rho_\pm(Y_\pm(t;y),t)
\zeta(Y_\pm(t;y))
\partial_yY_\pm(t;y)\,\rd y\,.
\end{align*}
For a.e. \(y\in\mathbb R\), the first factor in the integrand is
absolutely continuous in \(t\), the second is continuous in \(t\),
and~\eqref{eq:forward_jacobian} provides the representative
\[
\partial_yY_\pm(t;y)
=
\exp\!\left(
\pm\int_0^t
(\partial_xv)(Y_\pm(r;y),r)\,\rd r
\right),
\]
which is also absolutely continuous in \(t\). Here
\(\partial_yY_\pm(t;\cdot)\) is understood through this
time-continuous representative; for every fixed \(t\), it agrees
with the spatial derivative for a.e. \(y\), which is sufficient for
the change of variables above. Moreover,
\eqref{eq:forward_displacement} shows that
\(\zeta(Y_\pm(t;y))\neq0\) implies
\(|y|\leq K+M_vT'\), and~\eqref{eq:forward_jacobian} gives
\[
\begin{aligned}
&\left|
\rho_\pm(Y_\pm(t;y),t)
\zeta(Y_\pm(t;y))
\partial_yY_\pm(t;y)
\right|
\\
&\qquad\leq
B(T')\|\zeta\|_{L^\infty(\mathbb R)}e^{M_{\partial v}T'}
\mathbf 1_{[-K-M_vT',K+M_vT']}(y)
\end{aligned}
\]
for every \(t\in[0,T']\) and for a.e. \(y\in\mathbb R\). The
right-hand side belongs to \(L^1(\mathbb R)\), and hence the
dominated convergence theorem proves that
\[
t\longmapsto
\int_{\mathbb R}\rho_\pm(x,t)\zeta(x)\,\rd x
\]
is continuous on \([0,T']\).

Now let \(\zeta\in L^1(\mathbb R)\). By the density of
\(C_c^\infty(\mathbb R)\) in \(L^1(\mathbb R)\), choose
\(\zeta_n\in C_c^\infty(\mathbb R)\) such that
\[
\|\zeta_n-\zeta\|_{L^1(\mathbb R)}\longrightarrow0\,.
\]
For \(r,t\in[0,T']\), the uniform bound obtained above gives
\begin{align*}
&\left|
\int_{\mathbb R}
\bigl(\rho_\pm(x,t)-\rho_\pm(x,r)\bigr)
\zeta(x)\,\rd x
\right|
\\
&\quad\leq
\left|
\int_{\mathbb R}
\bigl(\rho_\pm(x,t)-\rho_\pm(x,r)\bigr)
\zeta_n(x)\,\rd x
\right|
+2B(T')\|\zeta-\zeta_n\|_{L^1(\mathbb R)}\,.
\end{align*}
For fixed \(n\), the first term converges to zero as \(t\to r\).
Choosing \(n\) first and then letting \(t\to r\) proves
\[
\rho_\pm\in
C_{w^\ast}\bigl([0,T'];L^\infty(\mathbb R)\bigr)\,.
\]
Since \(Y_\pm(0;y)=y\), the characteristic representatives at
\(t=0\) satisfy
\[
\rho_\pm(\cdot,0)=\rho_{\pm,0}
\qquad\text{in }L^\infty(\mathbb R)\,.
\]

For \(f\), we take, for every \(t\in[0,T']\), the representative
defined by
\[
f(x,t)=f_0(x)+\int_0^t
v(x,\sigma)
\bigl(\rho_+(x,\sigma)+\rho_-(x,\sigma)\bigr)\,\rd\sigma
\]
for a.e. \(x\in\mathbb R\). Since the temporal representatives of
\(\rho_\pm\) agree with the original mild solution for a.e.
\((x,t)\), this representative agrees with the original \(f\) for
a.e. \((x,t)\in\mathbb R\times(0,T')\). For every
\(r,t\in[0,T']\),
\[
\|f(\cdot,t)-f(\cdot,r)\|_{L^\infty(\mathbb R)}
\leq 2M_vB(T')|t-r|\,,
\]
and therefore
\[
f\in C\bigl([0,T'];L^\infty(\mathbb R)\bigr),
\qquad
f(\cdot,0)=f_0
\quad\text{in }L^\infty(\mathbb R)\,.
\]
Subtracting the defining identities at \(t\) and at
\(\theta\in[0,t]\) gives~\eqref{eq:restarted-f-duhamel}.

Since the representatives just constructed agree with the original
mild solution for a.e. \((x,t)\), the preservation of space-time
null sets under the full flows and Fubini's theorem show that, for
a.e. \(y\), their pullbacks along either \(Y_+\) or \(Y_-\) agree
with the corresponding original pullbacks for a.e. time. Hence
replacing the original space-time representatives by the temporal
representatives in the characteristic coefficients and source terms
does not change any of the preceding time integrals or
characteristic equations.

It remains to prove the restarted formulae for the fork densities.
Fix \(0\leq\theta\leq t\leq T'\), set
\(x=Y_\pm(t;y)\), and use~\eqref{eq:flow_identity} to obtain
\[
Y_\pm(\sigma;y)=X_\pm(\sigma;x,t)
\qquad\text{for every }\sigma\in[0,t]\,.
\]
For every label in the full-measure set chosen above, the
characteristic equation obtained previously therefore becomes
\begin{align*}
&\frac{\rd}{\rd\sigma}
\rho_\pm(X_\pm(\sigma;x,t),\sigma)
+A_\pm(\sigma;x,t)
\rho_\pm(X_\pm(\sigma;x,t),\sigma)
=G_\pm(\sigma;x,t)
\end{align*}
for a.e. \(\sigma\in(\theta,t)\). Integrating this identity from
\(\theta\) to \(t\) after multiplying by the integrating factor gives
\[
\begin{aligned}
&\rho_\pm(x,t)
\exp\!\left(\int_\theta^t A_\pm(\eta;x,t)\,\rd\eta\right)
-\rho_\pm(X_\pm(\theta;x,t),\theta)
\\
&\qquad=
\int_\theta^t G_\pm(\sigma;x,t)
\exp\!\left(\int_\theta^\sigma
A_\pm(\eta;x,t)\,\rd\eta\right)\,\rd\sigma\,.
\end{aligned}
\]
Multiplying by
\(\exp(-\int_\theta^tA_\pm(\eta;x,t)\,\rd\eta)\) yields
\begin{align*}
\rho_\pm(x,t)
={}&\rho_\pm(X_\pm(\theta;x,t),\theta)
\exp\!\left(-\int_\theta^t
A_\pm(\eta;x,t)\,\rd\eta\right)
\\
&+\int_\theta^t G_\pm(\sigma;x,t)
\exp\!\left(-\int_\sigma^t
A_\pm(\eta;x,t)\,\rd\eta\right)\,\rd\sigma\,.
\end{align*}
Since the full-measure set of labels is independent of
\(\theta\) and \(t\), and \(Y_\pm(t;\cdot)\) preserves null sets,
this proves~\eqref{eq:restarted-rho-duhamel} for every
\(\theta\in[0,T')\), every \(t\in[\theta,T']\), and for a.e.
\(x\in\mathbb R\).

We next extend the inequalities in Definition~\ref{def:mild_solution}
to every temporal slice. There is a full-measure set of times in
\((0,T')\) at which these inequalities hold for a.e.
\(x\in\mathbb R\) and the temporal representatives agree with the
original mild solution. Fix \(t\in[0,T']\), and choose a sequence
of such times \(t_n\to t\). For every non-negative
\(\zeta\in L^1(\mathbb R)\), weak-star continuity gives
\[
\int_{\mathbb R}\rho_\pm(x,t)\zeta(x)\,\rd x
=\lim_{n\to\infty}
\int_{\mathbb R}\rho_\pm(x,t_n)\zeta(x)\,\rd x
\geq0\,.
\]
Hence \(\rho_\pm(x,t)\geq0\) for a.e.
\(x\in\mathbb R\). Moreover, strong \(L^\infty\)-continuity of
\(f\) gives
\[
\|f(\cdot,t_n)-f(\cdot,t)\|_{L^\infty(\mathbb R)}
\longrightarrow0\,.
\]
Passing to the limit in the inequalities satisfied at \(t_n\)
therefore gives
\[
0\leq f(x,t)\leq1
\]
and
\[
1-f(x,t)
\geq
\operatorname*{ess\,inf}_{(x,r)\in\mathbb R\times(0,T')}
s(x,r)>0
\]
for a.e. \(x\in\mathbb R\). Thus all the inequalities in
Definition~\ref{def:mild_solution} hold for every
\(t\in[0,T']\).

Finally, let
\[
(\rho_+,\rho_-,f)
\qquad\text{and}\qquad
(\widetilde\rho_+,\widetilde\rho_-,\widetilde f)
\]
be two mild solutions with the same initial data on a common
interval, which we denote by \([0,T']\), and set
\(\widetilde s:=1-\widetilde f\). Equip both solutions with the
representatives constructed above. The every-time lower bounds at
\(t=0\) give
\[
s_0(x)\geq
\min\left\{
\operatorname*{ess\,inf}_{(x,t)\in\mathbb R\times(0,T')}s(x,t),
\operatorname*{ess\,inf}_{(x,t)\in\mathbb R\times(0,T')}
\widetilde s(x,t)
\right\}>0
\]
for a.e. \(x\in\mathbb R\). Thus the common initial data satisfy all
the assumptions of Theorem~\ref{thm:local-well-posedness}, and that
theorem shows that the two solutions agree on a nontrivial interval
beginning at \(0\).

Let
\[
\theta:=
\sup\left\{
r\in[0,T']:
\begin{array}{l}
\rho_\pm(\cdot,t)=\widetilde\rho_\pm(\cdot,t)
\text{ in }L^\infty(\mathbb R),\\
f(\cdot,t)=\widetilde f(\cdot,t)
\text{ in }L^\infty(\mathbb R)
\text{ for every }t\in[0,r]
\end{array}
\right\}.
\]
If \(\theta<T'\), choose \(t_n\uparrow\theta\) such that the two
representatives agree at each \(t_n\). Weak-star continuity of the
fork densities and strong \(L^\infty\)-continuity of the replicated
fractions then give
\[
\rho_\pm(\cdot,\theta)
=\widetilde\rho_\pm(\cdot,\theta),
\qquad
f(\cdot,\theta)=\widetilde f(\cdot,\theta)
\quad\text{in }L^\infty(\mathbb R)\,.
\]
The inequalities already proved show that this common slice
consists of non-negative fork densities and a non-negative
replicated fraction whose unreplicated fraction is bounded away
from zero. Moreover, the shifted coefficients
\(I(x,\theta+\cdot)\) and \(v(x,\theta+\cdot)\) satisfy the same
assumptions as \(I\) and \(v\), while the restarted formulae
\eqref{eq:restarted-rho-duhamel}--\eqref{eq:restarted-f-duhamel}
and the every-time inequalities show that the restrictions of the
two solutions after \(\theta\) are mild solutions of the same
time-shifted problem with this common initial value. The uniqueness
assertion of Theorem~\ref{thm:local-well-posedness}, applied to this
time-shifted problem on a sufficiently short interval, therefore
extends their equality beyond \(\theta\). This contradicts the
definition of \(\theta\), and hence \(\theta=T'\), which proves
uniqueness on the common interval and completes the proof.
\end{proof}

We can now use Lemma~\ref{lem:temporal-traces-restart} to derive the
a priori estimates needed for the maximal continuation argument.

\begin{prop}
\label{prop:apriori-bounds}
Fix \(T\in(0,\infty)\), assume~\eqref{eq:wp-assumptions}, let
\(T'\in(0,T]\), and let \((\rho_+,\rho_-,f)\) be a mild solution of
\eqref{eq:collapsed_system}--\eqref{eq:initial_data} on \(\mathbb R\times[0,T']\). Then
\begin{align*}
\|\rho_\pm(\cdot,t)\|_{L^\infty(\mathbb R)} \leq B(t) \quad \text{for every }t\in[0,T'] 
\end{align*}
and 
\begin{align*}
s(x,t)\geq s_*-2M_v\int_0^tB(\tau)\,\rd\tau \quad \text{for a.e. }x\in\mathbb R \text{ and every }t\in[0,T']\,,
\end{align*}
where \(B\) is given by~\eqref{eq:BofT}.
\end{prop}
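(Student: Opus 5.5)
The plan is to work with the time-continuous representatives supplied by Lemma~\ref{lem:temporal-traces-restart}. That lemma gives $\rho_\pm\in C_{w^\ast}([0,T'];L^\infty(\mathbb R))$ and $f\in C([0,T'];L^\infty(\mathbb R))$. It also guarantees that the inequalities of Definition~\ref{def:mild_solution} ($\rho_\pm\geq0$, $0\leq f\leq1$, $s>0$) hold for every $t\in[0,T']$ and a.e.\ $x$. With these representatives, the pointwise-in-time statements reduce to the estimates already used in Lemma~\ref{lem:selfmap-rho-bound}, Lemma~\ref{lower-bound} and the uniqueness part of Theorem~\ref{thm:local-well-posedness}, now applied to a solution rather than to the image of the fixed-point map.

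For the fork bound, I would use the characteristic representation from the proof of Lemma~\ref{lem:temporal-traces-restart}. Along $Y_\pm(\cdot;y)$ the coefficient
\[
\pm(\partial_xv)+\frac{2v\rho_\mp}{s}
\]
is bounded below by $-M_{\partial v}$, because $v\geq0$, $\rho_\mp\geq0$ and $s>0$. The source satisfies $0\leq Is\leq M_I$, because $s=1-f\leq1$ since $f\geq0$. The integrating-factor formula then gives
\[
0\leq\rho_\pm(Y_\pm(t;y),t)\leq R_0e^{M_{\partial v}t}+M_I\int_0^te^{M_{\partial v}(t-\sigma)}\,\rd\sigma=B(t)
\]
for every $t\in[0,T']$ and a.e.\ $y$. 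This sharper version uses $B(t)$ in place of $B(T')$; it is exactly the computation in that proof, keeping $t$ rather than bounding it by $T'$.

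To transfer the bound to a.e.\ $x$, I would note that $x\mapsto X_\pm(0;x,t)$ is a bi-Lipschitz bijection, by~\eqref{eq:char_lipschitz}, and so preserves null sets. Since $\rho_\pm(\cdot,t)$ was defined in that lemma by evaluating the characteristic representative at $y=X_\pm(0;x,t)$, we get $\|\rho_\pm(\cdot,t)\|_{L^\infty(\mathbb R)}\leq B(t)$ for every $t$.

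For the lower bound on $s$, I would take the representative
\[
f(x,t)=f_0(x)+\int_0^tv(x,\sigma)\bigl(\rho_+(x,\sigma)+\rho_-(x,\sigma)\bigr)\,\rd\sigma ,
\]
which holds for every $t$ and a.e.\ $x$. Combining it with $v\leq M_v$, the slice bound $\rho_\pm(\cdot,\sigma)\leq B(\sigma)$, and $s_0\geq s_*$ from~\eqref{eq:wp-initial-gap} gives
\[
s(x,t)\geq s_*-2M_v\int_0^tB(\tau)\,\rd\tau .
\]

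The main obstacle is the measure-theoretic step inside this time integral. For fixed $t$, the slice bounds hold for every $\sigma$ but off $\sigma$-dependent null sets in $x$, so the integrand bound needs a single null set. I would handle this with Fubini's theorem: the set of $(x,\sigma)$ where $\rho_\pm(x,\sigma)>B(\sigma)$ is jointly measurable, because the representatives are jointly measurable by Lemma~\ref{lem:temporal-traces-restart}. Each $\sigma$-slice of this set is null, so the set is null in $\mathbb R\times(0,T')$. Hence for a.e.\ $x$ the bound holds for a.e.\ $\sigma$, which suffices inside the integral. The same Fubini argument justifies using $v\leq M_v$ a.e.\ in $\sigma$ for a.e.\ $x$.
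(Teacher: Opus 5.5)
Your proposal is correct and follows essentially the same route as the paper. Both bound the integrating factor by $e^{M_{\partial v}(t-\sigma)}$ and the source by $M_I$ in the characteristic representation, then insert the resulting slice bound into \eqref{eq:restarted-f-duhamel} with $\theta=0$. The only differences are cosmetic: the paper applies \eqref{eq:restarted-rho-duhamel} with $\theta=0$ directly in $x$ rather than along $Y_\pm$ with a bi-Lipschitz transfer, and your Fubini argument for a common null set inside the time integral spells out a step the paper leaves implicit.
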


\begin{proof}
By Lemma~\ref{lem:temporal-traces-restart}, the inequalities in
Definition~\ref{def:mild_solution} hold for every \(t\in[0,T']\) and
for a.e. \(x\in\mathbb R\). Hence
\eqref{eq:wp-velocity-regularity} and the definitions of
\(A_\pm\) in Subsection~\ref{subsec:characteristics-mild-formulation}
give \( A_\pm(\eta;x,t)\geq-M_{\partial v} \) for every \(t\in(0,T']\) and for a.e. \((\eta,x)\in(0,t)\times\mathbb R\),  while
\eqref{eq:wp-initiation}, the definition of \(G_\pm\) in
Subsection~\ref{subsec:characteristics-mild-formulation}, and
\(0<s(x,t)\leq1\) give \( 0\leq G_\pm(\sigma;x,t)\leq M_I \)
for every \(t\in(0,T']\) and for a.e. \((\sigma,x)\in(0,t)\times\mathbb R\). Therefore, \eqref{eq:wp-initial-forks} and \eqref{eq:restarted-rho-duhamel} with \(\theta=0\) yield
\begin{align*}
0\leq\rho_\pm(x,t)\leq R_0e^{M_{\partial v}t} +M_I\int_0^t e^{M_{\partial v}(t-\sigma)}\,\rd\sigma=B(t)
\end{align*}
for every \(t\in[0,T']\) and for a.e. \(x\in\mathbb R\), which
proves the asserted bound for \(\rho_\pm\), and, in combination with
\eqref{eq:wp-velocity-regularity}, \eqref{eq:wp-initial-gap},
and~\eqref{eq:restarted-f-duhamel} with \(\theta=0\), we further obtain
\begin{align*}
s(x,t)=s_0(x)-\int_0^t v(x,\tau)(\rho_+(x,\tau)+\rho_-(x,\tau))\,\rd\tau \geq
s_*-2M_v\int_0^tB(\tau)\,\rd\tau\,,
\end{align*}
which proves the asserted lower bound for \(s\).
\end{proof}

Lemma~\ref{lem:temporal-traces-restart} and
Proposition~\ref{prop:apriori-bounds} now yield the following
maximal continuation criterion.

\begin{thm}
\label{thm:maximal-continuation}
Assume~\eqref{eq:wp-initial-forks}--\eqref{eq:wp-initial-gap}, and
suppose that \eqref{eq:wp-initiation}
and~\eqref{eq:wp-velocity-regularity} hold for every finite
\(T>0\). Then there exist \(T_{\max}\in(0,\infty]\) and a unique
maximal mild solution of~\eqref{eq:collapsed_system}--
\eqref{eq:initial_data} on \(\mathbb R\times[0,T_{\max})\), in the
sense that its restriction to \(\mathbb R\times[0,T']\) is a mild
solution according to Definition~\ref{def:mild_solution} for every
\(T'\in(0,T_{\max})\). Moreover, if \(T_{\max}<\infty\), then
\begin{equation}
\label{eq:blowup_criterion}
\liminf_{t\uparrow T_{\max}}
\operatorname*{ess\,inf}_{x\in\mathbb R}s(x,t)=0.
\end{equation}
\end{thm}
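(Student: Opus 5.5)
We construct the maximal solution by gluing local solutions. Let \(\mathcal T\) be the set of \(T'>0\) for which a mild solution exists on \(\mathbb R\times[0,T']\), and set \(T_{\max}:=\sup\mathcal T\). Theorem~\ref{thm:local-well-posedness}, applied with any finite \(T\), shows that \(\mathcal T\neq\emptyset\). By the uniqueness part of Lemma~\ref{lem:temporal-traces-restart}, two mild solutions on \([0,T_1']\) and \([0,T_2']\) coincide on the common interval. Hence the solutions for \(T'\in\mathcal T\) define a single triple on \(\mathbb R\times[0,T_{\max})\) whose restriction to each \([0,T']\), \(T'<T_{\max}\), is a mild solution. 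The same lemma gives uniqueness of this maximal object.

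We also need \(\mathcal T\) to be an interval, i.e.\ that restricting a mild solution to a shorter interval gives a mild solution. This is immediate from Definition~\ref{def:mild_solution}, since the Duhamel formulas only involve data on \([0,t]\).

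For the blow-up criterion we argue by contradiction. Suppose \(T_{\max}<\infty\) and \eqref{eq:blowup_criterion} fails. Then there are \(\delta>0\) and \(t_0<T_{\max}\) with \(\operatorname*{ess\,inf}_x s(x,t)\geq\delta\) for all \(t\in[t_0,T_{\max})\). By Lemma~\ref{lem:temporal-traces-restart}, \(s(\cdot,t)\) is bounded below for every \(t\leq t_0\) by the ess inf on \([0,t_0]\), which is positive. So there is \(\delta'>0\) with \(s(\cdot,t)\geq\delta'\) for every \(t\in[0,T_{\max})\).

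Fix \(T:=T_{\max}+1\). Proposition~\ref{prop:apriori-bounds} gives \(\|\rho_\pm(\cdot,t)\|_{L^\infty}\leq B(T)\) uniformly for \(t<T_{\max}\), and \(0\leq f\leq 1-\delta'\). Apply Proposition~\ref{prop:continuous_dependence} (or the constructions behind Propositions~\ref{prop:self-mapping} and~\ref{prop:contraction}) with \(R_{\mathrm{in}}=B(T)\), gap \(\delta'\), and the time-shifted coefficients \(I(\cdot,\theta+\cdot)\), \(v(\cdot,\theta+\cdot)\). These coefficients have the same bounds \(M_I,M_v,M_{\partial v}\). The result is a uniform existence time \(T_c>0\), independent of the restart time \(\theta\).

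Choose \(\theta\in(T_{\max}-T_c/2,T_{\max})\). Restart at \(\theta\) from the slice \((\rho_+(\cdot,\theta),\rho_-(\cdot,\theta),f(\cdot,\theta))\), which is well defined for every \(\theta\) by Lemma~\ref{lem:temporal-traces-restart}. This produces a mild solution on \([\theta,\theta+T_c]\) of the shifted problem.

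The main obstacle is the concatenation step: showing that the glued function is a mild solution on \([0,\theta+T_c]\) in the sense of the original Duhamel formulas from time \(0\).

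\begin{itemize}
\item For \(f\), additivity of the integral in \eqref{eq:duhamel_f} together with \eqref{eq:restarted-f-duhamel} suffices.
\item For \(\rho_\pm\), compose the restarted formula \eqref{eq:restarted-rho-duhamel} on \([0,\theta]\) with the new one on \([\theta,t]\). Use the flow identity \(X_\pm(\sigma;X_\pm(\theta;x,t),\theta)=X_\pm(\sigma;x,t)\), which follows from uniqueness of characteristics as in \eqref{eq:flow_identity}. Then the exponential factors multiply, \(e^{-\int_0^\theta}\cdot e^{-\int_\theta^t}=e^{-\int_0^t}\), and the source integrals combine into \eqref{eq:duhamel_plus}--\eqref{eq:duhamel_minus}.
\item The sign constraints and the positive lower bound on \(s\) over the finite interval come from both pieces.
\item All a.e.\ statements are preserved because the characteristic maps are bi-Lipschitz.
\end{itemize}

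Thus \(\theta+T_c>T_{\max}\) belongs to \(\mathcal T\), contradicting maximality. This proves \eqref{eq:blowup_criterion}.
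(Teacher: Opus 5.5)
Your proposal is correct and follows essentially the same route as the paper: you define $T_{\max}$ as the supremum of existence times and glue local solutions via the uniqueness in Lemma~\ref{lem:temporal-traces-restart}; then, assuming the criterion fails, you use the bounds of Proposition~\ref{prop:apriori-bounds} to obtain a restart time independent of the restart point, which gives an extension past $T_{\max}$. Your explicit check, via the flow identity, that the concatenated triple satisfies the Duhamel formulas from time $0$ fills in a step the paper only asserts when it says the two solutions ``define an extension beyond $T_{\max}$''.
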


\begin{proof}
Theorem~\ref{thm:local-well-posedness} gives a mild solution on a nontrivial time interval. Let \(T_{\max}\) be the supremum of all times for which a mild solution exists. By Lemma~\ref{lem:temporal-traces-restart}, any two such solutions agree
on their common interval of existence, and therefore determine a
unique solution on
\([0,T_{\max})\).

Suppose that \(T_{\max}<\infty\) and that
\eqref{eq:blowup_criterion} does not hold. Then there exist
\(\delta>0\) and \(t_0\in[0,T_{\max})\) such that
\[
s(x,t)\geq\delta \quad \text{for a.e. }x\in\mathbb R \text{ and every }t\in[t_0,T_{\max})\,.
\]
Fix \(T>T_{\max}\), take the constants \(M_I\), \(M_v\), and \(M_{\partial v}\) introduced at the beginning of Section~\ref{Sec3} with respect to
\(\mathbb R\times(0,T)\), and let \(B\) be given by \eqref{eq:BofT} of Lemma~\ref{lem:selfmap-rho-bound}
with these constants. For every \(\tau\in[t_0,T_{\max})\), set
\[
I_\tau(x,r):=I(x,\tau+r)\,, \quad v_\tau(x,r):=v(x,\tau+r)
\]
for \((x,r)\in\mathbb R\times(0,T-T_{\max})\).
Since \(0<\tau+r<T\) on this interval, \eqref{eq:wp-initiation} and~\eqref{eq:wp-velocity-regularity} give
\[
\|I_\tau\|_{L^\infty(\mathbb R\times(0,T-T_{\max}))}\leq M_I\,,\quad 
\|v_\tau\|_{L^\infty(\mathbb R\times(0,T-T_{\max}))}\leq M_v\,,\quad \|\partial_xv_\tau\|_{L^\infty(\mathbb R\times(0,T-T_{\max}))}\leq M_{\partial v}\,,
\]
and \(I_\tau(x,r)\geq0\) and \(v_\tau(x,r)\geq0\) for a.e.  \((x,r)\in\mathbb R\times(0,T-T_{\max})\), while
Lemma~\ref{lem:temporal-traces-restart} and 
Proposition~\ref{prop:apriori-bounds}, together with the monotonicity of \(B\) and the lower bound for \(s\), yield 
\[
0\leq\rho_\pm(x,\tau) \leq B(\tau) \leq B(T_{\max})\,,
\quad 0\leq f(x,\tau)\leq1-\delta
\]
for every \(\tau\in[t_0,T_{\max})\) and for a.e. \(x\in\mathbb R\). Consequently, the choices in
Propositions~\ref{prop:self-mapping} and~\ref{prop:contraction},
applied to the shifted coefficients \(I_\tau\) and \(v_\tau\) and the initial data
\((\rho_+(\cdot,\tau),\rho_-(\cdot,\tau),f(\cdot,\tau))\),
give a local existence time \(h_0>0\) independent of \(\tau\).  Set
\[
h:=\min\left\{h_0,\frac{T-T_{\max}}2\right\}>0 \quad \text{ and choose } \quad
\tau\in(\max\{t_0,T_{\max}-h\},T_{\max})\,.
\]
Then $\tau+h>T_{\max}$, $\tau+h<T$, and Theorem~\ref{thm:local-well-posedness}, applied to the time-shifted problem with \(r=t-\tau\), gives a mild solution for \(r\in[0,h]\), or equivalently for \(t=\tau+r\in[\tau,\tau+h]\), with initial data \((\rho_+(\cdot,\tau),
\rho_-(\cdot,\tau),f(\cdot,\tau)\bigr)\).
Lemma~\ref{lem:temporal-traces-restart} shows that this solution
agrees with the original solution on every compact subinterval of
\([\tau,T_{\max})\), and hence the two solutions define an extension
beyond \(T_{\max}\), contradicting its maximality. This
proves~\eqref{eq:blowup_criterion}.
\end{proof}

\begin{rem}
\label{rem:criterion_for_well-posedness}
The nonlinear coalescence term in the first two equations
of~\eqref{eq:collapsed_system} contains the singular factor \(1/s\), and the continuation criterion~\eqref{eq:blowup_criterion} in Theorem~\ref{thm:maximal-continuation} shows that
\(T_{\max}<\infty\) can occur only if the unreplicated fraction \(s\) is no longer uniformly bounded away from zero as
\(t\uparrow T_{\max}\).
\end{rem}

\subsection{Normalised fork densities and global-in-time  existence}

We now connect the local-in-time well-posedness result in
Theorem~\ref{thm:local-well-posedness} and the maximal-continuation
criterion in Theorem~\ref{thm:maximal-continuation} with the
replication-time analysis in Section~\ref{Sec4} by showing that, for
initial data satisfying~\eqref{eq:compatible_data}, normalising the
fork densities by the unreplicated fraction \(s\) cancels the
nonlinear coalescence term in the first two equations
of~\eqref{eq:collapsed_system} and gives the linear transport
system~\eqref{eq:normalised-system} together with the
representations~\eqref{eq:normalised-characteristic-representation}
and~\eqref{eq:s-normalised-representation} below, from which we derive the
direct decay estimate for \(s\) in
Corollary~\ref{cor:direct-s-bound} and the global-in-time existence
result in Theorem~\ref{thm:global-compatible}, both stated below.

\begin{prop}
\label{prop:normalised-equations}
Fix \(T\in(0,\infty)\), assume~\eqref{eq:compatible_data}
and~\eqref{eq:wp-assumptions}, and let
\((\rho_+,\rho_-,f)\) be a mild solution of
\eqref{eq:collapsed_system}--\eqref{eq:initial_data} on
\(\mathbb R\times[0,T]\). Then the functions \(z_\pm\) and
\(z_{\pm,0}\), defined by
\[
z_\pm(x,t):=\frac{\rho_\pm(x,t)}{s(x,t)} \quad\text{and}\quad z_{\pm,0}(x):=\frac{\rho_{\pm,0}(x)}{s_0(x)} 
\]
for a.e. \((x,t)\in\mathbb R\times(0,T)\) and for a.e. 
\(x\in\mathbb R\), respectively, satisfy
\[
s\in W_{\mathrm{loc}}^{1,\infty}(\mathbb R\times(0,T))\,,
\qquad z_\pm\in L^\infty(\mathbb R\times(0,T))\,,
\]
and \(z_\pm\) admit representatives, denoted by the same symbols,
such that
\[
z_\pm \in C_{w^\ast}([0,T];L^\infty(\mathbb R))\,,
\quad z_\pm(\cdot,0)=z_{\pm,0}
\quad\text{in }L^\infty(\mathbb R)\,.
\]
Moreover,
\begin{equation}
\label{eq:normalised-system}
\left\{
\begin{aligned}
\partial_tz_+(x,t) +\partial_x\big(v(x,t)z_+(x,t)\big) &=I(x,t)\,,\\
\partial_tz_-(x,t) -\partial_x\big(v(x,t)z_-(x,t)\big) &=I(x,t)
\end{aligned}
\right.
\end{equation}
in \(\mathcal D'(\mathbb R\times(0,T))\), and, for every
\(t\in[0,T]\) and for a.e. \(x\in\mathbb R\),
\begin{align}
\label{eq:normalised-characteristic-representation}
z_\pm(x,t) &= z_{\pm,0}(X_\pm(0;x,t))
\exp\big( \mp\int_0^t (\partial_xv)(X_\pm(\eta;x,t),\eta)\,\rd\eta
\big) \nonumber \\
&\qquad + \int_0^t I(X_\pm(\sigma;x,t),\sigma)
\exp\big( \mp\int_\sigma^t (\partial_xv)(X_\pm(\eta;x,t),\eta)\,\rd\eta
\big)\,\rd\sigma \,.
\end{align}
Finally,
\begin{equation}
\label{eq:s-normalised-representation}
s(x,t) =
s_0(x) \exp\big(
-\int_0^t v(x,\tau)(z_+(x,\tau)+z_-(x,\tau))\,\rd\tau
\big)
\end{equation}
for a.e. \(x\in\mathbb R\) and every \(t\in[0,T]\).
\end{prop}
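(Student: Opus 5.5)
The plan is to first obtain the geometric identity $\partial_x s=\rho_+-\rho_-$ and the regularity of $s$, then to derive \eqref{eq:normalised-system} by a product-rule computation, and finally to read off the two representation formulas from linear transport and an ODE in $t$.

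\textbf{Step 1: regularity of $s$ and the compatibility relation.} By Proposition~\ref{prop:mild-implies-distributional}, the mild solution solves \eqref{eq:collapsed_system} in $\mathcal D'$, with $\rho_\pm(\cdot,t)\stackrel{*}{\rightharpoonup}\rho_{\pm,0}$ and $s(\cdot,t)\to s_0$. Together with \eqref{eq:compatible_data}, Lemma~\ref{lem:compatibility-propagation} then gives $\partial_xs=\rho_+-\rho_-$ for a.e.\ $t$. Since $\rho_\pm\in L^\infty$, this yields $\partial_xs\in L^\infty$. The third equation gives $\partial_ts=-v(\rho_++\rho_-)\in L^\infty$. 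Hence $s\in W^{1,\infty}_{\mathrm{loc}}$, and in fact it is globally Lipschitz. Because $s\ge \operatorname{ess\,inf}s>0$ by Definition~\ref{def:mild_solution}, we get $z_\pm=\rho_\pm/s\in L^\infty$.

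\textbf{Step 2: the normalised equations.} To derive \eqref{eq:normalised-system}, I would use the product rule $\rho_\pm=s\,z_\pm$, which I would justify by mollifying $\rho_\pm$ in space-time. Since $s$ is Lipschitz and $1/s$ is Lipschitz and bounded, multiplying a distributional solution by the Lipschitz function $1/s$ is legitimate. A cleaner justification works along the forward characteristics $Y_\pm$ from the proof of Proposition~\ref{prop:mild-implies-distributional}, where $t\mapsto\rho_\pm(Y_\pm(t;y),t)$ is $W^{1,\infty}$ and $t\mapsto s(Y_\pm(t;y),t)$ is Lipschitz. There the chain rule is valid and gives
\[
\frac{\rd}{\rd t}s(Y_\pm,t)=\bigl(\partial_ts\pm v\,\partial_xs\bigr)(Y_\pm,t).
\]
\textbf{Main obstacle.} This chain rule needs care, because $\partial_xs$ is only defined a.e.\ and need not have a meaningful trace along a curve. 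I would handle it either by approximating $s$ with mollifications and passing to the limit a.e.\ in $y$, using Fubini and the bi-Lipschitz flow, or by working directly with the mollified PDE. The algebra itself is straightforward. For the $+$ case,
\[
\partial_ts+v\partial_xs=-v(\rho_++\rho_-)+v(\rho_+-\rho_-)=-2v\rho_-,
\]
so
\[
\frac{\rd}{\rd t}\frac{\rho_+}{s}=\frac{1}{s}\Bigl(Is-\frac{2v\rho_+\rho_-}{s}-(\partial_xv)\rho_+\Bigr)+\frac{\rho_+}{s^2}\,2v\rho_-=I-(\partial_xv)z_+.
\]
The coalescence term cancels exactly. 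The same computation gives the $-$ case, with $\partial_ts-v\partial_xs=-2v\rho_+$. Along characteristics this reads $\frac{\rd}{\rd t}z_\pm(Y_\pm,t)=\bigl(I\mp(\partial_xv)z_\pm\bigr)(Y_\pm,t)$. Integrating this ODE with an integrating factor and composing with $X_\pm$ via \eqref{eq:flow_identity} gives \eqref{eq:normalised-characteristic-representation}. The distributional form \eqref{eq:normalised-system} then follows by the same change-of-variables argument as in Proposition~\ref{prop:mild-implies-distributional}.

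\textbf{Step 3: time continuity and the formula for $s$.} Weak-star continuity of $z_\pm$ and the identity $z_\pm(\cdot,0)=z_{\pm,0}$ follow by repeating the argument of Lemma~\ref{lem:temporal-traces-restart} on the representation \eqref{eq:normalised-characteristic-representation}. Alternatively, they follow from the weak-star continuity of $\rho_\pm$, the strong continuity of $s$ in $L^\infty$, and $s\ge c>0$. Finally, for a.e.\ $x$, the function $t\mapsto s(x,t)$ is absolutely continuous by \eqref{eq:restarted-f-duhamel} and satisfies
\[
\partial_ts=-v(\rho_++\rho_-)=-v(z_++z_-)\,s .
\]
This is a linear ODE in $t$ with bounded coefficient, and solving it gives \eqref{eq:s-normalised-representation} for every $t$, using the continuous representative of $s$.
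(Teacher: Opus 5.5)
Your proposal is correct. Steps 1 and 3 match the paper's argument: compatibility propagation gives $\nabla s\in L^\infty$; weak-star continuity of $z_\pm$ comes from that of $\rho_\pm$ together with the strong $L^\infty$-continuity of $s$ and $s\geq\delta>0$; and the formula for $s$ comes from solving the linear ODE in $t$.

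The real difference is in Step 2, where you reverse the order of the paper's argument.

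\emph{Paper's route.} The paper first obtains \eqref{eq:normalised-system} directly in distributional form. It tests the $\rho_\pm$ equations with $\varphi/s$, approximated in $W^{1,1}$ by smooth functions, which is possible because $1/s\in W^{1,\infty}_{\mathrm{loc}}$. It then uses the a.e.\ identity $\partial_ts\pm v\partial_xs=-2v\rho_\mp$ to cancel the coalescence terms. Only afterwards does it compute $\frac{\rd}{\rd t}z_\pm(Y_\pm(t;y),t)$ to obtain \eqref{eq:normalised-characteristic-representation}.

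\emph{Your route.} You do the characteristic computation first. From it you read off both the representation and the distributional system, the latter via the forward-flow change of variables of Proposition~\ref{prop:mild-implies-distributional} applied to $z_\pm(Y_\pm,t)\,\partial_yY_\pm\,\varphi(Y_\pm,t)$.

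\emph{Trade-off.} Your route is more economical, since one computation yields both conclusions. However, it rests entirely on the chain rule for the Lipschitz function $s$ along characteristics. The paper's test-function derivation of the PDE needs only a.e.\ identities and the Sobolev product rule, so it never has to evaluate $\partial_xs$ along curves. Yet the paper then uses exactly this chain rule, without justification, to get the representation formula. Your mollification argument therefore supplies a step the paper leaves implicit.

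\emph{Why the mollification argument works.} The map $(y,t)\mapsto(Y_\pm(t;y),t)$ is bi-Lipschitz on $\mathbb R\times[0,T]$. Indeed, it is Lipschitz in $y$ by \eqref{eq:char_lipschitz} and in $t$ with constant $M_v$, and its inverse $(x,t)\mapsto(X_\pm(0;x,t),t)$ is Lipschitz by the composition property of the characteristics. Consequently, the $L^1_{\mathrm{loc}}$ convergence of the gradients of space-time mollifications $s_\epsilon$ transfers, along a subsequence, to $L^1(0,T)$ convergence along a.e.\ characteristic. This lets you pass to the limit in the integrated chain rule for $s_\epsilon$.
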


\begin{proof}
By Proposition~\ref{prop:mild-implies-distributional}, the mild solution satisfies the size-collapsed system~\eqref{eq:collapsed_system} in
\(\mathcal D'(\mathbb R\times(0,T))\), and the trace convergences of
\(\rho_\pm\) and \(f\) established there yield
\[
\rho_\pm(\cdot,t)\longrightarrow\rho_{\pm,0}\,,
\qquad
f(\cdot,t)\longrightarrow f_0
\quad\text{in }\mathcal D'(\mathbb R)
\qquad
\text{as }t\downarrow0\,,
\]
and hence Lemma~\ref{lem:compatibility-propagation}, together with the
third equation in~\eqref{eq:collapsed_system}, gives
\[
\partial_x s(x,t) =\rho_+(x,t)-\rho_-(x,t)\,,
\quad
\partial_ts(x,t)
=-v(x,t)(\rho_+(x,t)+\rho_-(x,t)),
\]
$\text{in }\mathcal D'(\mathbb R\times(0,T))$ and, since the right-hand sides belong to
\(L^\infty(\mathbb R\times(0,T))\), it follows that
\[
s\in
W_{\mathrm{loc}}^{1,\infty} (\mathbb R\times(0,T))\,.
\]
Moreover, Definition~\ref{def:mild_solution}
and~\eqref{eq:wp-initial-gap} give
\[
\delta:=\min\{s_*,\operatorname*{ess\,inf}_{(x,t)\in\mathbb R\times(0,T)}s(x,t)\}>0\,,
\]
and therefore
\[
\|z_\pm\|_{L^\infty(\mathbb R\times(0,T))}
\leq \frac{1}{\delta} \|\rho_\pm\|_{L^\infty(\mathbb R\times(0,T))}\,,
\quad \frac1s\in
W_{\mathrm{loc}}^{1,\infty} (\mathbb R\times(0,T))\,
\]
where the latter follows from the Sobolev chain rule
(see, e.g.,~\cite{Brezis2011}). Let \(\varphi\in C_c^\infty(\mathbb R\times(0,T))\). Since \(\tfrac{1}{s}\in W_{\mathrm{loc}}^{1,\infty}(\mathbb R\times(0,T))\) and \(\varphi\) has compact support
in \(\mathbb R\times(0,T)\), the Sobolev product rule implies that
\(\tfrac{\varphi}{s}\in W^{1,1}(\mathbb R\times(0,T))\)
and \(\tfrac{\varphi}{s}=0\) a.e.\ outside \(\operatorname{supp}\varphi\),
while, for \(j\in\{x,t\}\),
\[
\partial_j\biggl(\frac{\varphi}{s}\biggr) =
\frac{\partial_j\varphi}{s}
- \frac{\varphi\partial_js}{s^2}
\quad
\text{a.e. in }\mathbb R\times(0,T)\,.
\]
Consequently, a density argument provides a sequence
\((\varphi_n)_{n\in\mathbb N}\) satisfying
\[
\varphi_n\in
C_c^\infty\bigl(\mathbb R\times(0,T)\bigr),
\qquad
\varphi_n\rightarrow\frac{\varphi}{s}
\quad\text{in }
W^{1,1}\bigl(\mathbb R\times(0,T)\bigr),
\]
and each \(\varphi_n\) is an admissible test function in the
distributional formulations of the first two equations
of~\eqref{eq:collapsed_system}, which hold by
Proposition~\ref{prop:mild-implies-distributional}. Since
\[
\rho_\pm, \: v\rho_\pm, \:
Is-\frac{2v\rho_+\rho_-}{s}
\in
L^\infty(\mathbb R\times(0,T))\,,
\]
using the preceding \(W^{1,1}\)-convergence and the
\(L^\infty\)-bounds of these functions, H\"older's inequality allows
us to pass to the limit in these identities as \(n\to\infty\), and we
obtain
\begin{align*}
-&\int_0^T\int_{\mathbb R}
\rho_\pm(x,t) \Big[
\partial_t \Big(\frac{\varphi}{s}\Big)(x,t)
\pm v(x,t)
\partial_x \Big(\frac{\varphi}{s}\Big)(x,t)
\Big]\,\rd x\,\rd t\\
&\quad=
\int_0^T\int_{\mathbb R}
\biggl( I(x,t)s(x,t) - \frac{2v(x,t)\rho_+(x,t)\rho_-(x,t)}{s(x,t)}
\biggr)\frac{\varphi(x,t)}{s(x,t)}\,\rd x\,\rd t\,.
\end{align*}
Moreover, the identities for \(\partial_xs\) and \(\partial_ts\)
obtained above imply that
\[
\partial_ts
\pm v\partial_xs
=
-2v\rho_\mp
=
-2vs z_\mp
\quad
\text{a.e. in }\mathbb R\times(0,T)\,,
\]
and thus the left- and right-hand sides of the preceding weak identity can be written, respectively, as
\begin{align*}
&-\int_0^T\int_{\mathbb R} \rho_\pm(x,t)
\Big[
\partial_t\biggl(\frac{\varphi}{s}\biggr)(x,t)
\pm v(x,t) \partial_x\biggl(\frac{\varphi}{s}\biggr)(x,t)
\Big]\,\rd x\,\rd t = \\
& -\int_0^T\int_{\mathbb R} z_\pm(x,t)
\big(\partial_t\varphi(x,t)\pm v(x,t)\partial_x\varphi(x,t)
\big)\,\rd x\,\rd t -
2\int_0^T\int_{\mathbb R}
v(x,t)z_+(x,t)z_-(x,t)\varphi(x,t)\,\rd x\,\rd t
\end{align*}
and
\begin{align*}
&\int_0^T\int_{\mathbb R}
\biggl( I(x,t)s(x,t)
- \frac{2v(x,t)\rho_+(x,t)\rho_-(x,t)}{s(x,t)} \biggr)
\frac{\varphi(x,t)}{s(x,t)}\,\rd x\,\rd t\\
&\quad =
\int_0^T\int_{\mathbb R}
I(x,t)\varphi(x,t)\,\rd x\,\rd t
- 2\int_0^T\int_{\mathbb R}
v(x,t)z_+(x,t)z_-(x,t)\varphi(x,t)\,\rd x\,\rd t\,.
\end{align*}
Hence the last integrals on the two sides cancel, and we conclude that
\[
-\int_0^T\int_{\mathbb R}
z_\pm(x,t)(\partial_t\varphi(x,t) \pm v(x,t)\partial_x\varphi(x,t))\,\rd x\,\rd t
= \int_0^T\int_{\mathbb R}
I(x,t)\varphi(x,t)\,\rd x\,\rd t\,,
\]
which is the distributional formulation
of~\eqref{eq:normalised-system}.
Furthermore, Lemma~\ref{lem:temporal-traces-restart}, Proposition~\ref{prop:apriori-bounds}, and the definition of \(\delta\) give
\[
s(\cdot,t)\geq\delta
\quad\text{a.e. in }\mathbb R\,,
\qquad \|\rho_\pm(\cdot,t)\|_{L^\infty(\mathbb R)}
\leq B(t)\leq B(T)
\qquad \text{for every }t\in[0,T]\,,
\]
and hence $z_\pm(\cdot,t)\in L^\infty(\mathbb R)$ for every $t\in[0,T]$.
For every \(\zeta\in L^1(\mathbb R)\) and \(r,t\in[0,T]\), one has
\(\tfrac{\zeta}{s}(\cdot,r)\in L^1(\mathbb R)\), and therefore
\begin{align*}
&\left| \int_{\mathbb R}
\big(z_\pm(x,t)-z_\pm(x,r)\big)\zeta(x)\,\rd x \right|
\\
&\quad= \left| \int_{\mathbb R}
\big(\rho_\pm(x,t)-\rho_\pm(x,r)\big) \frac{\zeta(x)}{s(x,r)}\,\rd x + \int_{\mathbb R} \rho_\pm(x,t)\zeta(x)
\Big( \frac1{s(x,t)}- \frac1{s(x,r)}
\Big)\, \rd x \right|
\\
&\quad\leq
\left| \int_{\mathbb R}
\big(\rho_\pm(x,t)-\rho_\pm(x,r)\big)\frac{\zeta(x)}{s(x,r)}\,\rd x\right|+
\|\rho_\pm(\cdot,t)\|_{L^\infty(\mathbb R)}
\|\zeta\|_{L^1(\mathbb R)} \left\| \frac1{s(\cdot,t)}
-\frac1{s(\cdot,r)}\right\|_{L^\infty(\mathbb R)}
\\
&\quad\leq
\left|\int_{\mathbb R}\big(\rho_\pm(x,t)-\rho_\pm(x,r)\big)\frac{\zeta(x)}{s(x,r)}\,\rd x\right|+
\frac{B(T)}{\delta^2}
\|\zeta\|_{L^1(\mathbb R)}
\|s(\cdot,t)-s(\cdot,r)\|_{L^\infty(\mathbb R)}
\quad \longrightarrow 0
\end{align*}
as $t\to r$. Here the convergence follows from~\eqref{eq:continuity_in_time} and
\(s=1-f\). Hence
\[
z_\pm
\in
C_{w^\ast}\bigl([0,T];L^\infty(\mathbb R)\bigr)\,,
\qquad z_\pm(\cdot,0)
=\frac{\rho_\pm(\cdot,0)}{s(\cdot,0)}
=\frac{\rho_{\pm,0}}{s_0}
=z_{\pm,0}\quad\text{in }L^\infty(\mathbb R)\,,
\]
where the identity at \(t=0\) follows from the initial values in Lemma~\ref{lem:temporal-traces-restart}.

Next, \eqref{eq:forward_characteristic_equation} and~\eqref{eq:forward_pullback_continuity} which were established
in the proof of Proposition~\ref{prop:mild-implies-distributional}, together with the identities for \(\partial_xs\) and \(\partial_ts\) obtained above imply
that, for a.e. \(y\in\mathbb R\),
\(\rho_\pm(Y_\pm(\cdot;y),\cdot),
s(Y_\pm(\cdot;y),\cdot)\in W^{1,\infty}(0,T)\), and 
\begin{align*}
\frac{\rd}{\rd t}s(Y_\pm(t;y),t)=
\big(\partial_ts\pm v\partial_xs\big)
(Y_\pm(t;y),t)=
-2(v\rho_\mp)(Y_\pm(t;y),t)
\qquad
\text{for a.e. }t\in(0,T)\,.
\end{align*}
Moreover, the definition of \(\delta\) and the continuity of
\(t\mapsto s(Y_\pm(t;y),t)\) give
\(s(Y_\pm(t;y),t)\geq\delta\) for every
\(t\in[0,T]\), and thus
\(z_\pm(Y_\pm(\cdot;y),\cdot)\in W^{1,\infty}(0,T)\) and
\begin{align*}
\frac{\rd}{\rd t}z_\pm(Y_\pm(t;y),t)
&=\Big(I-\frac{2v\rho_+\rho_-}{s^2}
\mp(\partial_xv)z_\pm +\frac{2v\rho_\pm\rho_\mp}{s^2}
\Big)(Y_\pm(t;y),t)\\
&=I(Y_\pm(t;y),t)\mp (\partial_xv)(Y_\pm(t;y),t)
z_\pm(Y_\pm(t;y),t)
\end{align*}
for a.e. \(t\in(0,T)\), while the initial values in
Lemma~\ref{lem:temporal-traces-restart} and  \(Y_\pm(0;y)=y\) give
\[
z_\pm\bigl(Y_\pm(0;y),0\bigr)
= \frac{\rho_{\pm,0}(y)}{s_0(y)}
= z_{\pm,0}(y)\,.
\]
The integrating-factor formula gives the corresponding representation along the forward characteristics \(Y_\pm\), and, setting \(x=Y_\pm(t;y)\), the inverse relation \(y=Y_\pm(t;\cdot)^{-1}(x)=X_\pm(0;x,t)\) and the flow relation \(Y_\pm(\cdot;y)=X_\pm(\cdot;x,t)\) on \([0,t]\), both established in
the proof of Proposition~\ref{prop:mild-implies-distributional}, yield, upon
substitution into this representation,
\eqref{eq:normalised-characteristic-representation} for every \(t\in[0,T]\) and for a.e. \(x\in\mathbb R\).

Finally,~\eqref{eq:restarted-f-duhamel} with \(\theta=0\), \(\rho_\pm=sz_\pm\), and the lower bound for \(s\) obtained above give, for a.e. \(x\in\mathbb R\),
\[
s(x,\cdot)\,, \, 
\log s(x,\cdot) \in W^{1,\infty}(0,T)\quad \text{and} \quad 
s(x,0)=s_0(x),
\]
and
\[
\frac{\rd}{\rd t}\log s(x,t)
=
-v(x,t)\big(z_+(x,t)+z_-(x,t)\big)
\qquad
\text{for a.e. }t\in(0,T)\,.
\]
Consequently,
\[
\log s(x,t)-\log s_0(x)
=-\int_0^tv(x,\tau)\bigl(z_+(x,\tau)+z_-(x,\tau)\bigr)\,\rd\tau
\]
for a.e. \(x\in\mathbb R\) and every \(t\in[0,T]\), which proves \eqref{eq:s-normalised-representation} after taking the exponential.
\end{proof}

Proposition~\ref{prop:normalised-equations} now gives the following
direct decay estimate for the unreplicated fraction.

\begin{cor}
\label{cor:direct-s-bound}
Under the assumptions of Proposition~\ref{prop:normalised-equations}, assume, in addition, that there exist \(I_{\min}>0\) and \(v_{\min}>0\) such that
\[
I(x,t)\geq I_{\min}\,,
\quad
v(x,t)\geq v_{\min}
\qquad
\text{for a.e. }(x,t)\in\mathbb R\times(0,T)\,.
\]
Then
\begin{equation}
\label{eq:direct-general-s-bound}
\|s(\cdot,t)\|_{L^\infty(\mathbb R)}
\leq \|s_0\|_{L^\infty(\mathbb R)}
\exp\left[ -2v_{\min}I_{\min} \int_0^t\int_0^\tau
e^{-M_{\partial v}(\tau-\sigma)} \,\rd\sigma\,\rd\tau
\right]
\end{equation}
for every \(t\in[0,T]\). In particular, if
\(M_{\partial v}=0\), which holds, for example, when the fork speed
is independent of \(x\), then
\begin{equation}
\label{eq:direct-constant-speed-bound}
\|s(\cdot,t)\|_{L^\infty(\mathbb R)}
\leq \|s_0\|_{L^\infty(\mathbb R)} e^{-v_{\min}I_{\min}t^2}
\end{equation}
for every \(t\in[0,T]\).
\end{cor}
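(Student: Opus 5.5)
The plan is to read the bound directly off the two representations in Proposition~\ref{prop:normalised-equations}: formula~\eqref{eq:s-normalised-representation} for \(s\), and the characteristic formula~\eqref{eq:normalised-characteristic-representation} for \(z_\pm\). No new a priori estimate is needed. Positivity and monotonicity carry the whole argument.

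First, I bound \(z_\pm\) from below pointwise. By~\eqref{eq:wp-initial-forks} and~\eqref{eq:wp-initial-gap}, \(z_{\pm,0}=\rho_{\pm,0}/s_0\geq0\) a.e. The transported initial term in~\eqref{eq:normalised-characteristic-representation} is therefore non-negative, and I drop it. Here I use that \(x\mapsto X_\pm(0;x,t)\) is bi-Lipschitz, so null sets are preserved. In the Duhamel term, \(|\partial_xv|\leq M_{\partial v}\) gives
\[
\exp\Bigl(\mp\int_\sigma^t(\partial_xv)(X_\pm(\eta;x,t),\eta)\,\rd\eta\Bigr)\geq e^{-M_{\partial v}(t-\sigma)}.
\]
I also need \(I(X_\pm(\sigma;x,t),\sigma)\geq I_{\min}\) for a.e.\ \((\sigma,x)\). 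This follows from the hypothesis via the measure-preservation properties of the characteristic flow noted after~\eqref{eq:char_lipschitz}, combined with Fubini. Together these give
\[
z_\pm(x,\tau)\geq I_{\min}\int_0^\tau e^{-M_{\partial v}(\tau-\sigma)}\,\rd\sigma
\]
for every \(\tau\in[0,T]\) and a.e.\ \(x\).

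Second, I insert this lower bound into~\eqref{eq:s-normalised-representation}. I use \(v\geq v_{\min}\) and the fact that \(z_\pm\geq0\), which holds since \(\rho_\pm\geq0\) and \(s>0\). The exponent then satisfies
\[
-\int_0^tv(z_++z_-)\,\rd\tau\leq-2v_{\min}I_{\min}\int_0^t\int_0^\tau e^{-M_{\partial v}(\tau-\sigma)}\,\rd\sigma\,\rd\tau
\]
for a.e.\ \(x\). I then bound \(s_0(x)\leq\|s_0\|_{L^\infty(\mathbb R)}\) and take the essential supremum in \(x\), which yields~\eqref{eq:direct-general-s-bound} for every \(t\in[0,T]\).

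The only delicate point is a measure-theoretic one. The pointwise lower bound on \(z_\pm(x,\tau)\) must hold for a.e.\ \((x,\tau)\) jointly, so that the time integral in~\eqref{eq:s-normalised-representation} can be estimated for a.e.\ \(x\). Since~\eqref{eq:normalised-characteristic-representation} holds for every \(t\) and a.e.\ \(x\), and the representatives are jointly measurable, Fubini supplies this.

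Finally, for the special case I set \(M_{\partial v}=0\). The inner integral becomes \(\tau\), and \(\int_0^t2\tau\,\rd\tau=t^2\) gives~\eqref{eq:direct-constant-speed-bound}. When \(v\) is independent of \(x\), \(\partial_xv=0\), so this case applies.
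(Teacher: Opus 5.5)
Your proposal is correct and follows the paper's proof essentially step by step. You drop the non-negative transported term in \eqref{eq:normalised-characteristic-representation}, bound the exponential factor below by \(e^{-M_{\partial v}(t-\sigma)}\) and \(I\) below by \(I_{\min}\), then insert the resulting lower bound for \(z_\pm\) into \eqref{eq:s-normalised-representation} and take the essential supremum. Your Fubini remark, that the lower bound must hold for a.e.\ \((x,\tau)\) jointly, addresses a point the paper leaves implicit, and you handle it correctly.
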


\begin{proof}
By~\eqref{eq:wp-initial-forks} and~\eqref{eq:wp-initial-gap}, \(z_{\pm,0}(x)=\tfrac{\rho_{\pm,0}(x)}{s_0(x)}\geq0\) for a.e. \(x\in\mathbb R\), while~\eqref{eq:wp-velocity-regularity} gives
\begin{align*}
\exp\left(\mp\int_\sigma^t
(\partial_xv)(X_\pm(\eta;x,t),\eta)\,\rd\eta
\right)\geq \exp\left(-\int_\sigma^t |
(\partial_xv)(X_\pm(\eta;x,t),\eta)|\,\rd\eta
\right) \geq
e^{-M_{\partial v}(t-\sigma)}
\end{align*}
for every \(0\leq\sigma\leq t\leq T\) and for a.e.\
\(x\in\mathbb R\). Hence
\eqref{eq:normalised-characteristic-representation} of
Proposition~\ref{prop:normalised-equations} and the lower bound for
\(I\) yield
\begin{align*}
z_\pm(x,t)\geq
\int_0^t I(X_\pm(\sigma;x,t),\sigma)
\exp\left(\mp\int_\sigma^t(\partial_xv)\bigl(X_\pm(\eta;x,t),\eta)\,\rd\eta\right)\,\rd\sigma
\geq I_{\min} \int_0^t
e^{-M_{\partial v}(t-\sigma)}\,\rd\sigma
\end{align*}
for every \(t\in[0,T]\) and for a.e.\
\(x\in\mathbb R\). Consequently,
\begin{align*}
\int_0^t v(x,\tau)(z_+(x,\tau)+z_-(x,\tau))\,\rd\tau
\geq 2v_{\min}I_{\min} \int_0^t\int_0^\tau e^{-M_{\partial v}(\tau-\sigma)} \,\rd\sigma\,\rd\tau
\end{align*}
for every \(t\in[0,T]\) and for a.e.\
\(x\in\mathbb R\), and therefore
\eqref{eq:s-normalised-representation} of
Proposition~\ref{prop:normalised-equations} gives
\[
s(x,t)
\leq s_0(x) \exp\left[ -2v_{\min}I_{\min} \int_0^t\int_0^\tau e^{-M_{\partial v}(\tau-\sigma)}
\,\rd\sigma\,\rd\tau
\right]\,.
\]
Taking the essential supremum over \(x\in\mathbb R\) proves
\eqref{eq:direct-general-s-bound}, while, if
\(M_{\partial v}=0\), then
\(
2v_{\min}I_{\min}
\int_0^t\int_0^\tau
1\,\rd\sigma\,\rd\tau
=
v_{\min}I_{\min}t^2,
\)
and~\eqref{eq:direct-constant-speed-bound} follows from
\eqref{eq:direct-general-s-bound}.
\end{proof}

The normalised representation formulas
\eqref{eq:normalised-characteristic-representation}
and~\eqref{eq:s-normalised-representation}, established in Proposition~\ref{prop:normalised-equations} under the compatibility condition~\eqref{eq:compatible_data}, together with the continuation criterion~\eqref{eq:blowup_criterion} in
Theorem~\ref{thm:maximal-continuation}, give the following
global-in-time existence result.

\begin{thm}
\label{thm:global-compatible}
Assume~\eqref{eq:compatible_data} and
\eqref{eq:wp-initial-forks}--\eqref{eq:wp-initial-gap}, and suppose that \eqref{eq:wp-initiation}
and~\eqref{eq:wp-velocity-regularity} hold for every finite \(T>0\). Then
\eqref{eq:collapsed_system}--\eqref{eq:initial_data} admits a unique global-in-time mild solution on
\(\mathbb R\times[0,\infty)\), in the sense that its restriction to \(\mathbb R\times[0,T']\) is a mild solution according to
Definition~\ref{def:mild_solution} for every \(T'>0\).
\end{thm}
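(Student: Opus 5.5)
We argue by contradiction with the continuation criterion. Theorem~\ref{thm:maximal-continuation} gives a unique maximal mild solution on \(\mathbb R\times[0,T_{\max})\). It therefore suffices to show that \(T_{\max}<\infty\) is impossible. Suppose \(T_{\max}<\infty\), fix any \(T>T_{\max}\), and take the constants \(M_I\), \(M_v\), \(M_{\partial v}\) with respect to \(\mathbb R\times(0,T)\). By~\eqref{eq:blowup_criterion} it is enough to exhibit \(\delta>0\) such that \(s(x,t)\geq\delta\) for a.e. \(x\) and every \(t\in[0,T_{\max})\), with \(\delta\) independent of \(t\).

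To produce \(\delta\), fix \(T'\in(0,T_{\max})\). The restriction of the maximal solution to \(\mathbb R\times[0,T']\) is a mild solution in the sense of Definition~\ref{def:mild_solution}, and the data satisfy~\eqref{eq:compatible_data}. Hence Proposition~\ref{prop:normalised-equations} applies with \(T\) replaced by \(T'\). We first bound \(z_\pm\) from above through~\eqref{eq:normalised-characteristic-representation}. Since \(z_{\pm,0}=\rho_{\pm,0}/s_0\leq R_0/s_*\) by~\eqref{eq:wp-initial-forks}--\eqref{eq:wp-initial-gap}, and the exponential factors are bounded by \(e^{M_{\partial v}(t-\sigma)}\), we get, for every \(t\in[0,T']\) and a.e. \(x\),
\[
0\leq z_\pm(x,t)\leq \frac{R_0}{s_*}e^{M_{\partial v}t}+M_I\int_0^te^{M_{\partial v}(t-\sigma)}\,\rd\sigma\leq Z(T),
\]
where \(Z(T)\) denotes the value of the middle expression at \(t=T\). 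This bound depends on neither \(T'\) nor on the solution beyond its data. Inserting it into~\eqref{eq:s-normalised-representation} gives
\[
s(x,t)\geq s_*\exp\bigl(-2M_vTZ(T)\bigr)=:\delta>0
\qquad\text{for every }t\in[0,T']\text{ and a.e. }x\in\mathbb R .
\]
Since \(T'<T_{\max}\) was arbitrary and \(\delta\) does not depend on \(T'\), the bound holds on all of \([0,T_{\max})\). Thus the liminf in~\eqref{eq:blowup_criterion} is at least \(\delta>0\), a contradiction, so \(T_{\max}=\infty\).

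Uniqueness follows from the uniqueness part of Theorem~\ref{thm:maximal-continuation}, or equivalently from Lemma~\ref{lem:temporal-traces-restart}: any two global mild solutions agree on every \([0,T']\). The restriction property in the statement is exactly the sense in which the maximal solution is defined.

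The main point requiring care is that Proposition~\ref{prop:normalised-equations} presupposes a mild solution on a closed interval \([0,T']\) with \(\operatorname{ess\,inf}s>0\). We therefore apply it only for \(T'<T_{\max}\), where this holds by definition of the maximal solution, and never at \(T_{\max}\) itself. The compatibility hypothesis enters solely through that proposition. It is what removes the nonlinear coalescence term and yields an upper bound on \(z_\pm\) that is linear in the data. Without it, only Proposition~\ref{prop:apriori-bounds} would be available, and its lower bound on \(s\) degenerates in finite time.
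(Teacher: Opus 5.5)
Your proposal is correct and follows essentially the same route as the paper. You apply Proposition~\ref{prop:normalised-equations} on each \([0,T']\) with \(T'<T_{\max}\), bound \(z_\pm\) uniformly through~\eqref{eq:normalised-characteristic-representation}, and insert this bound into~\eqref{eq:s-normalised-representation} to contradict~\eqref{eq:blowup_criterion}. The only difference is cosmetic: you bound the time integrals using \(T\) where the paper uses \(T_{\max}\), which changes nothing.
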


\begin{proof}
By Theorem~\ref{thm:maximal-continuation}, there exist \(T_{\max}\in(0,\infty]\) and a unique maximal mild solution on \(\mathbb R\times[0,T_{\max})\).  Assume, for contradiction, that \(T_{\max}<\infty\).

Fix \(T>T_{\max}\) and, for this value of \(T\), use the constants \(M_I\), \(M_v\), and \(M_{\partial v}\) defined at the beginning of
Section~\ref{Sec3} (after~\eqref{eq:wp-assumptions}). For every \(t\in[0,T_{\max})\), choose
\(T'\in(t,T_{\max})\). The restriction of the maximal mild solution to \(\mathbb R\times[0,T']\) is a mild solution, and hence Proposition~\ref{prop:normalised-equations} applies on this interval,
while~\eqref{eq:wp-initial-forks}
and~\eqref{eq:wp-initial-gap} give
\[
\|z_{\pm,0}\|_{L^\infty(\mathbb R)}
\leq \frac{1}{s_*} \|\rho_{\pm,0}\|_{L^\infty(\mathbb R)}
<\infty\,.
\]
Therefore, it follows from
\eqref{eq:normalised-characteristic-representation} in
Proposition~\ref{prop:normalised-equations} that
\begin{align*}
\|z_\pm(\cdot,t)\|_{L^\infty(\mathbb R)}
\leq
\|z_{\pm,0}\|_{L^\infty(\mathbb R)}
e^{M_{\partial v}T_{\max}}
+ M_I\int_0^{T_{\max}}
e^{M_{\partial v}(T_{\max}-\sigma)}\,\rd\sigma
<\infty
\end{align*}
for every \(t\in[0,T_{\max})\), and thus
\[
\sup_{0\leq t<T_{\max}} \big(\|z_+(\cdot,t)\|_{L^\infty(\mathbb R)}
+\|z_-(\cdot,t)\|_{L^\infty(\mathbb R)} \big) <\infty\,.
\]
Combining this estimate with
\eqref{eq:s-normalised-representation} of
Proposition~\ref{prop:normalised-equations},
\eqref{eq:wp-velocity-regularity},
and~\eqref{eq:wp-initial-gap}, we obtain
\begin{align*}
s(x,t)
&\geq s_*
\exp\Big(-M_v\int_0^t
\big(\|z_+(\cdot,\tau)\|_{L^\infty(\mathbb R)}
+\|z_-(\cdot,\tau)\|_{L^\infty(\mathbb R)}
\big)\,\rd\tau\Big)\\
&\geq
s_* \exp\Big( -M_vT_{\max}\sup_{0\leq t<T_{\max}}
\big(\|z_+(\cdot,t)\|_{L^\infty(\mathbb R)}
+\|z_-(\cdot,t)\|_{L^\infty(\mathbb R)}\big)
\Big)>0
\end{align*}
for a.e. \(x\in\mathbb R\) and every \(t\in[0,T_{\max})\). Hence
\[
\liminf_{t\uparrow T_{\max}}
\operatorname*{ess\,inf}_{x\in\mathbb R}s(x,t)>0\,,
\]
which contradicts the continuation
criterion~\eqref{eq:blowup_criterion} in
Theorem~\ref{thm:maximal-continuation}. Therefore
\(T_{\max}=\infty\), and the unique maximal mild solution is
global-in-time, which proves the theorem.
\end{proof}

We end this section with the following monotonicity consequence of \eqref{eq:duhamel_f} and the non-negativity of the fork densities and fork speed.

\begin{cor}
\label{cor:monotonicity}
Fix \(T\in(0,\infty)\), assume
\eqref{eq:wp-velocity-regularity}, and let
\((\rho_+,\rho_-,f)\) be a mild solution on
\(\mathbb R\times[0,T]\). Then, for a.e.\
\(x\in\mathbb R\), the functions
\[
t\mapsto f(x,t)\,,\quad t\mapsto s(x,t)
\]
are absolutely continuous on \([0,T]\) and satisfy
\[
\partial_tf(x,t) =v(x,t)(\rho_+(x,t)+\rho_-(x,t))\geq0 \,,
\quad \partial_ts(x,t) = -v(x,t)(\rho_+(x,t)+\rho_-(x,t))\leq0
\]
for a.e. \(t\in(0,T)\), and hence these are,
respectively, non-decreasing and non-increasing on \([0,T]\).
\end{cor}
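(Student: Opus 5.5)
The plan is to read everything off the mild formulation~\eqref{eq:duhamel_f}, using the time-continuous representatives constructed in Lemma~\ref{lem:temporal-traces-restart}. By Definition~\ref{def:mild_solution}, \(\rho_\pm\in L^\infty(\mathbb R\times(0,T))\) with \(\rho_\pm\geq0\) a.e. By~\eqref{eq:wp-velocity-regularity}, \(v\in L^\infty(\mathbb R\times(0,T))\) with \(v\geq0\) a.e. Hence the product \(g:=v(\rho_++\rho_-)\) lies in \(L^\infty(\mathbb R\times(0,T))\) and satisfies \(g\geq0\) a.e.

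First, Fubini's theorem gives a full-measure set \(N\subset\mathbb R\) with the following two properties for every \(x\in N\). The section \(g(x,\cdot)\) belongs to \(L^\infty(0,T)\subset L^1(0,T)\) and is non-negative for a.e. \(t\in(0,T)\). In addition, \eqref{eq:restarted-f-duhamel} with \(\theta=0\) holds at \(x\), that is, \(f(x,t)=f_0(x)+\int_0^tg(x,\sigma)\,\rd\sigma\).

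Some care is needed here, and it is the only point I expect to require attention. Lemma~\ref{lem:temporal-traces-restart} gives this identity for every \(t\) but only for a.e. \(x\), with the null set possibly depending on \(t\). To avoid this, I would simply take, as that lemma does, the representative of \(f\) defined by the right-hand side for every \(t\in[0,T]\) and every \(x\in N\). This representative agrees with \(f\) a.e. in \((x,t)\) and is the same one fixed in the lemma.

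Next, for each \(x\in N\), the map \(t\mapsto f_0(x)+\int_0^tg(x,\sigma)\,\rd\sigma\) is the indefinite integral of an \(L^1(0,T)\) function. It is therefore absolutely continuous (indeed Lipschitz, with constant \(2M_v\|\rho_\pm\|_{L^\infty}\)). By the Lebesgue differentiation theorem, \(\partial_tf(x,t)=g(x,t)\geq0\) for a.e. \(t\in(0,T)\). Since \(s=1-f\), the map \(t\mapsto s(x,t)\) is also absolutely continuous, with \(\partial_ts=-g\leq0\) a.e.

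Finally, monotonicity follows because an absolutely continuous function equals the integral of its derivative. For \(0\leq t_1\leq t_2\leq T\) we get \(f(x,t_2)-f(x,t_1)=\int_{t_1}^{t_2}g(x,\sigma)\,\rd\sigma\geq0\), and symmetrically \(s(x,t_2)\leq s(x,t_1)\). This argument uses only the mild formulation and non-negativity, not the compatibility condition.
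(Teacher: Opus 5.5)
Your proposal is correct and follows essentially the same route as the paper: both read absolute continuity and the derivative identity directly off \eqref{eq:duhamel_f}, then use \(v\geq0\) and \(\rho_\pm\geq0\) to get \(f(x,t)-f(x,r)=\int_r^t v(\rho_++\rho_-)\,\rd\tau\geq0\) (and the reverse inequality for \(s\)). The paper leaves implicit the choice of a single full-measure set of \(x\) on which the representative is defined for every \(t\), and your handling of this point is a sensible refinement; note that it needs only Definition~\ref{def:mild_solution} and Fubini, not Lemma~\ref{lem:temporal-traces-restart}, whose hypotheses go beyond those assumed in the corollary.
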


\begin{proof}
It follows from~\eqref{eq:duhamel_f} that the functions in the
statement are absolutely continuous and satisfy the derivative identities in the statement, and, since 
\eqref{eq:wp-velocity-regularity} and
Definition~\ref{def:mild_solution} give
\(v\geq0\) and \(\rho_\pm\geq0\), respectively, we obtain, 
for every \(0\leq r\leq t\leq T\),
\begin{align*}
f(x,t)-f(x,r)= \int_r^t v(x,\tau)(\rho_+(x,\tau)+\rho_-(x,\tau))\,\rd\tau
\geq0 
\end{align*}
and 
\begin{align*}
s(x,t)-s(x,r)=-\int_r^t v(x,\tau)(\rho_+(x,\tau)+\rho_-(x,\tau))\,\rd\tau
\leq0
\end{align*}
for a.e. \(x\in\mathbb R\), which proves the claimed monotonicity.
\end{proof}

\section{Replication Timing}
\label{Sec4}
In this section, we derive replication-time bounds on the real line
\(\mathbb R\) and on the one-dimensional torus of length \(L>0\), denoted by
\(\mathbb T_L\coloneqq\mathbb R/(L\mathbb Z)\). 
Moreover, under the additional assumptions stated later in Subsection~\ref{Sec4.2}, we obtain
replication-time bounds on the bounded interval \(D\coloneqq[0,L]\) and on the
half-line \(\mathbb R_+\coloneqq[0,\infty)\). Finally, we compare the resulting estimates with stochastic simulations of DNA replication fitted to experimental replication-timing data. \\

For the results on \(\mathbb R\) and \(\mathbb T_L\), we assume that
\eqref{eq:wp-initiation} and~\eqref{eq:wp-velocity-regularity} hold on \(\mathbb R\times(0,T)\) for every \(T\in(0,\infty)\), where, for \(\mathbb T_L\), \(I\) and \(v\) are regarded as \(L\)-periodic functions on \(\mathbb R\). We consider the initially unreplicated genome
\begin{subequations}
\label{eq:ass_light-cone-general}
\begin{equation}
\label{eq:ass_light-cone-initial}
s_0(x)=1\,, \quad
\rho_{+,0}(x)=\rho_{-,0}(x)=0
\qquad\text{for a.e. }x\in\mathbb R\,,
\end{equation}
and assume that there exists \(v_{\min}>0\) such that 
\begin{equation}
\label{eq:ass_light-cone-speed}
v(x,t)\geq v_{\min}
\quad\text{for a.e. }(x,t)\in\mathbb R\times(0,\infty)\,,
\end{equation}
\end{subequations}
Note that the initial data
in~\eqref{eq:ass_light-cone-initial} imply
\eqref{eq:compatible_data} and
\eqref{eq:wp-initial-forks}--\eqref{eq:wp-initial-gap} with \(s_*=1\). By Theorem~\ref{thm:global-compatible}, \eqref{eq:collapsed_system}--\eqref{eq:initial_data} admits a unique
global-in-time mild solution on \(\mathbb R\times[0,\infty)\). In what
follows, let \((\rho_+,\rho_-,f)\) denote this solution, set \(s:=1-f\), and let \(z_\pm\) denote the normalised fork densities defined in Proposition~\ref{prop:normalised-equations} on every finite time interval. 

\subsection{Replication timing on \texorpdfstring{$\R$}{R}}
\label{Sec4.1}
In this subsection, we prove replication-time bounds on \(\mathbb R\),
which are sharp when the initiation rate and the positive fork speed are
constant.

Using the normalised system~\eqref{eq:normalised-system}, we first derive the following light-cone representation formula for the unreplicated
fraction \(s(x,t)\)

\begin{lem}
\label{lem:light-cone-representation}
Fix \(T\in(0,\infty)\), assume \eqref{eq:wp-initiation}, \eqref{eq:wp-velocity-regularity}, and~\eqref{eq:ass_light-cone-initial}, and let
\((\rho_+,\rho_-,f)\) be a mild solution on \(\mathbb R\times[0,T]\). Suppose, in addition, that the fork speed is constant,
\[
v(x,t)=v>0
\quad\text{for a.e. }(x,t)\in\mathbb R\times(0,T)\,.
\]
Then the normalised fork densities defined in
Proposition~\ref{prop:normalised-equations} satisfy
\[
z_\pm(x,t)
= \int_0^t I(x\mp v(t-\sigma),\sigma)\,\rd\sigma
\]
for every \(t\in[0,T]\) and for a.e. \(x\in\mathbb R\). Consequently, the unreplicated fraction admits the light-cone representation
\begin{equation}
\label{eq:light-cone-derived-constant-v}
s(x,t) = \exp\left( -\int_0^t \int_{x-v(t-\sigma)}^{x+v(t-\sigma)} I(\xi,\sigma)\,\rd\xi\,\rd\sigma \right)
\end{equation}
for every \(t\in[0,T]\) and for a.e. \(x\in\mathbb R\).
\end{lem}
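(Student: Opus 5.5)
The plan is to read off both formulas from Proposition~\ref{prop:normalised-equations}, which applies here. The data in~\eqref{eq:ass_light-cone-initial} satisfy~\eqref{eq:compatible_data} (both sides vanish since $s_0\equiv1$ and $\rho_{\pm,0}\equiv0$), as well as~\eqref{eq:wp-initial-forks}--\eqref{eq:wp-initial-gap} with $s_*=1$. First I will note that $z_{\pm,0}=\rho_{\pm,0}/s_0=0$ almost everywhere. Since $v$ is constant, $\partial_xv=0$, so $M_{\partial v}=0$. The characteristic problem~\eqref{eq:char} is then solved explicitly by
\[
X_\pm(\sigma;x,t)=x\mp v(t-\sigma),\qquad 0\le\sigma\le t.
\]
By uniqueness in Carath\'eodory's theorem, this is the backward characteristic used in the representation.

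Substituting into~\eqref{eq:normalised-characteristic-representation}, the initial-data term vanishes and every exponential factor equals one. This gives
\[
z_\pm(x,t)=\int_0^t I(x\mp v(t-\sigma),\sigma)\,\rd\sigma
\]
for every $t\in[0,T]$ and a.e.\ $x$. That is the first claim.

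For the second claim, I insert this into~\eqref{eq:s-normalised-representation} with $s_0=1$:
\[
s(x,t)=\exp\Bigl(-v\int_0^t\int_0^\tau\bigl[I(x-v(\tau-\sigma),\sigma)+I(x+v(\tau-\sigma),\sigma)\bigr]\,\rd\sigma\,\rd\tau\Bigr).
\]
Next I exchange the order of integration by Fubini, which is legitimate because $I\in L^\infty$ and the domain is bounded. This gives $\int_0^t\int_\sigma^t\cdots\,\rd\tau\,\rd\sigma$. For fixed $\sigma$, I substitute $\xi=x\mp v(\tau-\sigma)$, so $\rd\xi=\mp v\,\rd\tau$. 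The two inner integrals become $\int_{x-v(t-\sigma)}^{x}I(\xi,\sigma)\,\rd\xi$ and $\int_{x}^{x+v(t-\sigma)}I(\xi,\sigma)\,\rd\xi$, and the factor $v$ is absorbed by the Jacobian. Adding the two pieces yields the light-cone exponent in~\eqref{eq:light-cone-derived-constant-v}.

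The only delicate point is measure-theoretic. $I$ is merely $L^\infty$, so the compositions $I(x\mp v(t-\sigma),\sigma)$ need to be meaningful for a.e.\ $x$ and every $t$. The a.e.\ statements must also survive the $\tau$-integration in~\eqref{eq:s-normalised-representation}. I will handle this with Tonelli/Fubini on the jointly measurable function $(x,\sigma,\tau)\mapsto I(x\mp v(\tau-\sigma),\sigma)$. Because the map $(x,\sigma)\mapsto(x\mp v(\tau-\sigma),\sigma)$ is measure-preserving, null sets are preserved. Hence the identity for $z_\pm(x,\tau)$ holds for a.e.\ $(x,\tau)$, which is all that is needed inside the time integral. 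The every-$t$ conclusion then follows from~\eqref{eq:s-normalised-representation} holding for every $t$.
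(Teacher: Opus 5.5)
Your proposal is correct and follows the paper's proof essentially step for step. You use the explicit characteristics $X_\pm(\sigma;x,t)=x\mp v(t-\sigma)$, for which the initial-data term vanishes and the exponentials equal one in~\eqref{eq:normalised-characteristic-representation}; you then insert the result into~\eqref{eq:s-normalised-representation}, swap the order of integration by Fubini, and substitute $\xi=x\mp v(\tau-\sigma)$. Your measure-theoretic remark, passing from ``every $t$, a.e.\ $x$'' to ``a.e.\ $(x,\tau)$'' inside the time integral, supplies a detail the paper leaves implicit.
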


\begin{proof}
Since the fork speed is constant, \eqref{eq:normalised-system} of
Proposition~\ref{prop:normalised-equations} reduces to
\[
\partial_tz_\pm(x,t)\pm v\partial_xz_\pm(x,t) =I(x,t) \quad\text{in } \mathcal D'(\mathbb R\times(0,T))\,,
\]
while~\eqref{eq:ass_light-cone-initial} and the definition of
\(z_{\pm,0}\) give \(z_{\pm,0}(x)=\frac{\rho_{\pm,0}(x)}{s_0(x)}=0 \) for a.e. $x\in\mathbb R$.
Moreover, the corresponding backward characteristics \(X_\pm\) satisfy
\[
X_\pm(\sigma;x,t)
=x\mp v(t-\sigma)\qquad\text{for }0\leq\sigma\leq t\leq T\,,
\]
and, since \(\partial_xv=0\), it follows from
\eqref{eq:normalised-characteristic-representation} of
Proposition~\ref{prop:normalised-equations} that
\[
z_\pm(x,t)=\int_0^t I (x\mp v(t-\sigma),\sigma)\,\rd\sigma
\]
for every \(t\in[0,T]\) and for a.e. \(x\in\mathbb R\).

Furthermore,~\eqref{eq:s-normalised-representation}, \eqref{eq:ass_light-cone-initial}, and the constant value of the fork speed give \(s(x,t)>0\) and
\begin{align*}
\log s(x,t)= -v\int_0^t (z_+(x,\tau)+z_-(x,\tau))\,\rd\tau =
-v\int_0^t\int_0^\tau [I(x-v(\tau-\sigma),\sigma) + I(x+v(\tau-\sigma),\sigma)]\,\rd\sigma\,\rd\tau
\end{align*}
for every \(t\in[0,T]\) and for a.e. \(x\in\mathbb R\). Since
\(I\in L^\infty\bigl(\mathbb R\times(0,T)\bigr)\), Fubini's theorem
applies to the preceding integral, and therefore
\[
\log s(x,t)
=-v\int_0^t\int_\sigma^t\left[I(x-v(\tau-\sigma),\sigma)
+I(x+v(\tau-\sigma),\sigma)\right]\rd\tau\,\rd\sigma\,.
\]
For a.e. \(\sigma\in(0,t)\), the changes of variables
\(\xi=x\mp v(\tau-\sigma)\) and \(\rd\xi=\mp v\,\rd\tau\), applied to
the two terms of the inner integral, respectively, give
\begin{align*}
v\int_\sigma^t I\bigl(x-v(\tau-\sigma),\sigma\bigr)\,\rd\tau
= \int_{x-v(t-\sigma)}^x I(\xi,\sigma)\,\rd\xi 
\end{align*}
and
\begin{align*}
v\int_\sigma^t I\bigl(x+v(\tau-\sigma),\sigma\bigr)\,\rd\tau
= \int_x^{x+v(t-\sigma)} I(\xi,\sigma)\,\rd\xi\,,
\end{align*}
and adding these two identities and substituting the resulting identity
into the preceding formula yields
\[
\log s(x,t) = -\int_0^t \int_{x-v(t-\sigma)}^{x+v(t-\sigma)} I(\xi,\sigma)\,\rd\xi\,\rd\sigma,
\]
which proves~\eqref{eq:light-cone-derived-constant-v} after taking the
exponential.
\end{proof}

For a spatially and temporally dependent fork speed \(v(x,t)\), the backward characteristics \(X_\pm(\cdot;x,t)\) defined by~\eqref{eq:char} in Subsection~\ref{subsec:characteristics-mild-formulation} determine, for every \(x\in\mathbb R\), \(t\in[0,\infty)\), and  \(\tau\in[0,t]\), the interval
\begin{equation}
\label{eq:general-cone-cross-section}
C_{x,t}^{\mathbb R}(\tau)
\coloneqq \left[ X_+(\tau;x,t), X_-(\tau;x,t) \right] \subset\mathbb R\,,
\end{equation}
and the following lemma gives the corresponding light-cone
representation formula for the unreplicated fraction \(s(x,t)\).

\begin{lem}
\label{lem:general-light-cone-representation}
Fix \(T\in(0,\infty)\), assume \eqref{eq:wp-initiation}, \eqref{eq:wp-velocity-regularity}, and~\eqref{eq:ass_light-cone-general}, and let
\((\rho_+,\rho_-,f)\) be a mild solution on \(\mathbb R\times[0,T]\). Then
\begin{equation}
\label{eq:general_lightcone_representation}
s(x,t)
= \exp\left( -\int_0^t \int_{C_{x,t}^{\mathbb R}(\tau)} I(\xi,\tau)\,\rd\xi\,\rd\tau \right)
\end{equation}
for every \(t\in[0,T]\) and for a.e. \(x\in\mathbb R\).
\end{lem}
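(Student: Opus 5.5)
Since \(s_0=1\) and \(\rho_{\pm,0}=0\), we have \(z_{\pm,0}=0\), and Proposition~\ref{prop:normalised-equations} applies: the compatibility condition~\eqref{eq:compatible_data} holds trivially. By~\eqref{eq:normalised-characteristic-representation},
\[
z_\pm(x,\tau)=\int_0^\tau I\bigl(X_\pm(\sigma;x,\tau),\sigma\bigr)J_\pm(\sigma;x,\tau)\,\rd\sigma,
\qquad
J_\pm(\sigma;x,\tau)\coloneqq\exp\Bigl(\mp\int_\sigma^\tau(\partial_xv)(X_\pm(\eta;x,\tau),\eta)\,\rd\eta\Bigr),
\]
and by~\eqref{eq:s-normalised-representation},
\[
-\log s(x,t)=\int_0^t v(x,\tau)\bigl(z_+(x,\tau)+z_-(x,\tau)\bigr)\,\rd\tau .
\]
The plan is to mimic the proof of Lemma~\ref{lem:light-cone-representation}. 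We insert the formula for \(z_\pm\), use Fubini (justified since \(I\) and \(\partial_xv\) are bounded) to put the \(\sigma\)-integral outside, and then show for a.e.\ fixed \(\sigma\in(0,t)\) that
\[
\int_\sigma^t v(x,\tau)\,I\bigl(X_+(\sigma;x,\tau),\sigma\bigr)J_+(\sigma;x,\tau)\,\rd\tau=\int_{X_+(\sigma;x,t)}^{x}I(\xi,\sigma)\,\rd\xi ,
\]
together with the mirror identity for the minus sign with the interval \([x,X_-(\sigma;x,t)]\). Adding the two identities and renaming \(\sigma\) as \(\tau\) gives~\eqref{eq:general_lightcone_representation}, since the two intervals concatenate to \(C^{\mathbb R}_{x,t}(\sigma)\). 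This concatenation requires \(X_+(\sigma;x,t)\le x\le X_-(\sigma;x,t)\), which follows from \(v\ge0\) in~\eqref{eq:char}.

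The key step is a change of variables \(\xi=X_+(\sigma;x,\tau)\) in the terminal time \(\tau\in[\sigma,t]\), with \(x\) and \(\sigma\) fixed. We will show that \(\tau\mapsto X_+(\sigma;x,\tau)\) is absolutely continuous with
\[
\partial_\tau X_+(\sigma;x,\tau)=-v(x,\tau)\,\partial_xX_+(\sigma;x,\tau),
\qquad
\partial_xX_+(\sigma;x,\tau)=J_+(\sigma;x,\tau)>0 .
\]
The Jacobian formula is the inverse of~\eqref{eq:forward_jacobian} via the flow identity~\eqref{eq:flow_identity}. The \(\tau\)-derivative comes from the semigroup property \(X_+(\sigma;x,\tau+h)=X_+\bigl(\sigma;X_+(\tau;x,\tau+h),\tau\bigr)\) together with \(X_+(\tau;x,\tau+h)=x-\int_\tau^{\tau+h}v\), that is, the standard transport identity \(\partial_\tau X+v\,\partial_xX=0\). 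Hence \(\tau\mapsto X_+(\sigma;x,\tau)\) is non-increasing, equals \(x\) at \(\tau=\sigma\), and satisfies \(\rd\xi=-vJ_+\,\rd\tau\), which yields the first identity. The minus case is symmetric: there \(\partial_\tau X_-=+vJ_-\) and the endpoints run from \(x\) to \(X_-(\sigma;x,t)\).

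The main obstacle is rigor, because \(v\) is only \(L^\infty\) in time and Lipschitz in space, and \(J_\pm\) exists only a.e. I would handle this by first proving the two identities for smooth \(v\), where they are classical, and then passing to the limit with mollified \(v_n\). Under mollification the characteristics converge uniformly by Gronwall, and the left-hand sides converge by dominated convergence, provided \(J^n_\pm\to J_\pm\) in \(L^1_{\mathrm{loc}}\). This convergence can be obtained indirectly by passing to the limit in the already-proven representation~\eqref{eq:s-normalised-representation} via continuous dependence. Failing that, I would derive the change-of-variables identity directly: take a test function of \(x\), use the forward flow \(Y_+\) to write both sides as integrals over labels \(y\), and check equality using the area formula for bi-Lipschitz maps. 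Finally, the resulting identity holds for a.e.\ \(x\) at each \(t\), and we extend it to every \(t\in[0,T]\) using the every-\(t\) statement of~\eqref{eq:s-normalised-representation} together with continuity in \(t\) of the right-hand side.
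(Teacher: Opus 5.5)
Your proposal is correct and follows the paper's proof essentially step by step. You insert the zero-data characteristic representation of \(z_\pm\) into \(-\log s=\int_0^t v(z_++z_-)\,\rd r\), apply Fubini, change variables \(\xi=X_\pm(\tau;x,r)\) in the terminal time using \(\frac{\rd}{\rd r}X_\pm(\tau;x,r)=\mp v(x,r)\,\partial_xX_\pm(\tau;x,r)\) (obtained from the composition property), and concatenate the two half-cones into \(C^{\mathbb R}_{x,t}(\tau)\).

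The only differences are technical:
\begin{itemize}
\item The paper gets the Lipschitz dependence on the terminal time directly from \eqref{eq:bw-char-composition} and \eqref{eq:char_lipschitz}, so your mollification detour is unnecessary.
\item The paper uses \(v\geq v_{\min}\) to make \(r\mapsto X_\pm(\tau;x,r)\) bi-Lipschitz, whereas your monotone absolutely continuous change of variables needs only \(v\geq0\).
\item Your final step extending the identity to every \(t\) is superfluous, since the argument already works for each fixed \(t\).
\end{itemize}
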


\begin{proof}
The assertion for \(t=0\) follows from \eqref{eq:ass_light-cone-initial}. Observe that, for every \(x\in\mathbb R\) and \(0\leq\tau\leq r\leq T\), the integral
formulation of~\eqref{eq:char} reads
\begin{equation}
\label{eq:bw-char-integral}
X_\pm(\tau;x,r) = x\mp\int_\tau^r v(X_\pm(q;x,r),q)\,\rd q\,,
\end{equation}
and differentiation with respect to the terminal position \(x\) gives
\begin{equation}
\label{eq:bw-char-jacobian}
\partial_xX_\pm(\tau;x,r)
= \exp\left(\mp\int_\tau^r(\partial_xv)(X_\pm(q;x,r),q)\,\rd q\right)>0
\end{equation}
for a.e. \((x,\tau,r)\in \mathbb R\times(0,T)\times(0,T)\) satisfying \(\tau<r\).
Moreover, the composition property of the characteristics \(X_\pm\) reads
\begin{equation}
\label{eq:bw-char-composition}
X_\pm(\tau;x,r)
=X_\pm(\tau;X_\pm(\sigma;x,r),\sigma)\,,
\qquad 0\leq\tau\leq\sigma\leq r\leq T\,.
\end{equation}
Equations~\eqref{eq:bw-char-integral}, \eqref{eq:bw-char-composition}, and~\eqref{eq:char_lipschitz} show that, for every
\(x\in\mathbb R\) and \(\tau\in[0,T)\), the map
\(r\mapsto X_\pm(\tau;x,r)\) belongs to
\(W^{1,\infty}(\tau,T)\), and
\eqref{eq:bw-char-integral}, \eqref{eq:bw-char-jacobian}, and~\eqref{eq:bw-char-composition} further imply
\begin{equation}
\label{eq:bw-char-terminal-derivative}
\frac{\rd}{\rd r}X_\pm(\tau;x,r)= \mp v(x,r)\partial_xX_\pm(\tau;x,r)
\end{equation}
for a.e. \((x,\tau,r)\in\mathbb R\times(0,T)\times(0,T)\) satisfying \(\tau<r\).
Consequently, \eqref{eq:wp-velocity-regularity}, \eqref{eq:ass_light-cone-speed},
\eqref{eq:bw-char-jacobian}, and~\eqref{eq:bw-char-terminal-derivative} imply,
for every \(t\in(0,T]\) and for a.e.\((x,\tau)\in\mathbb R\times(0,t)\),
\begin{equation}
\label{eq:bw-char-r-bounds}
\frac{\rd}{\rd r}X_+(\tau;x,r)<0<\frac{\rd}{\rd r}X_-(\tau;x,r)\,,
\quad v_{\min}e^{-M_{\partial v}T}
\leq \Big|\frac{\rd}{\rd r}X_\pm(\tau;x,r)\Big|\leq M_ve^{M_{\partial v}T}
\end{equation}
for a.e. \(r\in(\tau,t)\), and hence, since \(X_\pm(\tau;x,\tau)=x\), the maps
\begin{align*}
X_+(\tau;x,\cdot): [\tau,t]\rightarrow [X_+(\tau;x,t),x]\,, \quad 
X_-(\tau;x,\cdot):[\tau,t]\rightarrow [x,X_-(\tau;x,t)]
\end{align*}
are bi-Lipschitz bijections, where the first map is strictly
decreasing and the second is strictly increasing.

Since~\eqref{eq:ass_light-cone-initial} implies
\(z_{\pm,0}(x)=0\) for a.e.\ \(x\in\mathbb R\), the characteristic representation~\eqref{eq:normalised-characteristic-representation} of Proposition~\ref{prop:normalised-equations}, together with~\eqref{eq:bw-char-jacobian} and~\eqref{eq:bw-char-terminal-derivative}, gives
\begin{align}
\label{eq:norm-char-zero-data}
v(x,r)z_\pm(x,r)&=
v(x,r)\int_0^r I(X_\pm(\tau;x,r),\tau)\partial_xX_\pm(\tau;x,r)\,\rd\tau\nonumber \\
&=\mp\int_0^rI(X_\pm(\tau;x,r),\tau) \frac{\rd}{\rd r}X_\pm(\tau;x,r)\,\rd\tau
\end{align}
for a.e. \((x,r)\in\mathbb R\times(0,T)\). Fix \(t\in(0,T]\). The measurability of the characteristics and the fact that \(x\mapsto X_\pm(\tau;x,r)\) is a bi-Lipschitz bijection of \(\mathbb R\) for every \(0\leq\tau\leq r\leq T\), both established in Subsection~\ref{subsec:characteristics-mild-formulation}, show that
the functions \(I\bigl(X_\pm(\tau;x,r),\tau\bigr)\) are measurable
and satisfy the a.e. bounds in~\eqref{eq:wp-initiation}, and
\eqref{eq:bw-char-r-bounds} therefore gives
\begin{align*}
0 \leq I(X_\pm(\tau;x,r),\tau) \Big|\frac{\rd}{\rd r}X_\pm(\tau;x,r)\Big|
\leq M_I \Big|\frac{\rd}{\rd r}X_\pm(\tau;x,r)\Big| \leq M_IM_ve^{M_{\partial v}T}
\end{align*}
for a.e. \((x,\tau,r)\in\mathbb R\times(0,t)\times(0,t)\) satisfying \(\tau<r\).
Moreover, \eqref{eq:s-normalised-representation} and~\eqref{eq:ass_light-cone-initial} imply that \(s(x,t)>0\) and
\[
-\log s(x,t) = \int_0^t v(x,r)(z_+(x,r)+z_-(x,r))\,\rd r
\]
for a.e. \(x\in\mathbb R\). Since the preceding bound is integrable over
\(\{(\tau,r)\in(0,t)^2:\tau<r\}\), substituting
\eqref{eq:norm-char-zero-data} into this formula, applying Fubini's
theorem to change the order of integration, using the signs
in~\eqref{eq:bw-char-r-bounds}, and applying, for a.e. \((x,\tau)\in\mathbb R\times(0,t)\), the change-of-variables formula
to the two bi-Lipschitz maps
\begin{align*}
r&\mapsto \xi=X_\pm(\tau;x,r)\,, \quad r\in[\tau,t]\,, \qquad 
\Big| \frac{\rd\xi}{\rd r}\Big|= \Big|\frac{\rd}{\rd r}X_\pm(\tau;x,r)\Big|
\quad\text{for a.e. }r\in(\tau,t)\,,
\end{align*}
we obtain, together with~\eqref{eq:general-cone-cross-section},
\begin{align*}
-\log s(x,t)&= \int_0^t\int_\tau^t \Big[ I(X_+(\tau;x,r),\tau)
\Big|\frac{\rd}{\rd r}X_+(\tau;x,r)\Big|
+I(X_-(\tau;x,r),\tau)\Big|\frac{\rd}{\rd r}X_-(\tau;x,r)\Big|\Big]
\,\rd r\,\rd\tau
\\
&=\int_0^t \Big[ \int_{X_+(\tau;x,t)}^x I(\xi,\tau)\,\rd\xi + \int_x^{X_-(\tau;x,t)}
I(\xi,\tau)\,\rd\xi\Big]\, \rd\tau
\\
&=\int_0^t\int_{X_+(\tau;x,t)}^{X_-(\tau;x,t)}I(\xi,\tau)\,\rd\xi\,\rd\tau
=\int_0^t\int_{C_{x,t}^{\mathbb R}(\tau)}I(\xi,\tau)\,\rd\xi\,\rd\tau
\end{align*}
for a.e. \(x\in\mathbb R\). Since \(t\in(0,T]\) was arbitrary and the assertion for \(t=0\) was proved above, taking the exponential proves
\eqref{eq:general_lightcone_representation}.
\end{proof}

\begin{rem}
Since the global solution fixed above is a mild solution on
\(\mathbb R\times[0,T]\) for every \(T\in(0,\infty)\),
Lemma~\ref{lem:general-light-cone-representation}
shows that \eqref{eq:general_lightcone_representation} holds for every \(t\geq0\) and for a.e. \(x\in\mathbb R\). If, in addition, \(v(x,t)=v>0\) for a.e. \((x,t)\in\mathbb R\times(0,\infty)\), then
Lemma~\ref{lem:light-cone-representation} similarly shows that \eqref{eq:light-cone-derived-constant-v} holds for every \(t\geq0\) and
for a.e. \(x\in\mathbb R\).
\end{rem}

Observe that the definition~\eqref{eq:general-cone-cross-section},
the integral formulation~\eqref{eq:bw-char-integral} of the
characteristics \(X_\pm\), and~\eqref{eq:ass_light-cone-speed} imply 
\begin{align}
\label{eq:cone_lbR}
|C_{x,t}^{\mathbb R}(\tau)|
&=X_-(\tau;x,t)-X_+(\tau;x,t) \nonumber\\
&=\int_\tau^t\big(v(X_-(q;x,t),q)
+ v(X_+(q;x,t),q)\big)\,\rd q 
\geq
2v_{\min}(t-\tau)
\end{align}
for every \(x\in\mathbb R\), \(t\geq0\), and
\(0\leq\tau\leq t\).

In order to estimate the integral in the light-cone representation
formula~\eqref{eq:general_lightcone_representation} of
Lemma~\ref{lem:general-light-cone-representation} uniformly with respect
to \(x\in\mathbb R\), we use the following definition.

\begin{deff}
\label{def:local_initiation_mass} 
For a.e. \(\tau>0\) and every \(r\geq0\), we define the \emph{local initiation mass} \(m_I(r,\tau)\) by
\begin{equation}
\label{eq:def_mI_all_domains}
m_I(r,\tau)\coloneqq \operatorname*{ess\,inf}_{y\in\mathbb R}
\int_y^{y+r}I(\xi,\tau)\,\rd\xi.
\end{equation}
At the remaining times, set
\(m_I(r,\tau):=0\) for every \(r\geq0\).
\end{deff}

In order to use \(m_I\) in the time integrals below and to compare its
values at different interval lengths, we first establish its
measurability on \([0,\infty)\times(0,\infty)\) and then its
monotonicity and Lipschitz continuity with respect to \(r\). Indeed,
for a.e. \(\tau>0\), every \(r\geq0\), and all
\(y,h\in\mathbb R\),
\begin{align*}
\left|\int_{y+h}^{y+h+r}I(\xi,\tau)\,\rd\xi
-\int_y^{y+r}I(\xi,\tau)\,\rd\xi
\right|\leq
2\|I(\cdot,\tau)\|_{L^\infty(\mathbb R)}|h|\,,
\end{align*}
and hence the map
\(y\mapsto\int_y^{y+r}I(\xi,\tau)\,\rd\xi
\)
is continuous on \(\mathbb R\), which, together with
\eqref{eq:def_mI_all_domains} and the density of \(\mathbb Q\) in
\(\mathbb R\), gives
\begin{equation}
\label{eq:mI-rational}
m_I(r,\tau)
=\inf_{y\in\mathbb R}
\int_y^{y+r}I(\xi,\tau)\,\rd\xi=
\inf_{q\in\mathbb Q}
\int_q^{q+r}I(\xi,\tau)\,\rd\xi\,.
\end{equation}
Since, by Tonelli's theorem, for every \(q\in\mathbb Q\), the map
\((r,\tau)\mapsto\int_q^{q+r}I(\xi,\tau)\,\rd\xi\) is measurable on
\([0,\infty)\times(0,\infty)\), it follows from
\eqref{eq:mI-rational} and
Definition~\ref{def:local_initiation_mass} that
\(m_I:[0,\infty)\times(0,\infty)\rightarrow[0,\infty)\) is
measurable.  Moreover, for \(0\leq r_1\leq r_2\) and for a.e.\(\tau>0\),
\begin{align*}
0\leq m_I(r_1,\tau) \leq m_I(r_2,\tau) \leq
m_I(r_1,\tau)
+\|I(\cdot,\tau)\|_{L^\infty(\mathbb R)} (r_2-r_1),
\end{align*}
so that \(r\mapsto m_I(r,\tau)\) is non-decreasing and Lipschitz continuous.

With these preparations, the light-cone representation formula
\eqref{eq:general_lightcone_representation} of
Lemma~\ref{lem:general-light-cone-representation},
the cone-width estimate~\eqref{eq:cone_lbR}, and
Definition~\ref{def:local_initiation_mass} yield the following bounds
for the unreplicated fraction \(s\) and for the corresponding
locuswise near-completion time, which is defined below by requiring
that \(s\) be bounded above by a prescribed threshold at almost every
genomic position.

\begin{prop}
\label{prop:master_completion_bound_R}
Assume~\eqref{eq:ass_light-cone-general}, and suppose that
\eqref{eq:wp-initiation} and~\eqref{eq:wp-velocity-regularity} hold for
every finite \(T>0\). Let \((\rho_+,\rho_-,f)\) be the unique
global-in-time mild solution given by
Theorem~\ref{thm:global-compatible}, and set \(s:=1-f\). Then, for every
\(t\geq0\),
\begin{equation}
\label{eq:master_bound_R}
\|s(\cdot,t)\|_{L^\infty(\mathbb R)}
\leq
\exp\left\{
-\int_0^t
m_I\bigl(2v_{\min}(t-\tau),\tau\bigr)\,\rd\tau
\right\}.
\end{equation}
Consequently, for every \(\varepsilon\in(0,1)\),
\begin{align}
\label{eq:master_Teps_R}
T_\varepsilon^{\mathbb R} &\coloneqq \inf\left\{ t\geq0: \|s(\cdot,t)\|_{L^\infty(\mathbb R)} \leq\varepsilon \right\}\nonumber \\
&\leq
\inf\left\{
t\geq0:
\int_0^t m_I\bigl(2v_{\min}(t-\tau),\tau\bigr)\,\rd\tau\geq
\ln\left(\frac{1}{\varepsilon}\right)
\right\}\,,
\end{align}
with the convention \(\inf\varnothing:=+\infty\).
\end{prop}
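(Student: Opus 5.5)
The plan is to combine the light-cone representation of Lemma~\ref{lem:general-light-cone-representation} with the cone-width estimate~\eqref{eq:cone_lbR} and the monotonicity of \(m_I\) in its first argument.

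Fix \(t>0\); the case \(t=0\) is trivial, since \(s_0=1\) and the right-hand side of~\eqref{eq:master_bound_R} equals \(1\). By the remark following Lemma~\ref{lem:general-light-cone-representation}, the global solution satisfies~\eqref{eq:general_lightcone_representation} for a.e.\ \(x\in\mathbb R\). It therefore suffices to show that, for a.e.\ \(x\) and a.e.\ \(\tau\in(0,t)\),
\[
\int_{C_{x,t}^{\mathbb R}(\tau)} I(\xi,\tau)\,\rd\xi\;\geq\; m_I\bigl(2v_{\min}(t-\tau),\tau\bigr).
\]
Integrating this in \(\tau\) and exponentiating then gives \(s(x,t)\leq\exp\{-\int_0^t m_I(2v_{\min}(t-\tau),\tau)\,\rd\tau\}\) for a.e.\ \(x\), which is~\eqref{eq:master_bound_R} after taking the essential supremum.

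To prove the pointwise inequality, fix \(\tau\) in the full-measure set on which \(I(\cdot,\tau)\geq0\) a.e.\ and~\eqref{eq:mI-rational} holds. Write \(y:=X_+(\tau;x,t)\) and \(r:=X_-(\tau;x,t)-X_+(\tau;x,t)\). By~\eqref{eq:cone_lbR}, \(r\geq 2v_{\min}(t-\tau)\). Since \(C_{x,t}^{\mathbb R}(\tau)=[y,y+r]\), the infimum form of~\eqref{eq:mI-rational} (an infimum over all \(y\), not merely an essential infimum) gives
\[
\int_{C_{x,t}^{\mathbb R}(\tau)}I(\xi,\tau)\,\rd\xi\;\geq\; m_I(r,\tau).
\]
The monotonicity of \(r\mapsto m_I(r,\tau)\) then yields \(m_I(r,\tau)\geq m_I(2v_{\min}(t-\tau),\tau)\).

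The main point needing care is measurability and integrability in \(\tau\). Measurability of \(m_I\) on \([0,\infty)\times(0,\infty)\) has already been established, so \(\tau\mapsto m_I(2v_{\min}(t-\tau),\tau)\) is measurable as a composition with a continuous map. It is also bounded by \(2v_{\min}tM_I\), hence integrable on \((0,t)\). We also need that, for a.e.\ \(x\), the inequality holds for a.e.\ \(\tau\); this holds because the exceptional \(\tau\)-set is independent of \(x\).

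For~\eqref{eq:master_Teps_R}, let \(t_0\) be any element of the set on the right. Then \(\int_0^{t_0}m_I\,\rd\tau\geq\ln(1/\varepsilon)\), and~\eqref{eq:master_bound_R} gives \(\|s(\cdot,t_0)\|_{L^\infty}\leq\varepsilon\). Hence \(T_\varepsilon^{\mathbb R}\leq t_0\), and taking the infimum over \(t_0\) proves the bound; if the right-hand set is empty, the inequality is trivial under the convention \(\inf\varnothing=+\infty\).
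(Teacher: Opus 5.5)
Your proposal is correct and follows essentially the same route as the paper. Both arguments combine the light-cone representation~\eqref{eq:general_lightcone_representation}, the pointwise-infimum form~\eqref{eq:mI-rational} (which bounds the cone integral below by $m_I(|C_{x,t}^{\mathbb R}(\tau)|,\tau)$), the monotonicity of $m_I(\cdot,\tau)$ together with~\eqref{eq:cone_lbR}, and a set-inclusion argument for~\eqref{eq:master_Teps_R}.

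One small refinement to your measurability remark, which the paper leaves implicit. Composing a merely measurable function of $(r,\tau)$ with $\tau\mapsto(2v_{\min}(t-\tau),\tau)$ does not preserve measurability in general. It does here, because $m_I$ is a countable infimum over $q\in\mathbb Q$ of functions that are continuous in $r$ and measurable in $\tau$.
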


\begin{proof}
Fix \(t\geq0\), choose \(T>t\), and apply
Lemma~\ref{lem:general-light-cone-representation} to the restriction
of the global solution to \(\mathbb R\times[0,T]\).
Since \(C_{x,t}^{\mathbb R}(\tau)\) is an interval,
Definition~\ref{def:local_initiation_mass},
\eqref{eq:mI-rational}, the monotonicity of \(m_I(\cdot,\tau)\),
and~\eqref{eq:cone_lbR} imply
\begin{align*}
\int_{C_{x,t}^{\mathbb R}(\tau)}
I(\xi,\tau)\,\rd\xi \geq
m_I\big( |C_{x,t}^{\mathbb R}(\tau)|,\tau \big)\geq
m_I\big(2v_{\min}(t-\tau),\tau\big)
\end{align*}
for every \(x\in\mathbb R\) and for a.e. \(\tau\in(0,t)\).
Inserting this estimate into
\eqref{eq:general_lightcone_representation} of
Lemma~\ref{lem:general-light-cone-representation} and taking the
essential supremum with respect to \(x\in\mathbb R\) proves
\eqref{eq:master_bound_R}, which also shows that the set on the
right-hand side of~\eqref{eq:master_Teps_R} is contained in the
set defining \(T_\varepsilon^{\mathbb R}\), and hence
\eqref{eq:master_Teps_R} follows by taking the infimum. 
\end{proof}

\begin{rem}
\setlength{\emergencystretch}{2em}
For the population-level model~\eqref{eq:collapsed_system}, the
quantity \(T_\varepsilon^{\mathbb R}\), defined by~\eqref{eq:master_Teps_R} of
Proposition~\ref{prop:master_completion_bound_R}, describes
near-completion at individual genomic positions, since
\mbox{\(\|s(\cdot,t)\|_{L^\infty(\mathbb R)}\leq\varepsilon\)}
means that, for a.e.\ genomic position, the fraction of cells
in which that position remains unreplicated at time \(t\)
is at most \(\varepsilon\).
This estimate does not imply that the entire genome has been
replicated in at least a fraction \(1-\varepsilon\) of cells,
since the remaining unreplicated positions may differ
between cells.\par
\end{rem} 

The following corollary gives the corresponding bound for the
expected replication time.

\begin{cor}
\label{cor:master_E(T)_bound}
Under the assumptions of Proposition~\ref{prop:master_completion_bound_R}, define, for a.e. \(x\in\mathbb R\), the expected replication time by
\begin{equation}
\label{eq:expectation_time_def}
\mathbb E[T(x)] \coloneqq \int_0^\infty s(x,t)\,\rd t \in[0,\infty]\,.
\end{equation}
Then
\begin{equation}
\label{eq:bound_on_E(x)_R}
\mathbb E[T(x)] \leq \int_0^\infty \exp\left\{
-\int_0^t m_I\bigl(2v_{\min}(t-\tau),\tau\bigr)\,\rd\tau \right\}\rd t
\end{equation}
for a.e. \(x\in\mathbb R\).
\end{cor}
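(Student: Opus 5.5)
The plan is to integrate the pointwise-in-time bound \eqref{eq:master_bound_R} of Proposition~\ref{prop:master_completion_bound_R} over \(t\in(0,\infty)\). The only subtlety is that \eqref{eq:master_bound_R} controls the essential supremum in \(x\) separately at each fixed \(t\). We need an a.e.\ \(x\) bound valid for a.e.\ (or every) \(t\) on a common full-measure set of positions.

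First, we fix the representative of \(s\). By Lemma~\ref{lem:temporal-traces-restart} and Corollary~\ref{cor:monotonicity}, applied on \([0,T]\) for each \(T\in\mathbb N\) and patched together using uniqueness from Theorem~\ref{thm:global-compatible}, there is a full-measure set \(E\subset\mathbb R\) such that, for every \(x\in E\), the map \(t\mapsto s(x,t)\) is absolutely continuous, non-increasing, and takes values in \([0,1]\). In particular it is measurable, so \(\mathbb E[T(x)]\) in \eqref{eq:expectation_time_def} is well defined in \([0,\infty]\). Let \(g(t)\) denote the right-hand side of \eqref{eq:master_bound_R}. The integrand \(t\mapsto g(t)\) is measurable by the measurability of \(m_I\) established after Definition~\ref{def:local_initiation_mass} together with Tonelli's theorem, and \(0\le g\le 1\).

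Second, we pass from the sup bound to a pointwise bound. For each rational \(t\ge0\), \eqref{eq:master_bound_R} gives \(s(x,t)\le g(t)\) for all \(x\) outside a null set \(N_t\). Setting \(N:=\bigcup_{t\in\mathbb Q_{\ge0}}N_t\cup(\mathbb R\setminus E)\), which is still null, we get \(s(x,t)\le g(t)\) for all \(x\notin N\) and all rational \(t\).

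To extend this to all \(t\), we use continuity of \(s(x,\cdot)\) together with some regularity of \(g\). The inner integral \(t\mapsto\int_0^t m_I(2v_{\min}(t-\tau),\tau)\,\rd\tau\) is continuous, because \(m_I(\cdot,\tau)\) is Lipschitz with constant \(\|I(\cdot,\tau)\|_{L^\infty}\le M_I\) locally and \(0\le m_I(r,\tau)\le M_I r\). Continuity then follows by dominated convergence. Hence \(g\) is continuous, and letting rationals \(t_n\to t\) yields \(s(x,t)\le g(t)\) for every \(t\ge0\) and every \(x\notin N\).

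Finally, we integrate \(s(x,\cdot)\le g\) over \((0,\infty)\), which is legitimate since both are non-negative and measurable. This gives \eqref{eq:bound_on_E(x)_R} for every \(x\notin N\), that is, for a.e.\ \(x\). The main obstacle is the null-set bookkeeping in the second step. If continuity of \(g\) turned out to be awkward, I would instead use the monotonicity of \(s(x,\cdot)\): for any \(t\), pick rational \(t_n\downarrow t\) and use \(s(x,t_n)\le s(x,t)\). That alone gives the wrong direction, so instead apply Fubini to the set \(\{(x,t):s(x,t)>g(t)\}\), which has null \(x\)-sections for a.e.\ \(t\). Its \(t\)-sections are then null for a.e.\ \(x\), and that is enough for the integral inequality.
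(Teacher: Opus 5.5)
Your proposal is correct and follows the paper's own argument, which simply integrates \eqref{eq:master_bound_R} of Proposition~\ref{prop:master_completion_bound_R} with respect to \(t\) and uses \eqref{eq:expectation_time_def}. The paper leaves implicit the step from an essential-supremum bound at each fixed \(t\) to a bound holding on one common full-measure set of positions; you handle that step correctly, either through rational times combined with the continuity of \(s(x,\cdot)\) and of the right-hand side, or through Fubini applied to the exceptional set \(\{(x,t):s(x,t)>g(t)\}\).
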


\begin{proof} 
Integrating the bound~\eqref{eq:master_bound_R} of
Proposition~\ref{prop:master_completion_bound_R} with respect
to \(t\) and using~\eqref{eq:expectation_time_def}, we obtain
\eqref{eq:bound_on_E(x)_R} for a.e. \(x\in\mathbb R\).
\end{proof}

If the initiation rate is bounded below by a positive constant,
the bounds~\eqref{eq:master_bound_R} and~\eqref{eq:master_Teps_R}
of Proposition~\ref{prop:master_completion_bound_R} become explicit.

\begin{cor}
\label{cor:explicit-positive-initiation}
Under the assumptions of
Proposition~\ref{prop:master_completion_bound_R}, assume, in addition,
that there exists \(I_{\min}>0\) such that
\[
I(x,t)\geq I_{\min}
\quad\text{for a.e. }(x,t)\in\mathbb R\times(0,\infty)\,.
\]
Then, for every \(t\geq0\),
\begin{equation}
\label{eq:explicit_s_bound}
\|s(\cdot,t)\|_{L^\infty(\mathbb R)}
\leq e^{-I_{\min}v_{\min}t^2}\,.
\end{equation}
Consequently, for every \(\varepsilon\in(0,1)\),
\begin{equation}
\label{eq:explicit_Teps_bound}
T_\varepsilon^{\mathbb R}
\leq \sqrt{ \frac{1}{I_{\min}v_{\min}} \ln\left(\frac{1}{\varepsilon}\right)
}\,.
\end{equation}
\end{cor}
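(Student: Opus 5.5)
The plan is to specialise the master bound \eqref{eq:master_bound_R} of Proposition~\ref{prop:master_completion_bound_R} by computing a lower bound for the local initiation mass in terms of \(I_{\min}\). For a.e. \(\tau>0\), every \(r\geq0\) and every \(y\in\mathbb R\), the pointwise bound \(I(\xi,\tau)\geq I_{\min}\) for a.e. \(\xi\) gives \(\int_y^{y+r}I(\xi,\tau)\,\rd\xi\geq I_{\min}r\). Taking the infimum over \(y\), as in Definition~\ref{def:local_initiation_mass} and \eqref{eq:mI-rational}, then yields
\[
m_I(r,\tau)\geq I_{\min}r
\qquad\text{for a.e. }\tau>0\text{ and every }r\geq0\,.
\]
The exceptional null set of times, where \(m_I\) was set to zero, is irrelevant inside the time integral.

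Inserting \(r=2v_{\min}(t-\tau)\), we obtain
\[
\int_0^t m_I\bigl(2v_{\min}(t-\tau),\tau\bigr)\,\rd\tau
\geq 2I_{\min}v_{\min}\int_0^t(t-\tau)\,\rd\tau
=I_{\min}v_{\min}t^2\,.
\]
Together with \eqref{eq:master_bound_R} and the monotonicity of the exponential, this gives \eqref{eq:explicit_s_bound} for every \(t\geq0\).

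For \eqref{eq:explicit_Teps_bound}, set \(t_\varepsilon:=\sqrt{\ln(1/\varepsilon)/(I_{\min}v_{\min})}\). The previous display shows that \(\int_0^{t_\varepsilon}m_I(2v_{\min}(t_\varepsilon-\tau),\tau)\,\rd\tau\geq I_{\min}v_{\min}t_\varepsilon^2=\ln(1/\varepsilon)\). Hence \(t_\varepsilon\) belongs to the set on the right-hand side of \eqref{eq:master_Teps_R}, and that infimum is at most \(t_\varepsilon\). Alternatively, one can read the conclusion off \eqref{eq:explicit_s_bound} directly, since \(e^{-I_{\min}v_{\min}t_\varepsilon^2}=\varepsilon\) places \(t_\varepsilon\) in the set defining \(T_\varepsilon^{\mathbb R}\).

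There is no real obstacle here. The only point needing care is that the lower bound on \(m_I\) holds only for a.e. \(\tau\), which is harmless because it is used only under \(\int_0^t\,\rd\tau\).
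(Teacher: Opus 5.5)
Your proposal is correct and follows essentially the same route as the paper. It bounds \(m_I(2v_{\min}(t-\tau),\tau)\geq 2I_{\min}v_{\min}(t-\tau)\) for a.e. \(\tau\), integrates to obtain \(I_{\min}v_{\min}t^2\), and then reads off both \eqref{eq:explicit_s_bound} and \eqref{eq:explicit_Teps_bound} from \eqref{eq:master_bound_R} and \eqref{eq:master_Teps_R} of Proposition~\ref{prop:master_completion_bound_R}.
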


\begin{proof}
For every \(t\geq0\) and for a.e. \(\tau\in(0,t)\),
\eqref{eq:def_mI_all_domains} of
Definition~\ref{def:local_initiation_mass} and the lower bound
for \(I\) give
\begin{align*}
m_I\bigl(2v_{\min}(t-\tau),\tau\bigr) \geq
\operatorname*{ess\,inf}_{y\in\mathbb R} \int_y^{y+2v_{\min}(t-\tau)}
I_{\min}\,\rd\xi=
2I_{\min}v_{\min}(t-\tau)\,,
\end{align*}
and therefore
\begin{align*}
\int_0^t
m_I\bigl(2v_{\min}(t-\tau),\tau\bigr)\,\rd\tau \geq
2I_{\min}v_{\min}
\int_0^t(t-\tau)\,\rd\tau
=
I_{\min}v_{\min}t^2\,.
\end{align*}
Consequently, \eqref{eq:master_bound_R} and
\eqref{eq:master_Teps_R} of
Proposition~\ref{prop:master_completion_bound_R} imply
\eqref{eq:explicit_s_bound} and
\begin{align*}
T_\varepsilon^{\mathbb R}
\leq \inf\left\{ t\geq0: I_{\min}v_{\min}t^2
\geq \ln\left(\frac1\varepsilon\right)\right\}=
\sqrt{ \frac{1}{I_{\min}v_{\min}} \ln\left(\frac1\varepsilon\right)}
\end{align*}
for every \(\varepsilon\in(0,1)\), which proves
\eqref{eq:explicit_Teps_bound}.
\end{proof}

\begin{rem}
\label{rem:relation_between_T_max_and_T_epsilon}
Under the assumptions of
Proposition~\ref{prop:master_completion_bound_R},
Theorem~\ref{thm:global-compatible} gives \(T_{\max}=\infty\), whereas
\(T_\varepsilon^{\mathbb R}\), defined by~\eqref{eq:master_Teps_R}, is the infimum of the times at which the unreplicated fraction is bounded by \(\varepsilon\) in \(L^\infty(\mathbb R)\).
\end{rem}

The following corollary gives a two-sided estimate for the
near-completion time \(T_\varepsilon^{\mathbb R}\) under upper and lower bounds for the initiation rate \(I(x,t)\) and the fork speed \(v(x,t)\).

\begin{cor}
\label{cor:two-sided-completion}
Under the assumptions of
Proposition~\ref{prop:master_completion_bound_R}, assume, in addition,
that there exist \(I_{\min}>0\), \(I_{\max}>0\), and \(v_{\max}>0\) such that
\[
I_{\min}\le I(x,t)\le I_{\max}\,, \quad
v(x,t)\le v_{\max} \qquad\text{for a.e. }(x,t)\in\mathbb R\times(0,\infty)\,.
\]
Then
\begin{equation}
e^{-I_{\max}v_{\max}t^2} \le s(x,t)
\le e^{-I_{\min}v_{\min}t^2}
\qquad
\text{for every }t\geq0\text{ and for a.e. }x\in\mathbb R\,.
\label{eq:two_sided_s_bound}
\end{equation}
In particular, for every \(\varepsilon\in(0,1)\),
\begin{equation}
\sqrt{ \frac{1}{I_{\max}v_{\max}} \ln\left(\frac{1}{\varepsilon}\right)
} \le T_\varepsilon^\R
\le \sqrt{ \frac{1}{I_{\min}v_{\min}} \ln\left(\frac{1}{\varepsilon}\right)}\,.
\label{eq:two_sided_Teps}
\end{equation}
\end{cor}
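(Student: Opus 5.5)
The plan is to read both bounds in \eqref{eq:two_sided_s_bound} off the general light-cone representation \eqref{eq:general_lightcone_representation} of Lemma~\ref{lem:general-light-cone-representation}, applied on \(\mathbb R\times[0,T]\) for an arbitrary \(T>t\), and then to deduce \eqref{eq:two_sided_Teps} from monotonicity in \(t\).

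\emph{Upper bound on \(s\).} This is exactly \eqref{eq:explicit_s_bound} of Corollary~\ref{cor:explicit-positive-initiation}: since \(s(x,t)\le\|s(\cdot,t)\|_{L^\infty(\mathbb R)}\) for a.e.\ \(x\), nothing new is needed.

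\emph{Lower bound on \(s\).} We bound the cone width from above. By \eqref{eq:cone_lbR}, the width equals \(\int_\tau^t\big(v(X_-(q;x,t),q)+v(X_+(q;x,t),q)\big)\,\rd q\). The bound \(v\le v_{\max}\) holds only a.e.\ in \((x,t)\), while the integrand is evaluated along curves. To handle this, we use that the pullback along the bi-Lipschitz flow preserves null sets (Subsection~\ref{subsec:characteristics-mild-formulation}), so the bound holds along characteristics for a.e.\ \(x\) and a.e.\ \(q\). This is the only delicate point. Alternatively, one may replace \(v\) by its continuous-in-\(x\) representative, which is \(W^{1,\infty}\) in \(x\) for a.e.\ \(q\). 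Either way,
\[
|C^{\mathbb R}_{x,t}(\tau)|\le 2v_{\max}(t-\tau).
\]
Combining this with \(I\le I_{\max}\) gives
\[
\int_{C^{\mathbb R}_{x,t}(\tau)} I(\xi,\tau)\,\rd\xi\le 2I_{\max}v_{\max}(t-\tau).
\]
Integrating over \(\tau\in(0,t)\) yields \(I_{\max}v_{\max}t^2\), and exponentiating \eqref{eq:general_lightcone_representation} gives \(s(x,t)\ge e^{-I_{\max}v_{\max}t^2}\) for a.e.\ \(x\).

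\emph{Bounds on \(T_\varepsilon^{\mathbb R}\).} The upper bound is \eqref{eq:explicit_Teps_bound}. For the lower bound, set
\[
t_0:=\sqrt{\ln(1/\varepsilon)/(I_{\max}v_{\max})}.
\]
If \(t<t_0\), then \(\operatorname*{ess\,inf}_x s(x,t)\ge e^{-I_{\max}v_{\max}t^2}>\varepsilon\), so \(\|s(\cdot,t)\|_{L^\infty}>\varepsilon\) and \(t\) does not belong to the set defining \(T_\varepsilon^{\mathbb R}\). Hence \(T_\varepsilon^{\mathbb R}\ge t_0\). No monotonicity is even needed here, since the lower bound holds at every \(t\).
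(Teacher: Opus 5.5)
Your proposal is correct and follows the paper's proof essentially step by step: you quote the upper bounds from Corollary~\ref{cor:explicit-positive-initiation}, obtain the lower bound on \(s\) by inserting \(v\le v_{\max}\) into the equality in \eqref{eq:cone_lbR} and \(I\le I_{\max}\) into \eqref{eq:general_lightcone_representation}, and read the lower bound on \(T_\varepsilon^{\mathbb R}\) directly off the pointwise lower bound on \(s\). Your remark on evaluating the a.e.\ bound \(v\le v_{\max}\) along characteristics, for instance via the spatially Lipschitz representative of \(v(\cdot,q)\), spells out a point the paper uses without comment.
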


\begin{proof}
The upper bound in~\eqref{eq:two_sided_s_bound} follows from
\eqref{eq:explicit_s_bound} of
Corollary~\ref{cor:explicit-positive-initiation}, while the
equalities in~\eqref{eq:cone_lbR} and the upper bound for
\(v(x,t)\) give
\[
|C_{x,t}^{\mathbb R}(\tau)| =\int_\tau^t \big(v\bigl(X_-(q;x,t),q\bigr)
+ v\bigl(X_+(q;x,t),q\bigr) \big)\,\rd q \leq \int_\tau^t 2v_{\max}\,\rd q = 2v_{\max}(t-\tau)
\]
for every \(x\in\mathbb R\), \(t\geq0\), and \(\tau\in[0,t]\). Combining this estimate with the upper bound for \(I(x,t)\) and
\eqref{eq:general_lightcone_representation} of
Lemma~\ref{lem:general-light-cone-representation}, we obtain
\begin{align*}
-\log s(x,t)&= \int_0^t \int_{C_{x,t}^{\mathbb R}(\tau)}I(\xi,\tau)\,\rd\xi\,\rd\tau \\
&\leq
I_{\max}\int_0^t |C_{x,t}^{\mathbb R}(\tau)|\,\rd\tau
\leq 2I_{\max}v_{\max} \int_0^t(t-\tau)\,\rd\tau =
I_{\max}v_{\max}t^2
\end{align*}
for every \(t\geq0\) and for a.e. \(x\in\mathbb R\), which
proves the lower bound in~\eqref{eq:two_sided_s_bound} after
taking the exponential. Moreover, for every \(\varepsilon\in(0,1)\), the lower bound in~\eqref{eq:two_sided_s_bound} and the definition
in~\eqref{eq:master_Teps_R} of Proposition~\ref{prop:master_completion_bound_R} give
\begin{align*}
T_\varepsilon^{\mathbb R} &\geq
\inf\left\{ t\geq0: e^{-I_{\max}v_{\max}t^2}\leq\varepsilon
\right\}\\
&=
\inf\left\{ t\geq0: I_{\max}v_{\max}t^2
\geq\ln\left(\frac1\varepsilon\right) \right\}=
\sqrt{
\frac{1}{I_{\max}v_{\max}}
\ln\left(\frac1\varepsilon\right)
}\,,
\end{align*}
which proves the lower bound in~\eqref{eq:two_sided_Teps},
while the upper bound follows from~\eqref{eq:explicit_Teps_bound}
of Corollary~\ref{cor:explicit-positive-initiation}.
\end{proof}

The following proposition gives two-sided bounds for the expected
replication time \(\mathbb E[T(x)]\) and establishes its Lipschitz
continuity with respect to \(x\).

\begin{prop}
\label{prop:expected-replication-time}
Assume~\eqref{eq:ass_light-cone-general}, and suppose that
\eqref{eq:wp-initiation} and~\eqref{eq:wp-velocity-regularity}
hold on \(\mathbb R\times(0,T)\) for every \(T\in(0,\infty)\).
Let \((\rho_+,\rho_-,f)\) be the unique global-in-time mild
solution given by Theorem~\ref{thm:global-compatible}, and
set \(s:=1-f\). Assume, in addition, that there exist
\(I_{\min}>0\), \(I_{\max}>0\), and \(v_{\max}>0\) such that
\[
I_{\min}\leq I(x,t)\leq I_{\max}\,,
\quad
v(x,t)\leq v_{\max}
\qquad\text{for a.e. }(x,t)\in\mathbb R\times(0,\infty)\,,
\]
and let \(\mathbb E[T(x)]\) be defined by~\eqref{eq:expectation_time_def} of Corollary~\ref{cor:master_E(T)_bound}. Then
\begin{equation}
\label{eq:expected_time_bounds}
\frac{\sqrt{\pi}}{2\sqrt{I_{\max}v_{\max}}}
\leq \mathbb E[T(x)] \leq
\frac{\sqrt{\pi}}{2\sqrt{I_{\min}v_{\min}}}
\end{equation}
for a.e. \(x\in\mathbb R\). Moreover, \(\mathbb E[T(\cdot)]\in W^{1,\infty}(\mathbb R)\), with
\begin{equation}
\label{eq:expected_time_lipschitz}
|\partial_x\mathbb E[T(x)]|
\leq \frac{1}{v_{\min}}
\end{equation}
for a.e. \(x\in\mathbb R\).
\end{prop}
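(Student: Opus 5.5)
The two-sided bound \eqref{eq:expected_time_bounds} will come from integrating the pointwise bounds \eqref{eq:two_sided_s_bound} of Corollary~\ref{cor:two-sided-completion} in time. For a.e.\ $x$, the map $t\mapsto s(x,t)$ is non-negative and measurable; we take the every-time representative from Lemma~\ref{lem:temporal-traces-restart}, and note that Corollary~\ref{cor:monotonicity} makes it monotone. Hence \eqref{eq:expectation_time_def} is a well-defined Lebesgue integral, and Tonelli yields
\[
\int_0^\infty e^{-I_{\max}v_{\max}t^2}\,\rd t\le \mathbb E[T(x)]\le \int_0^\infty e^{-I_{\min}v_{\min}t^2}\,\rd t .
\]
The Gaussian integral $\int_0^\infty e^{-at^2}\,\rd t=\sqrt{\pi}/(2\sqrt a)$ then gives \eqref{eq:expected_time_bounds}. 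The only technical point here is that the a.e.\ set in $x$ must be independent of $t$. To handle this, I would apply \eqref{eq:two_sided_s_bound} at rational $t$ and pass to all $t$ using the monotonicity in Corollary~\ref{cor:monotonicity}.

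For the Lipschitz estimate \eqref{eq:expected_time_lipschitz}, the natural route uses the representative given by the light-cone formula \eqref{eq:general_lightcone_representation}:
\[
\Lambda(x,t):=\int_0^t\int_{X_+(\tau;x,t)}^{X_-(\tau;x,t)}I(\xi,\tau)\,\rd\xi\,\rd\tau,\qquad s=e^{-\Lambda}.
\]
I would first try to show that, for every $x<y$, the cones satisfy a shift inclusion: $C^{\mathbb R}_{x,t}(\tau)\subset C^{\mathbb R}_{y,t+h}(\tau)$ with $h=(y-x)/v_{\min}$. The geometric reason is that the backward characteristic $X_+(\cdot;y,t+h)$ starts at $y$ and moves left with speed at least $v_{\min}$. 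It therefore reaches position $x$ or further left by time $t$. By uniqueness and ordering of characteristics (they cannot cross, since by \eqref{eq:char_lipschitz} the map $x\mapsto X_\pm(\tau;x,t)$ is monotone increasing), this gives $X_+(\tau;y,t+h)\le X_+(\tau;x,t)$. The right endpoint $X_-(\tau;y,t+h)\ge X_-(\tau;x,t)$ follows the same way, since $y>x$ and the extra time only moves $X_-$ further right.

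Since $I\ge0$ and the time range also grows, $\Lambda(y,t+h)\ge\Lambda(x,t)$, so $s(y,t+h)\le s(x,t)$. Integrating in $t$ gives
\[
\mathbb E[T(y)]\le h+\int_h^\infty s(y,t)\,\rd t\le h+\int_0^\infty s(x,t)\,\rd t,
\]
that is, $\mathbb E[T(y)]-\mathbb E[T(x)]\le |y-x|/v_{\min}$. Exchanging the roles of $x$ and $y$ (symmetric cones) gives the reverse inequality. Thus $\mathbb E[T(\cdot)]$ has a $1/v_{\min}$-Lipschitz representative, which is bounded by the first part, so it lies in $W^{1,\infty}(\mathbb R)$ with \eqref{eq:expected_time_lipschitz} by Rademacher.

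The main obstacle is making the cone-inclusion argument rigorous. It requires the comparison/non-crossing property of Carath\'eodory characteristics in the terminal variables $(x,t)$, which I would derive from the composition property \eqref{eq:bw-char-composition}, the monotonicity \eqref{eq:bw-char-r-bounds}, and \eqref{eq:bw-char-jacobian}. It also requires choosing the representative of $s$ given by $e^{-\Lambda}$, defined for every $x$, so that the pointwise comparison is not spoiled by null sets.
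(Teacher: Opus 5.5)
Your proposal is correct. The two-sided bound is obtained exactly as in the paper, by integrating \eqref{eq:two_sided_s_bound} in $t$; your care about a $t$-independent null set is a harmless refinement. Your Lipschitz argument, however, takes a genuinely different route.

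The paper argues analytically. From \eqref{eq:duhamel_f}, $v\ge v_{\min}$ and $0\le f\le 1$, it obtains the fork budget $\int_0^\infty(\rho_++\rho_-)(x,t)\,\rd t\le 1/v_{\min}$. Fubini and $\partial_xs=\rho_+-\rho_-$ from Lemma~\ref{lem:compatibility-propagation} then give the identity $\partial_x\mathbb E[T(x)]=\int_0^\infty(\rho_+-\rho_-)(x,t)\,\rd t$, whose modulus is bounded by that budget.

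You instead work with the light-cone formula \eqref{eq:general_lightcone_representation}. For $x<y$ and $h=(y-x)/v_{\min}$, the inclusion $C^{\mathbb R}_{x,t}(\tau)\subset C^{\mathbb R}_{y,t+h}(\tau)$ does hold. It follows from $X_+(t;y,t+h)\le x$ and $X_-(t;y,t+h)\ge y$, together with the composition property \eqref{eq:bw-char-composition} and the monotonicity of $x\mapsto X_\pm(\tau;x,t)$. With $I\ge0$ it yields the pointwise delay comparison $s(y,t+h)\le s(x,t)$. Working with $e^{-\Lambda}$ is legitimate: it is defined for every $(x,t)$ and coincides with $s(\cdot,t)$ a.e.\ for each $t$. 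Tonelli then identifies its time integral with $\mathbb E[T(x)]$ for a.e.\ $x$.

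The paper's route is shorter. Beyond the finiteness supplied by the first part, its derivative step uses only compatibility, the fork budget and $v\ge v_{\min}$, not the cone geometry or the ordering of characteristics. It also gives an interpretable formula linking the slope of the mean timing profile to the time-integrated net fork imbalance.

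Your route gives an explicit Lipschitz representative without any distributional calculus, and its intermediate result is stronger. The replication-time law at $y$ is dominated by that at $x$ delayed by $|y-x|/v_{\min}$. Hence every quantile of the locuswise replication time, not only its mean, is $1/v_{\min}$-Lipschitz in $x$.
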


\begin{proof}
Integrating~\eqref{eq:two_sided_s_bound} of
Corollary~\ref{cor:two-sided-completion} with respect to \(t\)
and using~\eqref{eq:expectation_time_def} of
Corollary~\ref{cor:master_E(T)_bound}, together with
\[
\int_0^\infty e^{-at^2}\,\rd t = \frac{\sqrt{\pi}}{2\sqrt a}\,,
\quad a>0\,,
\]
proves~\eqref{eq:expected_time_bounds}. For the derivative with respect to \(x\), observe that the non-negativity of \(\rho_\pm\), \eqref{eq:ass_light-cone-speed}, and the integral
formula~\eqref{eq:duhamel_f} in
Subsection~\ref{subsec:characteristics-mild-formulation} give
\begin{align*}
0 \leq \int_0^R (\rho_+(x,t)+\rho_-(x,t))\,\rd t
&\leq
\frac{1}{v_{\min}} \int_0^R v(x,t)(\rho_+(x,t)+\rho_-(x,t))\,\rd t\\
&=
\frac{f(x,R)-f_0(x)}{v_{\min}}
\leq \frac{1}{v_{\min}}
\end{align*}
for every \(R>0\) and for a.e.\ \(x\in\mathbb R\), and
monotone convergence as \(R\to\infty\) therefore yields
\[
\int_0^\infty
\bigl(\rho_+(x,t)+\rho_-(x,t)\bigr)\,\rd t
\leq
\frac{1}{v_{\min}}
\quad\text{for a.e. }x\in\mathbb R\,.
\]
Together with~\eqref{eq:expected_time_bounds}, this estimate
allows us to apply Fubini's theorem and
\eqref{eq:spatial_derivative_of_s} of
Lemma~\ref{lem:compatibility-propagation}, which yield, for
every \(\zeta\in C_c^\infty(\mathbb R)\),
\begin{align*}
-\int_{\mathbb R}
\mathbb E[T(x)]\partial_x\zeta(x)\,\rd x
= -\int_0^\infty\int_{\mathbb R} s(x,t)\partial_x\zeta(x)\,\rd x\,\rd t
&=\int_{\mathbb R}
\left(\int_0^\infty(\rho_+(x,t)-\rho_-(x,t))\,\rd t
\right)
\zeta(x)\,\rd x\,.
\end{align*}
Consequently, the weak derivative satisfies
\begin{align*}
|\partial_x\mathbb E[T(x)]|
=\left| \int_0^\infty (\rho_+(x,t)-\rho_-(x,t))\,\rd t
\right|  \leq
\int_0^\infty
\bigl(\rho_+(x,t)+\rho_-(x,t)\bigr)\,\rd t
\leq
\frac{1}{v_{\min}}
\end{align*}
for a.e. \(x\in\mathbb R\), which, together
with~\eqref{eq:expected_time_bounds}, proves
\(\mathbb E[T(\cdot)]\in W^{1,\infty}(\mathbb R)\)
and~\eqref{eq:expected_time_lipschitz}.
\end{proof}

\begin{rem}
\label{rem:sharpness}
The bounds~\eqref{eq:two_sided_s_bound}
and~\eqref{eq:two_sided_Teps} of
Corollary~\ref{cor:two-sided-completion} are sharp, since
equality holds when \(I(x,t)=I_{\min}=I_{\max}>0\) and
\(v(x,t)=v_{\min}=v_{\max}>0\) for a.e. \((x,t)\in\mathbb R\times(0,\infty)\).
\end{rem}

Finally, we show that the unreplicated fraction \(s(x,t)\)
converges uniformly to zero and the replicated fraction \(f(x,t)\)
converges uniformly to one as \(t\to\infty\).

\begin{cor}
Assume~\eqref{eq:ass_light-cone-general}, and suppose that
\eqref{eq:wp-initiation} and~\eqref{eq:wp-velocity-regularity}
hold on \(\mathbb R\times(0,T)\) for every \(T\in(0,\infty)\).
Let \((\rho_+,\rho_-,f)\) be the unique global-in-time mild
solution given by Theorem~\ref{thm:global-compatible}, and
set \(s:=1-f\). Assume, in addition, that there exists
\(I_{\min}>0\) such that
\[
I(x,t)\geq I_{\min}
\quad\text{for a.e. }(x,t)\in\mathbb R\times(0,\infty)\,.
\]
Then
\[
\|f(\cdot,t)-1\|_{L^\infty(\mathbb R)} = \|s(\cdot,t)\|_{L^\infty(\mathbb R)}
\longrightarrow0
\quad\text{as }t\to\infty\,.
\]
\end{cor}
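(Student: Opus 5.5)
This follows directly from Corollary~\ref{cor:explicit-positive-initiation}. The assumptions are exactly those of Proposition~\ref{prop:master_completion_bound_R}, namely \eqref{eq:ass_light-cone-general} together with \eqref{eq:wp-initiation} and \eqref{eq:wp-velocity-regularity} for every finite $T$, with the additional lower bound $I\geq I_{\min}>0$. Note in particular that \eqref{eq:ass_light-cone-speed} supplies $v_{\min}>0$. The estimate \eqref{eq:explicit_s_bound} is therefore available, giving
\[
\|s(\cdot,t)\|_{L^\infty(\mathbb R)}\leq e^{-I_{\min}v_{\min}t^2}
\qquad\text{for every }t\geq0 .
\]

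First I would justify the identity $\|f(\cdot,t)-1\|_{L^\infty(\mathbb R)}=\|s(\cdot,t)\|_{L^\infty(\mathbb R)}$. By definition $s=1-f$, so $|f(x,t)-1|=|s(x,t)|$. By Lemma~\ref{lem:temporal-traces-restart}, the inequalities $0\leq f\leq 1$ hold for every $t$ and a.e.\ $x$ on each finite interval $[0,T]$, so $s(\cdot,t)\geq0$ and $|s|=s$. Hence the two $L^\infty$ norms coincide for every $t\geq0$.

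Second, I would let $t\to\infty$ in the bound above. Since $I_{\min}v_{\min}>0$, we have $e^{-I_{\min}v_{\min}t^2}\to0$, and the claim follows. There is no real obstacle here. The only point needing care is that the a.e.\ statements hold at every time slice rather than merely almost every $t$, and Lemma~\ref{lem:temporal-traces-restart} together with the every-$t$ form of \eqref{eq:explicit_s_bound} provides exactly this.
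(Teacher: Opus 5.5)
Your proposal is correct and matches the paper's proof, which likewise concludes directly from \eqref{eq:explicit_s_bound} of Corollary~\ref{cor:explicit-positive-initiation}, the positivity of $I_{\min}v_{\min}$, and the identity $s=1-f$. Your appeal to $0\leq f\leq 1$ is harmless but not needed: $|f-1|=|s|$ pointwise already gives equality of the two $L^\infty$ norms.
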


\begin{proof}
The assertion follows from~\eqref{eq:explicit_s_bound} of
Corollary~\ref{cor:explicit-positive-initiation},
\(I_{\min}v_{\min}>0\), and the identity \(s=1-f\).
\end{proof}

\subsection{Replication timing on other geometries}
\label{Sec4.2}

In this subsection, we derive corresponding replication-time bounds
on the torus \(\mathbb T_L\) of length \(L\), the bounded non-periodic
interval \(D=[0,L]\), and the half-line \(\mathbb R_+\), where the
results on \(D\) and \(\mathbb R_+\) require additional assumptions
which are stated below.\\

For each \(\Omega\in\{\mathbb T_L,D,\mathbb R_+\}\), we assume
\eqref{eq:ass_light-cone-general}, and suppose that the initiation
rate \(I\) and the fork speed \(v\) satisfy
\eqref{eq:wp-initiation} and~\eqref{eq:wp-velocity-regularity},
respectively, for every \(T\in(0,\infty)\), with \(\mathbb R\)
replaced by \(\Omega\) in these assumptions.
On \(\mathbb T_L\), \(I\) and \(v\) are regarded as \(L\)-periodic
functions on \(\mathbb R\), and, for each domain,
\(v_{\min}>0\) denotes a uniform lower bound for \(v(x,t)\)
as in~\eqref{eq:ass_light-cone-speed}.

For later use, we define the geometry-dependent uniform width
function by
\begin{equation}
\label{eq:def_width_function}
\omega_\Omega(\sigma)
\coloneqq
\begin{cases}
\min\{2v_{\min}\sigma,L\}\,,
& \Omega=\mathbb T_L\,,\\
\min\{v_{\min}\sigma,L\}\,,
& \Omega=D\,,\\
v_{\min}\sigma\,,
& \Omega=\mathbb R_+\,,
\end{cases}
\qquad \sigma\geq0\,.
\end{equation}
The factor \(2\) in~\eqref{eq:def_width_function} reflects fork propagation in both directions, whereas at a boundary position of \(D\) or \(\mathbb R_+\) only one side of the interval remains within the domain, and the minimum with \(L\) accounts for the maximal length of an interval in \(D\) or of the projection of an interval
onto \(\mathbb T_L\).

\begin{rem}
\label{rem:domain-consistency}
For \(\Omega=\mathbb T_L\), uniqueness in
Theorem~\ref{thm:global-compatible} shows that the global mild
solution \((\rho_+,\rho_-,f)\) corresponding to the \(L\)-periodic functions \(I\) and \(v\) and the initial data
in~\eqref{eq:ass_light-cone-initial} is \(L\)-periodic, so that
we may regard \(s\coloneqq 1-f\) as a function on
\(\mathbb T_L\times[0,\infty)\). For \(\Omega\in\{D,\mathbb R_+\}\), we do not treat the
corresponding boundary-value problems here, but assume that
\(s:\Omega\times[0,\infty)\longrightarrow[0,1]\) is measurable
and satisfies
\begin{subequations}
\label{eq:additional-domain-light-cone-assumptions}
\begin{equation}
\label{eq:additional-domain-light-cone-representation}
s(x,t) = \exp\left\{ -\int_0^t
\int_{C_{x,t}^{\Omega}(\tau)} I(\xi,\tau)\,\rd\xi\,\rd\tau
\right\}
\end{equation}
for every \(t\geq0\) and for a.e. \(x\in\Omega\), where
\(C_{x,t}^{\Omega}(\tau)\subset\Omega\) are bounded intervals
whose endpoints are measurable in \(\tau\), with
\begin{equation}
\label{eq:additional-domain-cone-width}
|C_{x,t}^{\Omega}(\tau)| \geq \omega_\Omega(t-\tau)
\end{equation}
for a.e. \(\tau\in(0,t)\).
\end{subequations}
Here we use the convention \(\exp(-\infty):=0\).
\end{rem}

To estimate the initiation integrals, we define the local
initiation mass on \(\Omega\) by
\begin{equation}
\label{eq:def_domain_dependent_mI}
m_{I,\Omega}(r,\tau)
\coloneqq \inf_{A\in\mathcal A_\Omega(r)} \int_A I(\xi,\tau)\,\rd\xi
\end{equation}
for a.e. \(\tau>0\) and every \(r>0\), with \(r\leq L\)
when \(\Omega\in\{\mathbb T_L,D\}\), where
\(\mathcal A_\Omega(r)\) consists of the intervals of length
\(r\) contained in \(\Omega\) when
\(\Omega\in\{D,\mathbb R_+\}\), and of the connected arcs
of length \(r\) when \(\Omega=\mathbb T_L\).
We set \(m_{I,\Omega}(0,\tau):=0\) for every \(\tau>0\),
and at the remaining times we set
\(m_{I,\Omega}(r,\tau)\coloneqq 0\) for every admissible \(r\).

Arguing as after Definition~\ref{def:local_initiation_mass},
the map \((r,\tau)\mapsto m_{I,\Omega}(r,\tau)\) is measurable
on its domain and satisfies
\begin{align}
\label{eq:additional-domain-initiation-mass-properties}
0 \leq m_{I,\Omega}(r_1,\tau)
\leq m_{I,\Omega}(r_2,\tau) \leq m_{I,\Omega}(r_1,\tau)
+\|I(\cdot,\tau)\|_{L^\infty(\Omega)}(r_2-r_1)
\end{align}
for a.e. \(\tau>0\) and all admissible \(0\leq r_1\leq r_2\), so that \(m_{I,\Omega}(\cdot,\tau)\) is non-decreasing and Lipschitz
continuous. With these properties, we obtain the following
bounds for the unreplicated fraction \(s(x,t)\) and the
near-completion time \(T_\varepsilon^\Omega\), defined below.

\begin{prop}
\label{prop:geometry_master_completion_bound}
Let \(\Omega\in\{\mathbb T_L,D,\mathbb R_+\}\), and assume
\eqref{eq:wp-initiation}, \eqref{eq:wp-velocity-regularity},
and~\eqref{eq:ass_light-cone-general} on the corresponding
domain as specified above. For \(\Omega=\mathbb T_L\), let \(s\) be the unreplicated fraction
of the periodic global-in-time mild solution, and, for
\(\Omega\in\{D,\mathbb R_+\}\), assume
\eqref{eq:additional-domain-light-cone-assumptions} as stated in Remark~\ref{rem:domain-consistency}.
Then, for every \(t\geq0\),
\begin{equation}
\label{eq:geometry_master_completion_bound}
\|s(\cdot,t)\|_{L^\infty(\Omega)}
\leq \exp\left\{ -\int_0^t m_{I,\Omega}(\omega_\Omega(t-\tau),\tau)\,\rd\tau \right\}\,.
\end{equation}
In particular, for every \(\varepsilon\in(0,1)\),
\begin{align}
T_\varepsilon^\Omega
&\coloneqq
\inf\left\{ t\geq0: \|s(\cdot,t)\|_{L^\infty(\Omega)} \leq\varepsilon \right\}
\nonumber\\
&\leq
\inf\left\{ t\geq0: \int_0^t
m_{I,\Omega}(\omega_\Omega(t-\tau),\tau)\,\rd\tau \geq \ln\left(\frac1\varepsilon\right) \right\}\,,
\label{eq:geometry_master_Teps_bound}
\end{align}
with \(\inf\varnothing:=\infty\).
\end{prop}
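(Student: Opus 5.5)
The argument copies the proof of Proposition~\ref{prop:master_completion_bound_R}: for each geometry, first obtain a light-cone representation of \(s\) over cone intervals of guaranteed width. Then bound the initiation integral over each cone cross-section from below by the local initiation mass \(m_{I,\Omega}\), using its monotonicity in \(r\). For \(\Omega\in\{D,\mathbb R_+\}\) the representation~\eqref{eq:additional-domain-light-cone-representation} and the width bound~\eqref{eq:additional-domain-cone-width} are assumed in Remark~\ref{rem:domain-consistency}. So only the torus needs genuine work.

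\emph{Torus.} By Remark~\ref{rem:domain-consistency}, the periodic global solution is the global solution on \(\mathbb R\) with \(L\)-periodic \(I\) and \(v\). Lemma~\ref{lem:general-light-cone-representation} therefore gives \(s(x,t)=\exp\{-\int_0^t\int_{C^{\mathbb R}_{x,t}(\tau)}I(\xi,\tau)\,\rd\xi\,\rd\tau\}\), and~\eqref{eq:cone_lbR} gives \(|C^{\mathbb R}_{x,t}(\tau)|\ge 2v_{\min}(t-\tau)\).

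Fix \(\tau\). If \(|C^{\mathbb R}_{x,t}(\tau)|\le L\), its projection onto \(\mathbb T_L\) is an arc of the same length, at least \(\omega_{\mathbb T_L}(t-\tau)\). Periodicity and nonnegativity of \(I\) then give
\[
\int_{C^{\mathbb R}_{x,t}(\tau)}I\,\rd\xi\ \ge\ m_{I,\mathbb T_L}\bigl(|C^{\mathbb R}_{x,t}(\tau)|,\tau\bigr).
\]
If \(|C^{\mathbb R}_{x,t}(\tau)|>L\), the interval contains a subinterval of length exactly \(L\), that is, a full period. Nonnegativity of \(I\) gives \(\int_C I\ge\int_0^L I(\xi,\tau)\,\rd\xi = m_{I,\mathbb T_L}(L,\tau)\).

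In both cases, monotonicity~\eqref{eq:additional-domain-initiation-mass-properties} yields \(\int_C I\ge m_{I,\mathbb T_L}(\min\{2v_{\min}(t-\tau),L\},\tau)\), and this bound is uniform in \(x\).

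\emph{Bounded interval and half-line.} For \(D\) and \(\mathbb R_+\), \(C^\Omega_{x,t}(\tau)\subset\Omega\) is an interval of length at least \(\omega_\Omega(t-\tau)\), and at most \(L\) in the case of \(D\). It therefore contains an admissible interval of length exactly \(\omega_\Omega(t-\tau)\). Since \(I\ge0\), the same lower bound follows directly from the definition~\eqref{eq:def_domain_dependent_mI} of the infimum.

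\emph{Conclusion.} Insert the bound into the exponent, integrate over \(\tau\in(0,t)\), and take the essential supremum in \(x\); this gives~\eqref{eq:geometry_master_completion_bound}. Measurability of \(\tau\mapsto m_{I,\Omega}(\omega_\Omega(t-\tau),\tau)\) follows from the measurability of \(m_{I,\Omega}\) and the continuity of \(\omega_\Omega\). The bound~\eqref{eq:geometry_master_Teps_bound} follows as in Proposition~\ref{prop:master_completion_bound_R}: every time \(t\) with \(\int_0^t m_{I,\Omega}(\omega_\Omega(t-\tau),\tau)\,\rd\tau\ge\ln(1/\varepsilon)\) satisfies \(\|s(\cdot,t)\|_{L^\infty(\Omega)}\le\varepsilon\). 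The infimum over the smaller set is therefore at least \(T^\Omega_\varepsilon\), and the convention \(\exp(-\infty)=0\) handles the degenerate case.

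\emph{Main obstacle.} The delicate step is the torus case with long cones. There the lifted interval wraps around the circle more than once, so it is not an arc of length at most \(L\), and one must pass from the interval on \(\mathbb R\) to a full-period integral using periodicity and \(I\ge0\). A related check is that \(m_{I,\mathbb T_L}(r,\tau)\) (an infimum over arcs) agrees with the essential infimum over lifted intervals \([y,y+r]\), which holds by the continuity argument used for~\eqref{eq:mI-rational}.
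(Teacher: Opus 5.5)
Your proof is correct and takes essentially the same route as the paper: the light-cone representation (Lemma~\ref{lem:general-light-cone-representation} on the torus, assumption~\eqref{eq:additional-domain-light-cone-assumptions} on $D$ and $\mathbb R_+$), the cone-width lower bound, and then the infimum defining $m_{I,\Omega}$ combined with $I\ge0$ and monotonicity in $r$. The only difference is on $\mathbb T_L$, where the paper avoids your case split ($|C^{\mathbb R}_{x,t}(\tau)|\le L$ versus $>L$) by working directly with the subinterval $[X_+(\tau;x,t),X_+(\tau;x,t)+\omega_{\mathbb T_L}(t-\tau)]\subset C^{\mathbb R}_{x,t}(\tau)$; its projection is an arc of length $\omega_{\mathbb T_L}(t-\tau)$, or the whole torus, so the wrapping issue you identify as the main obstacle never arises.
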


\begin{proof}
The identity \(s_0=1\) in~\eqref{eq:ass_light-cone-initial}
proves~\eqref{eq:geometry_master_completion_bound} at \(t=0\)
when \(\Omega=\mathbb T_L\), while
\eqref{eq:additional-domain-light-cone-representation}
of Remark~\ref{rem:domain-consistency} at \(t=0\) gives the same
conclusion when \(\Omega\in\{D,\mathbb R_+\}\), and we therefore
fix \(t>0\). 

For \(\Omega=\mathbb T_L\), we regard \(I\), \(v\), and \(s\)
as \(L\)-periodic functions on \(\mathbb R\), so that
\eqref{eq:cone_lbR} and~\eqref{eq:def_width_function} imply
\[
0<\omega_{\mathbb T_L}(t-\tau) \leq
\min\left\{ X_-(\tau;x,t)-X_+(\tau;x,t),L \right\}
\]
for every \(x\in\mathbb R\) and \(\tau\in(0,t)\).
The interval \(\big[ X_+(\tau;x,t), X_+(\tau;x,t)+\omega_{\mathbb T_L}(t-\tau)\big] \) is therefore contained in \(C_{x,t}^{\mathbb R}(\tau)\), and its projection onto \(\mathbb T_L\) is a connected arc of length \(\omega_{\mathbb T_L}(t-\tau)\) when this length is smaller than \(L\) and is the whole torus when this length equals \(L\), so that the non-negativity of \(I\) and \eqref{eq:def_domain_dependent_mI} imply
\begin{align*}
\int_{C_{x,t}^{\mathbb R}(\tau)} I(\xi,\tau)\,\rd\xi \geq
\int_{X_+(\tau;x,t)}^{X_+(\tau;x,t)+\omega_{\mathbb T_L}(t-\tau)} I(\xi,\tau)\,\rd\xi
\geq m_{I,\mathbb T_L}(\omega_{\mathbb T_L}(t-\tau),\tau)
\end{align*}
for every \(x\in\mathbb R\) and for a.e. \(\tau\in(0,t)\). For \(\Omega\in\{D,\mathbb R_+\}\), since
\(C_{x,t}^{\Omega}(\tau)\) is a bounded interval by Remark~\ref{rem:domain-consistency},
\eqref{eq:additional-domain-cone-width} of this remark, together
with~\eqref{eq:def_domain_dependent_mI} and
\eqref{eq:additional-domain-initiation-mass-properties},
similarly implies
\begin{align*}
\int_{C_{x,t}^{\Omega}(\tau)} I(\xi,\tau)\,\rd\xi \geq
m_{I,\Omega}(|C_{x,t}^{\Omega}(\tau)|,\tau)
\geq m_{I,\Omega}(\omega_\Omega(t-\tau),\tau)
\end{align*}
for a.e. \(x\in\Omega\) and for a.e. \(\tau\in(0,t)\). Inserting these estimates into
\eqref{eq:general_lightcone_representation} of Lemma~\ref{lem:general-light-cone-representation} in the periodic case, and into \eqref{eq:additional-domain-light-cone-representation}
of Remark~\ref{rem:domain-consistency} in the other two cases,
and taking the essential supremum with respect to \(x\in\Omega\) proves~\eqref{eq:geometry_master_completion_bound}, which also
shows that the set on the right-hand side of
\eqref{eq:geometry_master_Teps_bound} is contained in the set
defining \(T_\varepsilon^\Omega\), and hence
\eqref{eq:geometry_master_Teps_bound} follows by taking the infimum.
\end{proof}

As in Corollary~\ref{cor:master_E(T)_bound}, integration with respect to time gives the following bound.

\begin{cor}
\label{cor:geometry_master_expectation_bound}
Under the assumptions of
Proposition~\ref{prop:geometry_master_completion_bound}, define,
for a.e. \(x\in\Omega\), the geometry-dependent replication-time
integral by
\[
\mathbb E[T_\Omega(x)] \coloneqq \int_0^\infty s(x,t)\,\rd t \in[0,\infty]\,.
\]
Then
\begin{equation}
\label{eq:geometry_master_expectation_bound}
\mathbb E[T_\Omega(x)] \leq \int_0^\infty
\exp\left\{ -\int_0^t m_{I,\Omega}(\omega_\Omega(t-\tau),\tau)\,\rd\tau \right\}\,\rd t
\end{equation}
for a.e. \(x\in\Omega\).
\end{cor}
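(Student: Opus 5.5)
The bound follows by integrating the pointwise-in-time estimate \eqref{eq:geometry_master_completion_bound} of Proposition~\ref{prop:geometry_master_completion_bound} over \(t\in(0,\infty)\), exactly as Corollary~\ref{cor:master_E(T)_bound} was obtained from Proposition~\ref{prop:master_completion_bound_R}. First, for every \(t\geq0\) we have \(s(x,t)\leq\|s(\cdot,t)\|_{L^\infty(\Omega)}\) for a.e. \(x\in\Omega\), with an exceptional null set \(N_t\) that depends on \(t\). Hence, for every \(t\geq0\) and a.e. \(x\), \(s(x,t)\) is bounded by the right-hand side \(g(t)\) of \eqref{eq:geometry_master_completion_bound}.

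The step needing care is turning this into a bound for a.e. \(x\) simultaneously for a.e. \(t\), since the null sets \(N_t\) vary with \(t\). I would use joint measurability of \(s\) on \(\Omega\times[0,\infty)\). In the periodic case this holds because \(s\) is a mild solution. For \(D\) and \(\mathbb R_+\) it is part of the hypotheses in Remark~\ref{rem:domain-consistency}. Measurability of \(g\) follows from the measurability of \(m_{I,\Omega}\) together with Tonelli's theorem. Given these, the set \(E=\{(x,t):s(x,t)>g(t)\}\) is measurable and each \(t\)-slice of \(E\) is null. Tonelli's theorem then shows \(E\) is null in \(\Omega\times(0,\infty)\), so for a.e. \(x\) its \(x\)-slice has measure zero in \(t\).

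For such \(x\), I would integrate \(0\leq s(x,t)\leq g(t)\) over \(t\in(0,\infty)\) using the definition of \(\mathbb E[T_\Omega(x)]\). This gives \eqref{eq:geometry_master_expectation_bound}, with both sides allowed to take the value \(+\infty\). No convergence issue arises, since all integrands are non-negative.
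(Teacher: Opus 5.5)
Your proposal is correct and follows the paper's own proof, which simply integrates \eqref{eq:geometry_master_completion_bound} with respect to \(t\) and uses the definition of \(\mathbb E[T_\Omega(x)]\). Your Tonelli argument for the \(t\)-dependent exceptional null sets, using joint measurability of \(s\), supplies a measure-theoretic step that the paper leaves implicit.
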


\begin{proof}
Integrating~\eqref{eq:geometry_master_completion_bound} of
Proposition~\ref{prop:geometry_master_completion_bound}
with respect to \(t\) and using the definition above proves
\eqref{eq:geometry_master_expectation_bound}.
\end{proof}

Note that, if \(s(x,t)\) is the probability that the genomic position \(x\) remains unreplicated at time \(t\), then \(\mathbb E[T_\Omega(x)]\), defined in
Corollary~\ref{cor:geometry_master_expectation_bound}, is the expected replication time at \(x\).
We next show that the bounds for the unreplicated fraction \(s(x,t)\) and the near-completion time \(T_\varepsilon^\Omega\) in~\eqref{eq:geometry_master_completion_bound}
and~\eqref{eq:geometry_master_Teps_bound} of Proposition~\ref{prop:geometry_master_completion_bound}
become explicit if the initiation rate \(I(x,t)\) is bounded below by a positive constant.

\begin{cor}
\label{cor:explicit-geometry-bounds}
Under the assumptions of
Proposition~\ref{prop:geometry_master_completion_bound}, assume, in addition, that there exists $I_{\min} > 0$ such that
\[
I(x,t)\geq I_{\min} \quad\text{for a.e. }(x,t)\in\Omega\times(0,\infty)\,
\]
and define the corresponding accumulated width \(A_\Omega\) by
\begin{equation}
\label{eq:def_accumulated_width}
A_\Omega(t) \coloneqq  \int_0^t \omega_\Omega(\sigma)\,\rd \sigma, \quad t\geq 0\,.
\end{equation}
Then, for every \(t\geq0\),
\begin{equation}
\label{eq:explicit_geometry_s_bound}
\|s(\cdot,t)\|_{L^\infty(\Omega)} \le \exp\{-I_{\min}A_\Omega(t)\}\,
\end{equation}
where 
\begin{equation}
\label{eq:A_torus}
A_{\mathbb T_L}(t) = \begin{cases}
v_{\min}t^2\,, &0\leq t\leq\frac{L}{2v_{\min}}\,,\\
Lt-\frac{L^2}{4v_{\min}}\,, &t\geq\dfrac{L}{2v_{\min}}\,,
\end{cases}
\end{equation}
and 
\begin{equation}
\label{eq:additional-domain-accumulated-width-interval}
A_D(t)
= \begin{cases} \frac{v_{\min}}2t^2\,, &0\leq t\leq\frac{L}{v_{\min}}\,,\\
Lt-\dfrac{L^2}{2v_{\min}}\,, &t\geq\frac{L}{v_{\min}}\,,
\end{cases}
\end{equation}
and
\begin{equation}
\label{eq:additional-domain-accumulated-width-half-line}
A_{\mathbb R_+}(t)
= \frac{v_{\min}}2t^2\,.
\end{equation}
In particular, for every \(\varepsilon\in(0,1)\),
\begin{equation}
\label{eq:explicit_geometry_Teps_implicit}
T_\varepsilon^\Omega \leq \inf\left\{ t\geq0:
I_{\min}A_\Omega(t) \geq\ln\left(\frac1\varepsilon\right)
\right\},
\end{equation}
and thus
\begin{equation}
\label{eq:explicit_Teps_torus}
T_\varepsilon^{\mathbb T_L} \leq
\begin{cases}
\displaystyle \sqrt{ \frac{1}{I_{\min}v_{\min}} \ln\left(\frac1\varepsilon\right)}\,,
&\displaystyle \ln\left(\frac1\varepsilon\right) \leq\frac{I_{\min}L^2}{4v_{\min}}\,,
\\
\displaystyle \frac{1}{I_{\min}L}\ln\left(\frac1\varepsilon\right) +\frac{L}{4v_{\min}}\,,
&\displaystyle \ln\left(\frac1\varepsilon\right)>\frac{I_{\min}L^2}{4v_{\min}}\,,
\end{cases}
\end{equation}
and 
\begin{equation}
\label{eq:additional-domain-completion-bound-interval}
T_\varepsilon^D \leq
\begin{cases}
\displaystyle \sqrt{ \frac{2}{I_{\min}v_{\min}} \ln\left(\frac1\varepsilon\right) }\,,
&\displaystyle \ln\left(\frac1\varepsilon\right) \leq\frac{I_{\min}L^2}{2v_{\min}}\,,
\\
\displaystyle\frac{1}{I_{\min}L}\ln\left(\frac1\varepsilon\right) +\frac{L}{2v_{\min}}\,,
&\displaystyle \ln\left(\frac1\varepsilon\right) >\frac{I_{\min}L^2}{2v_{\min}}\,,
\end{cases}
\end{equation}
and
\begin{equation}
\label{eq:additional-domain-completion-bound-half-line}
T_\varepsilon^{\mathbb R_+}
\leq \sqrt{ \frac{2}{I_{\min}v_{\min}} \ln\left(\frac1\varepsilon\right)}\,.
\end{equation}
\end{cor}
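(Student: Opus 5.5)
The plan is to reduce everything to Proposition~\ref{prop:geometry_master_completion_bound}. First I bound the local initiation mass from below using the lower bound for \(I\). For a.e. \(\tau>0\) and every admissible \(r\), every interval or arc \(A\in\mathcal A_\Omega(r)\) has length \(r\), so \(\int_A I(\xi,\tau)\,\rd\xi\geq I_{\min}r\). Taking the infimum in~\eqref{eq:def_domain_dependent_mI} gives \(m_{I,\Omega}(r,\tau)\geq I_{\min}r\), and this holds trivially for \(r=0\).

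Next I apply this bound with \(r=\omega_\Omega(t-\tau)\), which is admissible because \(\omega_\Omega\leq L\) on \(\mathbb T_L\) and on \(D\). This yields
\[
\int_0^t m_{I,\Omega}(\omega_\Omega(t-\tau),\tau)\,\rd\tau\geq I_{\min}\int_0^t\omega_\Omega(t-\tau)\,\rd\tau=I_{\min}A_\Omega(t),
\]
after the substitution \(\sigma=t-\tau\). Inserting this into~\eqref{eq:geometry_master_completion_bound} gives~\eqref{eq:explicit_geometry_s_bound}. For~\eqref{eq:explicit_geometry_Teps_implicit}, the condition \(I_{\min}A_\Omega(t)\geq\ln(1/\varepsilon)\) implies the condition in the set of~\eqref{eq:geometry_master_Teps_bound}, so that set is larger and its infimum is smaller.

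The formulas for \(A_\Omega\) come from elementary integration of~\eqref{eq:def_width_function}. On \(\mathbb T_L\) the kink is at \(\sigma=L/(2v_{\min})\), and for \(t\) beyond it
\[
A_{\mathbb T_L}(t)=v_{\min}\Big(\tfrac{L}{2v_{\min}}\Big)^2+L\Big(t-\tfrac{L}{2v_{\min}}\Big)=Lt-\tfrac{L^2}{4v_{\min}}.
\]
On \(D\) the kink is at \(\sigma=L/v_{\min}\), which gives \(A_D(t)=Lt-L^2/(2v_{\min})\) beyond it. On \(\mathbb R_+\) there is no kink and \(A_{\mathbb R_+}(t)=\tfrac{v_{\min}}{2}t^2\).

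Finally, each \(A_\Omega\) is continuous, non-decreasing, unbounded and starts at \(A_\Omega(0)=0\). The infimum in~\eqref{eq:explicit_geometry_Teps_implicit} is therefore the smallest solution of \(I_{\min}A_\Omega(t)=\ln(1/\varepsilon)\), and I solve this piece by piece. On \(\mathbb T_L\), the value at the kink is \(I_{\min}L^2/(4v_{\min})\). If \(\ln(1/\varepsilon)\) is at most this value, the quadratic branch gives \(t=\sqrt{\ln(1/\varepsilon)/(I_{\min}v_{\min})}\). Otherwise the linear branch gives \(t=\ln(1/\varepsilon)/(I_{\min}L)+L/(4v_{\min})\). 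The same procedure on \(D\), with kink value \(I_{\min}L^2/(2v_{\min})\), and on \(\mathbb R_+\) gives the remaining two bounds.

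There is no real obstacle here. The only care needed is admissibility of \(r=\omega_\Omega(t-\tau)\) in the definition of \(m_{I,\Omega}\), which is guaranteed by the minimum with \(L\), and the measure-zero exceptional times, which do not affect the time integral.
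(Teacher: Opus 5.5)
Your proposal is correct and follows the paper's proof essentially step for step. Like the paper, you bound $m_{I,\Omega}(\omega_\Omega(t-\tau),\tau)\geq I_{\min}\,\omega_\Omega(t-\tau)$, integrate using the substitution $\sigma=t-\tau$, and insert the result into Proposition~\ref{prop:geometry_master_completion_bound}. You then integrate $\omega_\Omega$ explicitly and invert $A_\Omega$ piecewise. Your kink-point computations and your monotonicity argument, which shows that the infimum is the unique root of $I_{\min}A_\Omega(t)=\ln(1/\varepsilon)$, are details the paper leaves implicit, and they are accurate.
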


\begin{proof}
For every \(t\geq0\) and for a.e. \(\tau\in(0,t)\), \eqref{eq:def_domain_dependent_mI} and the lower bound for \(I(x,t)\) imply
\[
m_{I,\Omega}(\omega_\Omega(t-\tau),\tau) \geq I_{\min}\omega_\Omega(t-\tau),
\]
and therefore, after the change of variables
\(\sigma=t-\tau\) and using~\eqref{eq:def_accumulated_width},
we obtain
\begin{align*}
\int_0^t m_{I,\Omega}(\omega_\Omega(t-\tau),\tau)\,\rd\tau
\geq I_{\min}\int_0^t\omega_\Omega(t-\tau)\,\rd\tau
= I_{\min}\int_0^t\omega_\Omega(\sigma)\,\rd\sigma
= I_{\min}A_\Omega(t)\,.
\end{align*}
Together with~\eqref{eq:geometry_master_completion_bound} and~\eqref{eq:geometry_master_Teps_bound} of
Proposition~\ref{prop:geometry_master_completion_bound}, this proves~\eqref{eq:explicit_geometry_s_bound} and~\eqref{eq:explicit_geometry_Teps_implicit}.
The formulas~\eqref{eq:A_torus}--\eqref{eq:additional-domain-accumulated-width-half-line} follow by inserting~\eqref{eq:def_width_function} into \eqref{eq:def_accumulated_width}, and solving the inequality in
\eqref{eq:explicit_geometry_Teps_implicit} gives
\eqref{eq:explicit_Teps_torus}--\eqref{eq:additional-domain-completion-bound-half-line}.
\end{proof}

\subsection{Numerical simulations}
\label{Sec4.3}

We next compare the near-completion time estimates derived above with stochastic simulations of DNA replication fitted to experimental replication-timing data. In particular, we compare the empirical near-completion times obtained from Monte Carlo simulations with the bounds for $T_\varepsilon$ in Propositions~\ref{prop:master_completion_bound_R} and~\ref{prop:geometry_master_completion_bound}, and compare empirical mean replication times with the corresponding expectation bounds in Corollaries~\ref{cor:master_E(T)_bound} and~\ref{cor:geometry_master_expectation_bound}.

\begin{figure}[t]
    \centering
    \includegraphics[width=.82\textwidth]{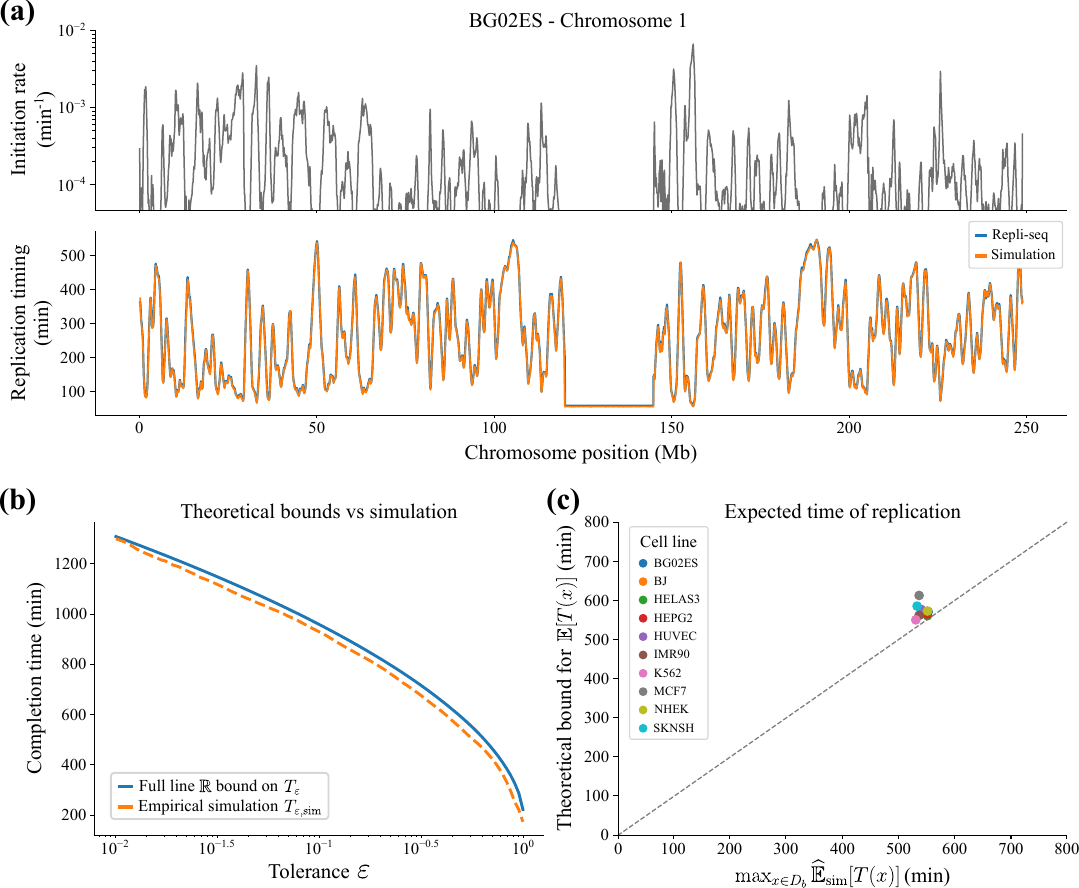}
    \caption{
    Numerical comparison of analytical completion-time estimates with finite-grid Monte Carlo simulations driven by Repli-seq-derived initiation landscapes.
    \textbf{(a)} Top: inferred position-dependent per-bin firing-rate profile ($I_j$) used to drive the simulations. Bottom: experimental replication-timing profile and mean simulated timing obtained from the fitted initiation landscape.
    \textbf{(b)} Comparison between the empirical near-completion time $T_{\varepsilon,\mathrm{sim}}$ obtained from Monte Carlo simulations and the theoretical upper bound derived from the local initiation-mass function.
    \textbf{(c)} Comparison between the maximum empirical mean replication time over the analysis domain and the corresponding uniform theoretical bound across datasets; the dashed diagonal indicates equality. Empirical means were estimated from 10,000 independent simulations on a 10~kb analysis grid.
    }
    \label{fig:completion_bounds_simulations}
\end{figure}

We follow the modelling approach from Berkemeier et al.~\cite{berkemeier2025dna}, where Repli-seq timing profiles are mapped to genomic position and converted into fitted initiation landscapes~\cite{hansen2010sequencing,davis2018encyclopedia,zhao2020high}. Although the analytical results allow both initiation and fork speed to vary over genomic position and S phase time, the numerical comparison is deliberately restricted to a baseline, unperturbed setting with time-independent initiation and constant fork speed set to $v=1.4\,\mathrm{kb\,min^{-1}}$~\cite{conti2007replication}. In particular, origin firing times follow exponential distributions with spatially varying rates. This controlled special case avoids fitting time-dependent initiation or fork-speed fields that are not identifiable from Repli-seq alone, while still testing the central mechanism in the estimates, namely how the spatial organisation of initiation, together with a lower bound on fork speed, controls replication completion.

Although each Repli-seq chromosome profile is defined on a finite interval \(D=[0,L]\), we compare the simulations with the full-line \(\mathbb{R}\) bound rather than with the substantially looser finite-interval estimate. The full-line bound assumes that every locus has a two-sided backward causal cone, whereas in a finite chromosome these cones are truncated near the boundaries. A direct comparison over the whole interval \(D\) would therefore introduce artificial boundary effects that arise from the mismatch between the full-line estimate and the finite simulation, rather than from the inferred replication programme itself. To avoid this, we extend the fitted initiation profile periodically beyond \(D\), solely to define the local initiation mass on \(\mathbb{R}\) using the full-line cone width \(2v\sigma\), while keeping the stochastic simulations finite and non-periodic. Empirical quantities are then evaluated on an interior domain \(D_b \coloneqq [b,L-b]\), with \(b=3000\,\mathrm{kb}\). This margin is chosen from the maximum analysis time \(t_{\max}=2000\,\mathrm{min}\) so that a fork moving at \(v=1.4\,\mathrm{kb\,min^{-1}}\) can travel at most \(vt_{\max}=1.4\times2000=2800\,\mathrm{kb}\), which we round conservatively to \(3000\,\mathrm{kb}\). Hence, for every locus in \(D_b \) and throughout the analysis window, the entire backward light cone remains inside the simulated chromosome. Consequently, the chromosome boundaries do not affect
replication on \(D_b\) throughout this analysis window. Furthermore, because centromeric Repli-seq profiles are poorly resolved owing to low short-read mappability~\cite{wheeler2025comparison}, we preserve their span but assign them the earliest replication timing before fitting, preventing them from contributing artificial late-completion bottlenecks.

In addition, we take a discretisation of $D_b$ and the initiation rate $I$. Let $B_j$ be a genomic bin of width $\Delta x$ and let $I_j\geq0$ denote its fitted per-bin firing rate, with units $\mathrm{min}^{-1}$. The corresponding continuum initiation density is
\[
I(x)=\frac{I_j}{\Delta x},
\qquad x\in B_j,
\]
with units $\mathrm{kb}^{-1}\mathrm{min}^{-1}$, so that, for some $m>0$,
\[
\int_{\bigcup_{j=k}^{m}B_j} I(x)\,\mathrm{d}x
=
\sum_{j=k}^{m}I_j.
\]
This conversion preserves the initiation mass of each bin. The finite-grid simulations approximate the corresponding
continuum initiation model, and the continuum bounds are not
claimed to hold exactly for the discrete model at fixed
grid spacing. Each genomic bin is therefore treated as a potential origin with firing rate $I_j$. For each fitted initiation landscape, we generate independent Monte Carlo realisations of replication and record two classes of observables. First, we compute the empirical replicated fraction \(f_{\mathrm{sim}}(x,t)\) and the corresponding unreplicated fraction \(s_{\mathrm{sim}}(x,t)=1-f_{\mathrm{sim}}(x,t)\), allowing us to estimate the interior near-completion time
\[
T_{\varepsilon,\mathrm{sim}}
=
\inf\left\{
t\geq 0:
\left\|s_{\mathrm{sim}}(\cdot,t)\right\|_{L^\infty(D_b )}
\leq \varepsilon
\right\}.
\]
Second, we record the replication time of each genomic bin across simulations and estimate the expected replication time by the empirical mean \(\widehat{\mathbb{E}}_{\mathrm{sim}}[T(x)]\) for \(x\in D_b \). All chromosome-scale simulations used to compute these means
completed before \(t_{\max}=2000\,\mathrm{min}\), and no
replication times were capped. The reported means are therefore
averages of the recorded replication times, with no contribution
beyond \(t_{\max}\) in the simulated sample. These quantities are then compared with the theoretical bounds obtained from the local initiation-mass function \(m_I\), computed by integrating the associated continuum density \(I(x)\) over genomic intervals.

The resulting comparisons are shown in Figure~\ref{fig:completion_bounds_simulations}. For the chromosome-scale analyses, we use wavelet-smoothed ENCODE Repli-seq profiles on chromosome~1 from selected cell lines, including BG02ES, BJ, HeLa, HEPG2, HUVEC, IMR90, K562, MCF7, NHEK and SK-N-SH~\cite{hansen2010sequencing,davis2018encyclopedia}. Across the datasets analysed, and after scaling to an approximately 8-hour S phase, the simulated timing profiles closely reproduced the Repli-seq-derived timing curves, with a maximum absolute discrepancy below $10$ min (Figure \hyperref[fig:completion_bounds_simulations]{2a}, illustrative for chromosome 1 of the embryonic stem cell BG02ES), and between the empirical and theoretical completion below $30$ min, over the full range of tolerances $\varepsilon$ considered (Figure \hyperref[fig:completion_bounds_simulations]{2b}). Across all cell lines, the empirical mean local replication
times lay below the corresponding analytical full-line
reference curves (Figure \hyperref[fig:completion_bounds_simulations]{2c}).

\begin{figure}[t]
    \centering
    \includegraphics[width=.82\textwidth]{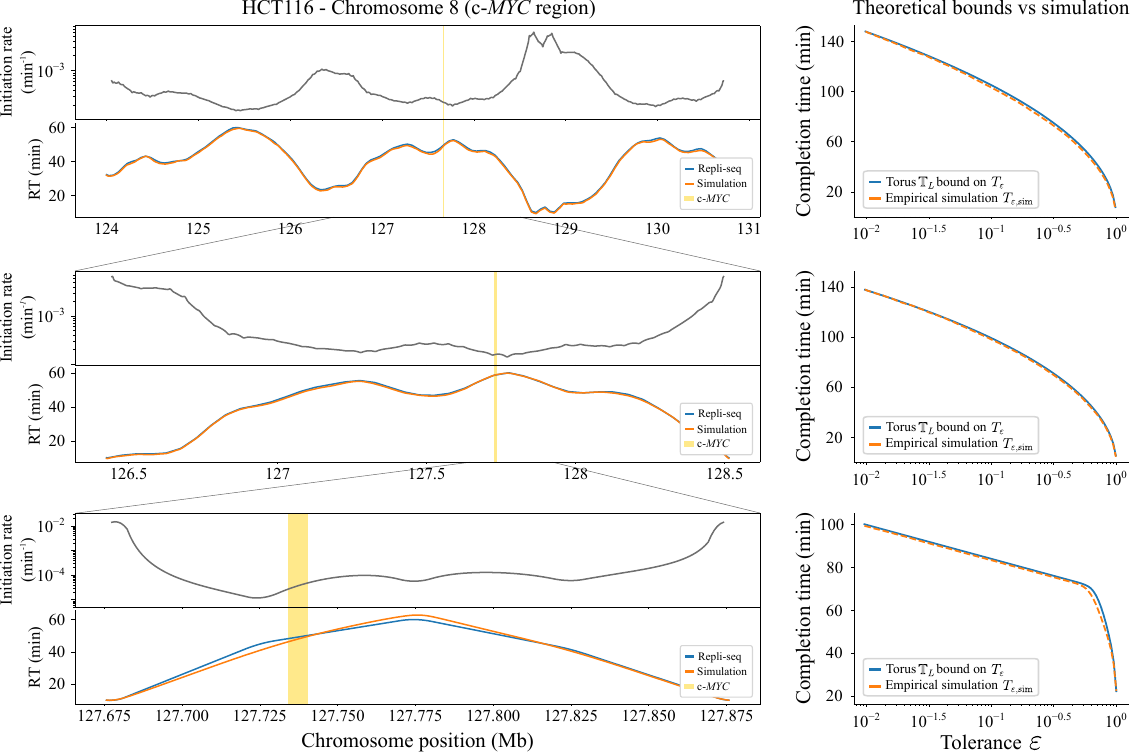}
    \caption{Periodic-domain simulations on three nested chromosome~8 intervals surrounding the c-\textit{MYC} locus. Left panels show the inferred per-bin firing-rate landscapes ($I_j$) and the corresponding Repli-seq and simulated mean replication-timing (RT) profiles; right panels compare empirical near-completion times with the theoretical estimates across tolerance levels. Each interval was evaluated using 10,000 independent simulations on a 1~kb analysis grid.}
\label{fig:completion_bounds_simulations_periodic}
\end{figure}

Beyond the chromosome-scale analyses on $\mathbb R$, the same framework can be tested on finite periodic geometries, such as $\mathbb T_L$. In the periodic formulation of~\eqref{eq:collapsed_system},
the light-cone representation formula
\eqref{eq:general_lightcone_representation} of
Lemma~\ref{lem:general-light-cone-representation} involves
the full cone interval on \(\mathbb R\), whereas for
continuum Poisson initiation on \(\mathbb T_L\), with the
same initiation rate, constant fork speed, and initially
unreplicated state, the integral is taken over its
projection onto \(\mathbb T_L\), with each position counted
once at each initiation time.
Although these integrals need not agree when the full
interval has length greater than \(L\), the non-negativity
of \(I\), \eqref{eq:cone_lbR}, and
\eqref{eq:def_domain_dependent_mI} yield the same upper
bound as in~\eqref{eq:geometry_master_completion_bound}
of Proposition~\ref{prop:geometry_master_completion_bound},
with the width function defined by~\eqref{eq:def_width_function}.
This setting is motivated by circular replication substrates, including oncogene-amplifying circular extrachromosomal DNA (ecDNA) and smaller extrachromosomal circular DNA, for which topology may influence how initiation-poor gaps are resolved~\cite{wu2019circular,yi2022extrachromosomal,tang2024transcription}. Jaworski et al.~\cite{jaworski2025ecdna} reconstructed a COLO~320DM ecDNA sequence mapping to chr8:126,425,747--127,997,820 and reported altered replication timing and origin usage relative to the corresponding chromosomal locus. Guided by this c-\textit{MYC}-associated region, and because the COLO~320DM Repli-seq profiles are not currently available through a traceable public accession, we use the independent HCT116 Repli-seq data of Zhao et al.~\cite{zhao2020high}, which Jaworski et al. also used, to define three nested test domains: a chr8 c-\textit{MYC}/ecDNA context window, chr8:124,000,000--130,775,000 ($L\simeq 6.8$~Mb); a dominant ecDNA-derived interval, chr8:126,425,747--128,512,820 ($L\simeq 2.1$~Mb); and a c-\textit{MYC}-centred origin-density zoom, chr8:127,675,747--127,875,747 ($L\simeq200$~kb) (Figure \hyperref[fig:completion_bounds_simulations_periodic]{3}). The HCT116 data have a source resolution of 50~kb and are interpolated to the 1~kb analysis grid used for fitting, simulation and evaluation of the local initiation mass. These intervals should be viewed as ecDNA-inspired modelling domains rather than reconstructions of the COLO~320DM circle. Their outer boundaries are selected from the HCT116 timing landscape to permit periodic identification without introducing a large artificial discontinuity, while the nested windows preserve the c-\textit{MYC}/ecDNA-associated initiation structure highlighted by Jaworski et al. Here, each bounded Repli-seq timing profile is rescaled to a nominal 60-minute window. This is not intended as an estimate of ecDNA completion time, but it serves as a convenient example for testing the robustness of the estimates to changes in time scale, in addition to domain geometry variation.

\section{Discussion}
\label{Sec5}

In this work, we develop and analyse a kinetic framework for DNA replication that links the evolution of unreplicated regions to fork densities, replication timing and near-completion dynamics. We show that reliable completion depends not only on the overall level of initiation, but also on how that activity is distributed over the genomic distances that forks can traverse. Once fork progression is specified, the lowest accumulated initiation across fork-accessible regions provides a sufficient, worst-case measure of how rapidly unreplicated DNA is resolved. This converts the well-established biological principle that broad initiation-poor regions hinder completion into a quantitative bound, showing that they can delay uniform locuswise near-completion even when initiation is abundant overall or most loci replicate early on average. Our framework thus provides a quantitative route from experimentally informed replication programmes to the identification of potential completion vulnerabilities.

To establish this result, we generalise earlier KJMA descriptions of replication by introducing a joint size-position-time density of unreplicated intervals, with initiation and fork speed allowed to vary across genomic position and time. Integrating over interval size recovers the spatially inhomogeneous mean-field fork-density system, thereby linking interval-based models of nucleation, growth and coalescence~\cite{Herrick2002,PhysRevE.71.011908,PhysRevE.71.011909} with the formulation of Gauthier et al.~\cite{gauthier2012modeling}, while extending previous well-posedness analysis for size-dependent replication models~\cite{Nieto2022}. This reduction retains the geometric interpretation of unreplicated intervals while producing a system suitable for analysing replication timing and completion. We establish existence, uniqueness and positivity (Theorem~\ref{thm:local-well-posedness}), together with continuous dependence on the initial data (Proposition~\ref{prop:continuous_dependence}), showing that prescribed initiation and fork-speed fields generate a stable and physically admissible replication programme. For data compatible with the underlying geometry (Lemma~\ref{lem:compatibility-propagation}), normalisation by the unreplicated fraction reveals a cancellation of the nonlinear coalescence term, reducing the system to linear transport along replication-fork characteristics (Proposition~\ref{prop:normalised-equations}). This yields global existence (Theorem~\ref{thm:global-compatible}) and a characteristic representation in which the backward causal region of a locus identifies the initiation events capable of reaching it by a given time (Lemma~\ref{lem:general-light-cone-representation}). From this representation, we derive bounds on $T_\varepsilon$, the time by which almost every locus has replicated in all but an $\varepsilon$-fraction of cells (Proposition \ref{prop:geometry_master_completion_bound}), and extend the same geometric approach to expected replication times (Corollary~\ref{cor:geometry_master_expectation_bound})
and their spatial variation on \(\mathbb R\)
(Proposition~\ref{prop:expected-replication-time}). The estimates recover the exact KJMA solution on \(\mathbb R\) in the homogeneous limit, while providing conservative guarantees
on the resolution of the final unreplicated regions in heterogeneous settings.

In addition, our results complement earlier work on the random completion problem. Previous studies have asked how stochastic origin firing can nevertheless support timely genome duplication, highlighting increasing initiation through S phase, excess licensed or dormant origins, and the spatial organisation of origins as possible solutions~\cite{hyrien2003paradoxes,jun2008just,yang2008xenopus,goldar2008dynamic,blow2011dormant,karschau2012optimal}. Yang and Bechhoefer characterised completion through the statistics of the final coalescence events, obtaining an extreme-value distribution for completion time~\cite{yang2008xenopus}, while Gauthier et al.\ estimated completion from fork-number statistics in a spatially heterogeneous mean-field model~\cite{gauthier2012modeling}. Our contribution is to recast the problem in terms of what can be guaranteed from a prescribed heterogeneous initiation and fork-speed programme. Rather than reconstructing the distribution of the final replication event, we identify sufficient conditions under which the unreplicated fraction becomes uniformly small across the genome. The local initiation mass (Definition~\ref{def:local_initiation_mass}) measures the lowest initiation accumulated over genomic intervals of a given size and brings several proposed completion mechanisms into a common quantitative framework. Dormant-origin activation raises the initiation available locally, more even origin placement limits broad initiation-poor regions, and faster fork progression increases the distance over which initiation elsewhere can rescue a late locus~\cite{ge2007dormant,ibarra2008excess,almamun2016inevitability,moreno2016unreplicated,karschau2012optimal}. In this way, the analysis clarifies how these mechanisms overcome stochastic origin firing by ensuring sufficient initiation across the regions accessible to replication forks.

To compare the resulting estimates with the corresponding stochastic forward model, we apply them to finite-grid Monte Carlo simulations driven by heterogeneous Repli-seq-derived initiation landscapes~\cite{hansen2010sequencing,zhao2020high}. This controlled comparison assesses whether the empirical completion-time estimates are consistent with the analytical curves and quantifies how conservative those curves remain under realistic spatial variation, different tolerances and replication programmes. It does not by itself validate the continuum theorem or provide independent biological validation, since the same fitted landscapes underlie both calculations. The intervals minimising local initiation mass also identify candidate completion impediments that may be obscured by population-averaged timing profiles, providing a possible basis for comparing cell types or perturbations~\cite{berkemeier2025dna,berners2025regulation,retkute2012mathematical,hawkins2013highresolution}. Finally, periodic-domain examples motivated by ecDNA illustrate that the same estimates extend naturally to circular replication geometries~\cite{jaworski2025ecdna}.

The main limitations of our approach arise from the scale at which replication is represented. The coalescence term is a mean-field closure and therefore neglects correlations among origins, forks and neighbouring gaps, while population-level fractions cannot resolve the rare single-cell outcomes that ultimately define failure of completion. Initiation and fork speed are prescribed effective fields, so the model does not itself predict checkpoint activation, dormant-origin firing or adaptive responses to replication stress. These fields are also not uniquely identifiable from Repli-seq alone, and the interval and half-line formulations still require a rigorous boundary theory. The resulting bounds should therefore be interpreted as conditional guarantees for a specified effective replication programme, rather than direct predictions of how individual cells respond to stress.

These limitations also point to several natural directions for further work. Connecting the mean-field closure to an underlying stochastic process would enable single-cell completion probabilities to be quantified, while integrating replication timing with origin maps, fork-directionality profiles and single-molecule fork-speed measurements would better constrain the inputs and propagate their uncertainty into the estimates. Extensions incorporating checkpoint feedback and stress-responsive initiation could examine how dormant origins preserve replication under perturbation and in therapeutic contexts~\cite{jones2025high}. Spatial completion estimates could then be mapped systematically and compared with stress-induced under-replication, fragile sites and double-strand breaks, although additional modelling would be needed to translate delayed replication into DNA-damage probabilities~\cite{brison2019transcription,berkemeier2025dna}. Across cell types, tumours and treatments, these measures could be evaluated as candidate biomarkers of replication stress and therapeutic sensitivity, subject to independent experimental and clinical validation~\cite{konstantinopoulos2021replication}. More broadly, locus-resolved estimates of incomplete replication could inform cell-based models of checkpoint activation, repair, arrest and death, linking genome-scale replication kinetics to cell fate and population response~\cite{hamis2021targeting}. Our work therefore provides both a rigorous analytical foundation for heterogeneous replication dynamics and a route from spatial replication programmes to testable predictions of genome instability, treatment vulnerability and cellular outcome.

\section*{Data and code availability}

Replication timing data used in this study were obtained from publicly available Repli-seq datasets from ENCODE~\cite{hansen2010sequencing}, as processed in~\cite{berkemeier2025dna}. Code for the fitting procedure, stochastic simulations, completion-time estimates and figure generation are available at \href{https://github.com/fberkemeier/dna-replication-completion}{https://github.com/fberkemeier/dna-replication-completion}.

\section*{Acknowledgements}

F.B. and M.A.B. were supported by the Leverhulme Trust (RPG-2022-028) and the Biotechnology and Biological Sciences Research Council (UKRI1912). Additional funding and career support were provided by a Rokos Postdoctoral Associate position at Queens’ College Cambridge and a College Research Associate position at Emmanuel College awarded to F.B. and a fellowship at St John's College Cambridge to M.A.B. F.B. also received bridging support from the Department of Pathology, University of Cambridge. This work was performed using resources provided by the Cambridge Service for Data Driven Discovery (CSD3), operated by the University of Cambridge Research Computing Service (\href{https://www.csd3.cam.ac.uk}{https://www.csd3.cam.ac.uk}), and supported by Dell EMC and Intel through Tier-2 funding from the Engineering and Physical Sciences Research Council (capital grant EP/T022159/1) and DiRAC funding from the Science and Technology Facilities Research Council (\href{https://dirac.ac.uk}{https://dirac.ac.uk}).

\bibliographystyle{unsrt}
\bibliography{DNALiterature}

\end{document}